\def\inte#1{
\displaystyle\mathop{#1\kern0pt}^\circ }
\let\pa=\partial
\let\f=\frac
\def\pa{\partial}
\def\virgp{\raise 2pt\hbox{,}}
\def\cdotpv{\raise 2pt\hbox{;}}
\def\C{\mathop{\mathbb C\kern 0pt}\nolimits}
\def\DD{\mathop{\mathbb D\kern 0pt}\nolimits}
\def\EE{\mathop{{\mathbb E \kern 0pt}}\nolimits}
\def\K{\mathop{\mathbb K\kern 0pt}\nolimits}
\def\N{\mathop{\mathbb N\kern 0pt}\nolimits}
\def\Q{\mathop{\mathbb Q\kern 0pt}\nolimits}
\def\R{\mathop{\mathbb R\kern 0pt}\nolimits}
\def\SS{\mathop{\mathbb S\kern 0pt}\nolimits}
\def\ZZ{\mathop{\mathbb Z\kern 0pt}\nolimits}
\def\TT{\mathop{\mathbb T\kern 0pt}\nolimits}
\def\P{\mathop{\mathbb P\kern 0pt}\nolimits}
\newcommand{\Z}{{\ZZ}}
\def\na{\nabla}
\newcommand{\beq}{\begin{equation}}
\newcommand{\eeq}{\end{equation}}
\newcommand{\ben}{\begin{eqnarray}}
\newcommand{\een}{\end{eqnarray}}
\newcommand{\beno}{\begin{eqnarray*}}
\newcommand{\eeno}{\end{eqnarray*}}
\newtheorem{defi}{Definition}[section]
\newtheorem{thm}{Theorem}[section]
\newtheorem{lem}{Lemma}[section]
\newtheorem{rmk}{Remark}[section]
\newtheorem{prop}{Proposition}[section]
\renewcommand{\theequation}{\thesection.\arabic{equation}}
\begin{document}
\title[Asymptotics of the Linearized Boltzmann operator]
{Asymptotic analysis  of the linearized Boltzmann collision operator  from angular cutoff to non-cutoff
\\
ANALYSE ASYMPTOTIQUE DE L'OP\'ERATEUR DE COLLISION BOLTZMANN LIN\'EARIS\'E, DE "AVEC CUTOFF ANGULAIRE" \`A "SANS CUTOFF ANGULAIRE"
}

\author[L. -B. He and Y. -L. Zhou]{Ling-Bing He and Yu-long Zhou$^{*}$}
\address[L.-B. He]{Department of Mathematical Sciences, Tsinghua University\\
Beijing 100084,  P. R.  China.} \email{hlb@tsinghua.edu.cn}
\address[Y.-L. Zhou]{School of Mathematics, Sun Yat-Sen University, Guangzhou, 510275, P. R.  China.} \email{zhouyulong@mail.sysu.edu.cn}

\thanks{$^{*}$ Corresponding author, email: zhouyulong@mail.sysu.edu.cn}

\begin{abstract} We give quantitative estimates on the asymptotics of the linearized Boltzmann collision operator and its associated equation from angular cutoff to non-cutoff. On one hand, the results disclose the link between the hyperbolic property resulting from the Grad's cutoff assumption and the smoothing property due to the long-range interaction. On the other hand, with the help of the localization techniques in the phase space, we observe some new phenomena in the asymptotic limit process. As a consequence, we give an affirmative answer to the question that there is no jump for the property that the collision operator with cutoff does not have spectrum gap but the operator without cutoff does have for the moderate soft potentials.
\\
\indent Nous donnons des estimations quantitatives sur l'asymptotique de l'op\'erateur de collision de Boltzmann lin\'earis\'e et son \'equation associ\'ee, lorsque l'on passe d'un noyau avec cutoff angulaire à un noyau sans cutoff angulaire. D'une part, les r\'esultats r\'ev\`elent le lien entre la propri\'et\'e hyperbolique r\'esultant de l'hypoth\`ese de cutoff de Grad et la propri\'et\'e de lissage due \`a l'interaction longue distance. D'autre part, \`a l'aide des techniques de localisation dans l'espace des phases, nous observons de nouveaux ph\'enom\`enes dans le processus limite asymptotique. En cons\'equence, nous donnons une r\'eponse \`a la question suivante: Y a t-il un saut pour la propri\'t\'e que l'op\'erateur de collision avec cutoff n'a pas de trou spectral, mais que l'op\'erateur sans cutoff en a un, dans le cas des potentiels mod\'er\'ement doux.
\end{abstract}

\maketitle

\tableofcontents

%%%%%%%%%%%%%%

\noindent {\sl Keywords:} {inhomogeneous  Boltzmann
equation, long-range interactions, asymptotic analysis, spectral gap, global-in-time error.}

%\vskip 0.2cm

\noindent {\sl AMS Subject Classification (2010):} {35Q20, 35R11, 75P05.}

\renewcommand{\theequation}{\thesection.\arabic{equation}}
\setcounter{equation}{0}
%%%%%%%%%%%%%%%%%%%%%%%%%%%%%%%%%%%%%%%%%%%%%%
%%%%%%%%%%%%%%%%%%%%%%%%%%%%%%%%%%%%%%%%%%

\section{Introduction}

 Let  $\mathcal{L}^\epsilon$ and $\mathcal{L}^0$ be  linearized Boltzmann collision operators with and without angular cutoff respectively.   The   present work aims at   quantitative estiamtes for the asymptotic behavior of  the operator $\mathcal{L}^\epsilon$ and its associated equation from angular cutoff to non-cutoff, which corresponds to the limit that $\epsilon$ goes to zero.  Our main motivation comes from the facts that the following properties of the collision operator are totally changed in the limit process:

\begin{enumerate}
\item[(1)] For fixed $\epsilon>0$,   $\mathcal{L}^\epsilon$ behaves like a damping term for the Boltzmann equation with angular cutoff while $\mathcal{L}^0$ behaves like a fractional Laplace operator   for the equation without cutoff.

\item[(2)] For moderate soft potentials($\gamma\in [-2s,0)$), the operator $\mathcal{L}^\epsilon$ has no spectral gap for  fixed $\epsilon>0$ but the limiting point $\mathcal{L}^0$ of $\{\mathcal{L}^\epsilon\}_{\epsilon>0}$  does have.
\end{enumerate}
Another motivation arises from the approximation problem for the Boltzmann equation. It is of great importance to find out the asymptotic formula to describe the   limit  for the nonlinear equation.

\subsection{Boltzmann operator and its linearized version} We first introduce our basic assumptions and definitions on the Boltzmann collision operator and its associated equation.

\subsubsection{Boltzmann collision operator}
The Boltzmann collision operator $Q$ is a bilinear
operator acting only on the velocity variables $v$, which is defined by,
\beno Q(g,h)(v):=
\int_{\R^3}\int_{\mathbb{S}^{2}}B(v-v_*,\sigma)(g'_*h'-g_*h) dv_* d\sigma.
\eeno Here we use the usual shorthand $h=h(v)$, $g_*=g(v_*)$,
$h'=h(v')$, $g'_*=g(v'_*)$ where $v'$, $v_*'$ are given by
\beno
v'=\frac{v+v_{*}}{2}+\frac{|v-v_{*}|}{2}\sigma, \quad
v'_{*}=\frac{v+v_{*}}{2}-\frac{|v-v_{*}|}{2}\sigma, \quad 
\sigma\in\mathbb{S}^{2}.
\eeno

The nonnegative function $B(v-v_*,\sigma)$  in the collision
operator is called the Boltzmann collision kernel. It is always
  assumed to depend only on $|v-v_{*}|$ and $\frac{v-v_{*}}{|v-v_{*}|} \cdot \sigma
  $. It is convenient to introduce the angle variable $\theta$ through
  $\cos\theta= \frac{v-v_{*}}{|v-v_{*}|}\cdot \sigma$.  Without loss of generality, we may
assume that $B(v-v_{*},\sigma)$ is supported in the set
 $0\leq\theta\leq\frac{\pi}{2}$ , i.e,
$ \cos\theta\ge0
$.
% for otherwise $B$ can be replaced by its symmetrized form:\beno\bar{B}(v-v_{*},\sigma)=[B(v-v_{*},\sigma)+B(v-v_{*},-\sigma)]\mathbf{1}_{\langle\frac{v-v_{*}}{|v-v_{*}|},\sigma\rangle\ge0}.\eenoHere, $\mathbf{1}_A$ is the characteristic function of the set $A$.

We now state some physically relevant assumptions on the collision kernel. The kernel $B(v-v_*,\sigma)$ satisfies
\begin{itemize}
\item (A-1) The cross-section $B(v-v_{*},\sigma)$ takes a product form of
\beno  B(v-v_{*},\sigma)= |v-v_{*}|^{\gamma}b(\cos\theta),\eeno
 where $-3<\gamma \leq 1$ and $b$ is a nonnegative function satisfying
  \ben  \label{angular-lower-upper-bound}
  K^{-1}\sin^{-2-2s}\frac{\theta}{2} \le
b(\cos\theta)\le K
 \sin^{-2-2s}\frac{\theta}{2},  \quad
\text{ for any }  0 <\theta \leq  \pi/2.
\een
  where $0<s<1, K \geq 1$.
  The parameters $\gamma$ and $s$ verify $\gamma + 2s > -1$.
\end{itemize}

Assumption (A-1) covers inverse power law interactions. For inverse repulsive potentials $r^{-p}, p>1$, one has
$\gamma=\frac{p-5}{p-1}$ and $s=\frac{1}{p-1}$.  Usually, $\gamma>0$, $\gamma=0$, and
$\gamma<0$ are called hard, Maxwellian, and soft
potentials respectively.

The inhomogeneous Boltzmann equation without cutoff reads:
\begin{eqnarray}\label{homb}\left\{ \begin{aligned}
&\partial _t F +  v \cdot \nabla_{x} F=Q(F,F), ~~t > 0, x \in \mathbb{T}^{3}, v \in\R^3 ;\\
&F|_{t=0} = F_{0}.
\end{aligned} \right.
\end{eqnarray}
where $F(t,x,v)\geq 0$ is the density
function of collision particles which move with velocity
$v\in\R^3$ at time $t\geq 0$, position $x \in \mathbb{T}^{3} :=[-\pi,\pi]^{3}$.

We now recall the famous Grad's cutoff assumption. Let us denote by $B^\epsilon(v-v_{*},\sigma)$  the cutoff Boltzmann collision kernel. The kernel $B^\epsilon(v-v_*,\sigma)$ verifies
\begin{itemize}
\item (A-2) The cross-section $B^\epsilon(v-v_{*},\sigma)$ takes a product form of
\ben \label{cutoff-kernel-def}
b^{\epsilon}(\cos\theta) := b(\cos\theta) (1-\phi(\sin\frac{\theta}{2}/\epsilon)), \quad  B^\epsilon(v-v_{*},\sigma)= |v-v_*|^\gamma b^\epsilon(\cos\theta),
\een
 where  $0 <\epsilon \leq \frac{\sqrt{2}}{2}$ and $\phi$ is a smooth function defined in \eqref{function-phi-psi}, which has support in $[0, 4/3]$ and equals to $1$ in $[0, 3/4]$.
\end{itemize}

Note that as $\epsilon \rightarrow 0$, pointwisely $b^{\epsilon} \rightarrow b$. For convenience, let $b^{0}:=b, B^{0}:=B$. Under assumption (A-2), $B^\epsilon(v-v_{*},\sigma)$ is supported in $\theta \gtrsim \epsilon$.
Correspondingly, the Boltzmann collision operator with parameter $\epsilon \geq 0$ and its associated equation are defined by
\beno Q^{\epsilon}(g,h)(v):=
\int_{\R^3}\int_{\mathbb{S}^{2}}B^{\epsilon}(v-v_*,\sigma)(g'_*h'-g_*h) dv_* d\sigma,
\eeno
and
\begin{equation}\label{cutoffboltzmann} \left\{ \begin{aligned}
&\partial _t F +  v \cdot \nabla_{x} F=Q^{\epsilon}(F,F), ~~t > 0, x \in \mathbb{T}^{3}, v \in\R^3 ;\\
&F|_{t=0} = F_{0}.
\end{aligned} \right.
\end{equation}
Note that when $\epsilon=0$, $Q^{0}=Q$ and equation \eqref{cutoffboltzmann} is the same as  \eqref{homb}.

We remark that the solutions  to \eqref{homb} and  \eqref{cutoffboltzmann} have the fundamental physical properties of conserving total mass, momentum and kinetic energy, that is, for all $t\ge0$,
\ben\label{conserveq}  \int_{\mathbb{T}^{3} \times \R^3} [1, v_{1}, v_{2}, v_{3}, |v|^{2}] F(t,x,v)dxdv=\int_{\mathbb{T}^{3} \times \R^3} [1, v_{1}, v_{2}, v_{3}, |v|^{2}]F_{0}(x,v) dxdv.\een
Without loss of generality,  we assume  $F_{0}(x,v)$ has the same mass, momentum and energy as those of the global
Maxwellian $\mu(v) := (2\pi)^{-\frac{3}{2}}e^{-\frac{|v|^{2}}{2}}$.
By \eqref{conserveq}, one has for any $t \geq 0$,
\ben \label{conerve-for-linearization}
\int_{\mathbb{T}^{3} \times \R^3} [1, v_{1}, v_{2}, v_{3}, |v|^{2}](F-\mu)(t,x,v)  dxdv=0.\een

\subsubsection{Linearized Boltzmann collision operator}
For the cutoff case $\epsilon>0$ or the non-cutoff case $\epsilon=0$, the operators based on $Q^{\epsilon}$ are defined by
\ben\label{DefLep}
\Gamma^{\epsilon}(g,h) := \mu^{-\frac{1}{2}} Q^{\epsilon}(\mu^{\frac{1}{2}}g,\mu^{\frac{1}{2}}h), \quad
\mathcal{L}^{\epsilon}_{1}g := -\Gamma^{\epsilon}(\mu^{\frac{1}{2}},g), \quad
\mathcal{L}^{\epsilon}_{2}g := -\Gamma^{\epsilon}(g, \mu^{\frac{1}{2}}), \quad
\mathcal{L}^{\epsilon}g := \mathcal{L}^{\epsilon}_{1}g + \mathcal{L}^{\epsilon}_{2}g.
\een
We recall that the null space $\mathcal{N}(\mathcal{L}^\epsilon)$ of  $\mathcal{L}^\epsilon$ reads
 \beno \mathcal{N}(\mathcal{L}^\epsilon)=\mathcal{N}:= \mathrm{span}\{\mu^{\frac{1}{2}}, \mu^{\frac{1}{2}}v_1, \mu^{\frac{1}{2}}v_2,\mu^{\frac{1}{2}}v_3, \mu^{\frac{1}{2}}|v|^2 \}. \eeno

If we set  $F=\mu +\mu^{\frac{1}{2}}f$, then
    \eqref{cutoffboltzmann}  and   \eqref{homb} reduce to
\begin{equation}\label{linearizedBE} \left\{ \begin{aligned}
&\partial_{t}f + v\cdot \nabla_{x} f + \mathcal{L}^{\epsilon}f= \Gamma^{\epsilon}(f,f), ~~t > 0, x \in \mathbb{T}^{3}, v \in\R^3 ;\\
&f|_{t=0} = f_{0},
\end{aligned} \right.
\end{equation}
and
\begin{equation}\label{linearizedNBE} \left\{ \begin{aligned}
&\partial_{t}f + v\cdot \nabla_{x} f + \mathcal{L}^{0}f= \Gamma^{0}(f,f), ~~t > 0, x \in \mathbb{T}^{3}, v \in\R^3 ;\\
&f|_{t=0} = f_{0},
\end{aligned} \right.
\end{equation}
where $f_0 = \mu^{-\frac{1}{2}}(F_{0}-\mu)$ verifies (thanks to \eqref{conerve-for-linearization})
\ben\label{Nuspace}  \int_{\mathbb{T}^{3} \times \R^3} [1, v_{1}, v_{2}, v_{3}, |v|^{2}]\mu^{\frac{1}{2}}(v)f_0(x,v) dxdv=0. \een

\subsection{Problems and difficulties}
The main purpose of the paper is to understand what  happens to the collision operator $\mathcal{L}^\epsilon $ and its associate equation \eqref{linearizedBE} in the limit that $\epsilon $ goes to zero. More concretely, we are concerned with the following three problems.

 {\bf Problem 1:}  What is the behavior change of the  operator $  \mathcal{L}^\epsilon  $ in the limit process?

 We recall that $\mathcal{L}^\epsilon$ behaves like a damping term for equation \eqref{linearizedBE} while $\mathcal{L}^0$ behaves like a fractional Laplace operator   for  equation \eqref{linearizedNBE}. The  motivation of {\bf (P-1)} is  to see clearly which kind of link between  these two different properties in the limit process.
 Obviously it is a fundamental problem and full of challenge.

To explain the main difficulty of the problem, we focus on the Maxwellian molecules ($\gamma=0$), which is simpler than the other cases.  Previous works \cite{amuxy4,amuxy5,gs1,gs2,he2}  show that
for $\gamma=0$, there holds
\ben\label{eqiL01}  \langle \mathcal{L}^0f,f\rangle_v+|f|_{L^2}^2\sim |f|^{2}_{L^{2}_{s}}+|f|^{2}_{H^{s}} + |(-\Delta_{\mathbb{S}^{2}})^{\frac{s}{2}}f|^{2}_{L^{2}}. \een
Here $\langle f,g \rangle_v$ denotes the inner product for $v$ variable.
On the right-hand side of the equivalence \eqref{eqiL01},
there are three parts  which correspond to gain of weight $|f|^{2}_{L^{2}_{s}}$, gain of Sobolev regularity $|f|^{2}_{H^{s}} $ and gain of tangential derivative on sphere $|(-\Delta_{\mathbb{S}^{2}})^{\frac{s}{2}}f|^{2}_{L^{2}}$ respectively. Observe that \eqref{eqiL01} can be rewritten by
\ben\label{eqiL02}
  \langle \mathcal{L}^0f,f\rangle_v+|f|_{L^2}^2\sim |W_{s}f|^{2}_{L^{2}}+|W_{s}(D)f|^{2}_{L^2} + |W_{s}((-\Delta_{\mathbb{S}^{2}})^{\frac{1}{2}})f|^{2}_{L^{2}},
\een
where $W_{s}(x) :=(1+|x|^2)^{\f{s}2}$. From now on, we call $W_{s}$ the characteristic function of $\mathcal{L}^0$ since it captures full structure of $\mathcal{L}^0$. The operator $W_{s}((-\Delta_{\mathbb{S}^{2}})^{\frac{1}{2}})$ is defined in \eqref{DeltaWe}.

Considering $ \langle \mathcal{L}^\epsilon f,f\rangle_v\rightarrow  \langle \mathcal{L}^0f,f\rangle_v$ as $\epsilon \rightarrow 0$,  we guess that $\langle \mathcal{L}^\epsilon f, f\rangle_v$ has the same structure as the right-hand side of \eqref{eqiL02}. If so, what is the characteristic function of $\mathcal{L}^\epsilon$ when $\epsilon>0$? To find out a good candidate, we go back to the original proof of the coercivity estimate for the collision operator in \cite{advw}. Following the computation used there, we can derive that
\ben\label{sobolevregu}  - \langle Q^\epsilon(g, f), f\rangle_v+|f|_{L^2}^2\ge C_g|W^\epsilon(D)f|_{L^2}^2, \een
where $W^\epsilon$  is  defined by
\ben\label{charicter function}
W^{\epsilon}(v) = W_{s}\phi(\epsilon v) + \epsilon^{-s}(1-\phi(\epsilon v)).
\een
Here $\phi \in C_{0}^{\infty}(B_{\frac{4}{3}})$ is a smooth compactly-supported  function satisfying \eqref{function-phi-psi}. Note that as $\epsilon \rightarrow 0$, at least pointwisely $W^{\epsilon} \rightarrow W_{s}$. For convenience, let $W^{0}:=W_{s}$.
We conjecture that $W^\epsilon$ is the characteristic function of $\mathcal{L}^\epsilon$ in the following sense:
\ben\label{conject1}\langle \mathcal{L}^\epsilon f, f\rangle_v + |f|^{2}_{L^{2}}  \sim  |W^\epsilon f|^{2}_{L^{2}}+|W^\epsilon(D)f|^{2}_{L^2} + |W^\epsilon((-\Delta_{\mathbb{S}^{2}})^{\frac{1}{2}})f|^{2}_{L^{2}}. \een
The operator $W^{\epsilon}((-\Delta_{\mathbb{S}^{2}})^{\frac{1}{2}})$ is defined in \eqref{DeltaWe}.

Let us give some comments on conjecture \eqref{conject1}. Firstly it is easy to see that  when $\epsilon$ goes to zero, \eqref{conject1} will coincide with \eqref{eqiL02}. This shows that the characteristic function $W^\epsilon$ connects the cutoff case and the non-cutoff case. Secondly on the right-hand of \eqref{conject1},   gain of weight only happens in the region $|v|\lesssim 1/\epsilon$ in the phase space, gain of Sobolev regularity only happens in the region $|\xi|\lesssim 1/\epsilon$ in the frequency space and gain of tangential derivative only happens in the region that the eigenvalue $\lambda$ of the operator $(-\Delta_{\mathbb{S}^{2}})^{\frac{1}{2}}$ verifies $\lambda\lesssim 1/\epsilon$.  These properties
 are consistent with the fact that the operator $\mathcal{L}^\epsilon$ has a hyperbolic structure due to the angular cutoff, that is, $\theta \gtrsim \epsilon$.
Thirdly, because of hyperbolic structure of $\mathcal{L}^\epsilon$, it is unclear how to derive
     $|W^\epsilon((-\Delta_{\mathbb{S}^{2}})^{\frac{1}{2}})f|_{L^2}$ and  $|W^\epsilon f|_{L^{2}}$  in \eqref{conject1} using
the methods in the previous works \cite{amuxy4,amuxy5,gs1,gs2,afl,lmkx,he2,pao}.
Therefore we need some new idea to prove the conjecture.

 {\bf Problem 2:} What is the longtime behavior of  $e^{-\mathcal{L}^\epsilon t}f$ with $f\in \mathcal{N}^{\perp}$ for moderate soft potentials in the limit process that $\epsilon$ goes to zero? Here $e^{-\mathcal{L}^\epsilon t}$ is the semi-group generated by
$\mathcal{L}^\epsilon$.

   As we know, for $\gamma\in [-2s,0)$, the operator $\mathcal{L}^\epsilon$ has no spectral gap for any  fixed $\epsilon>0$ but the limiting point $\mathcal{L}^0$ of $\{\mathcal{L}^\epsilon\}_{\epsilon>0}$  does have. It seems that there is a jump. Instead of investigating the spectrum of the operator which looks extremely difficult, we turn to consider the longtime behavior of   $e^{-\mathcal{L}^\epsilon t}f$ because the spectrum information of an operator has strong connection with the corresponding semi-group.

   Thanks to spectral gap of $\mathcal{L}^0$, it is easy to see that for any $f\in {\mathcal{N}}^{\perp}$,
\beno \|e^{-\mathcal{L}^{0} t}f\|_{L^2}\le e^{-c t}\|f\|_{L^2}. \eeno
  As  for the operator $\mathcal{L}^\epsilon$, by imposing the additional assumption that $f\in L^2_l$, we can derive that  $e^{-\mathcal{L}^{\epsilon}t}f$ will decay to zero with polynomial rate. However we have no idea on the explicit rate of this relaxation for $f\in {\mathcal{N}}^{\perp}$ if we only impose $f\in L^2$.  By approximation argument, we only can prove that
\beno  \lim_{t\rightarrow\infty}\|e^{-\mathcal{L}^{\epsilon}t}f\|_{L^2}=0. \eeno
      Therefore from these two estimates, it is difficult to find out the link between these two different longtime behaviors. We comment that this difficulty matches the facts that $\mathcal{L}^\epsilon$ does not have spectral gap but $\mathcal{L}^0$ does have.

 {\bf Problem 3:}  Which kind of asymptotic formula describes the limit that $\epsilon$ goes to zero for the solutions of the nonlinear equations \eqref{linearizedBE} and \eqref{linearizedNBE}?

Formally when $\epsilon$ goes to zero, the solution $f^\epsilon$ to \eqref{linearizedBE} will converge to the solution $f^{0}$ to \eqref{linearizedNBE}. To answer {\bf (P-3)} is to justify the convergence and find an asymptotic formula to describe the limit.

 To guess the relation between $f^\epsilon$ and $f^{0}$, we first have a look at the stationary case.
By Taylor expansion, we can prove that for any smooth compactly supported functions $f$,
 \ben \label{stationary} |Q^\epsilon(f,f)-Q^{0}(f,f)|\sim O(\epsilon^{2-2s}). \een
Thus it is natural to conjecture
\ben \label{solution-error-order} f^\epsilon-f^{0}=O(\epsilon^{2-2s}).\een
Obviously the main difficulty in establishing \eqref{solution-error-order} lies in bringing the error order \eqref{stationary} from operator level to solution level. To this end,  we need some uniform (with respect to $\epsilon$) estimates of  $\mathcal{L}^\epsilon$ and $\Gamma^{\epsilon}$, and also estimates of the difference $\Gamma^{\epsilon}-\Gamma^{0}$.

\subsection{Notations}
We list function spaces and notations
which will be used throughout the paper.

\subsubsection{Basic notations}
  We denote multi-index by $\alpha =(\alpha^1,\alpha^2,\alpha^3) \in \mathbb{Z}_{+}^{3}$ with
$|\alpha |=\alpha^1+\alpha^2+\alpha^3$. We write $a\lesssim b$ to indicate that  there is a
universal constant $C,$ which may be different on different lines,
such that $a\leq Cb$.  We use the notation $a\sim b$ whenever $a\lesssim b$ and $b\lesssim
a$.  The notation $[a]$ denotes the maximum integer which does not exceed $a$.  The bracket $\langle \cdot\rangle$ is defined by $\langle v\rangle :=(1+|v|^2)^{\frac{1}{2}}$. Then the weight function $W_l$ is defined by $W_l(v):= \langle v\rangle^l $.  We denote $C(\lambda_1,\lambda_2,\cdots, \lambda_n)$ or $C_{\lambda_1,\lambda_2,\cdots, \lambda_n}$ by a constant depending on   parameters $\lambda_1,\lambda_2,\cdots, \lambda_n$. The notations  $\langle f,g\rangle_v:= \int_{\R^3}f(v)g(v)dv$ and $(f,g):= \int_{\mathbb{T}^3 \times \R^3} f(x,v)g(x,v)dxdv$ are used to denote the inner products for $v$ variable and for $x,v$ variables respectively.
As usual, $\mathrm{1}_A$ is the characteristic function of a set $A$. If $A,B$ are two operators, then their commutator $[A,B]:= AB-BA$. Recall $|f|_{L \mathrm{log}L}:=\int_{\R^3} |f(v)|\log(1+|f(v)|) d v$.

\subsubsection{Function spaces} Several spaces are introduced as follows:

(1). For   real number $n, l $, we define the weighted Sobolev space on $\R^3$
\begin{equation*}
H^{n}_l :=\bigg\{f(v)\big| |f|^2_{H^n_l}:=\int_{\R^3} |(W_{n}(D) W_{l} f)(v)|^2 dv
 <\infty\bigg\}.
\end{equation*} For any symbol $a: \R^3 \rightarrow \R$, recall that
 $a(D)$ is the pseudo-differential operator defined by
\beno  \big(a(D)f\big)(v):=\f1{(2\pi)^3}\int_{\R^3}\int_{\R^3} e^{i(v-y)\cdot\xi}a(\xi)f(y)dyd\xi.\eeno

(2). We  introduce the $L^p_l$ space as
\beno
L^p_l:= \bigg\{f(v)\big| |f|_{L^p_l}:=\bigg(\int_{\R^3} |f(v)|^p\langle v
\rangle^{l p}dv\bigg)^{\f1{p}}<\infty \bigg\}.
\eeno

(3). For $m\in\N$, we denote the Sobolev space on $\mathbb{T}^{3}$ by
\begin{equation*} H^{m}_{x} :=\bigg\{f(x)\big| |f|^{2}_{H^{m}_{x}}:= \sum_{|\alpha | \leq m}
\int_{\mathbb{T}^3}|\partial^{\alpha}_{x} f|^{2}dx <\infty\bigg\}.
\end{equation*}

(4). For a function $f(x,v)$, we define the following weighted Sobolev  spaces
with weight on velocity variable $v$. For $m,n \in \N, l \in \R$, the weighted (in $v$) Sobolev space on $\mathbb{T}^{3}\times\R^{3}$ is defined by
\beno H^{m}_xH^{n}_{l} := \bigg\{f(x,v) \big| \|f\|^{2}_{ H^{m}_xH^{n}_{l}} :=
\sum_{|\alpha| \leq m, |\beta| \leq n}  \int_{\mathbb{T}^3}| \partial^{\alpha}_x\pa^{\beta}_v f|^{2}_{L^{2}_l}dx < \infty \bigg\}.\eeno
 For simplicity,   we write $\|f\|_{H^{m}_{x}L^{2}_{l}} := \|f\|_{ H^{m}_xH^{0}_{l} }$ if $n=0$ and   $\|f\|_{L^{2}_{l}} := \|f\|_{ H^{0}_xH^{0}_{l} }$ if $m=n=0$.  We can define the homogeneous space $\dot{H}^{m}_x\dot{H}^{n}_{l}$ if we replace by $|\alpha| \leq m, |\beta| \leq n$ by $|\alpha| = m, |\beta| = n$. Similarly we can introduce the partial homogeneous space  $\dot{H}^{m}_xH^{n}_{l}$ and $H^{m}_x\dot{H}^{n}_{l}$.

\subsubsection{Dyadic decompositions} We now recall dyadic
decomposition. Let $B_{\frac{4}{3}} := \{v\in\R^{3}: |v| \leq \frac{4}{3}\}$ and $C := \{v\in\R^{3}: \frac{3}{4} \leq |v| \leq \frac{8}{3}\}$.  Then one may introduce two
radial functions $\phi \in C_{0}^{\infty}(B_{\frac{4}{3}})$ and $\psi \in C_{0}^{\infty}(C)$ which satisfy
\ben \label{function-phi-psi} 0\leq \phi, \psi \leq 1, \text{ and } \phi(v) + \sum_{j \geq 0} \psi(2^{-j}v) =1, \text{ for all } v \in \R^{3}. \een
Since $\phi$ is a radical function, we can interchangeably use $\phi(v)$ and $\phi(|v|)$.
Now define $\varphi_{-1}(v) :=  \phi(v)$ and $\varphi_{j}(v) :=  \psi(2^{-j}v)$ for any $v \in \R^{3}$ and $j \geq 0$.
Let $(\mathcal{P}_j f)(v):= \varphi_{j}(v)f(v)$, then one has the following dyadic decomposition
\beno  f =\sum_{j=-1}^\infty \mathcal{P}_j f = \sum_{j=-1}^{\infty} \varphi_{j}f, \eeno
for any function $f$ defined on $\R^{3}$.
We will  use the notations \ben \label{defphilh} f_\phi:= \phi(\epsilon D) f,\quad f^\phi:=(1-\phi(\epsilon D))f,\quad f^l=\phi(\epsilon \cdot) f,\quad f^h=(1-\phi(\epsilon\cdot))f.\een

\subsubsection{Micro-Macro decomposition} Recalling $\mathcal{N}= \mathrm{span}\{\mu^{\frac{1}{2}}, \mu^{\frac{1}{2}}v_1, \mu^{\frac{1}{2}}v_2,\mu^{\frac{1}{2}}v_3, \mu^{\frac{1}{2}}|v|^2 \}$, we introduce the projection operator $\mathbb{P}$ on $\mathcal{N}$ as follows:
\ben\label{DefProj} \mathbb{P}f=(a+b\cdot v+c|v|^2)\mu^{\frac{1}{2}}, \een
where for $1\le i\le 3$, \beno
 a=\int_{\R^3} (\frac{5}{2}-\frac{|v|^{2}}{2})\mu^{\frac{1}{2}}fdv, \quad b_i=\int_{\R^3} v_i\mu^{\frac{1}{2}}fdv, \quad c=\int_{\R^3} (\frac{|v|^2}{6}-\frac{1}{2})\mu^{\frac{1}{2}}fdv.
\eeno

\subsubsection{Anisotropic function spaces}  Let $Y_l^m$ with $-l\le m\le l$ be real spherical harmonics verifying that
$ (-\triangle_{\mathbb{S}^2})Y_l^m=l(l+1)Y_l^m. $
Then the operator $W^{\epsilon}((-\Delta_{\mathbb{S}^{2}})^{\frac{1}{2}})$ for $\epsilon \geq 0$ is defined by: if $v = r \sigma$, then
\ben\label{DeltaWe} (W^{\epsilon}((-\Delta_{\mathbb{S}^{2}})^{\frac{1}{2}})f)(v) := \sum_{l=0}^\infty\sum_{m=-l}^{l} W^\epsilon((l(l+1))^{\frac{1}{2}}) Y^{m}_{l}(\sigma)f^{m}_{l}(r),
 \een
where
$ f^{m}_{l}(r) = \int_{\mathbb{S}^{2}} Y^{m}_{l}(\sigma) f(r \sigma) d\sigma$. We recall that when $\epsilon>0$,  $W^\epsilon$ is defined in \eqref{charicter function}. When $\epsilon=0$,  $W^{0}(v) = \langle v \rangle^{s}$.

Now we  introduce several anisotropic function spaces induced by $\mathcal{L}^\epsilon$.

(1). {\it The space $L^2_{\epsilon,l}$ with $l\in\R$.} For functions on $\R^3$,
the space $L^2_{\epsilon,l}$ is defined by
\ben \label{new-norm}
L^2_{\epsilon,l}:=\bigg\{f(v)\big||f|^{2}_{\epsilon,l}:= |W^{\epsilon}((-\Delta_{\mathbb{S}^{2}})^{\frac{1}{2}})W_{l}f|^{2}_{L^{2}} + |W^{\epsilon}(D)W_{l}f|^{2}_{L^{2}} + |W^{\epsilon}W_{l}f|^{2}_{L^{2}}<\infty\bigg\}.
\een

 (2). {\it The  space  $H^{m}_xH^{n}_{\epsilon,l}$ with $m,n\in \N, l\in \R .$} For functions on $\mathbb{T}^{3}\times\R^{3}$, the space $H^{m}_xH^{n}_{\epsilon,l}$  is defined  by
\beno  H^{m}_xH^{n}_{\epsilon,l}:=\bigg\{f(x,v)\big|
\|f\|^{2}_{H^{m}_xH^{n}_{\epsilon,l}} := \sum_{|\alpha| \leq m, |\beta| \leq n}\int_{\mathbb{T}^{3}} |\pa^\alpha_x\pa^\beta_vf(x,\cdot)|^{2}_{\epsilon,l} dx<\infty\bigg\}.
\eeno
 For simplicity,   we set $\|f\|_{H_x^{m}L^2_{\epsilon,l}}:= \|f\|_{H^{m}_xH^{0}_{\epsilon,l}}$ if $n=0$ and $\|f\|_{L^2_{\epsilon,l}}:= \|f\|_{H^{0}_xH^{0}_{\epsilon,l}}$ if $m=n=0$. Similarly we can introduce the   spaces $\dot{H}^{m}_x\dot{H}^{n}_{\epsilon,l}$  $\dot{H}^{m}_xH^{n}_{\epsilon,l}$ and $H^{m}_x\dot{H}^{n}_{\epsilon,l}$.

(3). {\it Functionals related to $\mathcal{L}^\epsilon$.} We introduce
\ben \label{defintion-R-g}
\mathcal{R}_g^{\epsilon,\gamma}(f) := \int_{\mathbb{S}^2 \times \mathbb{R}^6} b^{\epsilon}(\cos\theta)| v-v_{*} |^{\gamma} g_{*} (f^{\prime}-f)^{2} d\sigma dv dv_{*},
\\ \label{defintion-R-star-g}
\mathcal{R}^{\epsilon,\gamma}_{*,g}(f) := \int_{\mathbb{S}^2 \times \mathbb{R}^6} b^{\epsilon}(\cos\theta)\langle v-v_{*} \rangle^{\gamma} g_{*}(f^{\prime}-f)^{2}d\sigma dv dv_{*},
\\ \label{defintion-M}
\mathcal{M}^{\epsilon,\gamma}(f):=\int_{\mathbb{S}^2 \times \mathbb{R}^6} b^{\epsilon}(\cos\theta)| v-v_{*} |^{\gamma}  f_{*}^{2} ((\mu^{\frac{1}{2}})^{\prime}-\mu^{\frac{1}{2}})^{2} d\sigma dv dv_{*}.
\een
As we will show in Section 2, \beno \langle \mathcal{L}^{\epsilon} f,f \rangle_v+|f|^{2}_{L^2_{\gamma/2}} \gtrsim \mathcal{R}_\mu^{\epsilon,\gamma}(f)+\mathcal{M}^{\epsilon,\gamma}(f).\eeno
The quantities $\mathcal{R}_g^{\epsilon,\gamma}(f) $ and $\mathcal{M}^{\epsilon,\gamma}(f)$ correspond to gain of regularity and gain of weight respectively. In contrast to $\mathcal{R}^{\epsilon,\gamma}_{g}(f)$, when $\gamma<0$, $\mathcal{R}^{\epsilon,\gamma}_{*,g}(f)$ contains no singularity in the relative velocity $v-v_*$ near origin.

\subsection{Main results}
We are in a position to state our main results. The first one is a uniform coercivity estimate of $\mathcal{L}^{\epsilon}$, which fully solves {\bf (P-1)}.
\begin{thm}\label{main1}
There exists a constant $\epsilon_0>0$ such that for $0 \leq \epsilon \le \epsilon_0$ and  any suitable function $f$,
\begin{eqnarray}
\langle \mathcal{L}^{\epsilon}f, f\rangle_v + |f|^{2}_{L^{2}_{\gamma/2}} \sim |f|^{2}_{\epsilon,\gamma/2}. \label{uniforml2}
\end{eqnarray}
Here the norm $|\cdot|_{\epsilon,\gamma/2}$ is defined in \eqref{new-norm}.
\end{thm}
Some remarks are in order:
\begin{rmk} Through the characteristic function $W^\epsilon$,  the coercivity estimate \eqref{uniforml2} discloses the link
between the hyperbolic structure due to the cutoff assumption ($\epsilon>0$) and the smoothing property due to the long-range interaction ($\epsilon=0$).
\end{rmk}

\begin{rmk}  Recall $f^l $ and $f^h$ in \eqref{defphilh}, then
\beno
|W^\epsilon f|^2_{L^2_{\gamma/2}}\sim | f^l|^2_{L^2_{\gamma/2+s}}+\epsilon^{-2s}|f^h|^2_{L^2_{\gamma/2}}. \eeno
Let us focus on moderate soft potentials, that is, $\gamma\in [-2s,0)$. Obviously
in the region $|v|\lesssim 1/\epsilon$ of the phase space, the operator $\mathcal{L}^\epsilon$ produces some weight since $\gamma+2s\geq0$. While in the region $|v|\gtrsim 1/\epsilon$, the operator $\mathcal{L}^\epsilon$ loses some weight since $\gamma<0$. This observation is consistent with the fact that $\mathcal{L}^\epsilon$ has no spectral gap for any fixed $\epsilon>0$ but $\mathcal{L}^0$ does have.
\end{rmk}

  Our second result is on the diversity of the longtime behavior of  $e^{-\mathcal{L}^\epsilon t}f_{0}$ with $f_{0} \in \mathcal{N}^\perp$ for moderately soft potentials, which solves {\bf (P-2)}.
\begin{thm}\label{main2} Suppose $0<\epsilon\le\epsilon_0, -2s \leq \gamma <0$
and $f_0\in   \mathcal{N}^{\perp}$. There is a universal constant $c>0$ such that
\ben\label{semigroupLe1}  |e^{-\mathcal{L}^\epsilon t}f_0|_{L^2}^2
\lesssim e^{-ct}|f_0^l|_{L^2}^2+|f_0^h|_{L^2}^2+\epsilon^{2s}|f_0|_{L^2}^2, \een
where $f^l $ and $f^h$ are defined in \eqref{defphilh}. Furthermore,
\begin{enumerate}
\item Let $p>0$ and $ -\gamma p/2\ge2$.
 Suppose $f_0\in L^2_{-\gamma p/2}$. Set $\mathcal{C}_I:=|e^{-\mathcal{L}^\epsilon t_1}f_0|_{L^2}^{-\f2p}|f_0|_{L^2_{-\gamma p/2}}^{\f2p}\epsilon^{2s}$ for some $t_1\ge0$. For any $t\ge 0$,
\begin{enumerate} \item
if $\mathcal{C}_I\ll 1$, then there exists a universal constant  $c_{1}>0$ and a critical time $t_*=O(p(-\ln \mathcal{C}_I ))$ such that
\ben\label{semigroupLe2} |e^{-\mathcal{L}^\epsilon (t+t_1)}f_0|_{L^2}^2
\lesssim |e^{-\mathcal{L}^\epsilon t_1}f_0|_{L^2}^2\big(e^{-c_{1}t}\mathrm{1}_{t\le t_* }+C(p)\mathcal{C}_I^p (1+t)^{-p}\mathrm{1}_{t\ge t_*}\big);\een
\item
 if $\mathcal{C}_I\sim 1$, then
\ben\label{semigroupLe3} |e^{-\mathcal{L}^\epsilon (t+t_1)}f_0|_{L^2}^2
\lesssim  C(p)|f_0|_{L^2_{-\gamma p/2}}^2\epsilon^{2sp}(1+t)^{-p}.\een
\end{enumerate}
\item Let $f_0$ additionally verify that $|f_0|^2_{L^2}=1$ and $|\mathcal{P}_{j}f_0|_{L^2}^2= 1-\eta$ with $\eta$ sufficiently small and $2^{j\gamma}\ll  \epsilon^{2s}$(which implies that $1/\epsilon\ll 2^j$). Then for $t\in [0, C^{-1}\eta2^{-j\gamma}\epsilon^{2s}]$ where $C$ is a universal constant,
\ben\label{semigroupLe4} |e^{-\mathcal{L}^\epsilon t}f_0|_{L^2}^2\ge  |\mathcal{P}_je^{-\mathcal{L}^\epsilon t}f_0|_{L^2}^2\ge 1-2\eta-C\epsilon^{2s}. \een
\end{enumerate} As a consequence, for any fixed sufficiently small $\epsilon>0$, the estimate $\lim\limits_{t\rightarrow\infty}|e^{-\mathcal{L}^\epsilon t}f_0|_{L^2}=0$ is sharp.

\end{thm}

Some remarks are in order:
\begin{rmk} We have three comments on estimate \eqref{semigroupLe1}. Firstly it shows that the longtime behavior of $e^{-\mathcal{L}^\epsilon t}f_0$ depends heavily on energy distribution of  $f_0$. Secondly the estimate is sharp for general data $f_0\in  \mathcal{N}^{\perp}$ thanks to the estimates \eqref{semigroupLe2}  and \eqref{semigroupLe4}, which deal with the case that the energy of $f_0$ is concentrated in   the ball $B_{1/\epsilon}$ and  the case that the energy of $f_0$ is concentrated far away from  the ball $B_{1/\epsilon}$.
Thirdly, by passing the limit $\epsilon\rightarrow0$, we recover from \eqref{semigroupLe1} that   for all $t\ge0$,
\ben\label{semigroupL0} |e^{-\mathcal{L}^0 t}f_0|_{L^2}^2
\lesssim e^{-ct}|f_0|_{L^2}^2.\een This demonstrates that there is no jump for the facts that the operator $\mathcal{L}^\epsilon$ has no spectral gap for  fixed $\epsilon>0$ but $\mathcal{L}^0$  does have.
 \end{rmk}

\begin{rmk} Estimates \eqref{semigroupLe2} and \eqref{semigroupL0}  show that up to a  critical time $t_*=O(|\ln \epsilon|)$, in terms of decay pattern,
there is no difference between $e^{-\mathcal{L}^\epsilon t}f_0$ and  $e^{-\mathcal{L}^0 t}f_0$. The  difference appears only after the critical time $t_*$. In fact, after $t_*$  the hyperbolic structure will take over the behavior of the semi-group $ e^{-\mathcal{L}^\epsilon t}$, which corresponds to the polynomial decay in \eqref{semigroupLe2}. To our best knowledge, this phenomenon is observed for the first time.   \end{rmk}

\begin{rmk} We have two remarks on \eqref{semigroupLe4}. Firstly the total energy of $f_0$ can be almost conserved in $e^{-\mathcal{L}^{\epsilon}t}f_0$ in any given time interval if the datum $f_{0}$ is suitably chosen.  Such kind of datum prevents the formation of spectral gap for  $\mathcal{L}^\epsilon$  no matter how small $\epsilon>0$ is. Secondly we want to show there are extensive data $f_0$ verifying all the assumptions in $(ii)$.  Taking a arbitrary function $f \in L^2$ with $|f|_{L^2}=1$ and the support of $f$ belonging to the ring $\{v\in \R^3| \frac{4}{3} \times 2^j\le |v|\le \frac{3}{2} \times 2^j\}$. Let   $f_0=f-\mathbb{P}f$. Then $f_0$ verifies $f_0\in \mathcal{N}^\perp$,
$|\mathcal{P}_j f_0|_{L^2} \ge 1-O(e^{-\f18 \times 2^{2j}})$. Then $f_0 / |f_{0}|_{L^2}$ fulfills all the assumptions.
\end{rmk}

\begin{rmk} Sharpness of the estimate, $\lim\limits_{t\rightarrow\infty}|e^{-\mathcal{L}^\epsilon t}f_0|_{L^2}=0$, directly follows   \eqref{semigroupLe2}-\eqref{semigroupLe4}. Indeed, the estimate can be derived thanks to \eqref{semigroupLe2} and \eqref{semigroupLe3}.  On the other hand, due to \eqref{semigroupLe4}, it is impossible to get an explicit and uniform decay rate for the above relaxation. These two facts reveal the diversity of the longtime behavior of   $e^{-\mathcal{L}^\epsilon t}f_0$. Our results are comparable to the results for the homogeneous Boltzmann equation with moderately soft potentials.  As is shown in \cite{cclu}, the rate of convergence to equilibrium can be very slow if we only assume that a solution conserves mass, momentum and energy.
\end{rmk}

\begin{rmk} Let us comment on the connection between the constant $c_{1}$ in  \eqref{semigroupLe2} and  the spectral gap $\lambda$ of the operator $\mathcal{L}^0$. Obviously $c_{1} \le \lambda$. An interesting problem is to  see the dependence of $\lambda$ on $c_{1}$. By Lemma \ref{estimate-operator-difference}, there holds $|(\mathcal{L}^\epsilon-\mathcal{L}^0)f|_{L^2}\lesssim \epsilon^{2-2s}|f|_{H^2_{\gamma+2}}$. Therefore if $f_0$ is smooth, then \eqref{semigroupLe2} can be improved to
\beno |e^{-\mathcal{L}^\epsilon t}f_0|_{L^2}^2
\lesssim |f_0|_{L^2}^2e^{-\lambda t}+\epsilon^{2-2s} |f_0|_{H^2_{\gamma+2}}^2.\eeno
\end{rmk}

Our third result is on global well-posedness, propagation of regularity and global dynamics of equation \eqref{linearizedBE}. Based on propagation of regularity, we derive an asymptotic formula for solutions to \eqref{linearizedBE} and \eqref{linearizedNBE}, which  solves  {\bf (P-3)}.

Let us introduce some weight functions.  For $J,N\in\N$ with $N \geq  1, 0 \leq J \leq N$, we  introduce a sequence of weight functions $\{W_{m,j}\}_{0 \leq m+j \le N-1}\cup \{W_{N-j,j}\}_{0\le j\le J}$ with $W_{m,j}=W_{l_{m,j}}, l_{m,j} \in \mathbb{R}$ verifying
\ben \label{AsuWf} l_{N-J,J}\ge 2, l_{m,j} \ge  l_{m-1,j+1} - \gamma,  l_{0,m-1} \ge l_{m,0}.\een
We remark that $l_{m,j}$ is the weight order for $x$ derivative of order $m$ and $v$ derivative of order $j$. Note that \eqref{AsuWf} means the weight increase by $-\gamma (\gamma<0)$ if the $v$ derivative order decrease by 1 and the $x$ derivative order increase by 1. Note that this type of weight sequence is designed to control the term $v\cdot \nabla_{x} f$ and is used in \cite{Guo1} in cutoff case.

Let $\pa^\alpha_\beta := \pa^\alpha_{x} \pa^\beta_{v}$. For $0 \leq k\leq N-1$  and $0 \leq m+j \leq N-1$ or $m+j=N, 0 \leq j \leq J$, we define energy and dissipation functional
\ben \label{pure-order-m-j}
\dot{\mathcal{E}}^{m,j}(f):=\sum_{|\alpha|= m,|\beta|=j}\|W_{m,j}\pa^\alpha_\beta f\|_{L^2}^2,  \quad \dot{\mathcal{D}}^{m,j}(f):=\sum_{|\alpha|= m,|\beta|=j}\|W_{m,j}\pa^\alpha_\beta f\|_{L^2_{\epsilon,\gamma/2}}^2,
\\ \label{total-order-k}
\dot{\mathcal{E}}^{k}(f):=\sum_{j=0}^k\dot{\mathcal{E}}^{k-j,j}(f), \quad \dot{\mathcal{D}}^{k}(f):=\sum_{j=0}^k\dot{\mathcal{D}}^{k-j,j}(f),
\\ \label{full-order-k}
\mathcal{E}^{N,J}(f):=\sum_{k=0}^{N-1}\dot{\mathcal{E}}^{k}(f)+\sum_{j=0}^J\dot{\mathcal{E}}^{N-j,j}(f), \quad
  \mathcal{D}^{N,J}(f):=\sum_{k=0}^{N-1}\dot{\mathcal{D}}^{k}(f)+\sum_{j=0}^J\dot{\mathcal{D}}^{N-j,j}(f).
 \een
Here $\dot{\mathcal{E}}^{m,j}$ contains $x$ derivative of order $m$ and $v$ derivative of order $j$. $\dot{\mathcal{E}}^{k}$ contains all mixed $x$ and $v$ derivative of total order $k$, that is, $|\alpha|+|\beta|=k$. $\mathcal{E}^{N,J}$ contains derivatives $\pa^\alpha_\beta$ either
$|\alpha|+|\beta| \leq N-1$ or $|\alpha|+|\beta| = N, |\beta| \leq J$. $\dot{\mathcal{D}}^{m,j}, \dot{\mathcal{D}}^{k}$ and $\mathcal{D}^{N,J}$ are the corresponding dissipation. If $J=N$, we simplify the notation $\mathcal{E}^{N}(f):=\mathcal{E}^{N,N}(f), \mathcal{D}^{N}(f):=\mathcal{D}^{N,N}(f)$.
The energy functional $\mathcal{E}^{N,J}$ is introduced to prove the propagation of full regularity of the solution.

We are ready to present our last main result.
\begin{thm}\label{main3}
Let $0 \leq \epsilon\le\epsilon_0$, $\gamma\in(-3/2,0)\cap[-2s,0)$. There is a constant $\delta_0>0$ independent of $\epsilon$ such that the following statements are valid. Let  $f_0$ verify \eqref{Nuspace} and $\|f_{0}\|_{H^{2}_{x}L^{2}} \leq \delta_{0}$.
\begin{enumerate}
\item{\bf (Global well-posedness)} The Cauchy problem
 \eqref{linearizedBE} (interpreted as problem \eqref{linearizedNBE} if $\epsilon=0$) admits a unique and global solution $f^\epsilon$ verifying
 $\sup_{t\ge0}\|f^\epsilon(t)\|_{H^{2}_{x}L^{2}}\lesssim \|f_{0}\|_{H^{2}_{x}L^{2}}$.

\begin{enumerate}
\item [(i)] If additionally $f_0\in H^N_xL^2_l$ with $N,l\ge2$, then
\ben \label{propagation-h-n-l-2-l} \sup_{t\ge0}\|f^\epsilon(t)\|_{H^N_xL^2_l}^{2} + \int_0^\infty \|f^\epsilon(\tau)\|_{H^N_xL^2_{\epsilon, l+\gamma/2}}^{2} d\tau
\leq C(\|f_0\|_{H^N_xL^2_l}^{2}).\een
\item [(ii)] If  additionally
 $ \mathcal{E
 }^{N,J}(f_0)<\infty$ with $N \geq  2, 0 \leq J \leq N$, then
 \ben \label{propagation-h-n-h-m-l}
 \sup_{t\ge0}\mathcal{E
 }^{N,J}(f^\epsilon(t))+\int_0^\infty \mathcal{D}^{N,J}(f^\epsilon(\tau))d\tau   \leq C(\mathcal{E
 }^{N,J}(f_0)). \een
\end{enumerate}
Here $C(\cdot)$ is a continuous increasing function verifying $C(0)=0$.
\item{\bf (Global dynamics)} There are two results.
\begin{enumerate}
\item[(i)] If $f_0\in H_x^2L^2_{-p\gamma/2}$ with $-p\gamma/2\ge2$. Set $\mathcal{C}_I:= \|f_0\|_{H^2_xL^2}^{-\f2p}
|f_0|_{H_x^2 L^2_{-\gamma p/2}}^{\f2p}\epsilon^{2s}$, then for $t\ge 0$,
\begin{enumerate} \item[(a)]
if $\mathcal{C}_I\ll 1$, then there exists a universal constant  $c>0$ and a critical time $t_*=O(p(-\ln \mathcal{C}_I ))$ such that
\ben\label{decay-uniform-formula1} \|f^\epsilon(t)\|^2_{H^2_xL^2}\lesssim \|f_0\|_{H^2_xL^2}^2\big(e^{-ct}\mathrm{1}_{t\le t_* }+C(p)\mathcal{C}_I^p(1+t)^{-p}\mathrm{1}_{t\ge t_*}\big);\een
\item[(b)]
 if $\mathcal{C}_I\sim 1$, then
\ben\label{decay-uniform-formula2} \|f^\epsilon(t)\|^2_{H^2_xL^2}
\lesssim  C(p)\|f_0\|_{H^2_x L^2_{-\gamma p/2}}^2\epsilon^{2sp}(1+t)^{-p}.\een
\end{enumerate}
\item[(ii)] Let $0<\eta<1$. If $2^{j\gamma}\ll \epsilon^{2s}$, then for $t\in [0,  C^{-1} \eta 2^{-j\gamma}\epsilon^{2s}]$ where $C$ is a universal constant, we have
\ben\label{localizedenergy} \|\mathcal{P}_jf^\epsilon(t)\|^2_{L^2}\ge \|\mathcal{P}_jf_0\|^2_{L^2}-\eta \delta_{0}-C\epsilon^{2s}\delta_{0}. \een
\end{enumerate}

\item{\bf (Global asymptotic formula for the limit process)}
If $ \mathcal{E}^{N+2,2}(f_0)<\infty$ with $N \ge 2$, then
\ben \label{error-function-uniform-estimate} \sup_{t \geq 0} \|f^{\epsilon}(t)-f^{0}(t)\|^{2}_{H^{N}_{x}L^{2}} \leq C(\mathcal{E}^{N+2,2}(f_0)) \epsilon^{4-4s}. \een
\end{enumerate}
\end{thm}
Some comments are in order:
\begin{rmk} Theorem \ref{main3} gives global well-posedness of \eqref{linearizedBE} for all $0 \leq \epsilon \leq \epsilon_{0}$. The non-cutoff case ($\epsilon=0$) is established in \cite{amuxy4} and \cite{gs1}, and the cutoff case (somehow can be considered as $\epsilon=\epsilon_{0}$) is finished in \cite{Guo1}. We get global well-posedness of \eqref{linearizedBE} uniformly in the whole range $0 \leq \epsilon \leq \epsilon_{0}$.
\end{rmk}

\begin{rmk} Estimates \eqref{decay-uniform-formula1}-\eqref{localizedenergy} show that the diversity of semi-group $e^{-\mathcal{L}^{\epsilon}t}$ in Theorem \ref{main2}
can also be observed  in the non-linear level. In other words, even in the perturbation framework,  the original solution $F$ of the Boltzmann equation converges to the equilibrium without any explicit rate. That is, we can only derive
$$\lim_{t\rightarrow\infty}\|\mu^{-\frac{1}{2}}(F(t)-\mu)\|_{L^2}=0.$$
We have two comments on this phenomenon. Firstly, if we go back to original equation, by energy-entropy method introduced in \cite{He-Jiang}, it holds that
\beno \lim_{t\rightarrow\infty}\|F(t)-\mu\|_{L^2}=O(t^{-\infty}). \eeno  Secondly, it is very interesting to ask what is the impact of such convergence on the hydrodynamic limit for the soft potentials.
\end{rmk}

\begin{rmk}  To our best knowledge, the results in Theorem \ref{main3} are new for the moderate soft potentials. To keep the paper in a reasonable size, we refrain from generalizing the results to the other potentials, which can be done by noticing that all the estimates involving $\mathcal{L}^\epsilon$ and $\Gamma^\epsilon$ in this article are valid for $\gamma>-3$. Using the estimates in this article, the very soft potentials $-3<\gamma < -2s$ are considered in \cite{He-Yao-Zhou}.
\end{rmk}

\subsection{Ideas and novelties} Let us illustrate  the ideas and novelties of the proof to our main theorems.

\subsubsection{Proof of Theorem \ref{main1}} We illustrate our strategy under the Maxwellian molecules case $\gamma=0$. It is not difficult to see (in the proof of Theorem \ref{equivalencenorm}) that the coercivity estimate of $\langle \mathcal{L}^{\epsilon}f, f\rangle_v$ can be reduced to the control of quantities $\mathcal{M}^{\epsilon,0}(f)$ and $\mathcal{R}_\mu^{\epsilon,0}(f)$ which correspond to gain of weight and  gain of regularity respectively.

\noindent $\bullet$ Instead of using Carleman representation of the collision operator, in Lemma \ref{lowerboundpart1}
we introduce a new coordinate system which enables us to make full use of the cancellation and the law of sines to estimate $\mathcal{M}^{\epsilon,0}(f)$. The method is elementary but effective to catch the hyperbolic structure of $\mathcal{L}^\epsilon$ uniformly in $\epsilon$.

\noindent$\bullet$ To give a precise description of $\mathcal{R}_\mu^{\epsilon,0}(f)$, we develop some new techniques. The first new idea is to apply the geometric decomposition to $\mathcal{R}_\mu^{\epsilon,0}(f)$ in the frequency space rather than phase space. More precisely,
by Bobylev's equality, we have
\beno
\mathcal{R}_\mu^{\epsilon,0}(f) &=& \frac{1}{(2\pi)^{3}} \int_{\mathbb{R}^3 \times  \mathbb{S}^2} b^{\epsilon}(\frac{\xi}{|\xi|} \cdot \sigma)(\hat{\mu}(0)|\hat{f}(\xi) - \hat{f}(\xi^{+})|^{2} + 2\Re((\hat{\mu}(0) - \hat{\mu}(\xi^{-}))\hat{f}(\xi^{+})\bar{\hat{f}}(\xi)) d\xi  d\sigma
\\ &:=& \frac{\hat{\mu}(0)}{(2\pi)^{3}}\mathcal{I}_{1} + \frac{2}{(2\pi)^{3}}\mathcal{I}_{2},
\eeno
where $\xi^{+} = \frac{\xi+|\xi|\sigma}{2}$ and $\xi^{-} = \frac{\xi-|\xi|\sigma}{2}$. It is not difficult to prove that
\beno
|\mathcal{I}_{2}| \lesssim |W^{\epsilon}(D)f|^{2}_{L^{2}}\lesssim \langle\mathcal{L}^{\epsilon} f,f \rangle_v+|f|_{L^2}^2,
\eeno
where the latter $\lesssim$ is given by \eqref{sobolevregu}. Therefore
we only need to consider the estimate of $\mathcal{I}_{1}$. By the geometric decomposition introduced in \cite{he2},
\beno
\hat{f}(\xi) - \hat{f}(\xi^{+}) = \hat{f}(\xi) - \hat{f}(|\xi|\frac{\xi^{+}}{|\xi^{+}|})+ \hat{f}(|\xi|\frac{\xi^{+}}{|\xi^{+}|}) - \hat{f}(\xi^{+}),
\eeno
we have
\beno
\mathcal{I}_{1} &=& \int_{\mathbb{R}^3 \times  \mathbb{S}^2} b^{\epsilon}(\frac{\xi}{|\xi|} \cdot \sigma)|\hat{f}(\xi) - \hat{f}(\xi^{+})|^{2} d\xi d\sigma
\\&\geq& \frac{1}{2} \int_{\mathbb{R}^3 \times  \mathbb{S}^2} b^{\epsilon}(\frac{\xi}{|\xi|} \cdot \sigma)|\hat{f}(\xi) - \hat{f}(|\xi|\frac{\xi^{+}}{|\xi^{+}|})|^{2} d\xi d\sigma
- \int_{\mathbb{R}^3 \times  \mathbb{S}^2} b^{\epsilon}(\frac{\xi}{|\xi|} \cdot \sigma)|\hat{f}(|\xi|\frac{\xi^{+}}{|\xi^{+}|}) - \hat{f}(\xi^{+})|^{2} d\xi d\sigma
\\&:=& \frac{1}{2}\mathcal{I}_{1,1} - \mathcal{I}_{1,2}.
\eeno
Thanks to the fact that Fourier transform is commutative with $W^\epsilon((-\triangle_{\mathbb{S}^2})^{\f12})$, we obtain the anisotropic regularity from  $\mathcal{I}_{1,1}$ (see Proposition \ref{key-anisotropic-result} and Lemma \ref{lowerboundpart2} for details). Now we only need to give the upper the  bound for  $\mathcal{I}_{1,2}$. Our key observation lies in that  $\hat{f}(\frac{\xi^{+}}{|\xi^{+}|})$ and $ \hat{f}(\xi^{+})$ can be localized in the same region both in frequency and phase space, which enables us to derive that  $\mathcal{I}_{1,2}$ can be bounded by $|W^{\epsilon}(D)f|^{2}_{L^{2}} + |W^{\epsilon}f|^{2}_{L^{2}}.$

To get the $\lesssim$ side of Theorem \ref{main1}, we have to give upper bound for $\langle Q^\epsilon (g,h),f\rangle_v$ and $\langle \Gamma^\epsilon (g,h),f\rangle_v$. To this end,  our new idea is to separate the integration domain into two regions, $|v-v_*|\le 1$ and $|v-v_*|\ge1$, to manifest the hyperbolic structure and the smoothing property of the operator.

(i). In the region $|v-v_*|\le 1$, the hyperbolic structure  prevails over
the anisotropic structure, which can be checked from the proof of the sharp bounds for the operator in weighted Sobolev spaces(see \cite{he2} for details). It suggests that we can use Sobolev regularity to give the upper bounds for the operator. See Proposition \ref{ubqepsilonsingular} for more details.

(ii). In the region  $|v-v_*|\ge 1$, the operator is dominated by the anisotropic structure. We resort to the geometric decomposition in the phase space to give the corresponding upper bounds. In particular, we make full use of the symmetric property of the structure inside the operator and also the dissipation $\mathcal{R}^{\epsilon,\gamma}_{*,g}(f)$ obtained from the lower bound of the operator. See Proposition \ref{ubqepsilonnonsingular} for more details.

\subsubsection{The proof of Theorem \ref{main2}} We have two novelties in the proof.

\noindent$\bullet$  The first one lies in the localization techniques in the phase space which are totally new and important considering that the Boltzmann equation is a non-local equation. It shows    that the linear or even non-linear Boltzmann equations can be almost localized  thanks to the commutator estimates (in Lemma \ref{CommSemi}) between $\mathcal{L}^\epsilon$ and the localization function.  This fact enables us to consider the evolution of the local energy which is the key to prove diversity of
  longtime behavior of  $e^{-\mathcal{L}^{\epsilon}t}f$.

\noindent$\bullet$ We reduce longtime behavior of  $e^{-\mathcal{L}^{\epsilon}t}f$ to some special ODE system. Based on a technical argument, we obtain a sharp estimate (in Proposition \ref{propODE}) for the ODE system which in turn gives the precise behavior of the semi-group.  The result shows that there exists a critical time $t_*$ such that the decay rate is totally different  before and after $t_*$ which matches the complex property of $\mathcal{L}^\epsilon$.

\subsubsection{Proof of Theorem \ref{main3}} The proof has some new features.

\noindent$\bullet$ Since we only impose the smallness assumption on $\|f\|_{H^2_xL^2}$, we have to find a new way to
  prove propagation of full regularity(\eqref{propagation-h-n-l-2-l} and \eqref{propagation-h-n-h-m-l}). To this end, we first close energy estimates for pure spatial regularity. Then the desired result is reduced to prove that if we have the control of $\dot{\mathcal{E}}^{N-j,j}(f)$ with $j\le N$, then by the equation we can get  the control of $\dot{\mathcal{E}}^{N-j-1,j+1}(f)$ thanks to the well-designed weight functions in \eqref{AsuWf}.

\noindent$\bullet$  To prove the global error estimate \eqref{error-function-uniform-estimate}, the key idea is to regard the error equation as a linear equation since we already have high order energy estimates \eqref{propagation-h-n-h-m-l} of the solutions to \eqref{linearizedBE} and \eqref{linearizedNBE}.

\subsection{Plan of the article} In Section 2, we endeavor to prove Theorem \ref{main1} and some upper bounds for the nonlinear term $\Gamma^\epsilon$.  Theorem \ref{main2} and Theorem \ref{main3} are proved in  Section 3 and Section 4 respectively. In the appendix, we give some necessary results for the sake of completeness.

\section{Bounds of the linearzied Boltzmann operator and the nonlinear term}
In this section, we will prove Theorem \ref{main1}. To this end, we separate the proof into two parts: lower bound of $\langle \mathcal{L}^\epsilon f, f\rangle_v$ and upper bound of $\langle \Gamma^\epsilon (g,h), f\rangle_v$. Moreover,  we give an estimate of the commutator between the collision operator $\Gamma^\epsilon (g,\cdot)$ and the weight function $W_{l}$ which are crucial to Theorem \ref{main2} and Theorem \ref{main3}.

Throughout this section, we assume that $0 \leq \epsilon \leq \epsilon_0$ with $\epsilon_0>0$ sufficiently small. Since many variables are
frequently used, we will sometimes omit their range in integrals.
Usually, $\sigma, \tau, \varsigma \in \mathbb{S}^{2}, v, v_{*}, u, \xi \in \mathbb{R}^{3}, \kappa \in [0,1], r \in \mathbb{R}_{+}$.  For instance, $\int (\cdots) d\sigma := \int_{\mathbb{S}^{2}} (\cdots) d\sigma,  \int (\cdots) d\sigma dv dv_{*} := \int_{\mathbb{S}^{2} \times \mathbb{R}^{3} \times \mathbb{R}^{3}} (\cdots) d\sigma dv dv_{*}$.

\subsection{Lower bound of the linearzied operator} Our strategy of the proof can be summarized as follows. We first give the  estimates of $\mathcal{R}_g^{\epsilon,\gamma}(f) $  and $ \mathcal{M}^{\epsilon,\gamma}(f)$.
 Then the lower bound of $\mathcal{L}^\epsilon$ is obtained by proving $\langle \mathcal{L}^{\epsilon}_{1}f, f\rangle_v + |f|^{2}_{L^{2}_{\gamma/2}} \gtrsim  \mathcal{R}_{\mu}^{\epsilon,\gamma}(f) + \mathcal{M}^{\epsilon,\gamma}(f)$ and $\langle \mathcal{L}^{\epsilon}_{2}f, f\rangle_v$ is a lower order term.

\subsubsection{Estimate of $\mathcal{M}^{\epsilon,\gamma}(f) $} Now we state a lemma on the functional  $\mathcal{M}^{\epsilon,\gamma}(f) $.
\begin{prop}\label{lowerboundpart1} There exists $\epsilon_{0} >0$ such that for  $0 \leq \epsilon \leq \epsilon_{0}$,
\beno
\mathcal{M}^{\epsilon,\gamma}(f) + |f|^{2}_{L^{2}_{\gamma/2}} \sim |W^{\epsilon}f|^{2}_{L^{2}_{\gamma/2}}.
\eeno
\end{prop}
\begin{proof} We only consider the case $\epsilon>0$. Note that with slight modification, our method also works for the case $\epsilon=0$. We divide the proof into two steps.

{\it Step 1: The lower bound of $\mathcal{M}^{\epsilon,\gamma}(f)$.} Note that $\nabla \mu^{\frac{1}{2}} = -\frac{\mu^{\frac{1}{2}}}{2} v$ and $\nabla^{2} \mu^{\frac{1}{2}} = \frac{\mu^{\frac{1}{2}}}{4} (-2I_{3}+v \otimes v)$ where $I_{3}$ is the $3 \times 3$ identity matrix. By Taylor expansion, we have
\beno
\mu^{\frac{1}{2}}(v^{\prime}) - \mu^{\frac{1}{2}}(v) = -\frac{\mu^{\frac{1}{2}}(v)}{2} v \cdot (v^{\prime}-v) + \int_{0}^{1} (1-\kappa) (\nabla^{2} \mu^{\frac{1}{2}}) (v(\kappa)):(v^{\prime}-v)\otimes(v^{\prime}-v) d \kappa,
\eeno
where $v(\kappa) = v+\kappa (v^{\prime}-v)$.
Using the inequality $(a-b)^{2} \geq \frac{a^{2}}{2} - b^{2}$, we have
\beno
(\mu^{\frac{1}{2}}(v^{\prime}) - \mu^{\frac{1}{2}}(v))^{2} \geq \frac{\mu(v)}{8} |v \cdot (v^{\prime}-v)|^{2} - \int_{0}^{1}  |(\nabla^{2} \mu^{\frac{1}{2}}) (v(\kappa))|^{2}|v^{\prime}-v|^{4} d \kappa.
\eeno

{\it Step 1.1: $|v_{*}| \leq \eta / \epsilon$.} Here $0 <\eta <1$ is a constant to be determined later.
Set $r = 4 \sqrt{2}$ and $A(\epsilon, \eta, r) = \{(v_{*},v,\sigma): 2r \leq |v_{*}| \leq \eta/\epsilon, |v| \leq r, 2\eta|v-v_{*}|^{-1} \leq \sin(\theta/2) \leq 4\eta|v-v_{*}|^{-1}\}$. Recall $\mathcal{M}^{\epsilon,\gamma}(f)$ defined in \eqref{defintion-M}. For simplicity, let $B^{\epsilon,\gamma}:= b^{\epsilon}(\cos\theta)|v-v_{*}|^{\gamma}$, then
\begin{eqnarray}\label{vsmallvstarsmall}
\mathcal{M}^{\epsilon,\gamma}(f) &\geq& \int B^{\epsilon,\gamma} \mathrm{1}_{A(\epsilon, \eta, r)} f_{*}^{2} ((\mu^{\frac{1}{2}})^{\prime}-\mu^{\frac{1}{2}})^{2} d\sigma dv dv_{*}
 \nonumber \\&\geq& \frac{1}{8}\int B^{\epsilon,\gamma} \mathrm{1}_{A(\epsilon, \eta, r)} \mu(v)|v \cdot (v^{\prime}-v)|^{2} f_{*}^{2}  d\sigma dv dv_{*} \nonumber
\\&& -  \int B^{\epsilon,\gamma} \mathrm{1}_{A(\epsilon, \eta, r)} |(\nabla^{2} \mu^{\frac{1}{2}}) (v(\kappa))|^{2}|v^{\prime}-v|^{4} f_{*}^{2}  d\sigma dv dv_{*} d\kappa \nonumber
\\&:=& \frac{1}{8}\mathcal{M}_{1}^{\epsilon,\gamma} (\eta) - \mathcal{M}_{2}^{\epsilon,\gamma} (\eta).
\end{eqnarray}

{\underline{The estimate of $\mathcal{M}_{1}^{\epsilon,\gamma} (\eta)$.}} For fixed $v, v_*$, we introduce an orthonormal basis $(h^{1}_{v,v_{*}},h^{2}_{v,v_{*}}, \frac{v-v_{*}}{|v-v_{*}|})$ such that $d\sigma= \sin\theta d\theta d\varphi$. Then one has
\beno
\frac{v^{\prime}-v}{|v^{\prime}-v|} = \cos\frac{\theta}{2}\cos\varphi h^{1}_{v,v_{*}} + \cos\frac{\theta}{2}\sin\varphi h^{2}_{v,v_{*}} -\sin\frac{\theta}{2} \frac{v-v_{*}}{|v-v_{*}|},
\\
\frac{v}{|v|} = c_{1} h^{1}_{v,v_{*}} + c_{2} h^{2}_{v,v_{*}} + c_{3} \frac{v-v_{*}}{|v-v_{*}|},
\eeno
where $c_{3}=\frac{v}{|v|}\cdot \frac{v-v_{*}}{|v-v_{*}|}$ and $c_{1}, c_{2}$ are  constants independent of $\theta$ and $\varphi$. Then we have
\beno
|\frac{v}{|v|} \cdot \frac{v^{\prime}-v}{|v^{\prime}-v|}|^{2}  &=& |c_{1}\cos\frac{\theta}{2}\cos\varphi + c_{2}\cos\frac{\theta}{2}\sin\varphi - c_{3}\sin\frac{\theta}{2}|^{2},
\\&=& c^{2}_{1}\cos^{2}\frac{\theta}{2}\cos^{2}\varphi + c^{2}_{2}\cos^{2}\frac{\theta}{2}\sin^{2}\varphi + c^{2}_{3}\sin^{2}\frac{\theta}{2}
\\ && + 2c_{1}c_{2}\cos^{2}\frac{\theta}{2}\cos\varphi\sin\varphi - 2c_{3}\cos\frac{\theta}{2}\sin\frac{\theta}{2}(c_{1}\cos\varphi + c_{2}\sin\varphi).
\eeno
Integrating with respect to $\sigma$, we have
\ben \nonumber
&& \int b^{\epsilon}(\cos\theta)\mathrm{1}_{A(\epsilon, \eta, r)}|v \cdot (v^{\prime}-v)|^{2}d\sigma
\\ \nonumber &=& \int_{0}^{\pi/2}\int_{0}^{2\pi}b^{\epsilon}(\cos\theta)\sin\theta \mathrm{1}_{A(\epsilon, \eta, r)}|v \cdot (v^{\prime}-v)|^{2} d\theta d\varphi
\\ &\geq& \pi(c^{2}_{1}+c^{2}_{2})|v|^{2}|v-v_{*}|^{2}
\int_{0}^{\pi/2} b^{\epsilon}(\cos\theta)\sin\theta \cos^{2}\frac{\theta}{2} \sin^{2}\frac{\theta}{2}\mathrm{1}_{A(\epsilon, \eta, r)} d\theta, \label{sin-square}
\een
If $(v_{*},v,\sigma) \in A(\epsilon, \eta, r)$, then $|v-v_{*}| \geq |v_{*}|-|v| \geq r$ and thus $4\eta|v-v_{*}|^{-1} \leq 4 r^{-1} \leq \sqrt{2}/{2}$. Suppose $\epsilon \leq \eta/2r$, then
$|v-v_{*}| \leq |v|+|v_{*}| \leq r + \eta/\epsilon \leq 3\eta / 2\epsilon$ and thus $2\eta|v-v_{*}|^{-1} \geq 4\epsilon/3$. Recall $\phi$ in \eqref{function-phi-psi} and $b^{\epsilon}$ in \eqref{cutoff-kernel-def} to see
\ben \label{b-epsilon-is-b-when-theta-large}
b^{\epsilon}(\cos\theta) \mathrm{1}_{4\epsilon/3 \leq \sin(\theta/2) \leq \sqrt{2}/{2}} = b(\cos\theta) \mathrm{1}_{4\epsilon/3 \leq \sin(\theta/2) \leq \sqrt{2}/{2}},
\een
which gives
\ben \nonumber
&&\int_{0}^{\pi/2} b^{\epsilon}(\cos\theta)\sin\theta \cos^{2}\frac{\theta}{2} \sin^{2}\frac{\theta}{2}\mathrm{1}_{A(\epsilon, \eta, r)}d\theta
\\&=& \int_{0}^{\pi/2} b(\cos\theta)\sin\theta \cos^{2}\frac{\theta}{2} \sin^{2}\frac{\theta}{2}\mathrm{1}_{A(\epsilon, \eta, r)}d\theta
\nonumber \\&\gtrsim& \mathrm{1}_{B(\epsilon, \eta, r)} \int_{2\eta|v-v_{*}|^{-1}}^{4\eta|v-v_{*}|^{-1}} t^{1-2s}dt
 \gtrsim \eta^{2-2s}|v-v_{*}|^{2s-2}\mathrm{1}_{B(\epsilon, \eta, r)}, \label{integrate-the-angular-function}
\een
where we use \eqref{angular-lower-upper-bound} and the change of variable $t = \sin\frac{\theta}{2}$. Here $B(\epsilon, \eta, r) = \{(v_{*},v): 2r \leq |v_{*}| \leq \eta/\epsilon, |v| \leq r\}$.
Plugging \eqref{sin-square} and \eqref{integrate-the-angular-function} into the definition of $\mathcal{M}_{1}^{\epsilon,\gamma} (\eta)$, we have
\beno
\mathcal{M}_{1}^{\epsilon,\gamma} (\eta) &\gtrsim& \eta^{2-2s} \int (c^{2}_{1}+c^{2}_{2})|v-v_{*}|^{\gamma+2s}|v|^{2}\mathrm{1}_{B(\epsilon, \eta, r)} \mu(v) f_{*}^{2}dv dv_{*}
\\&=& \eta^{2-2s} \int (1-(\frac{v}{|v|}\cdot\frac{v_{*}}{|v_{*}|})^{2})|v_{*}|^{2}|v-v_{*}|^{\gamma+2s-2}|v|^{2}\mathrm{1}_{B(\epsilon, \eta, r)} \mu(v) f_{*}^{2}dv dv_{*},
\eeno
where in the last line we use the fact $c_1^2+c_2^2+c_3^2=1$ and the law of sines $$(1-(\frac{v}{|v|}\cdot\frac{v_{*}}{|v_{*}|})^{2})^{-1}|v-v_{*}|^{2}= (1-c_3^2)^{-1}|v_{*}|^{2}.$$
Note that in the region $B(\epsilon, \eta, r)$, one has $|v-v_{*}| \sim \frac{3}{2}|v_{*}|$ and thus $|v-v_{*}|^{\gamma+2s-2} \sim |v_{*}|^{\gamma+2s-2}$. Recall $r=4 \sqrt{2}$, denote $c_{1} = \int (1-(\frac{v}{|v|}\cdot\frac{v_{*}}{|v_{*}|})^{2}) |v|^{2} \mu(v) \mathrm{1}_{|v|\leq 4 \sqrt{2}}dv$ and observe that the value of the integral is independent of $v_{*}$. Therefore we have
\ben
\mathcal{M}_{1}^{\epsilon,\gamma} (\eta, r) &\gtrsim&  \eta^{2-2s} c_{1} \int |v_{*}|^{\gamma+2s} \mathrm{1}_{2r \leq |v_{*}|\leq \eta/\epsilon}  f_{*}^{2} dv_{*}
\nonumber \\ \label{M-1-lower-bound}
&\geq& \eta^{2-2s} c_{1} (\int \langle v_{*} \rangle^{\gamma+2s}\mathrm{1}_{|v_{*}|\leq \eta/\epsilon}  f_{*}^{2} dv_{*}- (8\sqrt{2})^{2s}|f|^{2}_{L^{2}_{\gamma/2}}) .
\een

{\underline{The estimate of $\mathcal{M}_{2}^{\epsilon,\gamma} (\eta)$.}} By the change of variable $v \rightarrow v(\kappa)$,
let $\cos \theta(\kappa) =\f{v(\kappa)-v_*}{|v(\kappa)-v_*|}\cdot \sigma$, then $\theta/2 \leq \theta(\kappa) \leq \theta$ and
it is not difficult to check
\ben
\mathcal{M}_{2}^{\epsilon,\gamma} (\eta)
&\lesssim&  \int (\int_{\eta|v-v_{*}|^{-1}}^{8\eta|v-v_{*}|^{-1}} t^{3-2s} dt) |v-v_{*}|^{\gamma+4} \mathrm{1}_{|v_{*}|\leq \eta/\epsilon} f_{*}^{2} \mu^{\frac{1}{2}} dv  dv_{*}
\nonumber \\ \label{M-2-upper-bound}
&\lesssim& \eta^{4-2s} \int \langle v_{*} \rangle^{\gamma+2s} \mathrm{1}_{|v_{*}|\leq \eta/\epsilon}  f_{*}^{2} dv_{*}.
\een

Plugging \eqref{M-1-lower-bound} and \eqref{M-2-upper-bound} into \eqref{vsmallvstarsmall}, for some universal constant $C\geq 1$, we have
\beno
\mathcal{M}^{\epsilon,\gamma}(f) \gtrsim \eta^{2-2s} (1-C\eta^{2}) \int \langle v_{*} \rangle^{\gamma+2s}\mathrm{1}_{|v_{*}|\leq \eta/\epsilon}  f_{*}^{2} dv_{*} - C\eta^{2-2s}|f|^{2}_{L^{2}_{\gamma/2}}.
\eeno
Choosing $\eta$ such that $C\eta^{2} = 1/2$, we have
\begin{eqnarray}\label{lowerboundvstarsmall}
\mathcal{M}^{\epsilon,\gamma}(f)+ |f|^{2}_{L^{2}_{\gamma/2}} \gtrsim  \int \langle v_{*} \rangle^{\gamma+2s}\mathrm{1}_{|v_{*}|\leq \eta/\epsilon}  f_{*}^{2} dv_{*}.
\end{eqnarray}

{\it Step 1.2: $|v_{*}| \geq R / \epsilon$.} Here $R \geq 1$.
By direct computation, we have
\beno
\mathcal{M}^{\epsilon,\gamma}(f) &=& \int B^{\epsilon,\gamma} f_{*}^{2} ((\mu^{\frac{1}{2}})^{\prime}-\mu^{\frac{1}{2}})^{2} d\sigma dv dv_{*}
\\&\geq&  \int b^{\epsilon}|v-v_{*}|^{\gamma}\mathrm{1}_{|v_{*}|\geq R/\epsilon} f_{*}^{2} \mu d\sigma dv dv_{*}
- 2\int b^{\epsilon}|v-v_{*}|^{\gamma}\mathrm{1}_{|v_{*}|\geq R/\epsilon} f_{*}^{2} (\mu^{\frac{1}{2}})^{\prime}\mu^{\frac{1}{2}} d\sigma dv dv_{*}
\\&:=&\mathcal{M}_{1}^{\epsilon,\gamma,R}-\mathcal{M}_{2}^{\epsilon,\gamma,R}.
\eeno
Recalling \eqref{b-epsilon-is-b-when-theta-large} and \eqref{angular-lower-upper-bound}, using the change of variable $t = \sin\frac{\theta}{2}$, we have
\beno
\mathcal{M}_{1}^{\epsilon,\gamma,R} \gtrsim \int_{4\epsilon/3}^{\sqrt{2}/2} t^{-1-2s} dt \int |v-v_{*}|^{\gamma}\mathrm{1}_{|v_{*}| \geq R/\epsilon}  f_{*}^{2} \mu dv  dv_{*}
\gtrsim \epsilon^{-2s}\int \langle v_{*} \rangle^{\gamma}\mathrm{1}_{|v_{*}| \geq R/\epsilon} f_{*}^{2}dv_{*},
\eeno
where we use and $\int |v-v_{*}|^{\gamma} \mu dv \gtrsim \langle v_{*} \rangle^{\gamma}$ and
$\int_{4\epsilon/3}^{\sqrt{2}/2} t^{-1-2s} dt \gtrsim \epsilon^{-2s}$ when $\epsilon \leq \frac{1}{10}$.
Recalling the support of $b^{\epsilon}$ belongs to $\sin \frac{\theta}{2} \geq \frac{3}{4}\epsilon$,
there holds $|v^{\prime}|+|v| \geq |v^{\prime}-v| = \sin\frac{\theta}{2}|v-v_{*}|\geq \frac{3}{4}\epsilon|v-v_{*}|\geq \frac{3}{4}\epsilon(|v_{*}|-|v|)$ and thus $|v^{\prime}|+(1+\frac{3}{4}\epsilon)|v| \geq \frac{3}{4}\epsilon|v_{*}| \geq \frac{3}{4} R$. Then
$R^{2}/2 \leq  4(|v^{\prime}|+|v|)^{2} \leq 8 (|v^{\prime}|^{2}+|v|^{2})$ and so
\beno
(\mu^{\frac{1}{2}})^{\prime}\mu^{\frac{1}{2}} = (2 \pi)^{-3/2} e^{-\frac{|v^{\prime}|^{2}+|v|^{2}}{4}} \lesssim e^{-\frac{|v|^{2}}{8}}e^{-\frac{R^{2}}{2^{7}}}.
\eeno
Thus we have
\beno
\mathcal{M}_{2}^{\epsilon,\gamma,R} \lesssim e^{-\frac{R^{2}}{2^{7}}} \epsilon^{-2s}\int \langle v_{*} \rangle^{\gamma}\mathrm{1}_{|v_{*}| \geq R/\epsilon} f_{*}^{2}dv_{*}.
\eeno
Patching together the above estimates of $\mathcal{M}_{1}^{\epsilon,\gamma,R}$ and $\mathcal{M}_{2}^{\epsilon,\gamma,R}$, we arrive at
\ben \label{v-is-large}
\mathcal{M}^{\epsilon,\gamma}(f) \geq  (C_{1} - C_{2}e^{-\frac{R^{2}}{2^{7}}})\epsilon^{-2s}\int \langle v_{*} \rangle^{\gamma}\mathrm{1}_{|v_{*}|\geq R/\epsilon}  f_{*}^{2} dv_{*},
\een
for some universal constants $C_{1}$ and $C_{2}$.

{\it Step 1.3: $|v_{*}| \geq \eta / \epsilon$.} Here $\eta$ is the fixed constant in {\it Step 1.1}.
Note that the estimate \eqref{v-is-large} is valid for any $R \geq 1$ and $\epsilon \leq \frac{1}{10}$.
We choose $R=N\eta$, where $N \geq 1$ is large enough such that $C_{1}- C_{2}e^{-\frac{(N\eta)^{2}}{2^{7}}} \geq \frac{C_{1}}{2}$. Then   by \eqref{v-is-large}, when $N\epsilon \leq \frac{1}{10}$, we have
\beno
\mathcal{M}^{\epsilon,\gamma}(f) \geq  \mathcal{M}^{N\epsilon,\gamma}(f) &\geq&  (C_{1}- C_{2}e^{-\frac{(N\eta)^{2}}{2^{7}}})(N\epsilon)^{-2s}\int \langle v_{*} \rangle^{\gamma}\mathrm{1}_{|v_{*}|\geq \eta/\epsilon}  f_{*}^{2} dv_{*}
\\&\geq& \frac{C_{1}}{2} N^{-2s}\epsilon^{-2s}\int \langle v_{*} \rangle^{\gamma}\mathrm{1}_{|v_{*}|\geq \eta/\epsilon}  f_{*}^{2} dv_{*}.
\eeno
From which together with \eqref{lowerboundvstarsmall}, taking $\epsilon_{0}:=\min\{ \frac{\eta}{8\sqrt{2}}, \frac{1}{10N}\}$, when $\epsilon \leq \epsilon_{0}$, we arrive at
\beno
\mathcal{M}^{\epsilon,\gamma}(f) + |f|^{2}_{L^{2}_{\gamma/2}} \gtrsim  \int \langle v_{*} \rangle^{\gamma+2s}\mathrm{1}_{|v_{*}|\leq \eta/\epsilon}  f_{*}^{2} dv_{*}+\epsilon^{-2s}\int \langle v_{*} \rangle^{\gamma}\mathrm{1}_{|v_{*}|\geq \eta/\epsilon}  f_{*}^{2} dv_{*}
\gtrsim |W^{\epsilon}f|^{2}_{L^{2}_{\gamma/2}}.
\eeno

{\it Step 2: The upper bound of $\mathcal{M}^{\epsilon,\gamma}(f)$.} Since $((\mu^{\frac{1}{2}})^{\prime}-\mu^{\frac{1}{2}})^{2} \leq 2 ((\mu^{\frac{1}{4}})^{\prime}-\mu^{\frac{1}{4}})^{2}((\mu^{\frac{1}{2}})^{\prime}+\mu^{\frac{1}{2}})$,
we have
\beno
\mathcal{M}^{\epsilon,\gamma}(f) &\lesssim& \int  B^{\epsilon,\gamma} f_{*}^{2} ((\mu^{\frac{1}{4}})^{\prime}-\mu^{\frac{1}{4}})^{2}(\mu^{\frac{1}{2}})^{\prime} d\sigma dv dv_{*} + \int  B^{\epsilon,\gamma} f_{*}^{2} ((\mu^{\frac{1}{4}})^{\prime}-\mu^{\frac{1}{4}})^{2}\mu^{\frac{1}{2}} d\sigma dv dv_{*}
\\&:=& \mathcal{M}^{\epsilon,\gamma}_{1}(f) + \mathcal{M}^{\epsilon,\gamma}_{2}(f).
\eeno
By Taylor expansion, one has
$((\mu^{\frac{1}{4}})^{\prime} - \mu^{\frac{1}{4}})^{2} \lesssim \min\{1,|v-v_{*}|^{2}\sin^{2}\frac{\theta}{2}\} \sim \min\{1,|v^{\prime}-v_{*}|^{2}\sin^{2}\frac{\theta}{2}\}.$
By Proposition \ref{symbol}, we have
$
\int b^{\epsilon}(\cos\theta) \min\{1,|v-v_{*}|^{2}\sin^{2}\frac{\theta}{2}\} d\sigma \lesssim (W^{\epsilon})^2(v-v_*).
$
After checking
\begin{eqnarray}\label{combinewithgamma}
 (W^{\epsilon})^2(v-v_*)   \lesssim (W^{\epsilon})^{2}(v)(W^{\epsilon})^{2}(v_{*}),
\end{eqnarray}
we have
$
\int b^{\epsilon}(\cos\theta) \min\{1,|v-v_{*}|^{2}\sin^{2}\frac{\theta}{2}\} d\sigma \lesssim  (W^{\epsilon})^{2}(v)(W^{\epsilon})^{2}(v_{*}).
$
Thus we have
\beno
\mathcal{M}^{\epsilon,\gamma}_{2}(f) \lesssim \int f_{*}^{2} |v-v_{*}|^{\gamma}(W^{\epsilon})^{2}(v)(W^{\epsilon})^{2}(v_{*})  \mu^{\frac{1}{2}} dv dv_{*} \lesssim  |W^\epsilon f|_{L^2_{\gamma/2}}^2,
\eeno
where we use the following estimate(see \cite{amuxy4}). For $\gamma>-3, a >0$, there holds
\ben \label{mu-cancel-sigularity}
\int |v-v_{*}|^{\gamma}  \mu^{a}(v) dv \leq C_{\gamma,a} \langle v_{*} \rangle^{\gamma}.
\een

The term $\mathcal{M}^{\epsilon,\gamma}_{1}(f)$ can be similarly estimated by the change of variable $v \rightarrow v^{\prime}$. Indeed, one has $\mathcal{M}^{\epsilon,\gamma}_{1}(f) \lesssim \int  b^{\epsilon}(\cos(2\theta^{\prime})) |v^{\prime}-v_{*}|^{\gamma} f_{*}^{2} ((\mu^{\frac{1}{4}})^{\prime}-\mu^{\frac{1}{4}})^{2}(\mu^{\frac{1}{2}})^{\prime} d\sigma dv^{\prime} dv_{*},$
where $\theta^{\prime}$ is the angle between $v^{\prime}-v_{*}$ and $\sigma$. With the fact $\theta^{\prime} = \theta/2$, we also have
\beno
\int b^{\epsilon}(\cos(2\theta^{\prime})) \min\{1,|v^{\prime}-v_{*}|^{2}\sin^{2}\frac{\theta}{2}\} d\sigma \lesssim  (W^{\epsilon})^{2}(v^{\prime})(W^{\epsilon})^{2}(v_{*}).
\eeno
Thus by exactly the same argument as that for $\mathcal{M}^{\epsilon,\gamma}_{2}(f)$, we have $\mathcal{M}^{\epsilon,\gamma}_{1}(f) \lesssim |W^\epsilon f|_{L^2_{\gamma/2}}^2$.
The proof is complete.
\end{proof}

\subsubsection{Estimate of $\mathcal{R}_\mu^{\epsilon,\gamma}(f)$ } We recall from  \cite{advw}
that  for $g\ge0$ with $|g|_{L^1}\ge\delta>0$ and $|g|_{L^1_1 \cap L\log L}\le \lambda$,
\beno \int b(\cos\theta)g_*(f'-f)^2d\sigma dv_*dv+|f|_{L^2}^2\ge C(\delta, \lambda)|a(D)f|_{L^2}^2, \eeno
where $a(\xi):= \int b(\f{\xi}{|\xi|}\cdot \sigma)\min\{ |\xi|^2\sin^2(\theta/2),1\} d\sigma + 1$.
Recalling \eqref{defintion-R-g} and by Proposition \ref{symbol}, we get
\begin{prop}\label{lowerboundpart1-general-g}
There holds
\beno
\mathcal{R}_\mu^{\epsilon,0}(f)+ |f|^{2}_{L^{2}} \gtrsim |W^{\epsilon}(D)f|^{2}_{L^{2}}.
\eeno
\end{prop}
Thus it remains to derive the anisotropic regularity from the lower bound of  $\mathcal{R}_\mu^{\epsilon,\gamma}(f)$.  To this end, we derive three technical lemmas.
 \begin{lem}\label{a-technical-lemma}
There holds
\beno \mathcal{A} := \int_{\R^{3}}\int_{\epsilon}^{\pi/4} \theta^{-1-2s}|f(v) - f(v/\cos\theta)|^{2} dv d\theta \lesssim |W^{\epsilon}(D)f|^{2}_{L^{2}} + |W^{\epsilon}f|^{2}_{L^{2}}.\eeno
\end{lem}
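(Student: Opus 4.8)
The plan is to reduce the estimate of $\mathcal{A}$ to a bound involving the multiplier $W^\epsilon$ on the Fourier side by first exploiting the scaling structure of the difference $f(v) - f(v/\cos\theta)$. Writing $\lambda = \lambda(\theta) = 1/\cos\theta \geq 1$, the inner difference is $f(v) - f(\lambda v)$, so I would first use the elementary telescoping/interpolation identity
\beno
f(v) - f(\lambda v) = -\int_1^\lambda \frac{d}{dt}\big(f(tv)\big)\,dt = -\int_1^\lambda v\cdot(\nabla f)(tv)\,dt,
\eeno
or, more conveniently for an $L^2$ estimate in the frequency variable, I would pass to the Fourier transform: since $\widehat{f(\lambda\cdot)}(\xi) = \lambda^{-3}\hat f(\xi/\lambda)$, Plancherel gives
\beno
\int_{\R^3} |f(v) - f(v/\cos\theta)|^2\,dv = \int_{\R^3} \big|\hat f(\xi) - (\cos\theta)^{3}\hat f(\xi\cos\theta)\big|^2\,d\xi.
\eeno
Then I would split $\hat f(\xi) - (\cos\theta)^3 \hat f(\xi\cos\theta) = \big(\hat f(\xi) - \hat f(\xi\cos\theta)\big) + (1-\cos^3\theta)\hat f(\xi\cos\theta)$ and treat the two pieces separately. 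The second piece contributes $\int_\epsilon^{\pi/4}\theta^{-1-2s}(1-\cos^3\theta)^2\,d\theta \int|\hat f(\xi\cos\theta)|^2 d\xi \lesssim \int_\epsilon^{\pi/4}\theta^{3-2s}\,d\theta\,\|f\|_{L^2}^2 \lesssim \|f\|_{L^2}^2$, which is dominated by $|W^\epsilon f|_{L^2}^2$ since $W^\epsilon \gtrsim 1$.

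For the main piece $\hat f(\xi) - \hat f(\xi\cos\theta)$ I would use the radial-difference estimate: writing $\xi = \rho\omega$ with $\rho = |\xi|$, $\omega\in\SS^2$, this is $\hat f(\rho\omega) - \hat f(\rho\cos\theta\,\omega)$, a difference along the radial ray. The key is that the angular integral $\int_\epsilon^{\pi/4}\theta^{-1-2s}|\hat f(\rho\omega) - \hat f(\rho\cos\theta\,\omega)|^2\,d\theta$ should be controlled, after the change of variable $\rho' = \rho\cos\theta$ (so $\rho - \rho' \sim \rho\theta^2/2$ for small $\theta$) and a one-dimensional fractional-Sobolev / Hardy-type argument, by the quantity $\int |\langle\cdot\rangle^{?}\partial_\rho \hat f|^2$ cut off appropriately, which in turn is exactly what $|W^\epsilon(D)f|_{L^2}^2$ captures: for $\rho \lesssim 1/\epsilon$ the angular singularity produces a gain of $s$ derivatives (so $\langle\xi\rangle^s$), while for $\rho \gtrsim 1/\epsilon$ the cutoff $\theta \geq \epsilon$ truncates the singular integral and one only gains the bounded factor $\epsilon^{-s}$ — precisely the two regimes in the definition of $W^\epsilon$ in \eqref{charicter function}. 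I expect to invoke Proposition \ref{symbol} here to convert the $\theta$-integral against $b^\epsilon$-type weights into the symbol $(W^\epsilon)^2$.

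The main obstacle, I expect, is the radial-difference step: controlling $\int_\epsilon^{\pi/4}\theta^{-1-2s}|\hat f(\rho\omega) - \hat f(\rho\cos\theta\,\omega)|^2\,d\theta$ cleanly by a derivative of $\hat f$ with the correct $\rho$-dependent weight and the correct $\epsilon$-truncation, uniformly in $\omega$, and then reassembling the three-dimensional integral. One has to be careful that the substitution $\rho \mapsto \rho\cos\theta$ is only a mild contraction and that near $\rho = 0$ there is no loss (the factor $\rho$ in $\rho - \rho' \sim \rho\theta^2$ helps here), and that for large $\rho$ the $\theta\geq\epsilon$ cutoff is genuinely used so that the bound degrades to $\epsilon^{-s}$ rather than blowing up. Once this one-dimensional estimate is in hand, integrating in $\rho$ and over $\omega\in\SS^2$, undoing Plancherel, and recognizing the resulting multiplier as $(W^\epsilon)^2$ (using that $\langle\xi\rangle^{2s}\phi(\epsilon\xi)^2 + \epsilon^{-2s}(1-\phi(\epsilon\xi))^2 \sim (W^\epsilon)^2(\xi)$) yields $\mathcal{A} \lesssim |W^\epsilon(D)f|_{L^2}^2 + |W^\epsilon f|_{L^2}^2$, completing the proof.
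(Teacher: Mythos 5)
Your first step — Plancherel plus the split of $\hat f(\xi)-\cos^3\theta\,\hat f(\xi\cos\theta)$ into $(\hat f(\xi)-\hat f(\xi\cos\theta))+(1-\cos^3\theta)\hat f(\xi\cos\theta)$ — is correct and is also the first move in the paper's proof. But by itself it is not a reduction: after a change of variable the quantity $\int_\epsilon^{\pi/4}\theta^{-1-2s}\int|\hat f(\xi)-\hat f(\xi\cos\theta)|^2\,d\xi\,d\theta$ is exactly $\mathcal{A}$ with $f$ replaced by $\hat f$, and the right-hand side $|W^\epsilon(D)f|^2+|W^\epsilon f|^2$ is self-dual under the Fourier transform ($|W^\epsilon(D)f|_{L^2}=|W^\epsilon\hat f|_{L^2}$ and $|W^\epsilon f|_{L^2}=|W^\epsilon(D)\hat f|_{L^2}$). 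So Plancherel alone returns you to the same lemma, and the real work has not yet started.

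The genuine gap is in the ``radial difference / one-dimensional fractional Sobolev'' step, and it is two-fold. First, you never decompose in \emph{physical} space, but this is where the $|W^\epsilon f|^2_{L^2}$ term actually comes from: the trivial bound on the $\theta$-integral gives $\epsilon^{-2s}\|f\|_{L^2}^2$, and to improve $\epsilon^{-2s}$ to $(W^\epsilon(v))^2\sim\min(\langle v\rangle^{2s},\epsilon^{-2s})$ you must know where $f$ is supported. Without a dyadic decomposition $f=\sum_k\varphi_k f$ in the $v$-variable you have no way to feed physical-space localization into a Fourier-side argument. Second, when you Taylor-expand $\hat f(\xi)-\hat f(\xi\cos\theta)$ you produce $\nabla_\xi\hat f=-i\widehat{vf}$, not $\hat f$; the quantity $\int\langle\xi\rangle^{?}|\partial_\rho\hat f|^2$ you point to is \emph{not} ``exactly what $|W^\epsilon(D)f|_{L^2}^2$ captures'' — it is a weighted norm of $\widehat{vf}$, which is $|W^\epsilon(D)\,v f|_{L^2}$ in flavor. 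Converting that back to $|W^\epsilon(D)f|_{L^2}^2+|W^\epsilon f|_{L^2}^2$ requires controlling the pseudodifferential commutator $[W^\epsilon(D),\,v\varphi_k]$; this is handled in the paper by Lemma \ref{operatorcommutator1}, together with the reassembly estimate \eqref{decompostionpacth}. Your proposal doesn't recognize that a commutator step is needed, and in the formulation you give (a single one-dimensional estimate uniform in $\rho$ and $\omega$) the commutator doesn't even appear — that's the symptom that the argument, as written, can only produce $|W^\epsilon(D)f|^2+\|f\|_{L^2}^2$, which is strictly weaker than the claimed bound for $f$ supported far outside the ball of radius $1/\epsilon$.

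Concretely, the paper's proof does a dyadic decomposition $f=\sum_k\varphi_k f$ in $v$; dispatches $\sum_{2^k\ge1/\epsilon}\mathcal{A}_k\lesssim|W^\epsilon f|^2_{L^2}$ directly; for $2^k\le1/\epsilon$ passes to the Fourier side and does a second dyadic decomposition $\hat f\mapsto\sum_l\varphi_l\widehat{\varphi_kf}$ in $\xi$; splits the $\theta$-integral at the curvature scale $\theta\sim2^{-(k+l)/2}$ (not at $\theta\sim\epsilon$, which is the ``truncation'' you lean on); uses Taylor expansion and the change of variable $\xi\mapsto(1-\kappa)\xi\cos\theta+\kappa\xi$ only on the short-$\theta$ piece; and then invokes the pseudodifferential commutator Lemma \ref{operatorcommutator1} to absorb the $\widehat{v\varphi_k f}$ terms. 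The double dyadic decomposition and the commutator lemma are the two ingredients missing from your proposal, and both are essential. Proposition \ref{symbol} is not invoked in the paper's proof of this lemma and would not help here.
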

\begin{proof}
Applying dyadic decomposition in the phase space, since $\frac{\sqrt{2}}{2} \leq \cos\theta \leq 1$ for $\theta \in [0, \pi/4]$,
we have
\beno
\mathcal{A} &=& \int_{\R^{3}}\int_{\epsilon}^{\pi/4} \theta^{-1-2s}| \sum_{k=-1}^{\infty}(\varphi_{k}f)(v)- \sum_{k=-1}^{\infty}(\varphi_{k}f)(v/\cos\theta)|^{2} dv d\theta
\\&\lesssim& \sum_{k=-1}^{\infty}\int_{\R^{3}}\int_{\epsilon}^{\pi/4} \theta^{-1-2s}| (\varphi_{k}f)(v)- (\varphi_{k}f)(v/\cos\theta)|^{2} dv d\theta :=\sum_{k=-1}^{\infty}\mathcal{A}_{k}.
\eeno
It is easy to check $\sum_{2^{k} \geq 1/\epsilon} \mathcal{A}_{k} \lesssim |W^{\epsilon}f|^{2}_{L^{2}}$ since $\int_{\epsilon}^{\pi/4} \theta^{-1-2s} d \theta \lesssim \epsilon^{-2s}$.
For the case $2^{k} \leq 1/\epsilon$, by Plancherel's theorem and dyadic decomposition in the frequency space, we have
\beno
\mathcal{A}_{k}&=&
\int_{\R^{3}}\int_{\epsilon}^{\pi/4} \theta^{-1-2s}|\widehat{\varphi_{k}f}(\xi)- \cos^{3}\theta\widehat{\varphi_{k}f}(\xi\cos\theta)|^{2} d\xi d\theta
\\&\lesssim& \int_{\R^{3}}\int_{\epsilon}^{\pi/4} \theta^{-1-2s}|\widehat{\varphi_{k}f}(\xi)- \widehat{\varphi_{k}f}(\xi\cos\theta)|^{2} d\xi d\theta + |\varphi_{k}f|^{2}_{L^{2}}
\\&=& \int_{\R^{3}}\int_{\epsilon}^{\pi/4} \theta^{-1-2s}|\sum_{l=-1}^{\infty}(\varphi_{l}\widehat{\varphi_{k}f})(\xi)- \sum_{l=-1}^{\infty}(\varphi_{l}\widehat{\varphi_{k}f})(\xi\cos\theta)|^{2} d\xi d\theta + |\varphi_{k}f|^{2}_{L^{2}}
\\&\lesssim& \sum_{l=-1}^{\infty} \int_{\R^{3}}\int_{\epsilon}^{\pi/4} \theta^{-1-2s}|(\varphi_{l}\widehat{\varphi_{k}f})(\xi)- (\varphi_{l}\widehat{\varphi_{k}f})(\xi\cos\theta)|^{2} d\xi d\theta + |\varphi_{k}f|^{2}_{L^{2}}
\\&:=& \sum_{l=-1}^{\infty}\mathcal{A}_{k,l}  + |\varphi_{k}f|^{2}_{L^{2}}.
\eeno
Note that $\sum_{2^{l} \geq 1/\epsilon} \mathcal{A}_{k,l} \lesssim |W^{\epsilon}(D)\varphi_{k}f|^{2}_{L^{2}}$, thus $\mathcal{A}_{k} \lesssim \sum_{2^{l} \leq 1/\epsilon} \mathcal{A}_{k,l} + |W^{\epsilon}(D)\varphi_{k}f|^{2}_{L^{2}}+|\varphi_{k}f|^{2}_{L^{2}}$.
Using $\sum_{k \geq -1}^{\infty}|\varphi_{k}f|^{2}_{L^{2}} \lesssim |f|^{2}_{L^{2}}$ and \eqref{decompostionpacth}, we have
\beno
\mathcal{A}  \lesssim \sum_{2^{k} \leq 1/\epsilon,2^{l}\leq 1/\epsilon}\mathcal{A}_{k,l}+ |W^{\epsilon}(D)f|^{2}_{L^{2}} + |W^{\epsilon}f|^{2}_{L^{2}}.
\eeno
For each $k$ and $l$ such that $2^{k} \leq 1/\epsilon,2^{l}\leq 1/\epsilon$, we have
\ben \nonumber
\mathcal{A}_{k,l}  &=&  \int_{\R^{3}}\int_{\epsilon}^{2^{-k/2-l/2}} \theta^{-1-2s}|(\varphi_{l}\widehat{\varphi_{k}f})(\xi)- (\varphi_{l}\widehat{\varphi_{k}f})(\xi\cos\theta)|^{2} d\xi d\theta
\\ \nonumber &&+ \int_{\R^{3}}\int_{2^{-k/2-l/2}}^{\pi/4} \theta^{-1-2s}|(\varphi_{l}\widehat{\varphi_{k}f})(\xi)- (\varphi_{l}\widehat{\varphi_{k}f})(\xi\cos\theta)|^{2} d\xi d\theta
\\ \nonumber &\lesssim& \int_{\R^{3}}\int_{\epsilon}^{2^{-k/2-l/2}} \theta^{-1-2s}|(\varphi_{l}\widehat{\varphi_{k}f})(\xi)- (\varphi_{l}\widehat{\varphi_{k}f})(\xi\cos\theta)|^{2} d\xi d\theta
+ 2^{s(l+k)}|\varphi_{l}\widehat{\varphi_{k}f}|^{2}_{L^{2}}
\\ \label{A-k-l-to-Bkl} &:=&\mathcal{B}_{k,l}+ 2^{s(l+k)}|\varphi_{l}\widehat{\varphi_{k}f}|^{2}_{L^{2}}.
\een
By Taylor expansion,
$
(\varphi_{l}\widehat{\varphi_{k}f})(\xi)- (\varphi_{l}\widehat{\varphi_{k}f})(\xi\cos\theta) = (1-\cos\theta)\int_{0}^{1}
(\nabla \varphi_{l}\widehat{\varphi_{k}f})(\xi(\kappa))\cdot \xi d\kappa,
$
where $\xi(\kappa) = (1-\kappa)\xi\cos\theta + \kappa \xi$. Thus  we obtain
\beno
\mathcal{B}_{k,l}
\lesssim \int_{0}^{1}\int_{\R^{3}}\int_{\epsilon}^{2^{-k/2-l/2}} \theta^{3-2s}|\xi|^{2}
|(\nabla \varphi_{l}\widehat{\varphi_{k}f})(\xi(\kappa)) |^{2} d\kappa d\xi d\theta .
\eeno
By the change of variable $\xi \rightarrow \eta = \xi(\kappa)$, we have
\beno
\mathcal{B}_{k,l}
&=& \int_{0}^{1}\int_{\R^{3}}\int_{\epsilon}^{2^{-k/2-l/2}} \theta^{3-2s}\frac{|\eta|^{2}}{((1-\kappa)\cos\theta + \kappa)^{5}}
|(\nabla \varphi_{l}\widehat{\varphi_{k}f})(\eta)|^{2} d\kappa d\eta d\theta
\\&\lesssim&\int_{\R^{3}}\int_{\epsilon}^{2^{-k/2-l/2}} \theta^{3-2s}|\eta|^{2}|(\nabla \varphi_{l}\widehat{\varphi_{k}f})(\eta)|^{2} d\eta d\theta
\\&\lesssim& 2^{-(2-s)(l+k)} \int_{\R^{3}} |\eta|^{2}|(\nabla \varphi_{l}\widehat{\varphi_{k}f})(\eta)|^{2} d\eta
\lesssim 2^{s(l+k)}2^{-2k} \int_{\R^{3}} |(\nabla \varphi_{l}\widehat{\varphi_{k}f})(\eta)|^{2} d\eta.
\eeno
Note that
$
(\nabla \varphi_{l}\widehat{\varphi_{k}f})(\eta) = (\nabla \varphi_{l})(\eta) \widehat{\varphi_{k}f}(\eta) + \varphi_{l}(\eta) (\nabla \widehat{\varphi_{k}f})(\eta)
= 2^{-l}(\nabla \varphi) (\frac{\eta}{2^{l}}) \widehat{\varphi_{k}f}(\eta) - \mathrm{i} (\varphi_{l} \widehat{v\varphi_{k}f}) (\eta),
$
which gives
$
|(\nabla \varphi_{l}\widehat{\varphi_{k}f}) (\eta)|^{2} \lesssim 2^{-2l}|\nabla \varphi|^{2}_{L^{\infty}} |\widehat{\varphi_{k}f} (\eta)|^{2}
+  |(\varphi_{l}\widehat{v \varphi_{k}f}) (\eta)|^{2},
$
and
\beno
\mathcal{B}_{k,l}
\lesssim 2^{-(2-s)(l+k)} |\varphi_{k}f|^{2}_{L^{2}} + 2^{s(l+k)-2k}|\varphi_{l} \widehat{v\varphi_{k}f}|^{2}_{L^{2}}.
\eeno
Recalling \eqref{A-k-l-to-Bkl}, we arrive at
\beno
\mathcal{A}_{k,l} \lesssim  2^{-(2-s)(l+k)} |\varphi_{k}f|^{2}_{L^{2}} + 2^{s(l+k)-2k}|\varphi_{l} \widehat{v\varphi_{k}f}|^{2}_{L^{2}} + 2^{s(l+k)}|\varphi_{l}\widehat{\varphi_{k}f}|^{2}_{L^{2}} :=\mathcal{A}_{k,l,1}+\mathcal{A}_{k,l,2}+\mathcal{A}_{k,l,3}.
\eeno
The first term is estimated by $\sum_{2^{k} \leq 1/\epsilon,2^{l}\leq 1/\epsilon} \mathcal{A}_{k,l,1} \lesssim |f|^{2}_{L^{2}}.$
For the second term $\mathcal{A}_{k,l,2}$, we have
\beno
\sum_{2^{k} \leq 1/\epsilon,2^{l}\leq 1/\epsilon} \mathcal{A}_{k,l,2}
&\lesssim& \sum_{j=1}^{3}\sum_{2^{k} \leq 1/\epsilon,2^{l}\leq 1/\epsilon} 2^{2sl}2^{-2k}|\varphi_{l} \widehat{v_{j}\varphi_{k}f}|^{2}_{L^{2}} + \sum_{j=1}^{3}\sum_{2^{k} \leq 1/\epsilon,2^{l}\leq 1/\epsilon} 2^{2sk}2^{-2k}|\varphi_{l} \widehat{v_{j}\varphi_{k}f}|^{2}_{L^{2}}
\\&\lesssim& \sum_{j=1}^{3}\sum_{2^{k} \leq 1/\epsilon} 2^{-2k}|W^{\epsilon}\widehat{v_{j}\varphi_{k}f}|^{2}_{L^{2}} + \sum_{j=1}^{3}\sum_{2^{k} \leq 1/\epsilon} 2^{2sk}2^{-2k}|v_{j} \varphi_{k}f|^{2}_{L^{2}}
\\&\lesssim& \sum_{j=1}^{3}\sum_{2^{k} \leq 1/\epsilon} 2^{-2k}|W^{\epsilon}(D) v_{j} \varphi_{k}f|^{2}_{L^{2}} + |W^{\epsilon}f|^{2}_{L^{2}}\lesssim |W^{\epsilon}(D)f|^{2}_{L^{2}} + |W^{\epsilon}f|^{2}_{L^{2}}.
\eeno
Here in the last inequality, we apply Lemma \ref{operatorcommutator1}  to get $|W^{\epsilon}(D) v_{j} \varphi_{k}f|^{2}_{L^{2}} \lesssim |v_{j} \varphi_{k}W^{\epsilon}(D)f|^{2}_{L^{2}} +  |f|^{2}_{H^{s-1}}$
thanks to $W^{\epsilon}\in S^{s}_{1,0}, v_{j}\varphi_{k} \in S^{1}_{1,0}$(see Definition \ref{psuopde} for $S^m_{1,0}$).
As for the last term $\mathcal{A}_{k,l,3}$, we have
\beno
\sum_{2^{k} \leq 1/\epsilon,2^{l}\leq 1/\epsilon} \mathcal{A}_{k,l,3}
&\lesssim& \sum_{2^{k} \leq 1/\epsilon,2^{l}\leq 1/\epsilon} 2^{2sl}|\varphi_{l}\widehat{\varphi_{k}f}|^{2}_{L^{2}}  +
\sum_{2^{k} \leq 1/\epsilon,2^{l}\leq 1/\epsilon} 2^{2sk}|\varphi_{l}\widehat{\varphi_{k}f}|^{2}_{L^{2}}
\\&\lesssim&  \sum_{2^{k} \leq 1/\epsilon} |W^{\epsilon}(D)\varphi_{k}f|^{2}_{L^{2}} + \sum_{2^{k} \leq 1/\epsilon} 2^{2sk}|\widehat{\varphi_{k}f}|^{2}_{L^{2}}
\lesssim |W^{\epsilon}(D)f|^{2}_{L^{2}} + |W^{\epsilon}f|^{2}_{L^{2}}.
\eeno	
The lemma follows from the above estimates.
\end{proof}
\begin{rmk}\label{epsilon-to-another} If we change the integral range  $\int_{\epsilon}^{\pi/4}$ in Lemma \ref{a-technical-lemma} to
$\int_{3\epsilon/4}^{\pi/4}$, the estimate still holds true.
\end{rmk}

\begin{lem}\label{gammanonzerotozero}
	Let $
	\mathcal{Z}^{\epsilon,\gamma}(f) := \int b^{\epsilon}(\frac{u}{|u|}\cdot\sigma)\langle u\rangle^{\gamma} |f(|u|\frac{u^{+}}{|u^{+}|})-f(u^{+})|^{2} d\sigma du
	$ with $u^{+} = \frac{u + |u|\sigma}{2}$. Then
	\beno
	\mathcal{Z}^{\epsilon,\gamma}(f) \lesssim |W^{\epsilon}(D)W_{\gamma/2}f|^{2}_{L^{2}} + |W^{\epsilon}W_{\gamma/2}f|^{2}_{L^{2}}.
	\eeno
\end{lem}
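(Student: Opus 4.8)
The plan is to reduce $\mathcal{Z}^{\epsilon,\gamma}(f)$, by a change of variables in the velocity, to the quantity $\mathcal{A}$ of Lemma~\ref{a-technical-lemma} applied to the weighted function $W_{\gamma/2}f$. The geometric starting point is the identity $|u^{+}|=|u|\cos(\theta/2)$ (with $\cos\theta=\frac{u}{|u|}\cdot\sigma$), which yields $|u|\frac{u^{+}}{|u^{+}|}=u^{+}/\cos(\theta/2)$, so that the integrand is $(f(u^{+}/\cos(\theta/2))-f(u^{+}))^{2}$, and moreover the angle between $u^{+}$ and $\sigma$ is exactly $\theta/2$. For fixed $\sigma$ I would then carry out the substitution $u\mapsto w=u^{+}=\frac{u+|u|\sigma}{2}$; its Jacobian equals $\frac18(1+\cos\theta)=\frac14\cos^{2}(\theta/2)$, which is bounded and bounded away from $0$ on the support of $b^{\epsilon}$ (where $\theta\le\pi/2$), so this is a legitimate change of variables. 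Afterwards I would apply Fubini and integrate in $\sigma$ first for fixed $w$, parametrising $\sigma$ by the angle $\psi=\theta/2$ it makes with $w$ together with an azimuthal angle on which the integrand does not depend, and use $b^{\epsilon}(\cos 2\psi)\sin\psi\lesssim\psi^{-1-2s}\mathbf{1}_{\psi\gtrsim\epsilon,\,\psi\le\pi/4}$ (a consequence of the cutoff structure of $b^{\epsilon}$, cf. Proposition~\ref{symbol}). The outcome is
\beno \mathcal{Z}^{\epsilon,\gamma}(f) &\lesssim& \int_{\R^{3}}\int_{\epsilon}^{\pi/4}\psi^{-1-2s}\,\langle w/\cos\psi\rangle^{\gamma}\,\big(f(w/\cos\psi)-f(w)\big)^{2}\,d\psi\,dw, \eeno
where the factor $\cos^{-2}\psi\sim1$ and the harmless constant in the lower angular limit have been absorbed (Lemma~\ref{a-technical-lemma} being insensitive to replacing $\epsilon$ by a fixed multiple of itself).

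The second step is to move the weight onto $f$. Setting $g=W_{\gamma/2}f$, the identity $\langle w/\cos\psi\rangle^{\gamma/2}(f(w/\cos\psi)-f(w))=(g(w/\cos\psi)-g(w))-(\langle w/\cos\psi\rangle^{\gamma/2}-\langle w\rangle^{\gamma/2})f(w)$ together with $(a-b)^{2}\le 2a^{2}+2b^{2}$ splits the right-hand side above into a main term and an error term. The main term $\int_{\R^{3}}\int_{\epsilon}^{\pi/4}\psi^{-1-2s}(g(w/\cos\psi)-g(w))^{2}\,d\psi\,dw$ is exactly $\mathcal{A}$ from Lemma~\ref{a-technical-lemma} evaluated at $g$, hence $\lesssim|W^{\epsilon}(D)g|^{2}_{L^{2}}+|W^{\epsilon}g|^{2}_{L^{2}}=|W^{\epsilon}(D)W_{\gamma/2}f|^{2}_{L^{2}}+|W^{\epsilon}W_{\gamma/2}f|^{2}_{L^{2}}$. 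For the error term, the mean value theorem gives $|\langle w/\cos\psi\rangle^{\gamma/2}-\langle w\rangle^{\gamma/2}|\lesssim|w|\big(\frac{1}{\cos\psi}-1\big)\langle w\rangle^{\gamma/2-1}\lesssim\psi^{2}\langle w\rangle^{\gamma/2}$ for $\psi\le\pi/4$ (using $1-\cos\psi\lesssim\psi^{2}$, $|w|\le\langle w\rangle$, and $\langle w\rangle\sim\langle w/\cos\psi\rangle$, so the sign of $\gamma/2-1$ plays no role); since $3-2s>-1$, the angular integral $\int_{\epsilon}^{\pi/4}\psi^{-1-2s}\psi^{4}\,d\psi$ is uniformly bounded, and so the error term is $\lesssim\int_{\R^{3}}\langle w\rangle^{\gamma}|f(w)|^{2}\,dw=|W_{\gamma/2}f|^{2}_{L^{2}}\le|W^{\epsilon}W_{\gamma/2}f|^{2}_{L^{2}}$ because $W^{\epsilon}\ge1$. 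Adding the two contributions yields the asserted bound.

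I expect the main (and only real) obstacle to be the geometric reduction in the first step: recognising that $u\mapsto u^{+}$ collapses $\mathcal{Z}^{\epsilon,\gamma}$ — after using $|u|\,u^{+}/|u^{+}|=u^{+}/\cos(\theta/2)$ and the half-angle property of the angle between $u^{+}$ and $\sigma$ — precisely onto the structure of Lemma~\ref{a-technical-lemma}, and checking carefully that the Jacobian is uniformly comparable to $1$ on the support of $b^{\epsilon}$ and that the substitution $\theta=2\psi$ turns the cutoff weight into the $\psi^{-1-2s}$ occurring in $\mathcal{A}$. Once this is in place, inserting $W_{\gamma/2}$ and controlling the commutator error is routine, the only mild point being that the $\psi^{4}$ gain in the error comfortably absorbs the $\psi^{-1-2s}$ singularity.
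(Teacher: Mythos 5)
Your argument is correct and takes essentially the same route as the paper: reduce $\mathcal{Z}^{\epsilon,\gamma}$ to the quantity $\mathcal{A}$ of Lemma~\ref{a-technical-lemma} applied to $W_{\gamma/2}f$ via a geometric change of variable, then control the weight mismatch by a commutator estimate of order $\theta^{2}$. The only difference is cosmetic — you change $u\mapsto u^{+}$ at fixed $\sigma$ and pass to spherical coordinates around $u^{+}$, while the paper first treats $\gamma=0$ through the bisector change $(u,\sigma)\to(r,\tau,\varsigma)$ and then superimposes the weight step separately — but both land on the same integral up to a harmless bounded Jacobian.
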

\begin{proof} We divide the proof into two steps.
	
{\it Step 1: $\gamma=0$.}
By the change of variable $(u, \sigma) \rightarrow (r, \tau, \varsigma)$ with $u=r\tau$ and $\varsigma=\f{\sigma+\tau}{|\sigma+\tau|}$, we have
	\beno
	 \mathcal{Z}^{\epsilon, 0}(f) = 4 \int_{\R_{+} \times \mathbb{S}^{2} \times \mathbb{S}^{2}} b^{\epsilon}(2(\tau\cdot\varsigma)^{2} - 1)|f(r\varsigma) - f((\tau\cdot\varsigma)r\varsigma)|^{2} (\tau\cdot\varsigma) r^{2} dr d \tau d \varsigma.
	\eeno
	Let $\eta = r\varsigma$, and $\theta$ be the angle between $\tau$ and $\varsigma$. Recalling  the assumption \eqref{angular-lower-upper-bound} and $b^{\epsilon}$ in \eqref{cutoff-kernel-def}, we have
$b^{\epsilon}(2(\tau\cdot\varsigma)^{2} - 1) = b^{\epsilon}(\cos2\theta) \lesssim \theta^{-2-2s} \mathrm{1}_{3\epsilon/4 \leq \leq \pi/4}$. Observing $r^{2} dr d \tau d \varsigma = \sin\theta d\eta d\theta d\varphi$, we have
	\ben
	\mathcal{Z}^{\epsilon,0}(f) &\lesssim&  \int_{\R^{3}}\int_{\epsilon}^{\pi/4} \theta^{-1-2s}|f(\eta) - f(\eta\cos\theta)|^{2} d\eta d\theta
\nonumber	\\&\lesssim& \int_{\R^{3}}\int_{\epsilon}^{\pi/4} \theta^{-1-2s}|f(\eta) - f(\eta/\cos\theta)|^{2} d\eta d\theta
	\lesssim |W^{\epsilon}(D) f|^{2}_{L^{2}} + |W^{\epsilon} f|^{2}_{L^{2}}, \label{gamma-0-case}
	\een
where the last inequality is given by Lemma \ref{a-technical-lemma} and Remark \ref{epsilon-to-another}.

{\it Step 2: $\gamma \neq 0$.}	We  reduce the general case $\gamma \neq 0$ to the special case $\gamma = 0$. For simplicity, denote $w = |u|\frac{u^{+}}{|u^{+}|}$, then $W_{\gamma}(u) = W_{\gamma}(w)$. Then we have
	\beno
	&&\langle u\rangle^{\gamma} (f(w)-f(u^{+}))^{2} \\&=& \{[(W_{\gamma/2}f)(w)-(W_{\gamma/2}f)(u^{+})]
	 + (W_{\gamma/2}f)(u^{+})(1-W_{\gamma/2}(w)W_{-\gamma/2}(u^{+}))\}^{2}
	\\&\leq& 2 [(W_{\gamma/2}f)(w)-(W_{\gamma/2}f)(u^{+})]^{2}
	+ 2 |(W_{\gamma/2}f)(u^{+})|^{2}|1-W_{\gamma/2}(w)W_{-\gamma/2}(u^{+})|^{2}.
	\eeno
	Thus we have
	\beno
	\mathcal{Z}^{\epsilon,\gamma}(f) &\lesssim& \mathcal{Z}^{\epsilon,0}(W_{\gamma/2}f)
	+ \mathcal{B},
	\\ \mathcal{B} &:=& \int b^{\epsilon}(\frac{u}{|u|}\cdot\sigma)|(W_{\gamma/2}f)(u^{+})|^{2}|1-W_{\gamma/2}(w)W_{-\gamma/2}(u^{+})|^{2} du d\sigma
.
	\eeno
	By noticing that
	$
	|W_{\gamma/2}(w)W_{-\gamma/2}(u^{+}) - 1| \lesssim \sin^{2}\frac{\theta}{2},
	$
	we have
	$
	|\mathcal{B}| \lesssim  |W_{\gamma/2}f|^{2}_{L^{2}},
	$
	where the change of variable $u \rightarrow u^{+}$ is used.
	The desired result follows by utilizing \eqref{gamma-0-case} for $\mathcal{Z}^{\epsilon,0}(W_{\gamma/2}f)$.
\end{proof}

Next we want to show
\begin{lem} \label{key-anisotropic-result} Let $\epsilon \geq 0$ be small enough.
For suitable function $f$ defined on $\mathbb{S}^2$, there holds
\beno
\int\frac{|f(\sigma)-f(\tau)|^{2}}{|\sigma-\tau|^{2+2s}}\mathrm{1}_{|\sigma-\tau| \geq \epsilon} d\sigma d\tau + |f|^{2}_{L^{2}(\mathbb{S}^{2})}
 \sim |W^{\epsilon}((-\Delta_{\mathbb{S}^{2}})^{\frac{1}{2}})f|^{2}_{L^{2}(\mathbb{S}^{2})} + |f|^{2}_{L^{2}(\mathbb{S}^{2})}.
\eeno
As a consequence, for suitable function $f$ defined on $\mathbb{R}^3$, there holds
\ben\label{similarlemma5.6}
\int_{\mathbb{S}^2\times\mathbb{S}^2 \times \mathbb{R}_{+}}\frac{|f(r\sigma)-f(r\tau)|^{2}}{|\sigma-\tau|^{2+2s}} \mathrm{1}_{|\sigma-\tau| \geq \epsilon} r^{2}d\sigma d\tau dr + |f|^{2}_{L^{2}}
 \sim |W^{\epsilon}((-\Delta_{\mathbb{S}^{2}})^{\frac{1}{2}})f|^{2}_{L^{2}} + |f|^{2}_{L^{2}}.
\een
\end{lem}
\begin{proof} We only prove the case $\epsilon>0$ since the case $\epsilon=0$ is already proved in \cite{he2}.  By Lemma 5.4 in \cite{he2}, we have
\beno
\int\frac{|f(\sigma)-f(\tau)|^{2}}{|\sigma-\tau|^{2+2s}}\mathrm{1}_{|\sigma-\tau| \geq \epsilon} d\sigma d\tau =\sum_{l=0}^{\infty}\sum_{m=-l}^{l}(f^{m}_{l})^{2} \int\frac{|Y^{m}_{l}(\sigma)-Y^{m}_{l}(\tau)|^{2}}{|\sigma-\tau|^{2+2s}}\mathrm{1}_{|\sigma-\tau| \geq \epsilon} d\sigma d\tau,
\eeno where $f^m_l=\int_{\mathbb{S}^2} fY^m_ld\sigma$.
For simplicity, let $\mathcal{A}^{\epsilon}_{l} = \int\frac{|Y^{m}_{l}(\sigma)-Y^{m}_{l}(\tau)|^{2}}{|\sigma-\tau|^{2+2s}}\mathrm{1}_{|\sigma-\tau| \geq \epsilon} d\sigma d\tau$. We set to analyze $\mathcal{A}^{\epsilon}_{l}$.

{\it Case 1:  $\epsilon^{2} l(l+1) \leq \eta$.}  We have
\beno
\mathcal{A}^{\epsilon}_{l} = \int\frac{|Y^{m}_{l}(\sigma)-Y^{m}_{l}(\tau)|^{2}}{|\sigma-\tau|^{2+2s}}d\sigma d\tau   -\int\frac{|Y^{m}_{l}(\sigma)-Y^{m}_{l}(\tau)|^{2}}{|\sigma-\tau|^{2+2s}}\mathrm{1}_{|\sigma-\tau| \leq \epsilon} d\sigma d\tau. \eeno
Lemma 5.5 in \cite{he2} yields
\beno &&|(-\Delta_{\mathbb{S}^{2}})^{\frac{s}{2}}Y^{m}_{l}|^{2}_{L^{2}(\mathbb{S}^{2})} - |Y^{m}_{l}|^{2}_{L^{2}(\mathbb{S}^{2})} - \epsilon^{2-2s}|\nabla_{\mathbb{S}^{2}}Y^{m}_{l}|^{2}_{L^{2}(\mathbb{S}^{2})}
\\ &\lesssim& \mathcal{A}^{\epsilon}_{l}
\lesssim |(-\Delta_{\mathbb{S}^{2}})^{\frac{s}{2}}Y^{m}_{l}|^{2}_{L^{2}(\mathbb{S}^{2})}+ |Y^{m}_{l}|^{2}_{L^{2}(\mathbb{S}^{2})} + \epsilon^{2-2s}|\nabla_{\mathbb{S}^{2}}Y^{m}_{l}|^{2}_{L^{2}(\mathbb{S}^{2})}. \eeno
Choosing $\eta$ small enough, for $\epsilon^{2}l(l+1)\leq \eta$, we have
\beno
 &&[l(l+1)]^{s}(1 - 2^{-s} - \eta^{1-s})\le [l(l+1)]^{s}(1 - [l(l+1)]^{-s} - [\epsilon^{2} l(l+1)]^{1-s})\\&=&[l(l+1)]^{s}-1-\epsilon^{2-2s}l(l+1)\le\mathcal{A}^{\epsilon}_{l}
 \le  (2+\eta^{1-s})[l(l+1)]^{s}.
\eeno
In other words, in this case, we have $\mathcal{A}^{\epsilon}_{l}
\sim  [l(l+1)]^{s}.$

{\it Case 2:  $\epsilon^{2} l(l+1) \geq R^{2}$.}  Let $\zeta$ be a smooth function with compact support verifying that $0\le \zeta\le1$, $\zeta(x)=1$ if $|x|\ge2$ and $\zeta(x)=0$ if $|x|\le1$.  We have

\beno
\mathcal{A}^{\epsilon}_{l} &\ge& \int\frac{|Y^{m}_{l}(\sigma)|^{2}+|Y^{m}_{l}(\tau)|^{2}-2Y^{m}_{l}(\sigma)Y^{m}_{l}(\tau)}{|\sigma-\tau|^{2+2s}}\zeta(\epsilon^{-1}|\sigma-\tau|) d\sigma d\tau
\\&\gtrsim& \epsilon^{-2s} - \int\frac{Y^{m}_{l}(\sigma)Y^{m}_{l}(\tau)}{|\sigma-\tau|^{2+2s}}\zeta(\epsilon^{-1}|\sigma-\tau|)d\sigma d\tau
:= \epsilon^{-2s} - \mathcal{B}^{\epsilon}_{l}.
\eeno
Since $(-\Delta_{\mathbb{S}^{2}})Y^{m}_{l} = l(l+1)Y^{m}_{l}$, we have
\beno
\mathcal{B}^{\epsilon}_{l} &=& [l(l+1)]^{-1}\int\frac{(-\Delta_{\mathbb{S}^{2}})Y^{m}_{l}(\sigma)Y^{m}_{l}(\tau)}{|\sigma-\tau|^{2+2s}}\zeta(\epsilon^{-1}|\sigma-\tau|) d\sigma d\tau
\\&\leq& C[l(l+1)]^{-1}|\nabla_{\mathbb{S}^{2}}Y^{m}_{l}|_{L^{2}(\mathbb{S}^{2})}|Y^{m}_{l}|_{L^{2}(\mathbb{S}^{2})}\int_{\mathbb{S}^2}|\sigma-\tau|^{-3-2s}\mathrm{1}_{|\sigma-\tau| \geq \epsilon}d\sigma
\\&\leq& C\epsilon^{-2s}[\epsilon^{2}l(l+1)]^{-\frac{1}{2}}.
\eeno
Thus we have
$
\mathcal{A}^{\epsilon}_{l} \gtrsim \epsilon^{-2s}(1-C[\epsilon^{2}l(l+1)]^{-\frac{1}{2}}) \geq \epsilon^{-2s}(1-C/R).
$
Since $\mathcal{A}^{\epsilon}_{l}
\le  4\epsilon^{-2s}$, we obtain that $\mathcal{A}^{\epsilon}_{l}
\sim  \epsilon^{-2s}$ as long as $R$ is large enough.

{\it Case 3:  $\epsilon^{2}l(l+1)\geq \eta$.}
Note that $(N\epsilon)^{2}l(l+1)\geq N^{2}\eta$. Applying the lower bound estimate in {\it Case 2} with $\epsilon:= N\epsilon, R:= N\sqrt{\eta}$, we obtain that
\beno
\mathcal{A}^{N\epsilon}_{l} \gtrsim N^{-2s}\epsilon^{-2s}(1-\frac{C}{N\sqrt{\eta}}).
\eeno
Choosing $N$ large enough, for $\epsilon^{2}l(l+1)\geq \eta$, we have
$
\mathcal{A}^{\epsilon}_{l} \geq \mathcal{A}^{N\epsilon}_{l}  \gtrsim \epsilon^{-2s}.
$
Notice that there still holds $\mathcal{A}^{\epsilon}_{l} \lesssim \epsilon^{-2s}$ in this case. Thus we get
$\mathcal{A}^{\epsilon}_{l}\sim \epsilon^{-2s}$.

Summing up {\it Case 1} and  {\it Case 3}, we finally obtain the desired result.
\end{proof}
\begin{rmk}\label{no-difference} If we change the truncation $|\sigma-\tau| \geq \epsilon$ in Lemma \ref{key-anisotropic-result} to
$|\sigma-\tau| \geq a\epsilon$ for some $\frac{1}{3} \leq a \leq 3$, the results still hold true.
\end{rmk}

Now we are in a position to derive the anisotropic regularity from  $\mathcal{R}_\mu^{\epsilon,\gamma}(f)$ defined in \eqref{defintion-R-g}. Our key strategy is to apply the geometric decomposition in the frequency space.
\begin{prop}\label{lowerboundpart2}
There holds
\ben\label{lowerupper1}
\mathcal{R}_{\mu}^{\epsilon,0}(f) + |W^{\epsilon}f|^{2}_{L^{2}} \sim  |W^{\epsilon}((-\Delta_{\mathbb{S}^{2}})^{\frac{1}{2}})f|^{2}_{L^{2}}+ |W^{\epsilon}(D)f|^{2}_{L^{2}}+|W^{\epsilon}f|^{2}_{L^{2}}.\\
\mathcal{R}_{\mu}^{\epsilon,\gamma}(f) +|W^{\epsilon}f|^{2}_{L^{2}_{\gamma/2}} \gtrsim  |W^{\epsilon}((-\Delta_{\mathbb{S}^{2}})^{\frac{1}{2}})W_{\gamma/2}f|^{2}_{L^{2}}+ |W^{\epsilon}(D)W_{\gamma/2}f|^{2}_{L^{2}}. \label{lowerupper2}
\een
\end{prop}
\begin{proof} The proof is split into two steps.

{\it Step 1: \eqref{lowerupper1} and \eqref{lowerupper2} with $\gamma=0$.}
By Bobylev's formula, we have
\beno
\mathcal{R}_{\mu}^{\epsilon,0}(f) &=& \frac{1}{(2\pi)^{3}}\int b^{\epsilon}(\frac{\xi}{|\xi|} \cdot \sigma)(\hat{\mu}(0)|\hat{f}(\xi) - \hat{f}(\xi^{+})|^{2} + 2\Re((\hat{\mu}(0) - \hat{\mu}(\xi^{-}))\hat{f}(\xi^{+})\bar{\hat{f}}(\xi)) d\xi d\sigma
\\ &:=& \frac{\hat{\mu}(0)}{(2\pi)^{3}}\mathcal{I}_{1} + \frac{2}{(2\pi)^{3}}\mathcal{I}_{2},
\eeno
where $\xi^{+} = \frac{\xi+|\xi|\sigma}{2}$ and $\xi^{-} = \frac{\xi-|\xi|\sigma}{2}$.
Thanks to the fact $\hat{\mu}(0) - \hat{\mu}(\xi^{-}) = \int(1-\cos(v \cdot \xi^{-}))\mu(v) dv$, we have
\beno
|\mathcal{I}_{2}| &=& |\int b^{\epsilon}(\frac{\xi}{|\xi|} \cdot \sigma) (1-\cos(v \cdot \xi^{-}))\mu(v) \Re(\hat{f}(\xi^{+})\bar{\hat{f}}(\xi)) d\sigma d\xi dv |
\\ &\lesssim& (\int b^{\epsilon}(\frac{\xi}{|\xi|} \cdot \sigma) (1-\cos(v \cdot \xi^{-}))\mu(v) |\hat{f}(\xi^{+})|^{2} d\sigma d\xi dv)^{\frac{1}{2}}
\\&& \times (\int b^{\epsilon}(\frac{\xi}{|\xi|} \cdot \sigma) (1-\cos(v \cdot \xi^{-}))\mu(v) |\hat{f}(\xi)|^{2} d\sigma d\xi dv)^{\frac{1}{2}}.
\eeno
Observe that
$
1-\cos(v \cdot \xi^{-}) \lesssim |v|^{2}|\xi^{-}|^{2} = \frac{1}{4}|v|^{2}|\xi|^{2}|\frac{\xi}{|\xi|} - \sigma|^{2} \sim |v|^{2}|\xi^{+}|^{2}|\frac{\xi^{+}}{|\xi^{+}|} - \sigma|^{2},
$
thus
$
1-\cos(v \cdot \xi^{-}) \lesssim \min\{|v|^{2}|\xi|^{2}|\frac{\xi}{|\xi|} - \sigma|^{2},1\} \sim  \min\{|v|^{2}|\xi^{+}|^{2}|\frac{\xi^{+}}{|\xi^{+}|} - \sigma|^{2},1\}.
$
Note that
$
\frac{\xi}{|\xi|} \cdot \sigma  = 2(\frac{\xi^{+}}{|\xi^{+}|} \cdot \sigma)^{2} - 1,
$
by the change of variable $\xi \rightarrow \xi^{+}$, Proposition \ref{symbol} and the fact $W^{\epsilon}(|v||\xi|) \lesssim W^{\epsilon}(|v|)W^{\epsilon}(|\xi|)$, we have
\beno
|\mathcal{I}_{2}| \lesssim \int (W^{\epsilon})^{2}(|v||\xi|)|\hat{f}(\xi)|^{2}\mu(v)dvd\xi
\lesssim |W^{\epsilon}\mu^{\frac{1}{2}}|^{2}_{L^{2}} |W^{\epsilon}(D)f|^{2}_{L^{2}}\lesssim |W^{\epsilon}(D)f|^{2}_{L^{2}}.
\eeno
Now we set to estimate $\mathcal{I}_{1}$. By the geometric decomposition
\beno
\hat{f}(\xi) - \hat{f}(\xi^{+}) = \hat{f}(\xi) - \hat{f}(|\xi|\frac{\xi^{+}}{|\xi^{+}|})+ \hat{f}(|\xi|\frac{\xi^{+}}{|\xi^{+}|}) - \hat{f}(\xi^{+}),
\eeno
we have
\beno
\mathcal{I}_{1} &=& \int b^{\epsilon}(\frac{\xi}{|\xi|} \cdot \sigma)|\hat{f}(\xi) - \hat{f}(\xi^{+})|^{2} d\xi d\sigma
\\&\geq& \frac{1}{2} \int b^{\epsilon}(\frac{\xi}{|\xi|} \cdot \sigma)|\hat{f}(\xi) - \hat{f}(|\xi|\frac{\xi^{+}}{|\xi^{+}|})|^{2} d\xi d\sigma
- \int b^{\epsilon}(\frac{\xi}{|\xi|} \cdot \sigma)|\hat{f}(|\xi|\frac{\xi^{+}}{|\xi^{+}|}) - \hat{f}(\xi^{+})|^{2} d\xi d\sigma
\\&:=& \frac{1}{2}\mathcal{I}_{1,1} - \mathcal{I}_{1,2}.
\eeno
Let $\xi = r \tau$ and $\varsigma = \frac{\tau+\sigma}{|\tau+\sigma|}$, then $\frac{\xi}{|\xi|} \cdot \sigma = 2(\tau\cdot\varsigma)^{2} - 1$ and $|\xi|\frac{\xi^{+}}{|\xi^{+}|} = r \varsigma$. For the change of variable $(\xi, \sigma) \rightarrow (r, \tau, \varsigma)$, one has
$
d\xi d\sigma = 4  (\tau\cdot\varsigma) r^{2} dr d \tau d \varsigma.
$ Let $\theta$ be the angle between $\tau$ and $\sigma$, then $2 \sin\frac{\theta}{2} = |\tau-\sigma|, |\tau - \varsigma| = 2(1-\cos \frac{\theta}{2})$ and thus
$\sin\frac{\theta}{2} = \frac{1}{2} |\tau-\sigma| \leq |\tau - \varsigma| \leq |\tau-\sigma| = 2\sin\frac{\theta}{2}$.
Therefore
\ben \label{equivalent-to-tau-var}
|\tau - \varsigma|^{-2-2s} \mathrm{1}_{|\tau - \varsigma| \geq \frac{8}{3}\epsilon} \lesssim  b^{\epsilon}(\cos\theta) \lesssim |\tau - \varsigma|^{-2-2s} \mathrm{1}_{|\tau - \varsigma| \geq \frac{3}{4}\epsilon}.
\een
By \eqref{similarlemma5.6} in Lemma \ref{key-anisotropic-result} and Remark \ref{no-difference}, we have
\ben \label{lower-I-11}
\mathcal{I}_{1,1}+|f|_{L^2}^2 &=& 4 \int b^{\epsilon}(2(\tau\cdot\varsigma)^{2} - 1)|\hat{f}(r\tau) - \hat{f}(r\varsigma)|^{2} (\tau\cdot\varsigma) r^{2} dr d \tau d \varsigma+|f|_{L^2}^2
\nonumber \\&\gtrsim& \int \frac{|\hat{f}(r\tau) - \hat{f}(r\varsigma)|^{2}}{|\tau - \varsigma|^{2+2s}}\mathrm{1}_{|\tau-\varsigma| \geq 8\epsilon/3}  r^{2} dr d \tau d \varsigma+|f|_{L^2}^2
\nonumber \\&\sim& |W^{\epsilon}((-\Delta_{\mathbb{S}^{2}})^{\frac{1}{2}})\hat{f}|^{2}_{L^{2}}+ |\hat{f}|^{2}_{L^{2}}
\sim |W^{\epsilon}((-\Delta_{\mathbb{S}^{2}})^{\frac{1}{2}})f|^{2}_{L^{2}}+|f|^{2}_{L^{2}}.
\een
Here we use Lemma \ref{comWep} and Plancherel's theorem in the last line. Similarly, by \eqref{equivalent-to-tau-var}, \eqref{similarlemma5.6} in Lemma \ref{key-anisotropic-result} and Remark \ref{no-difference}, we have
\beno
\mathcal{I}_{1,1}+|f|_{L^2}^2 \lesssim |W^{\epsilon}((-\Delta_{\mathbb{S}^{2}})^{\frac{1}{2}})f|^{2}_{L^{2}}+|f|^{2}_{L^{2}}.
\eeno
By Lemma \ref{gammanonzerotozero}, there holds
\ben \label{upper-bound-I-12} \mathcal{I}_{1,2} \lesssim |W^{\epsilon}(D)f|^{2}_{L^{2}} + |W^{\epsilon}f|^{2}_{L^{2}}. \een
Patching together \eqref{lower-I-11}, \eqref{upper-bound-I-12} and Proposition \ref{lowerboundpart1-general-g}, we get
the $\gtrsim$ direction of \eqref{lowerupper1},
\ben \label{lower-bound-case-gamma-0}
\mathcal{R}_{\mu}^{\epsilon,0}(f) + |W^{\epsilon}f|^{2}_{L^{2}} \gtrsim  |W^{\epsilon}((-\Delta_{\mathbb{S}^{2}})^{\frac{1}{2}})f|^{2}_{L^{2}}+ |W^{\epsilon}(D)f|^{2}_{L^{2}}+|W^{\epsilon}f|^{2}_{L^{2}}. \een
The $\lesssim$ direction of \eqref{lowerupper1} follows easily from $\mathcal{R}_{\mu}^{\epsilon,0}(f) \lesssim  \mathcal{I}_{1,1} + \mathcal{I}_{1,2} + |\mathcal{I}_{2}|.$

{\it Step 2: \eqref{lowerupper2} with $\gamma \neq 0$.}
Thanks to Lemma 3.4 in \cite{he2} which reads that
\beno
%\label{casegammazerotonon}
\mathcal{R}_\mu^{\epsilon,\gamma}(f) + |f|^{2}_{L^{2}_{\gamma/2}}  \gtrsim  \mathcal{R}_{\mu}^{\epsilon,0}(W_{\gamma/2}f),
\eeno
we obtain the desired result by using \eqref{lower-bound-case-gamma-0}. The proof is complete.
\end{proof}

\subsubsection{Lower bound of $\langle \mathcal{L}^{\epsilon}f,f\rangle_v$} The aim of this subsection is to prove that $\langle \mathcal{L}^{\epsilon}f,f\rangle_v$ is bounded below by $\mathcal{R}_\mu^{\epsilon,\gamma}(f) + \mathcal{M}^{\epsilon,\gamma}(f) -C|f|^{2}_{L^{2}_{\gamma/2}}$, and hence by $|f|_{\epsilon,\gamma/2}^2-C|f|^{2}_{L^{2}_{\gamma/2}}$ for some constant $C$.
\begin{thm} \label{equivalencenorm} For $-3<\gamma \leq 1$, there holds
\beno
\langle \mathcal{L}^{\epsilon}f, f\rangle_v + |f|^{2}_{L^{2}_{\gamma/2}} \gtrsim  |f|_{\epsilon,\gamma/2}^2.
\eeno
\end{thm}
%\textcolor{red}{The proof of upper bound is not finished yet! The good thing is that we do not use it. Note that only the lower bound is used in the next lemma.}
\begin{proof} We proceed in the spirit of \cite{amuxy4}.
Recalling \eqref{DefLep} and $(a+b)^{2} \geq a^{2}/2 - b^{2}$, there holds
\ben\label{L1lowerbound1}
&& 2 \langle \mathcal{L}_{1}^{\epsilon}f,f \rangle_{v} =\int B^{\epsilon} (\mu_{*}^{\frac{1}{2}}f -
(\mu^{\frac{1}{2}})_{*}^{\prime}f^{\prime})^{2}dv dv_{*} d\sigma
\nonumber\\&=& \int B^{\epsilon} (\mu_{*}^{\frac{1}{2}}(f - f^{\prime}) + (\mu_{*}^{\frac{1}{2}}- (\mu^{\frac{1}{2}})_{*}^{\prime})f^{\prime})^{2} dv dv_{*} d\sigma
\ge \frac{1}{2} \mathcal{R}_\mu^{\epsilon,\gamma}(f) - \mathcal{M}^{\epsilon,\gamma}(f).
\een
On the other hand, one has $$2 \langle \mathcal{L}_{1}^{\epsilon}f,f \rangle_v = \mathcal{R}_\mu^{\epsilon,\gamma}(f) + \mathcal{M}^{\epsilon,\gamma}(f) + 2\int B^{\epsilon} (\mu_{*}^{\frac{1}{2}}- (\mu^{\frac{1}{2}})_{*}^{\prime})\mu_{*}^{\frac{1}{2}}(f - f^{\prime})f^{\prime} dv dv_{*} d\sigma.$$
From which together with the fact $2(a-b)b = a^{2}-b^{2}-(a-b)^{2}$, one gets
\beno
&&2(\mu_{*}^{\frac{1}{2}}- (\mu^{\frac{1}{2}})_{*}^{\prime})\mu_{*}^{\frac{1}{2}}(f - f^{\prime})f^{\prime} =\frac{1}{2}(f^{2} -f^{\prime 2}  - (f - f^{\prime})^{2}) (\mu_{*} - \mu_{*}^{\prime} + (\mu_{*}^{\frac{1}{2}}- (\mu^{\frac{1}{2}})_{*}^{\prime})^{2})
\\
&=& \frac{1}{2}(f^{2} -f^{\prime 2})(\mu_{*} - \mu_{*}^{\prime}) - \frac{1}{2}(f - f^{\prime})^{2}(\mu_{*}^{\frac{1}{2}}- (\mu^{\frac{1}{2}})_{*}^{\prime})^{2}
+\frac{1}{2}(f^{2} -f^{\prime 2})(\mu_{*}^{\frac{1}{2}}- (\mu^{\frac{1}{2}})_{*}^{\prime})^{2}\\&& -\frac{1}{2}(f - f^{\prime})^{2}(\mu_{*}- \mu_{*}^{\prime})
:= A_{1} + A_{2} + A_{3} + A_{4}.
\eeno
By the change of variable $(v,v_{*}) \rightarrow (v^{\prime},v_{*}^{\prime})$, cancellation lemma in \cite{advw} and \eqref{mu-cancel-sigularity}, there hold
\beno |\int B^{\epsilon} A_{1} dv dv_{*} d\sigma | = |\int B^{\epsilon} \mu_{*} (f^{2} -f^{\prime 2}) dv dv_{*} d\sigma | \leq C |f|^{2}_{L^{2}_{\gamma/2}},\\
\int B^{\epsilon} A_{3} dv dv_{*} d\sigma = \int B^{\epsilon} A_{4} dv dv_{*} d\sigma = 0.
\eeno
Note that
\beno
 \int B^{\epsilon} A_{2} dv dv_{*} d\sigma = - \int B^{\epsilon} \mu_{*} (f -f^{\prime})^{2} dv dv_{*} d\sigma +
\int B^{\epsilon} \mu_{*}^{\frac{1}{2}}(\mu^{\frac{1}{2}})_{*}^{\prime}(f -f^{\prime})^{2} dv dv_{*} d\sigma
\ge - \mathcal{R}_\mu^{\epsilon,\gamma}(f).
\eeno
Patching the above estimates, we infer that
$
2 \langle \mathcal{L}_{1}^{\epsilon}f,f \rangle_v \geq \mathcal{M}^{\epsilon,\gamma}(f) - C |f|^{2}_{L^{2}_{\gamma/2}}$.
From which together with  \eqref{L1lowerbound1}, we have
\beno
5 \langle \mathcal{L}_{1}^{\epsilon}f,f \rangle_v \geq \frac{1}{2}\mathcal{R}_\mu^{\epsilon,\gamma}(f) +  \frac{1}{2}\mathcal{M}^{\epsilon,\gamma}(f) - \frac{3}{2}C |f|^{2}_{L^{2}_{\gamma/2}}
\gtrsim |f|_{\epsilon,\gamma/2}^2-C|f|^{2}_{L^{2}_{\gamma/2}}.
\eeno
 Here we use Proposition \ref{lowerboundpart1} and \eqref{lowerupper2} in Proposition \ref{lowerboundpart2}. By a similar proof as that of
 Lemma 2.15 in \cite{amuxy4}, there holds
 \ben \label{l2-upper-bound}
 |\langle \mathcal{L}_{2}^{\epsilon}g, h\rangle_v| \lesssim |\mu^{1/10^{3}}g|_{L^{2}} |\mu^{1/10^{3}}h|_{L^{2}} \lesssim |g|_{L^{2}_{\gamma/2}}|h|_{L^{2}_{\gamma/2}}.
 \een
Recalling $\mathcal{L}^\epsilon=\mathcal{L}^\epsilon_1+\mathcal{L}^\epsilon_2$, we finish the proof.
\end{proof}

As a result, we can get the
coercivity estimate of $\mathcal{L}^{\epsilon}$ on the perpendicular space $\mathcal{N}^{\perp}$.

\begin{prop}\label{coercvityforLep} There holds
\beno
\langle \mathcal{L}^{\epsilon}f, f\rangle_v  \gtrsim |(\mathbb{I}-\mathbb{P})f|^{2}_{\epsilon,\gamma/2}.
\eeno
Here $\mathbb{I}$ stands for the identity operator.
\end{prop}
\begin{proof}
By \cite{mouhot,mr}, there holds $\langle \mathcal{L}^{\epsilon}f, f\rangle_v  \gtrsim |(\mathbb{I}-\mathbb{P})f|^{2}_{L^{2}_{\gamma/2}}.$
By the definition of  $\mathbb{P}$ and Theorem \ref{equivalencenorm}, we have
\beno
\langle \mathcal{L}^{\epsilon}f, f\rangle_v  = \langle \mathcal{L}^{\epsilon}(\mathbb{I}-\mathbb{P})f, (\mathbb{I}-\mathbb{P})f\rangle_v   \gtrsim |(\mathbb{I}-\mathbb{P})f|^{2}_{\epsilon,\gamma/2} - |(\mathbb{I}-\mathbb{P})f|^{2}_{L^{2}_{\gamma/2}}.
\eeno
Making a suitable combination of the two estimates, we get the desired result.
\end{proof}

\subsection{Upper bound of the nonlinear term}
In this subsection, we will give the upper bound for the nonlinear term $ \Gamma^{\epsilon}(g,h)$.
We prove it by duality.
Observe that
\ben \label{Gamma-to-Q-I}
\langle \Gamma^{\epsilon}(g,h), f\rangle_v &=& \langle Q^{\epsilon}(\mu^{\frac{1}{2}}g,h), f\rangle_v + \mathcal{I}(g,h,f),
 \\ \label{definition-of-Q}
\mathcal{I}(g,h,f)&:=& \int b^{\epsilon}(\cos\theta)|v-v_{*}|^{\gamma}((\mu^{\frac{1}{2}})_{*}^{\prime} - \mu_{*}^{\frac{1}{2}})g_{*} h f^{\prime} d\sigma dv_{*} dv.
\een
We will estimate $Q^\epsilon$ in sub-subsection \ref{estimate-of-Q} and $\mathcal{I}(g,h,f)$ in sub-subsection \ref{estimate-of-I}.
\subsubsection{Upper bounds for the collision operator $Q^\epsilon$} \label{estimate-of-Q}
We  perform the decomposition:
\ben \label{Q-into-two-parts}
\langle Q^{\epsilon}(g,h), f\rangle_v =  \langle Q^{\epsilon}_{-1}(g,h), f\rangle_v + \langle Q^{\epsilon}_{\geq 0}(g,h), f\rangle_v,
\een
where
\beno
\langle Q^{\epsilon}_{-1}(g,h), f\rangle_v := \int b^{\epsilon}(\cos\theta)|v-v_{*}|^{\gamma}\phi(v-v_{*}) g_{*} h (f^{\prime}-f) d\sigma dv_{*} dv,
\\
\langle Q^{\epsilon}_{\geq 0}(g,h), f\rangle_v := \int b^{\epsilon}(\cos\theta)|v-v_{*}|^{\gamma} (1-\phi)(v-v_{*})g_{*} h (f^{\prime}-f) d\sigma dv_{*} dv.
\eeno
Here $\phi$ is defined in \eqref{function-phi-psi}.

To give an estimate for $Q_{-1}^\epsilon$, we begin with two lemmas.
\begin{lem}\label{hardylittlewoodsoblev} Let
$
A :=  \int |v-v_{*}|^{\gamma} \phi(v-v_{*}) g_{*} h f dv d v_{*},
B := \epsilon^{2s} \int b^{\epsilon}(\cos\theta)|v-v_{*}|^{\gamma}\phi(v-v_{*})g_{*} h f^{\prime} d\sigma dv d v_{*}.
$ The following statements are valid.
\begin{itemize}
\item If $\gamma>-\frac{3}{2}$,
$ |A|+|B| \lesssim  |g|_{L^{2}} |h|_{L^{2}}|f|_{L^{2}}$.
\item If $\gamma=-\f32$, for any $\eta>0$, there exists a constant $C_{\eta}$ such that
\beno |A|+|B|\le\left\{\begin{aligned} & C_{\eta} |g|_{H^\eta}|h|_{L^{2}}|f|_{L^{2}};\\&
C_{\eta} (|g|_{L^1}+|g|_{L^2})|h|_{H^{\eta}}|f|_{L^{2}}. \end{aligned}\right.\eeno
\item If $-3<\gamma< -\frac{3}{2}$, for any $\eta>0$, there exists a constant $C_{\eta}$ such that
\beno
|A|+|B| \lesssim \left\{\begin{aligned} & C_{\eta} |g|_{H^{\eta-\frac{3}{2}-\gamma}}|h|_{L^{2}}|f|_{L^{2}}
;\\
& |g|_{H^{s_{1}}}|h|_{H^{s_{2}}}|f|_{H^{s_{3}}}.\end{aligned}\right.
\eeno
Here the constants $s_1, s_2, s_3 \geq 0$ verify $s_{1}+s_{2}+s_{3}=-\gamma-\frac{3}{2}, s_{2}+s_{3}>0$.
\end{itemize}
\end{lem}
\begin{proof} We first handle the term $A$. For the case of
$\gamma>-\frac{3}{2}$, the desired result comes from the inequality
\beno \int |v-v_{*}|^{\gamma} \phi(v-v_{*}) |g_{*}| d v_{*} \lesssim |g|_{L^{2}}.\eeno

For the case of $\gamma=-\f32$, the first result follows the Hardy's  inequality
\beno \int |v-v_*|^{-\f32}\phi(v-v_*) |g_*|dv_*\le C_{\eta}\bigg(\int |v-v_*|^{-2\eta} |g_*|^2dv_*\bigg)^{\f12}\lesssim C_\eta |g|_{H^\eta}.   \eeno
The second result follows the Hardy-Littlewood-Sobolev inequality,  Sobolev embedding theorem and interpolation inequality. Indeed, we have
\beno |A|\lesssim |g|_{L^{p_1}}|h|_{L^{p_2}}|f|_{L^2} \lesssim C_{\eta} (|g|_{L^1}+|g|_{L^2})|h|_{H^{\eta}}|f|_{L^{2}},  \eeno
where $\frac{1}{p_{1}} + \frac{1}{p_2}=1, \frac{\eta}{3} = \frac{1}{2} - \frac{1}{p_{2}}$ with $p_2>2, 1<p_1<2$.

For the case of $-3<\gamma< -\frac{3}{2}$, the first result follows the Hardy's  inequality
\beno \int |v-v_*|^{\gamma}\phi(v-v_*) |g_*|dv_*\le C_{\eta}\bigg(\int |v-v_*|^{2\gamma+3-2\eta}  |g_*|^2dv_*\bigg)^{\f12}\lesssim C_\eta |g|_{H^{\eta-\f32-\gamma}}.   \eeno
The second result follows the Hardy-Littlewood-Sobolev inequality and Sobolev embedding theorem
\beno
|A| \lesssim |g|_{L^{p_{1}}}|h|_{L^{p_{2}}}|f|_{L^{p_{3}}} \lesssim |g|_{H^{s_{1}}}|h|_{H^{s_{2}}}|f|_{H^{s_{3}}},
\eeno
where $\f{-\gamma}3+\f1{p_1}+\f1{p_2}+\f1{p_3}=2, p_{i} \geq 2, \frac{1}{p_{2}} + \frac{1}{p_{3}} < 1,  \frac{s_{i}}{3} = \frac{1}{2} - \frac{1}{p_{i}}$ and thus
 $s_{1}+s_{2} + s_{3} = -\frac{3}{2}-\gamma, s_{2}+s_{3}>0$.

Now we point out how to derive the same estimates for $B$. From the above proof for $A$, thanks to the change of variable $v \rightarrow v^{\prime}$ and the estimate $\epsilon^{2s}\int b^\epsilon(\cos\theta) d\sigma \lesssim 1$,
we only need to prove that the Hardy-Littlewood-Sobolev inequality is still valid for $B$.
To this end, we observe that for $\f{-\gamma}3+\f1{p_1}+\f1{r}=2$ and $\f1{p_2}+\f1{p_3}=\f1{r}$,
\beno |B|&\lesssim& \bigg(\epsilon^{2s}\int b^\epsilon(\cos\theta) |v-v_*|^{\gamma}\phi(v-v_*)|g_*||h|^{\f{p_2}{r}}d\sigma dv_* dv\bigg)^{\f{r}{p_2}}\\
&&\times\bigg(\epsilon^{2s}\int b^\epsilon(\cos\theta) |v-v_*|^{\gamma}\phi(v-v_*)|g_*||f'|^{\f{p_3}{r}}d\sigma dv_* dv\bigg)^{\f{r}{p_3}}
\lesssim |g|_{L^{p_{1}}}|h|_{L^{p_{2}}}|f|_{L^{p_{3}}}.
\eeno Then we conclude the results for $B$ by copying the same argument used for $A$.
\end{proof}

\begin{lem}\label{aftercancellation}
Set
$
A :=  \int |v-v_{*}|^{\gamma}g_{*} h f dv d v_{*}
$ and    $
B := \epsilon^{2s} \int b^{\epsilon}(\cos\theta)|v-v_{*}|^{\gamma} g_{*} h f^{\prime} d\sigma dv d v_{*}$.
The following statements are valid.
\begin{itemize}
	\item If $\gamma \geq 0$, then
$ |A|+|B| \lesssim |g|_{L^{1}_{\gamma}}|h|_{L^{2}_{\gamma/2}}|f|_{L^{2}_{\gamma/2}}.$
\item If $-\frac{3}{2}<\gamma<0$, then
$ |A|+|B| \lesssim  (|g|_{L^{1}_{|\gamma|}}+|g|_{L^{2}_{|\gamma|}})|h|_{L^{2}_{\gamma/2}}|f|_{L^{2}_{\gamma/2}}.$
\item If $\gamma=-\f32$, for any $\eta>0$, there exists a constant $C_{\eta}$ such that
\beno |A|+|B|\le\left\{\begin{aligned} & C_{\eta} (|g|_{L^1_{|\gamma|}}+ |g|_{H^\eta_{|\gamma|}})|h|_{L^{2}_{\gamma/2}}|f|_{L^{2}_{\gamma/2}};\\&
C_{\eta} (|g|_{L^1_{|\gamma|}}+|g|_{L^2_{|\gamma|}})|h|_{H^{\eta}_{\gamma/2}}|f|_{L^{2}_{\gamma/2}}.\end{aligned}\right.\eeno
\item If $-3<\gamma< -\frac{3}{2}$, for any $\eta>0$, there exists a constant $C_{\eta}$ such that\beno
|A|+|B| \lesssim \left\{\begin{aligned} & C_{\eta} (|g|_{L^1_{|\gamma|}}+|g|_{H^{\eta-\frac{3}{2}-\gamma}_{|\gamma|}})|h|_{L^{2}_{\gamma/2}}|f|_{L^{2}_{\gamma/2}};\\
& (|g|_{L^1_{|\gamma|}}+
|g|_{H^{s_{1}}_{|\gamma|}})|h|_{H^{s_{2}}_{\gamma/2}}|f|_{H^{s_{3}}_{\gamma/2}}. \end{aligned}\right.
\eeno
Here $s_1, s_2, s_3 \geq 0$ are constants verifying $s_{1}+s_{2}+s_{3}=-\gamma-\frac{3}{2}, s_{2}+s_{3}>0$.
\end{itemize}
 \end{lem}
\begin{proof} Let $G = gW_{|\gamma|}, H = hW_{\gamma/2}, F = fW_{\gamma/2}$. Then we have \beno
|A| =  |\int |v-v_{*}|^{\gamma}\langle v_{*}\rangle^{-|\gamma|}\langle v\rangle^{-\gamma}G_{*} H F dv d v_{*}|
 \lesssim \int (1+\mathrm{1}_{\gamma \leq 0}|v-v_{*}|^{\gamma}\phi(v-v_*))|G_{*} H F| dv d v_{*}
\eeno
Then the estimates of $A$ follow from Lemma \ref{hardylittlewoodsoblev}. Since $|v-v_{*}| \sim |v^{\prime}-v_{*}|$,
by a similar argument, we can conclude the results for $B$.
\end{proof}

Now we are ready to give the following  upper bounds for $Q^\epsilon_{-1}$.
\begin{prop}\label{ubqepsilonsingular}
For any  $\eta>0$, the following estimates are valid.
\begin{itemize}
\item If $\gamma>-\frac{3}{2}$,
$|\langle Q^{\epsilon}_{-1}(g,h), f\rangle_v| \lesssim |g|_{L^{2}_{|\gamma|}}|W^{\epsilon}(D)h|_{L^{2}_{\gamma/2}}|W^{\epsilon}(D)f|_{L^{2}_{\gamma/2}}.$

\item If $\gamma=-\f32$, $|\langle Q^{\epsilon}_{-1}(g,h), f\rangle_v| \lesssim (|g|_{L^1_{|\gamma|}}+|g|_{H^{s_{1}}_{|\gamma|}})|W^{\epsilon}(D)h|_{H^{s_{2}}_{\gamma/2}}
    |W^{\epsilon}(D)f|_{L^2_{\gamma/2}}.$ Here $(s_1, s_2) = (0, \eta)$ or $(\eta, 0)$.
\item If $-3<\gamma< -\frac{3}{2}$,
$
|\langle Q^{\epsilon}_{-1}(g,h), f\rangle_v| \lesssim |g|_{H^{s_{1}}_{|\gamma|}}|W^{\epsilon}(D)h|_{H^{s_{2}}_{\gamma/2}}|W^{\epsilon}(D)f|_{H^{s_{3}}_{\gamma/2}}.
$ Here $s_1, s_2, s_3 \geq 0$ are constants verifying either $s_{1}+s_{2}+s_{3}=-\gamma-\frac{3}{2}, s_{2}+s_{3}>0$ or  $s_1=-\gamma-\frac{3}{2}+\eta, s_2=s_3=0$.
\end{itemize}
In the case of $\gamma=-\f32$ and $-3<\gamma< -\frac{3}{2}$,
the $\lesssim$ could produce a constant depending on $\eta$ on the right-hand sides.
\end{prop}
\begin{proof} We divide the proof into two steps.

{\it Step 1: Estimates without weight.} Following the proof of Theorem 1.1 in \cite{he2}, we conclude that
	\ben\label{Q-1} | Q^{\epsilon}_{-1}(g,h), f\rangle_v|\lesssim |g|_{L^2}|h|_{H^a}|f|_{H^b}, \een
	where $a+b=2s$ with $a,b\in [0,2s]$. Recalling \eqref{defphilh}, we use the following decomposition
 \beno
\langle Q^{\epsilon}_{-1}(g,h), f\rangle_v =  \langle Q^{\epsilon}_{-1}(g,h_\phi+h^\phi), f_\phi+f^\phi\rangle_{v}.
\eeno
Using \eqref{Q-1},
we have
$ |\langle  Q^{\epsilon}_{-1}(g,h_\phi), f_\phi\rangle_{v}|\lesssim |g|_{L^2}|h_\phi|_{H^s}|f_\phi|_{H^s},
|\langle  Q^{\epsilon}_{-1}(g,h_\phi), f^\phi\rangle_{v}|\lesssim |g|_{L^2}|h_\phi|_{H^{2s}}|f^\phi|_{L^2}$ and $
 |\langle  Q^{\epsilon}_{-1}(g,h^\phi), f_\phi\rangle_{v}|\lesssim |g|_{L^2}|h^\phi|_{L^2}|f_\phi|_{H^{2s}}.$
Thanks to the fact $|h_\phi|_{H^{2s}}\lesssim \epsilon^{-s}|h_\phi|_{H^{s}}$, we have
\beno |\langle  Q^{\epsilon}_{-1}(g,h_\phi), f_\phi\rangle_{v}|+
|\langle  Q^{\epsilon}_{-1}(g,h_\phi), f^\phi\rangle_{v}|+
|\langle  Q^{\epsilon}_{-1}(g,h^\phi), f_\phi\rangle_{v}|\lesssim  |g|_{L^2}|W^\epsilon(D)h|_{L^2}|W^\epsilon(D)f|_{L^2}.\eeno
From which together Lemma \ref{hardylittlewoodsoblev} to deal with $
\langle Q^{\epsilon}_{-1}(g,h^\phi), f^\phi\rangle_{v}$, we conclude that,
\beno
\gamma>-\frac{3}{2}, & \quad
|\langle Q^{\epsilon}_{-1}(g,h), f\rangle_{v}|
\lesssim |g|_{L^2}|W^\epsilon(D)h|_{L^2}|W^\epsilon(D)f|_{L^2};
\\
\gamma=-\f32, & \quad |\langle Q^{\epsilon}_{-1}(g,h), f\rangle_{v}|
\lesssim (|g|_{L^1}+|g|_{H^{s_{1}}})|W^\epsilon(D)h|_{H^{s_{2}}}|W^\epsilon(D)f|_{L^2};
\\
-3<\gamma< -\frac{3}{2}, & \quad
|\langle Q^{\epsilon}_{-1}(g,h), f\rangle_{v}|
\lesssim  |g|_{H^{s_{1}}}|W^\epsilon(D)h|_{H^{s_{2}}}|W^\epsilon(D)f|_{H^{s_{3}}}.
\eeno
That is, the estimates are valid if we take $\gamma=0$ on the right-hand side. In the next step, we recover the weights by using some commutator estimates.

{\it Step 2: Estimates with weight.} We recall that
\beno
\langle Q^{\epsilon}_{-1}(g,h), f\rangle_v &=& \sum_{j\geq N_{0} - 1} \langle Q^{\epsilon}_{-1}(\varphi_{j}g,\tilde{\varphi}_{j}h), \tilde{\varphi}_{j}f\rangle_v + \sum_{j\leq N_{0} - 2} \langle Q^{\epsilon}_{-1}(\varphi_{j}g,\mathcal{U}_{N_{0}-1}h), \mathcal{U}_{N_{0}-1}f\rangle_v
\\&:=&\mathcal{A}_{1} + \mathcal{A}_{2},
\eeno
where $\tilde{\varphi}_{j} := \sum_{k \geq -1,|k-j| \leq N_{0}} \varphi_{k}$ and $\mathcal{U}_{N_{0}-1} := \sum_{-1 \leq k \leq N_{0}-1} \varphi_{k} $ for some fixed integer $N_{0} \geq 4$. We only consider the most difficult case $-3<\gamma< -\frac{3}{2}$. In this case, by {\it Step 1}, we have
\beno |\mathcal{A}_{1}| \lesssim  \sum_{j\geq N_{0} - 1} |\langle D \rangle^{s_{1}}\varphi_{j}g|_{L^{2}}|\langle D \rangle^{s_{2}}W^{\epsilon}(D)\tilde{\varphi}_{j}h|_{L^{2}}
|\langle D \rangle^{s_{3}}W^{\epsilon}(D)\tilde{\varphi}_{j}f|_{L^{2}} := \sum_{j\geq N_{0} - 1} \mathcal{A}_{1,j}.\eeno
For simplicity, we write $\mathcal{A}_{1,j} =  \mathcal{B}_{j}\mathcal{C}_{j}\mathcal{D}_{j} $, where
$ \mathcal{B}_{j} := 2^{-j}|\langle D \rangle^{s_{1}}2^{(- \gamma + 1) j}\langle \cdot \rangle^{\gamma}\varphi_{j}\langle \cdot \rangle^{-\gamma}g|_{L^{2}},   \mathcal{C}_{j} := 2^{-j}|\langle D \rangle^{s_{2}}\\W^{\epsilon}(D)2^{(\gamma/2+1)j}\langle \cdot \rangle^{-\gamma/2}\tilde{\varphi}_{j}\langle \cdot \rangle^{\gamma/2}h|_{L^{2}}$ and $ \mathcal{D}_{j} := 2^{-j}|\langle D \rangle^{s_{3}}W^{\epsilon}(D)2^{(\gamma/2+1)j}\langle \cdot \rangle^{-\gamma/2}\tilde{\varphi}_{j}\langle \cdot \rangle^{\gamma/2}f|_{L^{2}}$.
 Thanks to $2^{(- \gamma + 1) j}\langle \cdot \rangle^{\gamma}\varphi_{j} \in S^{1}_{1,0}$ and $\langle \cdot \rangle^{s_{1}} \in S^{s_{1}}_{1,0}$,  Lemma \ref{operatorcommutator1} yields
\beno \mathcal{B}_{j} \lesssim | \varphi_{j}\langle D \rangle^{s_{1}}\langle \cdot \rangle^{-\gamma}g|_{L^{2}} + 2^{-j}|\langle \cdot \rangle^{-\gamma}g|_{H^{s_{1}-1}}. \eeno
Similarly, Lemma \ref{operatorcommutator1} yields
\beno \mathcal{C}_{j} \lesssim | \tilde{\varphi}_{j}\langle D \rangle^{s_{2}} W^{\epsilon}(D)\langle \cdot \rangle^{\gamma/2}h|_{L^{2}} + 2^{-j}|\langle \cdot \rangle^{\gamma/2}h|_{H^{s_{2}+s-1}},
\\ \mathcal{D}_{j} \lesssim| \tilde{\varphi}_{j}\langle D \rangle^{s_{3}}W^{\epsilon}(D)\langle \cdot \rangle^{\gamma/2}f|_{L^{2}}+ 2^{-j}|\langle \cdot \rangle^{\gamma/2}f|_{H^{s_{3}+s-1}}. \eeno
Thus it is not difficult to conclude that
\beno
|\mathcal{A}_{1}| \lesssim |\langle D \rangle^{s_{1}}\langle \cdot \rangle^{-\gamma}g|_{L^{2}}|\langle D \rangle^{s_{2}}W^{\epsilon}(D)\langle \cdot \rangle^{\gamma/2}h|_{L^{2}}|\langle D \rangle^{s_{3}}W^{\epsilon}(D)\langle \cdot \rangle^{\gamma/2}f|_{L^{2}}.
\eeno
The term $\mathcal{A}_{2}$ is much easier since it has only finite terms. Finally, we have
\beno
|\langle Q^{\epsilon}_{-1}(g,h), f\rangle_v| \lesssim |\langle D \rangle^{s_{1}}\langle \cdot \rangle^{-\gamma}g|_{L^{2}}|\langle D \rangle^{s_{2}}W^{\epsilon}(D)\langle \cdot \rangle^{\gamma/2}h|_{L^{2}}|\langle D \rangle^{s_{3}}W^{\epsilon}(D)\langle \cdot \rangle^{\gamma/2}f|_{L^{2}}.
\eeno

For the case $\gamma\ge-\frac{3}{2}$, we can repeat the above procedure to get the desired results. We finish the proof
with the help of Lemma \ref{func}.
\end{proof}

To give the upper bound for $Q^{\epsilon}_{\geq 0}$, we need the next two lemmas.

\begin{lem}\label{crosstermsimilar}
	Let
	$
	\mathcal{Y}^{\epsilon,\gamma}(h,f) := \int b^{\epsilon}(\frac{u}{|u|}\cdot\sigma)\langle u \rangle^{\gamma} h(u)[f(u^{+}) - f(|u|\frac{u^{+}}{|u^{+}|})] du d\sigma,
	$
	then
	\beno
	|\mathcal{Y}^{\epsilon,\gamma}(h,f)| \lesssim (|W^{\epsilon}W_{\gamma/2}h|_{L^{2}}+|W^{\epsilon}(D)W_{\gamma/2}h|_{L^{2}})
(|W^{\epsilon}W_{\gamma/2}f|_{L^{2}}+|W^{\epsilon}(D)W_{\gamma/2}f|_{L^{2}}).
	\eeno
\end{lem}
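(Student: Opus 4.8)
The plan is to apply Cauchy--Schwarz, pairing the increment of $f$ with a suitable portion of the weight $b^{\epsilon}\langle u\rangle^{\gamma}$ so that the $f$-factor becomes $\mathcal{Z}^{\epsilon,\gamma}(f)$, which is controlled by Lemma \ref{gammanonzerotozero}. The two structural facts to exploit are that $f(u^{+})-f(|u|\tfrac{u^{+}}{|u^{+}|})$ is a \emph{radial} increment (same direction $u^{+}/|u^{+}|$, radii $|u|\cos\tfrac\theta2$ and $|u|$), so that no gain of $\SS^{2}$-regularity of $f$ enters, and that it is of size $\sim|u|\theta^{2}$, hence small precisely when $|u|\theta^{2}$ is small. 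First I would reduce to $\gamma=0$: since $|u|=\bigl||u|\tfrac{u^{+}}{|u^{+}|}\bigr|$ one has $W_{\gamma/2}(u)=W_{\gamma/2}\bigl(|u|\tfrac{u^{+}}{|u^{+}|}\bigr)$ and $W_{\gamma/2}(u)W_{-\gamma/2}(u^{+})=1+O(\theta^{2})$, so exactly as in Step 2 of the proof of Lemma \ref{gammanonzerotozero} one gets $\mathcal{Y}^{\epsilon,\gamma}(h,f)=\mathcal{Y}^{\epsilon,0}(W_{\gamma/2}h,W_{\gamma/2}f)+\mathcal{E}$, where, using $\int_{\SS^{2}}b^{\epsilon}(\cos\theta)\theta^{2}\,d\sigma\lesssim1$ (finite since $1-2s>-1$) and the change of variable $u\mapsto u^{+}$, the remainder obeys $|\mathcal{E}|\lesssim|W_{\gamma/2}h|_{L^{2}}|W_{\gamma/2}f|_{L^{2}}$; because $W^{\epsilon}\gtrsim1$ this is already bounded by the right-hand side of the claim.

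\textbf{The high part of $h$.} Split $h=h^{l}+h^{h}$ with $h^{l}=\phi(\epsilon\cdot)h$, $h^{h}=(1-\phi(\epsilon\cdot))h$ as in \eqref{defphilh}. For $h^{h}$ the plain Cauchy--Schwarz suffices:
$$|\mathcal{Y}^{\epsilon,0}(h^{h},f)|\le\Bigl(\int b^{\epsilon}(\tfrac{u}{|u|}\cdot\sigma)|h^{h}(u)|^{2}\,du\,d\sigma\Bigr)^{1/2}\mathcal{Z}^{\epsilon,0}(f)^{1/2}\lesssim\bigl(\epsilon^{-2s}|h^{h}|_{L^{2}}^{2}\bigr)^{1/2}\mathcal{Z}^{\epsilon,0}(f)^{1/2},$$
since $\int_{\SS^{2}}b^{\epsilon}(\tfrac{u}{|u|}\cdot\sigma)\,d\sigma\sim\epsilon^{-2s}$; as $(W^{\epsilon})^{2}\gtrsim\epsilon^{-2s}$ on $\mathrm{supp}\,h^{h}\subset\{|u|\gtrsim1/\epsilon\}$ the first factor is $\lesssim|W^{\epsilon}h|_{L^{2}}^{2}$, and Lemma \ref{gammanonzerotozero} bounds $\mathcal{Z}^{\epsilon,0}(f)$ by $|W^{\epsilon}(D)f|_{L^{2}}^{2}+|W^{\epsilon}f|_{L^{2}}^{2}$.

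\textbf{The low part of $h$ — the main obstacle.} For $\mathcal{Y}^{\epsilon,0}(h^{l},f)$ the crude bound would only yield $\epsilon^{-2s}|h^{l}|_{L^{2}}^{2}$, which is \emph{not} controlled by $|W^{\epsilon}h^{l}|_{L^{2}}^{2}\sim|h^{l}|_{L^{2}_{s}}^{2}$, so here one must use the smallness of the increment. I would pass to the coordinates $u=r\tau$, $\varsigma=\tfrac{\tau+\sigma}{|\tau+\sigma|}$ of the proof of Lemma \ref{gammanonzerotozero}; with $\eta=r\varsigma$ and $\psi$ the angle between $\tau$ and $\varsigma$ (so $\theta=2\psi$), $\mathcal{Y}^{\epsilon,0}(h^{l},f)$ becomes, up to harmless factors,
$$\int_{\R^{3}}\int_{\epsilon}^{\pi/4}\psi^{-1-2s}\Bigl(\int_{0}^{2\pi}h^{l}(|\eta|\,\tau_{\psi,\phi})\,d\phi\Bigr)\bigl[f(\eta\cos\psi)-f(\eta)\bigr]\,d\psi\,d\eta,$$
where $\tau_{\psi,\phi}$ ranges over the directions at angular distance $\psi$ from $\eta/|\eta|$; the inner $\phi$-average of $h^{l}$ is an averaging operator, bounded on $L^{2}(\SS^{2})$ uniformly in $\psi$, so no $\SS^{2}$-regularity of $h^{l}$ is needed. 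This is a bilinear analogue of the quantity $\mathcal{A}$ of Lemma \ref{a-technical-lemma}, and I would treat it by the same double dyadic decomposition: expand $h^{l}$ and $f$ in the phase space and $f$ further in the frequency space; on the blocks $\{|\eta|\sim2^{k},\ |\xi|\sim2^{l}\}$ where the $\psi$-integral gets truncated to $\psi\lesssim2^{-(k+l)/2}$, Taylor-expand $f(\eta\cos\psi)-f(\eta)=-(1-\cos\psi)\int_{0}^{1}(\partial_{r}f)(\,\cdot\,)\,d\kappa$ and apply a \emph{weighted} Cauchy--Schwarz distributing the powers of $\psi$, the gain $\int_{\epsilon}^{2^{-(k+l)/2}}\psi^{3-2s}\,d\psi\sim2^{-(2-s)(k+l)}$ being the crucial point; this is arranged so that the $h$-side collects exactly the weight $\langle\cdot\rangle^{2s}$ (matching $W^{\epsilon}$ on $\{|u|\lesssim1/\epsilon\}$) while the $f$-side stays at the level $|W^{\epsilon}(D)f|_{L^{2}}^{2}+|W^{\epsilon}f|_{L^{2}}^{2}$. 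On the complementary blocks one bounds the increment directly by $|f(\eta\cos\psi)|^{2}+|f(\eta)|^{2}$. Summing over dyadic scales by $\sum_{k}|\varphi_{k}g|_{L^{2}}^{2}\lesssim|g|_{L^{2}}^{2}$ and moving $W^{\epsilon}(D)$ past the dyadic and weight multipliers via Lemma \ref{operatorcommutator1} (using $W^{\epsilon}\in S^{s}_{1,0}$) yields $|\mathcal{Y}^{\epsilon,0}(h^{l},f)|\lesssim(|W^{\epsilon}(D)h^{l}|_{L^{2}}^{2}+|W^{\epsilon}h^{l}|_{L^{2}}^{2})^{1/2}(|W^{\epsilon}(D)f|_{L^{2}}^{2}+|W^{\epsilon}f|_{L^{2}}^{2})^{1/2}$.

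\textbf{Where the difficulty sits.} The genuinely delicate step is the low part: a \emph{global} Taylor expansion of the increment would produce a norm of $f$ of order $H^{1}_{1}$ rather than the $W^{\epsilon}$-level, so one is forced, as in Lemmas \ref{a-technical-lemma} and \ref{gammanonzerotozero}, to localize in both the phase and the frequency variables so that the angular truncation supplies exactly the dyadic decay needed to interpolate down to the correct order while keeping the $h$-side at the weight $\langle\cdot\rangle^{2s}$; bookkeeping of the three dyadic indices (phase of $h^{l}$, phase of $f$, frequency of $f$) together with the weight functions is the technical burden. Collecting the high- and low-part estimates and undoing the reduction to $\gamma=0$ completes the proof.
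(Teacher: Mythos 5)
Your reduction to $\gamma=0$ is exactly the paper's Step 2, and your treatment of the high part $h^h$ via Cauchy--Schwarz against $\mathcal{Z}^{\epsilon,0}(f)$ and Lemma~\ref{gammanonzerotozero} is correct; it is the counterpart of the paper's estimate on the dyadic blocks with $2^k\ge 1/\epsilon$. The difficulty is, as you say, concentrated in the low part, but your plan for it has a genuine gap.

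The problem is that the two ingredients you want to combine --- phase-and-frequency double dyadic decomposition, and Taylor expansion of the radial increment of $f$ --- are only compatible when one can localize \emph{both} factors jointly in the pair of indices $(k,l)$, and in your setup the $h$-side cannot be made to see the frequency index $l$. Concretely: Taylor at scale $\psi\lesssim 2^{-(k+l)/2}$ on the block $|\eta|\sim 2^k$, frequency of $f$ $\sim 2^l$ produces, after Cauchy--Schwarz in $\eta$, a bound of the form $2^{s(k+l)}\,|\varphi_k h^{l}|_{L^2}\,|\mathcal{P}_l\varphi_k f|_{L^2}$, where only the $f$-factor carries $l$. Summing the large-$\psi$ piece gives the same shape. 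When you then apply Cauchy--Schwarz to the double sum $\sum_{k,l}$, you are forced to put $\sum_{2^l\le 1/\epsilon}2^{sl}\sim \epsilon^{-s}$ against $h$, which produces a factor $\epsilon^{-s}|h^l|_{L^2_{s/2}}$ rather than $|W^\epsilon h^l|_{L^2}$. Alternatively, if you avoid decomposing $f$ in frequency and Taylor-expand on the whole phase block $\varphi_k f$, the $f$-factor becomes $|\nabla\varphi_k f|_{L^2}$ with no cap at the $W^\epsilon(D)$-level, so $\sum_k 2^{ks}|\varphi_k h^l|_{L^2}|\nabla\varphi_k f|_{L^2}$ is not bounded by the right-hand side either. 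The ``averaging operator bounded on $L^2(\SS^2)$'' remark does not help, because it only controls the $L^2$ size of the $h$-factor, not its frequency content. This is precisely the divergence you acknowledged at the start of the low-part paragraph, and the weighted Cauchy--Schwarz ``distributing powers of $\psi$'' cannot repair it: distributing more angular weight to the $h$-side makes the $f$-side more singular, not less.

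The paper's key device that you have omitted is Proposition~\ref{fourier-transform-cross-term}. Applied to the phase-block $\mathcal{Y}_k$ (for $2^k\le 1/\epsilon$), it converts
\[
\int b^{\epsilon}(\tilde\varphi_k h)(u)\,\bigl[(\varphi_k f)(u^{+})-(\varphi_k f)(|u|\tfrac{u^{+}}{|u^{+}|})\bigr]\,du\,d\sigma
=\int b^{\epsilon}\bigl[\widehat{\tilde\varphi_k h}(\xi^{+})-\widehat{\tilde\varphi_k h}(|\xi|\tfrac{\xi^{+}}{|\xi^{+}|})\bigr]\,\overline{\widehat{\varphi_k f}}(\xi)\,d\xi\,d\sigma,
\]
moving the increment from $f$ onto $\hat h$. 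After a frequency decomposition $\varphi_l\widehat{\tilde\varphi_k h}$, $\tilde\varphi_l\widehat{\varphi_k f}$, \emph{both} factors are localized in $(k,l)$, the threshold $\theta\sim 2^{-(k+l)/2}$ is symmetric between them, and the double sum $\sum_{k,l}2^{s(k+l)}|\varphi_l\widehat{\tilde\varphi_k h}|_{L^2}|\tilde\varphi_l\widehat{\varphi_k f}|_{L^2}$ closes by Cauchy--Schwarz plus Young's inequality $2^{s(k+l)}\lesssim 2^{2sk}+2^{2sl}$, which is precisely how $|W^\epsilon h|^2_{L^2}+|W^\epsilon(D)h|^2_{L^2}$ appears on the $h$-side. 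For the bilinear pairing of \emph{different} functions this duality is unavoidable; Plancherel on a square (as in Lemma~\ref{a-technical-lemma}) is not an adequate substitute.
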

\begin{proof}We divide the proof into two steps.
	
{\it Step 1: $\gamma = 0$.} For ease of notation, we denote $\mathcal{Y} = \mathcal{Y}^{\epsilon, 0}(h,f)$. The proof is similar to that of Lemma \ref{a-technical-lemma}.
First applying dyadic decomposition in the phase space, we have
	\beno
	\mathcal{Y}= \sum_{k=-1}^{\infty}\int b^{\epsilon}(\frac{u}{|u|}\cdot\sigma) (\tilde{\varphi}_{k}h)(u) [(\varphi_{k}f)(u^{+})- (\varphi_{k}f)(|u|\frac{u^{+}}{|u^{+}|})] du d\sigma
	:= \sum_{k=-1}^{\infty} \mathcal{Y}_{k}.
	\eeno
	where $\tilde{\varphi}_{k} = \sum_{|l-k|\leq 3} \varphi_{l}$.
	We split the proof into two cases:  $2^{k}\geq 1/\epsilon$ and $2^{k}\leq 1/\epsilon$.
	For the case $2^{k}\geq 1/\epsilon$, we have
	\beno
	|\mathcal{Y}_{k}| \leq \bigg(\int b^{\epsilon}(\frac{u}{|u|}\cdot\sigma) |(\tilde{\varphi}_{k}h)(u)|^{2}  du d\sigma\bigg)^{\frac{1}{2}}
	  \bigg(\int b^{\epsilon}(\frac{u}{|u|}\cdot\sigma)  (|(\varphi_{k}f)(u^{+})|^{2} + |(\varphi_{k}f)(|u|\frac{u^{+}}{|u^{+}|})|^{2}) du d\sigma\bigg)^{\frac{1}{2}}.
	\eeno
	By the change of variable $u \rightarrow u^{+}$ and $u \rightarrow  w= |u|\frac{u^{+}}{|u^{+}|}$ respectively, we have
	$|\mathcal{Y}_{k}| \lesssim\epsilon^{-2s}|\tilde{\varphi}_{k}h|_{L^{2}}|\varphi_{k}f|_{L^{2}}$, which implies
	\beno
	|\sum_{2^{k}\geq 1/\epsilon} \mathcal{Y}_{k}| \lesssim \sum_{2^{k}\geq 1/\epsilon} \epsilon^{-2s}|\tilde{\varphi}_{k}h|_{L^{2}}|\varphi_{k}f|_{L^{2}} \lesssim |W^{\epsilon}h|_{L^{2}}|W^{\epsilon}f|_{L^{2}}.
	\eeno
	For the case $2^{k}\leq 1/\epsilon$, by Proposition \ref{fourier-transform-cross-term} and  the dyadic decomposition in the frequency space, we have
	\beno
	\mathcal{Y}_{k} &=& \int b^{\epsilon}(\frac{\xi}{|\xi|}\cdot\sigma)  [\widehat{\tilde{\varphi}_{k}h}(\xi^{+})- \widehat{\tilde{\varphi}_{k}h}(|\xi|\frac{\xi^{+}}{|\xi^{+}|})] \overline{\widehat{\varphi_{k}f}}(\xi)  d\xi d\sigma
	\\&=& \sum_{l=-1}^{\infty} \int b^{\epsilon}(\frac{\xi}{|\xi|}\cdot\sigma)[({\varphi}_{l}\widehat{\tilde{\varphi}_{k}h})(\xi^{+})- ({\varphi}_{l}\widehat{\tilde{\varphi}_{k}h})(|\xi|\frac{\xi^{+}}{|\xi^{+}|})] (\tilde{\varphi}_{l}\overline{\widehat{\varphi_{k}f}})(\xi)  d\xi d\sigma
	:= \sum_{l=-1}^{\infty} \mathcal{Y}_{k,l}.
	\eeno
	For the case $2^{l}\geq 1/\epsilon$, we have
	$
	|\mathcal{Y}_{k,l}| \lesssim \epsilon^{-2s}|{\varphi}_{l}\widehat{\tilde{\varphi}_{k}h}|_{L^{2}}|\tilde{\varphi}_{l}\widehat{\varphi_{k}f}|_{L^{2}}
	$
and thus
	\beno
	\sum_{2^{l}\geq 1/\epsilon}|\mathcal{Y}_{k,l}| \lesssim \sum_{2^{l}\geq 1/\epsilon} \epsilon^{-2s}|{\varphi}_{l}\widehat{\tilde{\varphi}_{k}h}|_{L^{2}}|\tilde{\varphi}_{l}\widehat{\varphi_{k}f}|_{L^{2}}
	\lesssim |W^{\epsilon}(D)\tilde{\varphi}_{k}h|_{L^{2}}|W^{\epsilon}(D)\varphi_{k}f|_{L^{2}}.
	\eeno
	Then by \eqref{decompostionpacth}, we have
	$
	\sum_{2^{k}\leq 1/\epsilon,2^{l}\geq 1/\epsilon}|\mathcal{Y}_{k,l}| \lesssim |W^{\epsilon}(D)h|_{L^{2}}|W^{\epsilon}(D)f|_{L^{2}}.
	$
For the case $2^{l}\leq 1/\epsilon$,  we have
	\beno
	\mathcal{Y}_{k,l} &=& \int b^{\epsilon}(\frac{\xi}{|\xi|}\cdot\sigma)\mathrm{1}_{\theta \geq 2^{-\frac{k+l}{2}}}[({\varphi}_{l}\widehat{\tilde{\varphi}_{k}h})(\xi^{+})- ({\varphi}_{l}\widehat{\tilde{\varphi}_{k}h})(|\xi|\frac{\xi^{+}}{|\xi^{+}|})] (\tilde{\varphi}_{l}\overline{\widehat{\varphi_{k}f}})(\xi) d\xi d\sigma
	\\&& + \int b^{\epsilon}(\frac{\xi}{|\xi|}\cdot\sigma)\mathrm{1}_{\theta \leq 2^{-\frac{k+l}{2}}}[({\varphi}_{l}\widehat{\tilde{\varphi}_{k}h})(\xi^{+})- ({\varphi}_{l}\widehat{\tilde{\varphi}_{k}h})(|\xi|\frac{\xi^{+}}{|\xi^{+}|})] (\tilde{\varphi}_{l}\overline{\widehat{\varphi_{k}f}})(\xi) d\xi d\sigma
	\\&:=& \mathcal{Y}_{k,l,1} + \mathcal{Y}_{k,l,2}.
	\eeno
Since $\int b^{\epsilon}(\cos\theta) \mathrm{1}_{\theta \geq 2^{-k/2-l/2}} d\sigma \lesssim 2^{s(k+l)}$, we have
	$
	|\mathcal{Y}_{k,l,1}| \lesssim 2^{s(k+l)}|{\varphi}_{l}\widehat{\tilde{\varphi}_{k}h}|_{L^{2}}|\tilde{\varphi}_{l}\widehat{\varphi_{k}f}|_{L^{2}}
	$
and thus
	\beno
	\sum_{2^{k}\leq 1/\epsilon,2^{l}\leq 1/\epsilon} |\mathcal{Y}_{k,l,1}| &\leq& \{\sum_{2^{k}\leq 1/\epsilon,2^{l}\leq 1/\epsilon} 2^{s(k+l)}|{\varphi}_{l}\widehat{\tilde{\varphi}_{k}h}|^{2}_{L^{2}}\}^{\frac{1}{2}}\{\sum_{2^{k}\leq 1/\epsilon,2^{l}\leq 1/\epsilon}2^{s(k+l)}|\tilde{\varphi}_{l}\widehat{\varphi_{k}f}|^{2}_{L^{2}}\}^{\frac{1}{2}}
	\\&\lesssim& (|W^{\epsilon}h|_{L^{2}}+|W^{\epsilon}(D)h|_{L^{2}})(|W^{\epsilon}f|_{L^{2}}+|W^{\epsilon}(D)f|_{L^{2}}).
	\eeno
	By Taylor expansion,
	$
	({\varphi}_{l}\widehat{\tilde{\varphi}_{k}h})(\xi^{+})- ({\varphi}_{l}\widehat{\tilde{\varphi}_{k}h})(|\xi|\frac{\xi^{+}}{|\xi^{+}|}) = (1-\frac{1}{\cos\theta})\int_{0}^{1}
	(\nabla \varphi_{l}\widehat{\tilde{\varphi}_{k}h})(\xi^{+}(\kappa))\cdot \xi^{+} d\kappa,
	$
	where $\xi^{+}(\kappa) = (1-\kappa)|\xi|\frac{\xi^{+}}{|\xi^{+}|} + \kappa \xi^{+}$. From which we get
	\beno
	|\mathcal{Y}_{k,l,2}| &=& | \int_{[0,1]\times \R^{3} \times \mathbb{S}^{2}}  b^{\epsilon}(\frac{\xi}{|\xi|}\cdot\sigma)(1-\frac{1}{\cos\theta})\mathrm{1}_{\theta \leq 2^{-\frac{k+l}{2}}}(\tilde{\varphi}_{l}\overline{\widehat{\varphi_{k}f}})(\xi)
	(\nabla \varphi_{l}\widehat{\tilde{\varphi}_{k}h})(\xi^{+}(\kappa))\cdot \xi^{+} d\kappa d\xi d\sigma |
	\\&\lesssim& \{\int_{0}^{2^{-\frac{k+l}{2}}} \int_{\R^{3}}  \theta^{1-2s} |(\tilde{\varphi}_{l}\widehat{\varphi_{k}f})(\xi)|^{2} d\theta d\xi\}^{\frac{1}{2}} \{\int_{0}^{2^{-\frac{k+l}{2}}} \int_{\R^{3}}  \theta^{1-2s} |\eta|^{2}|(\nabla \varphi_{l}\widehat{\tilde{\varphi}_{k}h})(\eta)|^{2} d\theta d\eta\}^{\frac{1}{2}}
	\\&\lesssim& 2^{s(k+l)/2}|\tilde{\varphi}_{l}\widehat{\varphi_{k}f}|_{L^{2}}\{2^{-(2-s)(k+l)}\int |\eta|^{2}|(\nabla \varphi_{l}\widehat{\tilde{\varphi}_{k}h})(\eta)|^{2}d\eta\}^{\frac{1}{2}},
	\eeno
	where we use the change of variable $\xi \rightarrow \eta = \xi^{+}(\kappa)$.
	It is not difficult to compute that
	\beno
	2^{-(2-s)(k+l)}\int |\eta|^{2}|(\nabla \varphi_{l}\widehat{\tilde{\varphi}_{k}h})(\eta)|^{2}d\eta\ \lesssim 2^{-(2-s)(l+k)} |\tilde{\varphi}_{k}h|^{2}_{L^{2}} + 2^{s(l+k)-2k}|\varphi_{l} \widehat{v\tilde{\varphi}_{k}h}|^{2}_{L^{2}}.
	\eeno
	Thus by \eqref{decompostionpacth},
	$
	\sum_{2^{k}\leq 1/\epsilon,2^{l}\leq 1/\epsilon}|\mathcal{Y}_{k,l,2}| \lesssim (|W^{\epsilon}h|_{L^{2}}+|W^{\epsilon}(D)h|_{L^{2}})(|W^{\epsilon}f|_{L^{2}}+|W^{\epsilon}(D)f|_{L^{2}}).
	$
	Patching together all the above results, we conclude
	\ben \label{case-gamma-0}
	|\mathcal{Y}^{\epsilon,0}(h,f)| \lesssim (|W^{\epsilon}h|_{L^{2}}+|W^{\epsilon}(D)h|_{L^{2}})(|W^{\epsilon}f|_{L^{2}}+|W^{\epsilon}(D)f|_{L^{2}}).
	\een

{\it Step 2: $\gamma \neq 0$.}  For simplicity, denote $w = |u|\frac{u^{+}}{|u^{+}|}$, then $W_{\gamma/2}(u) = W_{\gamma/2}(w)$.  Note that the identity
	\beno
	\langle u \rangle^{\gamma} h(u)[f(u^{+}) - f(w)] &=& (W_{\gamma/2}h)(u)[(W_{\gamma/2}f)(u^{+})-(W_{\gamma/2}f)(w)]
	\\&& +(W_{\gamma/2}h)(u)(W_{\gamma/2}f)(u^{+})(W_{\gamma/2}(w)W_{-\gamma/2}(u^{+}) - 1).
	\eeno
and thus
	\beno
	\mathcal{Y}^{\epsilon,\gamma}(h,f) &=& \mathcal{Y}^{\epsilon,0}(W_{\gamma/2}h, W_{\gamma/2}f) + \mathcal{A}
\\  \mathcal{A} &:=& \int b^{\epsilon}(\frac{u}{|u|}\cdot\sigma)(W_{\gamma/2}h)(u)
	(W_{\gamma/2}f)(u^{+})(W_{\gamma/2}(w)W_{-\gamma/2}(u^{+}) - 1) du d\sigma.
	\eeno
	Observing that
	$
	|W_{\gamma/2}(u)W_{-\gamma/2}(u^{+}) - 1| \lesssim \sin^{2}\frac{\theta}{2},
	$
	we have
	\beno
	|\mathcal{A}| &\leq&  \{\int b^{\epsilon}(\frac{u}{|u|}\cdot\sigma)|(W_{\gamma/2}h)(u)|^{2}|W_{\gamma/2}(w)W_{-\gamma/2}(u^{+}) - 1|du d\sigma\}^{\frac{1}{2}}
	\\&&\times\{\int b^{\epsilon}(\frac{u}{|u|}\cdot\sigma)|(W_{\gamma/2}f)(u^{+})|^{2}|W_{\gamma/2}(w)W_{-\gamma/2}(u^{+}) - 1| du d\sigma\}^{\frac{1}{2}}
	\lesssim  |W_{\gamma/2}h|_{L^{2}}|W_{\gamma/2}f|_{L^{2}},
	\eeno
	where the change of variable $u \rightarrow u^{+}$ is used. We use \eqref{case-gamma-0} to handle $\mathcal{Y}^{\epsilon,0}(W_{\gamma/2}h, W_{\gamma/2}f)$ and finish the proof.
\end{proof}

\begin{rmk} \label{exact-cross-term} Denote $
	\mathcal{X}^{\epsilon,\gamma}(h,f) := \int b^{\epsilon}(\frac{u}{|u|}\cdot\sigma)|u|^{\gamma}(1-\phi)(u) h(u)[f(u^{+}) - f(|u|\frac{u^{+}}{|u^{+}|})] du d\sigma,
	$
then
\beno
	|\mathcal{X}^{\epsilon,\gamma}(h,f)| \lesssim (|W^{\epsilon}W_{\gamma/2}h|_{L^{2}}+|W^{\epsilon}(D)W_{\gamma/2}h|_{L^{2}})
	(|W^{\epsilon}W_{\gamma/2}f|_{L^{2}}+|W^{\epsilon}(D)W_{\gamma/2}f|_{L^{2}}).
	\eeno
Indeed, since $|u|^{\gamma}(1-\phi)(u) = \langle u \rangle^{\gamma}(|u|^{\gamma}\langle u \rangle^{-\gamma}-1)(1-\phi)(u)+\langle u \rangle^{\gamma}(1-\phi)(u)$, we have \beno \mathcal{X}^{\epsilon,\gamma}(h,f) = \mathcal{Y}^{\epsilon,\gamma}((|\cdot|^{\gamma}\langle \cdot \rangle^{-\gamma}-1)(1-\phi)h,f)+\mathcal{Y}^{\epsilon,\gamma}((1-\phi)h,f).\eeno
 Then the result follows from Lemma \ref{crosstermsimilar} and \eqref{func5}.
\end{rmk}

\begin{lem}\label{nosigularityeg}
Recall $\mathcal{R}^{\epsilon,\gamma}_{*,g}(h) = \int b^{\epsilon}(\cos\theta)\langle v-v_{*} \rangle^{\gamma}g_{*}(h^{\prime}-h)^{2}d\sigma dv dv_{*}$ defined in \eqref{defintion-R-star-g}.
If $g\ge0$, then
\beno
\mathcal{R}^{\epsilon,\gamma}_{*,g}(h)\lesssim \mathcal{R}^{\epsilon,0}_{g W_{|\gamma|}}(W_{\gamma/2}h) + |g|_{L^1_{|\gamma+2|}}|h|^{2}_{L^{2}_{\gamma/2}}.
\eeno
\end{lem}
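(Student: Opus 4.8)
The plan is to absorb the velocity weight into the functions and reduce $\mathcal{R}^{\epsilon,\gamma}_{*,g}(h)$ to the ``flat'' quantity $\mathcal{R}^{\epsilon,0}_{gW_{|\gamma|}}(W_{\gamma/2}h)$. Set $H\eqdefa W_{\gamma/2}h$, so that $h=\langle v\rangle^{-\gamma/2}H$ and $h'=\langle v'\rangle^{-\gamma/2}H'$, and expand
\beno
h'-h=\langle v'\rangle^{-\gamma/2}(H'-H)+(\langle v'\rangle^{-\gamma/2}-\langle v\rangle^{-\gamma/2})H.
\eeno
Squaring and using $g_*\ge0$, $b^\epsilon\ge0$ and $(a+b)^2\le2a^2+2b^2$, we obtain $\mathcal{R}^{\epsilon,\gamma}_{*,g}(h)\lesssim\mathcal{I}_1+\mathcal{I}_2$, where $\mathcal{I}_1\eqdefa\int b^\epsilon(\cos\theta)\langle v-v_*\rangle^\gamma\langle v'\rangle^{-\gamma}g_*(H'-H)^2\,d\sigma dv dv_*$ is the ``collisional'' part and $\mathcal{I}_2\eqdefa\int b^\epsilon(\cos\theta)\langle v-v_*\rangle^\gamma(\langle v'\rangle^{-\gamma/2}-\langle v\rangle^{-\gamma/2})^2g_*H(v)^2\,d\sigma dv dv_*$ collects the error coming from the weight. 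A fact used throughout is that, thanks to the angular cutoff $0\le\theta\le\pi/2$, one has $|v'-v_*|=\cos\frac{\theta}{2}\,|v-v_*|\in[|v-v_*|/\sqrt2,\,|v-v_*|]$, hence $\langle v'-v_*\rangle\sim\langle v-v_*\rangle$, so the relative-velocity weight may be moved freely between $v$ and $v'$.

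For $\mathcal{I}_1$, combining $\langle v-v_*\rangle^\gamma\sim\langle v'-v_*\rangle^\gamma$ with the Peetre-type inequality $\langle v'-v_*\rangle^\gamma\lesssim\langle v'\rangle^\gamma\langle v_*\rangle^{|\gamma|}$ (valid for both signs of $\gamma$) yields $\langle v-v_*\rangle^\gamma\langle v'\rangle^{-\gamma}\lesssim\langle v_*\rangle^{|\gamma|}$, hence $\mathcal{I}_1\lesssim\int b^\epsilon(\cos\theta)\langle v_*\rangle^{|\gamma|}g_*(H'-H)^2\,d\sigma dv dv_*=\mathcal{R}^{\epsilon,0}_{gW_{|\gamma|}}(W_{\gamma/2}h)$. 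For $\mathcal{I}_2$, since the integrand contains no difference $H'-H$, after the $\sigma$- and $v_*$-integrations the function enters only as $\int_{\R^3}H(v)^2(\cdots)\,dv$; as $\int_{\R^3}H^2\,dv=|h|^2_{L^2_{\gamma/2}}$, it is enough to prove the pointwise kernel bound
\beno
\langle v-v_*\rangle^\gamma\int b^\epsilon(\cos\theta)\,(\langle v'\rangle^{-\gamma/2}-\langle v\rangle^{-\gamma/2})^2\,d\sigma\lesssim\langle v_*\rangle^{|\gamma+2|}\qquad\text{for all }v,v_*,
\eeno
from which $\mathcal{I}_2\lesssim\int_{\R^3}H(v)^2\big(\int_{\R^3}g_*\langle v_*\rangle^{|\gamma+2|}\,dv_*\big)dv=|g|_{L^1_{|\gamma+2|}}|h|^2_{L^2_{\gamma/2}}$. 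To prove the kernel bound I would split the $\sigma$-integral according to whether $|v'-v|=|v-v_*|\sin\frac{\theta}{2}$ is below or above a fixed small multiple of $\langle v\rangle$: in the near-diagonal regime $\langle v(\kappa)\rangle\sim\langle v\rangle$ along the segment from $v$ to $v'$, and Taylor expansion gives $(\langle v'\rangle^{-\gamma/2}-\langle v\rangle^{-\gamma/2})^2\lesssim\langle v\rangle^{-\gamma-2}|v-v_*|^2\theta^2$; in the far regime one has $|v-v_*|\gtrsim\langle v\rangle$ and $\langle v\rangle,\langle v'\rangle\lesssim|v-v_*|\theta$, and one uses the crude bound $(\langle v'\rangle^{-\gamma/2}-\langle v\rangle^{-\gamma/2})^2\lesssim\langle v'\rangle^{-\gamma}+\langle v\rangle^{-\gamma}$. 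The ensuing $\theta$-integrals are precisely of the form handled by Proposition \ref{symbol}; carrying them out and invoking $\gamma+2s>-1$, $0<s<1$ and the Peetre inequality once more recovers exactly $\langle v_*\rangle^{|\gamma+2|}$ with no residual growth in $v$.

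The main obstacle is this kernel estimate for $\mathcal{I}_2$. In the large-angle part the segment $[v,v']$ escapes the zone $\langle\cdot\rangle\sim\langle v\rangle$, so the mean value theorem is of no use there, and the delicate point is to check that the $v$-dependence cancels, leaving only $\langle v_*\rangle^{|\gamma+2|}$; this is exactly where $s<1$ is needed, as it forces the powers of $\langle v_*\rangle$ generated in the far regime (of the type $\gamma+2s$, resp. $2s-|\gamma|$) not to exceed $|\gamma+2|$. In addition, the two sign regimes $\gamma\ge0$ and $-3<\gamma<0$ call for slightly different bookkeeping — the monotonicity of $\langle\cdot\rangle^{-\gamma/2-1}$ along the segment changes — and for $\gamma\ge0$ it is convenient first to rewrite $(\langle v'\rangle^{-\gamma/2}-\langle v\rangle^{-\gamma/2})H(v)=h(v)\big((\langle v\rangle/\langle v'\rangle)^{\gamma/2}-1\big)$. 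Everything else (the Peetre inequalities, the Taylor expansion, and the angular integrations via Proposition \ref{symbol}) is routine.
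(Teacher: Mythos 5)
Your decomposition $h'-h = W'_{-\gamma/2}(H'-H) + (W'_{-\gamma/2}-W_{-\gamma/2})H$ and the resulting split $\mathcal{R}^{\epsilon,\gamma}_{*,g}(h)\lesssim\mathcal{I}_1+\mathcal{I}_2$ coincide exactly with the paper's, and your treatment of $\mathcal{I}_1$ via $\langle v-v_*\rangle^{\gamma}\langle v'\rangle^{-\gamma}\lesssim\langle v_*\rangle^{|\gamma|}$ is the same. Where you genuinely part ways with the paper is in the kernel bound for $\mathcal{I}_2$. You propose a dichotomy in $|v'-v|$ versus $\langle v\rangle$: Taylor expansion in the near-diagonal regime, a crude pointwise bound in the far regime, with the worry (which you flag yourself) that ``the mean value theorem is of no use'' once the segment $[v,v']$ leaves the zone $\langle\cdot\rangle\sim\langle v\rangle$. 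The paper avoids the dichotomy entirely. The integral-form Taylor remainder
\beno
\big(W'_{-\gamma/2}-W_{-\gamma/2}\big)^2 \lesssim \int_0^1 \langle v(\kappa)\rangle^{-\gamma-2}\,\theta^2\langle v-v_*\rangle^2\,d\kappa,\qquad v(\kappa)=v+\kappa(v'-v),
\eeno
is valid for \emph{all} $v,v',v_*$, not just nearby ones; the point is what you do with the factor $\langle v(\kappa)\rangle^{-\gamma-2}$ afterwards. Since $v(\kappa)$ lies on the segment $[v,v']$ and $|v'-v_*|\sim|v-v_*|$ (by $\theta\leq\pi/2$), one has $\langle v(\kappa)-v_*\rangle\sim\langle v-v_*\rangle$ uniformly in $\kappa$, whence by Peetre $\langle v-v_*\rangle^{\gamma+2}\sim\langle v(\kappa)-v_*\rangle^{\gamma+2}\lesssim\langle v(\kappa)\rangle^{\gamma+2}\langle v_*\rangle^{|\gamma+2|}$. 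Multiplying against $\langle v(\kappa)\rangle^{-\gamma-2}$ makes the $v(\kappa)$-dependence cancel identically, leaving $\langle v_*\rangle^{|\gamma+2|}\theta^2$, and $\int b^\epsilon(\cos\theta)\,\theta^2\,d\sigma\lesssim 1$ since $s<1$. No regime splitting, no delicate accounting of residual powers. Your far-regime argument can be pushed through (and the exponent checks $\gamma+2s\le|\gamma+2|$, $2s-|\gamma|\le|\gamma+2|$ you identify do close), but it costs a case analysis and the angular-integral estimates of Proposition \ref{symbol}, precisely the machinery the paper's one-line observation on $\langle v(\kappa)-v_*\rangle$ renders unnecessary.
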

\begin{proof}
Let $H = W_{\gamma/2}h$, then we have
\beno
(h^{\prime}-h)^{2} = (H^{\prime}W_{-\gamma/2}^{\prime} - H W_{-\gamma/2})^{2}
\lesssim W^{\prime}_{-\gamma}(H^{\prime}-H)^{2} + (W^{\prime}_{-\gamma/2}-W_{-\gamma/2})^{2}H^{2}.
\eeno
Observing that
$
\langle v^{\prime} \rangle^{-\gamma} \lesssim \langle v^{\prime} - v_{*} \rangle^{-\gamma} \langle v_{*} \rangle^{|\gamma|} \sim
\langle v - v_{*} \rangle^{-\gamma} \langle v_{*} \rangle^{|\gamma|},
$
we have
\beno
\mathcal{R}^{\epsilon,\gamma}_{*,g}(h) \lesssim \mathcal{R}^{\epsilon,0}_{g W_{|\gamma|}}(W_{\gamma/2}h) + \int b^{\epsilon}(\cos\theta)\langle v-v_{*} \rangle^{\gamma}(\langle v^{\prime} \rangle^{-\gamma/2}-\langle v \rangle^{-\gamma/2})^{2}g_{*}H^{2} d\sigma dv dv_{*}
\eeno
By Taylor expansion, one has
$
(W^{\prime}_{-\gamma/2}-W_{-\gamma/2})^{2} \lesssim \int_{0}^{1}\langle v(\kappa) \rangle^{-\gamma-2} \langle v-v_{*} \rangle^{2} \sin^{2}\frac{\theta}{2} d\kappa,
$
where $v(\kappa) = \kappa v + \kappa(v^{\prime}-v)$.
Note that $\langle v-v_{*} \rangle^{\gamma+2}\sim \langle v(\kappa)-v_{*} \rangle^{\gamma+2}\lesssim  \langle v(\kappa) \rangle^{\gamma+2}\langle  v_{*} \rangle^{|\gamma+2|}$. Then we have
\beno \mathcal{R}^{\epsilon,\gamma}_{*,g}(h) \lesssim \mathcal{R}^{\epsilon,0}_{g W_{|\gamma|}}(W_{\gamma/2}h) + \int b^{\epsilon}(\cos\theta)\theta^2 \langle  v_{*} \rangle^{|\gamma+2|} g_{*}H^{2} d\sigma dv dv_{*},\eeno
which yields the desired result.
\end{proof}

Now we are in a position to prove the following upper bound  of $Q^\epsilon_{\geq 0}$.
\begin{prop}\label{ubqepsilonnonsingular} There holds
\beno
|\langle Q^{\epsilon}_{\geq 0}(g,h), f\rangle_v| \lesssim | g|_{L^{1}_{|\gamma|+2}}|h|_{\epsilon,\gamma/2}|f|_{\epsilon,\gamma/2}.
\eeno
\end{prop}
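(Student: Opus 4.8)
The plan is to estimate $\langle Q^{\epsilon}_{\geq 0}(g,h),f\rangle_v$ by combining two ideas: a symmetrization/change-of-variables on the bilinear form so that the singular difference $(f'-f)$ is paired against a smooth weight times $h$, and then the Bobylev/geometric decomposition in the frequency (or suitable) variable already developed above, specialized by the support condition $(1-\phi)(v-v_*)$ which confines us to $|v-v_*|\gtrsim 1$. First I would write, using the pre-post collisional change of variables and the Cauchy--Schwarz inequality,
\beno
|\langle Q^{\epsilon}_{\geq 0}(g,h),f\rangle_v| \lesssim \bigl(\mathcal{R}^{\epsilon,\gamma}_{*,|g|}(f)\bigr)^{1/2}\Bigl(\int b^{\epsilon}(\cos\theta)\langle v-v_*\rangle^{\gamma}|g_*|\,h^2\,d\sigma dv dv_*\Bigr)^{1/2} + (\text{cancellation terms}),
\eeno
where the first factor, by Lemma \ref{nosigularityeg}, is controlled by $\mathcal{R}^{\epsilon,0}_{|g|W_{|\gamma|}}(W_{\gamma/2}f)+|g|_{L^1_{|\gamma+2|}}|f|^2_{L^2_{\gamma/2}}$, and then by Lemma \ref{lowerboundpart2} (the upper bound direction of \eqref{lowerupper1} applied to $W_{\gamma/2}f$) this is $\lesssim |g|_{L^1_{|\gamma|+2}}\,|f|_{\epsilon,\gamma/2}^2$. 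The price is that I must extract from $Q^\epsilon_{\ge 0}$ the term in which the post-collisional argument of the smooth factor gives rise to the cross term that is genuinely new.

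The core of the argument is the cross term. After isolating $(f'-f)$ via $(g_*h)(f'-f) = g_*h f' - g_*h f$ and performing the change of variables $v\to v'$ in the first piece, one is left essentially with a quantity of the shape $\mathcal{X}^{\epsilon,\gamma}(h,f)$ of Remark \ref{exact-cross-term} (after also using the geometric splitting $\hat f(\xi^+)-\hat f(|\xi|\xi^+/|\xi^+|)$ in the Bobylev representation, exactly as in the proof of Lemma \ref{lowerboundpart2}). Thus I would invoke Remark \ref{exact-cross-term} and Lemma \ref{crosstermsimilar} to bound this by
\beno
(|W^{\epsilon}W_{\gamma/2}h|^2_{L^2}+|W^{\epsilon}(D)W_{\gamma/2}h|^2_{L^2})^{1/2}(|W^{\epsilon}W_{\gamma/2}f|^2_{L^2}+|W^{\epsilon}(D)W_{\gamma/2}f|^2_{L^2})^{1/2},
\eeno
and then recognize the right-hand side, after using Lemma \ref{func}-type estimates to absorb the weights $W_{\gamma/2}$ into $W^\epsilon$, as $\lesssim |h|_{\epsilon,\gamma/2}|f|_{\epsilon,\gamma/2}$. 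The $|g|_{L^1_{|\gamma|+2}}$ factor enters through the $v_*$ integration of $\langle v-v_*\rangle^\gamma |g_*|$ against the Maxwellian-type or $L^1$-integrable remainders; the extra two powers of weight come from the Taylor-remainder terms (as in Lemma \ref{nosigularityeg}, where $|\gamma+2|$ appears) and from the growth of $W^\epsilon(|v-v_*|)\lesssim W^\epsilon(v)W^\epsilon(v_*)$, which forces a $W^\epsilon(v_*)^2\le \langle v_*\rangle^{2s}\le\langle v_*\rangle^2$ loss on $g$.

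The main obstacle I expect is the bookkeeping around the region $|v-v_*|\gtrsim 1$ together with the non-cancellable weight mismatch: one must be careful that the change of variables $v\to v'$ (whose Jacobian and whose effect on $\langle v-v_*\rangle^\gamma$ and on $(1-\phi)(v-v_*)$ must be tracked) does not destroy the structure needed to apply Lemma \ref{crosstermsimilar}, and that the remainder integrals genuinely close with only $|g|_{L^1_{|\gamma|+2}}$ rather than some Sobolev norm of $g$. In particular, since $s_1,s_2,s_3$ can be taken to vanish when $|v-v_*|$ is bounded below (the singular behavior at $v=v_*$ is excluded by $(1-\phi)$), no derivatives on $g$ are needed here, unlike in Proposition \ref{ubqepsilonsingular}; this is the key simplification that the support condition buys us, and making that precise is the one genuinely delicate point. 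Everything else is routine splitting into the ``$f'-f$'' dissipation part (handled by Lemmas \ref{nosigularityeg}, \ref{lowerboundpart2}) and the cross part (handled by Remark \ref{exact-cross-term}, Lemma \ref{crosstermsimilar}), followed by reassembling $|h|_{\epsilon,\gamma/2}$ and $|f|_{\epsilon,\gamma/2}$ from their three constituent pieces.
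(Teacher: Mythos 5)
Your plan has the right ingredients in view but a genuine gap at the place you call ``routine.'' The Cauchy--Schwarz in your first display produces the factor $\bigl(\int b^{\epsilon}(\cos\theta)\langle v-v_*\rangle^{\gamma}|g_*|\,h^2\,d\sigma\,dv\,dv_*\bigr)^{1/2}$, and since $h^2$ carries no cancellation in $\theta$ the angular integration contributes a full $\int b^{\epsilon}\,d\sigma\sim\epsilon^{-2s}$, so this factor is comparable to $\epsilon^{-s}|g|_{L^1_{|\gamma|}}^{1/2}|h|_{L^2_{\gamma/2}}$. That is strictly larger than $|h|_{\epsilon,\gamma/2}$ (for instance, take $h$ a smooth bump near the origin, for which $|h|_{\epsilon,\gamma/2}\sim 1$ uniformly in $\epsilon$), so the proposed inequality does not imply the claimed one. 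The alternative you sketch---splitting $g_*h(f'-f)$ as $g_*hf'-g_*hf$ and changing variables $v\to v'$ in the first piece---also does not close: each piece separately carries an $\epsilon^{-2s}$ after $\sigma$-integration, and the change of variables makes it $f$ (the only function already at $v$), not $h$, that ends up at a post-collisional point, so it does not produce a quantity of the shape $\mathcal{X}^{\epsilon,\gamma}(h,f)$.

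The ingredient you are missing is the symmetrization of the \emph{radial} piece of the geometric decomposition. After translating by $v_*$ and writing $f(u^+)-f(u)=\bigl[f(u^+)-f(|u|\tfrac{u^+}{|u^+|})\bigr]+\bigl[f(|u|\tfrac{u^+}{|u^+|})-f(u)\bigr]$, the first (tangential) piece is the cross term, and your plan for it---Remark~\ref{exact-cross-term} and Lemma~\ref{crosstermsimilar}, plus the translation estimates for the $|g|_{L^1_{|\gamma|+2}}$ weight---is correct. For the radial piece $\mathcal{D}_2$ one passes to $(u,\sigma)\to(r,\tau,\varsigma)$; the measure $4(\tau\cdot\varsigma)r^2\,dr\,d\tau\,d\varsigma$ and kernel $b^{\epsilon}(2(\tau\cdot\varsigma)^2-1)$ are symmetric under $\tau\leftrightarrow\varsigma$, so symmetrizing the integrand yields
\beno
\mathcal{D}_2=-\tfrac12\int b^{\epsilon}\,|u|^{\gamma}(1-\phi)(u)\,g_*\,\bigl[(T_{v_*}h)(|u|\tfrac{u^+}{|u^+|})-(T_{v_*}h)(u)\bigr]\bigl[(T_{v_*}f)(|u|\tfrac{u^+}{|u^+|})-(T_{v_*}f)(u)\bigr]\,d\sigma\,dv_*\,du.
\eeno
Only now is Cauchy--Schwarz safe: both resulting factors carry genuine differences, and a further triangle-inequality split with $u^+$ as a pivot reduces them to Lemma~\ref{gammanonzerotozero} and to $\mathcal{R}^{\epsilon,\gamma}_{*,|g|}(f)$, the latter handled by Lemma~\ref{nosigularityeg} together with the upper-bound direction of \eqref{lowerupper1}. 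Without this symmetrization the $h$-side of your estimate is off by a factor $\epsilon^{-2s}$, so the proposition is not established.
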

\begin{proof} Define the translation operator $T_{v_{*}}$ by $(T_{v_{*}}f)(u) = f(v_{*}+u)$ for $v_{*}, u \in \mathbb{R}^{3}$.
By geometric decomposition, we have
$\langle Q^{\epsilon}_{\geq 0}(g,h), f\rangle_v = \mathcal{D}_{1} + \mathcal{D}_{2}$,
where
\beno \mathcal{D}_{1} := \int b^{\epsilon}(\frac{u}{|u|}\cdot\sigma)|u|^{\gamma}(1-\phi)(u)g_{*} (T_{v_{*}}h)(u) ((T_{v_{*}}f)(u^{+})-(T_{v_{*}}f)(|u|\frac{u^{+}}{|u^{+}|})) d\sigma dv_{*} du,
\\
\mathcal{D}_{2} :=\int b^{\epsilon}(\frac{u}{|u|}\cdot\sigma)|u|^{\gamma}(1-\phi)(u)g_{*} (T_{v_{*}}h)(u)((T_{v_{*}}f)(|u|\frac{u^{+}}{|u^{+}|})- (T_{v_{*}}f)(u)) d\sigma dv_{*} du.\eeno

{\it Step 1: Estimate of $\mathcal{D}_{1}$.}
By Lemma \ref{crosstermsimilar} and Remark \ref{exact-cross-term}, we have
\beno
|\mathcal{D}_{1}| &\lesssim& \int |g_{*}| (|W^{\epsilon}W_{\gamma/2}T_{v_{*}}h|_{L^{2}}+|W^{\epsilon}(D)W_{\gamma/2}T_{v_{*}}h|_{L^{2}})
\\&&\times(|W^{\epsilon}W_{\gamma/2}T_{v_{*}}f|_{L^{2}}+|W^{\epsilon}(D)W_{\gamma/2}T_{v_{*}}f|_{L^{2}}) dv_{*}.
\eeno
It is easy to check that
\ben\label{tvstartonovstar1}
|W^{\epsilon}W_{\gamma/2}T_{v_{*}}h|_{L^{2}} \lesssim W^{\epsilon}(v_{*})W_{|\gamma|/2}(v_{*})|W^{\epsilon}W_{\gamma/2}h|_{L^{2}}.
\een
By Lemma \ref{operatorcommutator1}, we have
\ben\label{tvstartonovstar2}
|W^{\epsilon}(D)W_{\gamma/2}T_{v_{*}}h|_{L^{2}} &\lesssim& |W_{\gamma/2}W^{\epsilon}(D)T_{v_{*}}h|_{L^{2}} + |T_{v_{*}}h|_{H^{s-1}_{\gamma/2-1}}
\\&\lesssim& W_{|\gamma|/2}(v_{*})(|W_{\gamma/2}W^{\epsilon}(D)h|_{L^{2}} + |h|_{L^{2}_{\gamma/2-1}}) \nonumber
\\&\lesssim& W_{|\gamma|/2}(v_{*})|W^{\epsilon}(D)W_{\gamma/2}h|_{L^{2}}. \nonumber
\een
Thus we get the estimate of $\mathcal{D}_{1}$  as follows
\beno
|\mathcal{D}_{1}| \lesssim | g|_{L^{1}_{|\gamma|+2}}( |W^{\epsilon}(D)W_{\gamma/2}h|_{L^{2}} + |W^{\epsilon}W_{\gamma/2}h|_{L^{2}})
( |W^{\epsilon}(D)W_{\gamma/2}f|_{L^{2}} + |W^{\epsilon}W_{\gamma/2}f|_{L^{2}}).
\eeno

{\it Step 2: Estimate of $\mathcal{D}_{2}$.}
Let $u = r \tau$ and $\varsigma = \frac{\tau+\sigma}{|\tau+\sigma|}$, then $\frac{u}{|u|} \cdot \sigma = 2(\tau\cdot\varsigma)^{2} - 1$ and $|u|\frac{u^{+}}{|u^{+}|} = r \varsigma$. By the change of variable $(u, \sigma) \rightarrow (r, \tau, \varsigma)$, one has
$
du d\sigma = 4  (\tau\cdot\varsigma) r^{2} dr d \tau d \varsigma.
$
Then
\beno
\mathcal{D}_{2} &=& 4 \int r^\gamma(1-\phi)(r)b^{\epsilon}(2(\tau\cdot\varsigma)^{2} - 1)(T_{v_*}h)(r\tau)\big((T_{v_*}f)(r\varsigma) - (T_{v_*}f) (r\tau)\big) (\tau\cdot\varsigma) r^{2} dr d \tau d \varsigma dv_{*}\\
&=& 2 \int r^\gamma(1-\phi)(r)b^{\epsilon}(2(\tau\cdot\varsigma)^{2} - 1)\big((T_{v_*}h)(r\tau) - (T_{v_*}h) (r\varsigma)\big)\\ &&\times \big((T_{v_*}f)(r\varsigma) - (T_{v_*}f) (r\tau)\big) (\tau\cdot\varsigma) r^{2} dr d \tau d \varsigma dv_{*}\\&=&
-\frac{1}{2}\int b^{\epsilon}(\frac{u}{|u|}\cdot\sigma)|u|^{\gamma}(1-\phi)(u)g_{*} ((T_{v_{*}}h)(|u|\frac{u^{+}}{|u^{+}|})-(T_{v_{*}}h)(u))\\ &&\times
 ((T_{v_{*}}f)(|u|\frac{u^{+}}{|u^{+}|})-(T_{v_{*}}f)(u)) d\sigma dv_{*} du.
\eeno
Then by Cauchy-Schwartz inequality and the fact $|u|^{\gamma}(1-\phi)(u) \lesssim \langle u \rangle^{\gamma}$, we have
\beno
|\mathcal{D}_{2}| &\lesssim& \{\int b^{\epsilon}(\frac{u}{|u|}\cdot\sigma)\langle u \rangle^{\gamma}|g_{*}| ((T_{v_{*}}h)( |u|\frac{u^{+}}{|u^{+}|})-(T_{v_{*}}h)( u))^{2} d\sigma dv_{*} du\}^{\frac{1}{2}}
\\&& \times \{\int b^{\epsilon}(\frac{u}{|u|}\cdot\sigma)\langle u \rangle^{\gamma}|g_{*}|
((T_{v_{*}}f)(|u|\frac{u^{+}}{|u^{+}|})-(T_{v_{*}})f(u))^{2} d\sigma dv_{*} du\}^{\frac{1}{2}}
:= (\mathcal{D}_{2,h})^{\frac{1}{2}}(\mathcal{D}_{2,f})^{\frac{1}{2}}.
\eeno
Note that $\mathcal{D}_{2,h}$ and $\mathcal{D}_{2,f}$ have exactly the same structure. It suffices to focus on  $\mathcal{D}_{2,f}$.
Since
\beno
((T_{v_{*}}f)(|u|\frac{u^{+}}{|u^{+}|})-(T_{v_{*}}f)(u))^{2} \leq 2 ((T_{v_{*}}f)(|u|\frac{u^{+}}{|u^{+}|})-(T_{v_{*}}f)(u^{+}))^{2} + 2 ((T_{v_{*}}f)(u^{+})-(T_{v_{*}}f)(u))^{2},
\eeno
we have
\beno
\mathcal{D}_{2,f} &\lesssim& \int b^{\epsilon}(\frac{u}{|u|}\cdot\sigma)\langle u \rangle^{\gamma}|g_{*}| ((T_{v_{*}}f)(|u|\frac{u^{+}}{|u^{+}|})-(T_{v_{*}}f)(u^{+}))^{2} d\sigma dv_{*} du
\\&&+\int b^{\epsilon}(\frac{u}{|u|}\cdot\sigma)\langle u \rangle^{\gamma}|g_{*}| ((T_{v_{*}}f)(u^{+})-(T_{v_{*}}f)(u))^{2} d\sigma dv_{*} du
:= \mathcal{D}_{2,f,1}+ \mathcal{D}_{2,f,2}.
\eeno
By Lemma \ref{gammanonzerotozero}, and the facts \eqref{tvstartonovstar1} and \eqref{tvstartonovstar2}, we have
\beno
\mathcal{D}_{2,f,1} \lesssim \int |g_{*}| \mathcal{Z}^{\epsilon,\gamma}(T_{v_{*}}f) dv_{*} \lesssim| g|_{L^{1}_{|\gamma|+2}}(|W^{\epsilon}(D)W_{\gamma/2}f|^{2}_{L^{2}}+|W^{\epsilon}W_{\gamma/2}f|^{2}_{L^{2}}).
\eeno
Thanks to Lemma \ref{nosigularityeg}, we have
\beno
\mathcal{D}_{2,f,2} \le  \mathcal{R}^{\epsilon,\gamma}_{*,|g|}(f)
\lesssim \mathcal{R}^{\epsilon,0}_{|g| W_{|\gamma|}}(W_{\gamma/2}f) +  | g|_{L^{1}_{|\gamma|+2}}|f|^{2}_{L^{2}_{\gamma/2}}.
\eeno
Thanks to the estimate (see Lemma 3.3 in \cite{he2})
$
\mathcal{R}^{\epsilon,0}_{g}(f) \lesssim |g|_{L^{1}} \mathcal{R}^{\epsilon,0}_{\mu}(f) + | g|_{L^{1}_2}|W^{\epsilon}(D)f|^{2}_{L^{2}},
$
using \eqref{lowerupper1} in Proposition \ref{lowerboundpart2} to get
$
\mathcal{R}^{\epsilon,0}_{\mu}(f) \lesssim |W^{\epsilon}((-\Delta_{\mathbb{S}^{2}})^{\frac{1}{2}})f|^{2}_{L^{2}} + |W^{\epsilon}(D)f|^{2}_{L^{2}} + |W^{\epsilon}f|^{2}_{L^{2}},
$
we have
\beno
\mathcal{D}_{2,f,2} \lesssim  | g|_{L^{1}_{|\gamma|+2}}(|W^{\epsilon}((-\Delta_{\mathbb{S}^{2}})^{\frac{1}{2}})W_{\gamma/2}f|^{2}_{L^{2}} + |W^{\epsilon}(D)W_{\gamma/2}f|^{2}_{L^{2}} + |W^{\epsilon}W_{\gamma/2}f|^{2}_{L^{2}}).
\eeno
Thus we have
$
\mathcal{D}_{2,f} \lesssim  |g|_{L^{1}_{|\gamma|+2}}|f|^{2}_{\epsilon,\gamma/2}.
$
Similarly $\mathcal{D}_{2,h}$ has the same upper bound, so we have
\beno
|\mathcal{D}_{2}| \lesssim | g|_{L^{1}_{|\gamma|+2}}|h|_{\epsilon,\gamma/2}|f|_{\epsilon,\gamma/2}.
\eeno

We finish the proof of
 by patching together the estimates of $\mathcal{D}_{1}$ and $\mathcal{D}_{2}$.
\end{proof}

Recalling \eqref{Q-into-two-parts},
by Proposition \ref{ubqepsilonsingular} and Proposition \ref{ubqepsilonnonsingular}, we are led to
\begin{thm} \label{upQepsilon}
	For any $\eta>0$, the following estimates are valid.
\begin{itemize}
\item If $\gamma>-\frac{3}{2}$,
 $|\langle Q^{\epsilon}(g,h), f\rangle_v| \lesssim (|g|_{L^{2}_{|\gamma|}}+| g|_{L^{1}_{|\gamma|+2}})|h|_{\epsilon,\gamma/2}|f|_{\epsilon,\gamma/2}.$

\item If $\gamma=-\f32$, $|\langle Q^{\epsilon}(g,h), f\rangle_v| \lesssim
(|g|_{L^1_{|\gamma|}}+|g|_{H^{s_{1}}_{|\gamma|}})|W^{\epsilon}(D)h|_{H^{s_{2}}_{\gamma/2}}
   |f|_{\epsilon,\gamma/2} + |g|_{L^1_{|\gamma|+2}}|h|_{\epsilon,\gamma/2}|f|_{\epsilon,\gamma/2}.$ Here $(s_1, s_2) = (0, \eta)$ or $(\eta, 0)$.
\item If $-3< \gamma < -\frac{3}{2}$,
$
|\langle Q^{\epsilon} (g,h), f\rangle_v| \lesssim |g|_{H^{s_{1}}_{|\gamma|}}|W^{\epsilon}(D)h|_{H^{s_{2}}_{\gamma/2}}|W^{\epsilon}(D)f|_{H^{s_{3}}_{\gamma/2}}+| g|_{L^{1}_{|\gamma|+2}}|h|_{\epsilon,\gamma/2}|f|_{\epsilon,\gamma/2}.
$ Here the constants $s_1, s_2, s_3 \geq 0$ verify either $s_{1}+s_{2}+s_{3}=-\gamma-\frac{3}{2}, s_{2}+s_{3}>0$ or  $s_1=-\gamma-\frac{3}{2}+\eta, s_2=s_3=0$.
\end{itemize}
\end{thm}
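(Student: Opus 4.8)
The statement is an immediate consequence of the two preceding propositions once one records the elementary fact that the seminorm $|\cdot|_{\epsilon,\gamma/2}$ dominates the Sobolev piece $|W^{\epsilon}(D)\cdot|_{L^{2}_{\gamma/2}}$. The plan is to split the bracket along the dyadic cutoff already introduced, namely
\beno
\langle Q^{\epsilon}(g,h), f\rangle_v =  \langle Q^{\epsilon}_{-1}(g,h), f\rangle_v + \langle Q^{\epsilon}_{\geq 0}(g,h), f\rangle_v,
\eeno
with $Q^{\epsilon}_{-1}$ carrying the factor $\phi(v-v_{*})$ and $Q^{\epsilon}_{\geq 0}$ the factor $(1-\phi)(v-v_{*})$ from \eqref{function-phi-psi}, and then to invoke Proposition \ref{ubqepsilonnonsingular} for the non-singular (large relative velocity) piece and Proposition \ref{ubqepsilonsingular} for the singular (bounded relative velocity) piece.

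First I would dispose of $\langle Q^{\epsilon}_{\geq 0}(g,h), f\rangle_v$: Proposition \ref{ubqepsilonnonsingular} gives directly $|\langle Q^{\epsilon}_{\geq 0}(g,h), f\rangle_v| \lesssim |g|_{L^{1}_{|\gamma|+2}}|h|_{\epsilon,\gamma/2}|f|_{\epsilon,\gamma/2}$, which is already in the desired form in every one of the three regimes for $\gamma$ (it contributes the $|g|_{L^{1}_{|\gamma|+2}}|h|_{\epsilon,\gamma/2}|f|_{\epsilon,\gamma/2}$ summand, and for $\gamma>-3/2$ this is absorbed into the stated bound). Then I would turn to $\langle Q^{\epsilon}_{-1}(g,h), f\rangle_v$ and apply Proposition \ref{ubqepsilonsingular}. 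In the regime $\gamma>-3/2$ the proposition yields $|g|_{L^{2}_{|\gamma|}}|W^{\epsilon}(D)h|_{L^{2}_{\gamma/2}}|W^{\epsilon}(D)f|_{L^{2}_{\gamma/2}}$; since, by the very definition of the space $L^{2}_{\epsilon,l}$, one has $|W^{\epsilon}(D)h|_{L^{2}_{\gamma/2}} \le |h|_{\epsilon,\gamma/2}$ and likewise for $f$, this is bounded by $|g|_{L^{2}_{|\gamma|}}|h|_{\epsilon,\gamma/2}|f|_{\epsilon,\gamma/2}$. Adding the contribution of $Q^{\epsilon}_{\geq 0}$ gives the first bullet. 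For $\gamma=-3/2$, Proposition \ref{ubqepsilonsingular} produces $(|g|_{L^1_{|\gamma|}}+|g|_{L^2_{|\gamma|}})|W^{\epsilon}(D)h|_{H^\eta_{\gamma/2}}|W^{\epsilon}(D)f|_{L^2_{\gamma/2}}$; here I keep the $H^\eta_{\gamma/2}$-factor as is (it cannot be absorbed into $|\cdot|_{\epsilon,\gamma/2}$ because of the extra $\eta$ of regularity), bound $|W^{\epsilon}(D)f|_{L^2_{\gamma/2}}\le|f|_{\epsilon,\gamma/2}$, enlarge $|g|_{L^1_{|\gamma|}}$ to $|g|_{L^1_{|\gamma|+2}}$, and add the $Q^{\epsilon}_{\geq 0}$ term to get the second bullet. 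For $-3<\gamma<-3/2$ the proposition gives exactly $|g|_{H^{s_{1}}_{|\gamma|}}|W^{\epsilon}(D)h|_{H^{s_{2}}_{\gamma/2}}|W^{\epsilon}(D)f|_{H^{s_{3}}_{\gamma/2}}$ with the stated admissible triples $(s_1,s_2,s_3)$, which one keeps unchanged and simply adds to the $Q^{\epsilon}_{\geq 0}$ contribution $|g|_{L^{1}_{|\gamma|+2}}|h|_{\epsilon,\gamma/2}|f|_{\epsilon,\gamma/2}$, yielding the third bullet.

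There is no genuine obstacle here, the theorem being a clean repackaging of Propositions \ref{ubqepsilonsingular} and \ref{ubqepsilonnonsingular}; the only point requiring a little care is bookkeeping of the function spaces across the three ranges of $\gamma$—in particular remembering that in the soft regimes $\gamma\le-3/2$ the singular part of the operator genuinely needs a small amount of Sobolev regularity (the $H^{\eta}$ or $H^{s_2},H^{s_3}$ norms) that is \emph{not} supplied by the anisotropic seminorm $|\cdot|_{\epsilon,\gamma/2}$, so those factors must be displayed explicitly in the final inequality rather than absorbed. Once the norm inclusions $|W^{\epsilon}(D)\cdot|_{L^2_{\gamma/2}}\le|\cdot|_{\epsilon,\gamma/2}$ and $|g|_{L^1_{|\gamma|}}\lesssim|g|_{L^1_{|\gamma|+2}}$ are invoked and the two propositions are summed, the proof is complete.

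\ef
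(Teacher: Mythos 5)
Your proposal is correct and follows exactly the paper's route: decompose $Q^{\epsilon}=Q^{\epsilon}_{-1}+Q^{\epsilon}_{\geq 0}$, invoke Propositions~\ref{ubqepsilonsingular} and~\ref{ubqepsilonnonsingular} respectively, absorb $|W^{\epsilon}(D)\cdot|_{L^2_{\gamma/2}}$ into $|\cdot|_{\epsilon,\gamma/2}$ where possible, and add the two bounds. The paper condenses all of this into one sentence; your write-out of the bookkeeping across the three $\gamma$-regimes, including the observation that the $H^{\eta}$ and $H^{s_2},H^{s_3}$ factors in the soft regimes cannot be absorbed into the anisotropic seminorm, is precisely the intended argument.
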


\subsubsection{Upper bounds of $\mathcal{I}(g,h,f)$} \label{estimate-of-I}
For ease of notation, we simply write $\mathcal{I}(g,h,f)$ as $\mathcal{I}$. We first make some rearrangement.
Noting that
\beno
(\mu^{\frac{1}{2}})_{*}^{\prime} - \mu_{*}^{\frac{1}{2}} = ((\mu^{\frac{1}{4}})_{*}^{\prime} + \mu_{*}^{\frac{1}{4}})((\mu^{\frac{1}{4}})_{*}^{\prime} - \mu_{*}^{\frac{1}{4}})
= ((\mu^{\frac{1}{8}})_{*}^{\prime} + \mu_{*}^{\frac{1}{8}})^{2}((\mu^{\frac{1}{8}})_{*}^{\prime} - \mu_{*}^{\frac{1}{8}})^{2} + 2\mu_{*}^{\frac{1}{4}}((\mu^{\frac{1}{4}})_{*}^{\prime} - \mu_{*}^{\frac{1}{4}}),
\eeno
and $h=(h-h^{\prime}) + h^{\prime}$, recalling \eqref{definition-of-Q},
we have
\ben \label{I-into-I1-I2-I3}
\mathcal{I} &=& \mathcal{I}_{1} + \mathcal{I}_{2} + \mathcal{I}_{3}.
\\ \label{defintion-of-I1}
\mathcal{I}_{1}  &:=&
\int b^{\epsilon}(\cos\theta)|v-v_{*}|^{\gamma} ((\mu^{\frac{1}{8}})_{*}^{\prime} + \mu_{*}^{\frac{1}{8}})^{2}((\mu^{\frac{1}{8}})_{*}^{\prime} - \mu_{*}^{\frac{1}{8}})^{2}g_{*} h f^{\prime} d\sigma dv_{*} dv,
\\ \label{defintion-of-I2}
\mathcal{I}_{2}  &:=&  2 \int b^{\epsilon}(\cos\theta)|v-v_{*}|^{\gamma} ((\mu^{\frac{1}{4}})_{*}^{\prime} - \mu_{*}^{\frac{1}{4}})(\mu^{\frac{1}{4}}g)_{*}(h-h^{\prime})f^{\prime} d\sigma dv_{*}dv,
\\ \label{defintion-of-I3}
\mathcal{I}_{3}  &:=&  2 \int b^{\epsilon}(\cos\theta)|v-v_{*}|^{\gamma}((\mu^{\frac{1}{4}})_{*}^{\prime} - \mu_{*}^{\frac{1}{4}})(\mu^{\frac{1}{4}}g)_{*} h^{\prime} f^{\prime} d\sigma dv_{*}dv.
\een

\begin{lem}\label{upforI}
For any $\eta>0$, the following estimates are valid.
\begin{itemize}
	\item If $\gamma>-\frac{3}{2}$, then
 $|\mathcal{I}(g,h,f)| \lesssim  |g|_{L^{2}}|h|_{\epsilon,\gamma/2}|W^{\epsilon}f|_{L^{2}_{\gamma/2}}.$
 \item If $\gamma=-\f32$, then
 \beno |\mathcal{I}(g,h,f)|\lesssim |\mu^{\frac{1}{8}}g|_{H^{s_{1}}}(|W^{\epsilon}(D)h|_{H^{s_{2}}_{\gamma/2}}+|h|_{\epsilon,\gamma/2})
  |W^{\epsilon}f|_{L^{2}_{\gamma/2}}+
|g|_{L^{2}}|h|_{\epsilon,\gamma/2}|W^{\epsilon}f|_{L^{2}_{\gamma/2}},\eeno
where $(s_1, s_2) = (0, \eta)$ or $(\eta, 0)$.
 \item If $-3< \gamma < -\frac{3}{2}$, then
 \beno |\mathcal{I}(g,h,f)|\lesssim |\mu^{\frac{1}{8}}g|_{H^{s_{1}}}(|W^{\epsilon}(D)h|_{H^{s_{2}}_{\gamma/2}}+|h|_{\epsilon,\gamma/2})
  |W^{\epsilon}f|_{L^{2}_{\gamma/2}}+
|g|_{L^{2}}|h|_{\epsilon,\gamma/2}|W^{\epsilon}f|_{L^{2}_{\gamma/2}},\eeno
where $s_1, s_2 \geq 0$ are constants verifying either $s_{1}+2s_{2}=-\gamma-\frac{3}{2},  s_{2}>0$ or $s_1=-\gamma-\frac{3}{2}+\eta, s_2=0$.
 \end{itemize}
\end{lem}
\begin{proof}  In what follows, we will
constantly use the fact:
\ben \label{freq-used-inequality}
((\mu^{\frac{1}{8}})_{*}^{\prime} - \mu_{*}^{\frac{1}{8}})^{2} \lesssim \min\{1,|v-v_{*}|^{2}\sin^{2}\frac{\theta}{2}\} \sim \min\{1,|v^{\prime}-v_{*}|^{2}\sin^{2}\frac{\theta}{2}\}\sim \min\{1,|v-v^{\prime}_{*}|^{2}\sin^{2}\frac{\theta}{2}\}.
\een

We estimate $\mathcal{I}_{1}, \mathcal{I}_{2}$ and $\mathcal{I}_{3}$ one by one.

{\it Step 1: Estimate of $\mathcal{I}_{1}$.} Recalling \eqref{defintion-of-I1},
we use $\phi$ defined in \eqref{function-phi-psi} to separate the relative velocity into two parts:
\beno
\mathcal{I}_{1} &=& \int b^{\epsilon}(\cos\theta)|v-v_{*}|^{\gamma}(1-\phi)(v-v_{*})((\mu^{\frac{1}{8}})_{*}^{\prime} + \mu_{*}^{\frac{1}{8}})^{2}((\mu^{\frac{1}{8}})_{*}^{\prime} - \mu_{*}^{\frac{1}{8}})^{2}g_{*} h f^{\prime} d\sigma dv_{*} dv
\\&&+\int b^{\epsilon}(\cos\theta)|v-v_{*}|^{\gamma}\phi(v-v_{*})((\mu^{\frac{1}{8}})_{*}^{\prime} + \mu_{*}^{\frac{1}{8}})^{2}((\mu^{\frac{1}{8}})_{*}^{\prime} - \mu_{*}^{\frac{1}{8}})^{2}g_{*} h f^{\prime} d\sigma dv_{*} dv
\\&:=&  \mathcal{I}_{1,1}+\mathcal{I}_{1,2}.
\eeno

\underline{Estimate of $\mathcal{I}_{1,1}$.} Note that $|v-v_{*}| \sim \langle v-v_{*}\rangle$
in $\mathcal{I}_{1,1}$.
By Cauchy-Schwartz inequality, we have
\beno
|\mathcal{I}_{1,1}| &\lesssim& \{\int b^{\epsilon}(\cos\theta)\langle v-v_{*}\rangle^{\gamma}((\mu^{\frac{1}{8}})_{*}^{\prime} + \mu_{*}^{\frac{1}{8}})^{2}((\mu^{\frac{1}{8}})_{*}^{\prime} - \mu_{*}^{\frac{1}{8}})^{2}g^{2}_{*} h^{2}  d\sigma dv_{*} dv\}^{\frac{1}{2}}
\\&&\times \{\int b^{\epsilon}(\cos\theta)|v-v_{*}|^{\gamma}((\mu^{\frac{1}{8}})_{*}^{\prime} + \mu_{*}^{\frac{1}{8}})^{2}((\mu^{\frac{1}{8}})_{*}^{\prime} - \mu_{*}^{\frac{1}{8}})^{2} (f^{2})^{\prime} d\sigma dv_{*} dv\}^{\frac{1}{2}}
:= (\mathcal{I}_{1,1,1})^{\frac{1}{2}} (\mathcal{I}_{1,1,2})^{\frac{1}{2}}.
\eeno
We claim that
\ben \label{claim-1-L2} \mathcal{A} := \int b^{\epsilon}(\cos\theta)\langle v-v_{*}\rangle^{\gamma}((\mu^{\frac{1}{8}})_{*}^{\prime} + \mu_{*}^{\frac{1}{8}})^{2}((\mu^{\frac{1}{8}})_{*}^{\prime} - \mu_{*}^{\frac{1}{8}})^{2}  d\sigma \lesssim (W^{\epsilon})^{2}(v)\langle v\rangle^{\gamma},\een
which yields
$\mathcal{I}_{1,1,1} \lesssim  |g|^{2}_{L^{2}}|W^{\epsilon}h|^{2}_{L^{2}_{\gamma/2}}$.

To prove \eqref{claim-1-L2}, we notice that
\beno \mathcal{A} &\lesssim& \int b^{\epsilon}(\cos\theta)\langle v-v_{*}\rangle^{\gamma}\mu_{*}^{\frac{1}{4}}((\mu^{\frac{1}{8}})_{*}^{\prime} - \mu_{*}^{\frac{1}{8}})^{2}  d\sigma + \int b^{\epsilon}(\cos\theta)\langle v-v_{*}\rangle^{\gamma}(\mu^{\frac{1}{4}})_{*}^{\prime}((\mu^{\frac{1}{8}})_{*}^{\prime} - \mu_{*}^{\frac{1}{8}})^{2}  d\sigma
\\&:=& \mathcal{A}_{1}+\mathcal{A}_{2}.
\eeno
Due to Proposition \ref{symbol} and the fact \eqref{combinewithgamma},
we get $\mathcal{A}_{1} \lesssim \langle v-v_{*}\rangle^{\gamma}\mu_{*}^{\frac{1}{4}}(W^{\epsilon})^{2}(v-v_{*}) \lesssim (W^{\epsilon})^{2}(v)\langle v\rangle^{\gamma}$.
As for $\mathcal{A}_{2}$, thanks to $|v-v_{*}| \sim |v-v^{\prime}_{*}|$ and thus $\langle v-v_{*}\rangle^{\gamma} \lesssim \langle v-v^{\prime}_{*}\rangle^{\gamma} \lesssim \langle v\rangle^{\gamma}\langle v^{\prime}_{*}\rangle^{|\gamma|}$, we have
 $\mathcal{A}_{2} \lesssim \langle v\rangle^{\gamma} \int b^{\epsilon}(\cos\theta) (\mu^{\frac{1}{8}})_{*}^{\prime} \min\{1,|v-v_{*}|^{2}\sin^{2}\frac{\theta}{2}\} d\sigma$.
If $|v-v_{*}|\geq 10|v|$, then there holds $|v^{\prime}_{*}| = |v^{\prime}_{*}-v+v|\geq |v^{\prime}_{*}-v| -|v| \geq (1/\sqrt{2} - 1/10)|v-v_{*}| \geq \frac{1}{5}|v-v_{*}|$, and thus $(\mu^{\frac{1}{8}})_{*}^{\prime} \lesssim \mu^{\frac{1}{2}00}(v-v_{*})$,
which indicates
\beno \mathcal{A}_{2} \lesssim \langle v\rangle^{\gamma}\mu^{\frac{1}{2}00}(v-v_{*}) (W^{\epsilon})^{2}(v-v_{*}) \lesssim \langle v\rangle^{\gamma}.\eeno
If $|v-v_{*}|\leq 10|v|$, by Proposition \ref{symbol}, we have
\beno \mathcal{A}_{2} \lesssim \langle v\rangle^{\gamma} \int b^{\epsilon}(\cos\theta)  \min\{1,|v|^{2}\sin^{2}\frac{\theta}{2}\} d\sigma  \lesssim (W^{\epsilon})^{2}(v)\langle v\rangle^{\gamma}.\eeno
We finished the proof of \eqref{claim-1-L2}.
By the change of variable $(v, v_{*})\rightarrow (v^{\prime}, v_{*}^{\prime})$, we have
\beno
\mathcal{I}_{1,1,2} &=& \int b^{\epsilon}(\cos\theta)|v-v_{*}|^{\gamma}((\mu^{\frac{1}{8}})_{*}^{\prime} + \mu_{*}^{\frac{1}{8}})^{2}((\mu^{\frac{1}{8}})_{*}^{\prime} - \mu_{*}^{\frac{1}{8}})^{2} f^{2} d\sigma dv_{*} dv
\\&\leq& 2\int b^{\epsilon}(\cos\theta)|v-v_{*}|^{\gamma}\mu_{*}^{\frac{1}{4}}((\mu^{\frac{1}{8}})_{*}^{\prime} - \mu_{*}^{\frac{1}{8}})^{2} f^{2} d\sigma dv_{*} dv
\\&& + 2\int b^{\epsilon}(\cos\theta)|v-v_{*}|^{\gamma}(\mu^{\frac{1}{4}})_{*}^{\prime}((\mu^{\frac{1}{8}})_{*}^{\prime} - \mu_{*}^{\frac{1}{8}})^{2} f^{2} d\sigma dv_{*} dv
:= \mathcal{I}_{1,1,2,1} + \mathcal{I}_{1,1,2,2}.
\eeno
With the help of \eqref{freq-used-inequality}, Proposition \ref{symbol},
\eqref{combinewithgamma} and \eqref{mu-cancel-sigularity}, we have
\beno
\mathcal{I}_{1,1,2,1} &\lesssim& \int b^{\epsilon}(\cos\theta)|v-v_{*}|^{\gamma}\mu_{*}^{\frac{1}{4}}\min\{1,|v-v_{*}|^{2}\sin^{2}\frac{\theta}{2}\} f^{2} d\sigma dv_{*} dv
\\&\lesssim&  \int |v-v_{*}|^{\gamma}\mu_{*}^{\frac{1}{8}} (W^{\epsilon})^{2}(v)f^{2}  dv_{*} dv
\lesssim |W^{\epsilon}f|^{2}_{L^{2}_{\gamma/2}}.
\eeno
By the fact $|v-v_{*}|\sim|v-v_{*}^{\prime}|$ and the   change of variable $v_{*}\rightarrow v_{*}^{\prime}$, we have
\beno
\mathcal{I}_{1,1,2,2} \lesssim \int b^{\epsilon}(\cos\theta)|v-v_{*}^{\prime}|^{\gamma}(\mu^{\frac{1}{4}})_{*}^{\prime}
\min\{1,|v-v_{*}^{\prime}|^{2}\sin^{2}\frac{\theta}{2}\} f^{2} d\sigma dv_{*}^{\prime} dv
\lesssim |W^{\epsilon}f|^{2}_{L^{2}_{\gamma/2}}.
\eeno
Therefore we have
$
\mathcal{I}_{1,1,2} \lesssim |W^{\epsilon}f|^{2}_{L^{2}_{\gamma/2}}.
$
Together with the estimate  for $\mathcal{I}_{1,1,1}$, we have
\ben \label{I-1-1}
\mathcal{I}_{1,1} \lesssim |g|_{L^{2}}|W^{\epsilon}h|_{L^{2}_{\gamma/2}}|W^{\epsilon}f|_{L^{2}_{\gamma/2}}.
\een

\underline{Estimate of $\mathcal{I}_{1,2}$.} By Cauchy-Schwartz inequality, we have
\beno
|\mathcal{I}_{1,2}| &\lesssim& \{\int b^{\epsilon}(\cos\theta)|v-v_{*}|^{\gamma}\phi(v-v_{*})((\mu^{\frac{1}{8}})_{*}^{\prime} + \mu_{*}^{\frac{1}{8}})^{2}((\mu^{\frac{1}{8}})_{*}^{\prime} - \mu_{*}^{\frac{1}{8}})^{2}|g_{*}| h^{2}  d\sigma dv_{*} dv\}^{\frac{1}{2}}
\\&&\times \{\int b^{\epsilon}(\cos\theta)|v-v_{*}|^{\gamma}\phi(v-v_{*})((\mu^{\frac{1}{8}})_{*}^{\prime} + \mu_{*}^{\frac{1}{8}})^{2}((\mu^{\frac{1}{8}})_{*}^{\prime} - \mu_{*}^{\frac{1}{8}})^{2} |g_{*}|(f^{2})^{\prime} d\sigma dv_{*} dv\}^{\frac{1}{2}}
\\&:=& (\mathcal{I}_{1,2,1})^{\frac{1}{2}} (\mathcal{I}_{1,2,2})^{\frac{1}{2}}.
\eeno
Note that the support of function $\phi$ is $B_{\frac{4}{3}}$. When $|v-v_{*}| \leq \frac{4}{3}$, there hold
$
|v_{*}|\geq |v|-\frac{4}{3}$ and $ |v_{*}^{\prime}|\geq |v|-|v-v_{*}^{\prime}| \geq |v|-|v-v_{*}| \geq |v|-\frac{4}{3},
$ which imply that $((\mu^{\frac{1}{8}})_{*}^{\prime} + \mu_{*}^{\frac{1}{8}})^{2}\lesssim \mu^{\f14}$. Recalling  Proposition \ref{symbol}, Cauchy-Schwartz inequality and the assumption
 $\gamma+2 > -1$, one has
\beno
\mathcal{I}_{1,2,1} \lesssim \int |v-v_{*}|^{\gamma+2}\phi(v-v_{*})\mu^{\frac{1}{8}}|g_{*}| h^{2}  d\sigma dv_{*} dv
\lesssim |g|_{L^{2}}|\mu^{\frac{1}{16}}h|^{2}_{L^{2}}.
\eeno
By the change of variable $v \rightarrow v^{\prime}$,
we can similarly derive that
$
\mathcal{I}_{1,2,2}
\lesssim |g|_{L^{2}}|\mu^{\frac{1}{16}}f|^{2}_{L^{2}}.
$
Patching together the estimates for $\mathcal{I}_{1,2,1}$ and $\mathcal{I}_{1,2,2}$, we arrive at
$
|\mathcal{I}_{1,2}|  \lesssim |g|_{L^{2}}|\mu^{\frac{1}{16}}h|_{L^{2}}|\mu^{\frac{1}{16}}f|_{L^{2}}.
$
Together with the estimate \eqref{I-1-1} for $\mathcal{I}_{1,1}$, we obtain
$
|\mathcal{I}_{1}| \lesssim |g|_{L^{2}}|W^{\epsilon}h|_{L^{2}_{\gamma/2}}|W^{\epsilon}f|_{L^{2}_{\gamma/2}}.
$

{\it Step 2: Estimate of $\mathcal{I}_{2}$.} Recalling \eqref{defintion-of-I2}, by Cauchy-Schwartz inequality, we have
\beno
\mathcal{I}_{2}
&\lesssim&  \{\int b^{\epsilon}(\cos\theta)|v-v_{*}|^{\gamma}|(\mu^{\frac{1}{4}}g)_{*}|(h-h^{\prime})^{2}d\sigma dv_{*} dv\}^{\frac{1}{2}}
\\&&\times \{\int b^{\epsilon}(\cos\theta)|v-v_{*}|^{\gamma}((\mu^{\frac{1}{4}})_{*}^{\prime} - \mu_{*}^{\frac{1}{4}})^{2}|(\mu^{\frac{1}{4}}g)_{*}|(f^{2})^{\prime}d\sigma dv_{*} dv\}^{\frac{1}{2}}
:= (\mathcal{I}_{2,1})^{\frac{1}{2}}(\mathcal{I}_{2,2})^{\frac{1}{2}}.
\eeno

\underline{Estimate of $\mathcal{I}_{2,1}$.}
Notice that
$
(h-h^{\prime})^{2} = (h^{2})^{\prime} - h^{2} - 2h(h^{\prime}-h),
$
we have
\beno
\mathcal{I}_{2,1} &=& \mathcal{I}_{2,1,1} - 2 \langle Q^{\epsilon}(|\mu^{\frac{1}{4}}g|, h), h\rangle
\\
\mathcal{I}_{2,1,1} &:=& \int b^{\epsilon}(\cos\theta)|v-v_{*}|^{\gamma}|(\mu^{\frac{1}{4}}g)_{*}|((h^{2})^{\prime} - h^{2})d\sigma dv_{*} dv.
\eeno
By cancellation lemma in \cite{advw}, one has
$ \mathcal{I}_{2,1,1} =C(\epsilon)\int |v-v_{*}|^{\gamma}|(\mu^{\frac{1}{4}}g)_{*}|h^{2} dv_{*} dv$ with $|C(\epsilon)|\lesssim 1$. Thus
 by Lemma \ref{aftercancellation} and Theorem \ref{upQepsilon}, we have,
  if $\gamma>-\frac{3}{2}$,
 $|\mathcal{I}_{2,1}| \lesssim  |\mu^{\frac{1}{8}}g|_{L^{2}}|h|_{\epsilon,\gamma/2}^2$;
 if $\gamma=-\f32$, $|\mathcal{I}_{2,1}| \lesssim |\mu^{\frac{1}{8}}g|_{H^{s_{1}}}|W^{\epsilon}(D)h|_{H^{s_{2}}_{\gamma/2}}
   |h|_{\epsilon,\gamma/2} + |\mu^{\frac{1}{8}}g|_{L^2}|h|_{\epsilon,\gamma/2}^{2}$ where $(s_1, s_2) = (0, \eta)$ or $(\eta, 0)$;
  if $-3<\gamma< -\frac{3}{2}$,
\beno
|\mathcal{I}_{2,1}| \lesssim |\mu^{\frac{1}{8}}g|_{H^{s_{1}}}|W^{\epsilon}(D)h|_{H^{s_{2}}_{\gamma/2}}^2+| \mu^{\frac{1}{8}}g|_{L^{2}}|h|_{\epsilon,\gamma/2}^2,
\eeno
where $s_1, s_2 \geq 0$ verify either $s_{1}+2s_{2}=-\gamma-\frac{3}{2},  s_{2}>0$ or $s_1=-\gamma-\frac{3}{2}+\eta, s_2=0$.

\underline{Estimate of $\mathcal{I}_{2,2}$.}
 We separate the relative velocity $|v-v_*|$ into two regions  by introducing the cutoff function $\phi$. If $|v-v_*|\lesssim 1$, the estimate is the same as  that for $\mathcal{I}_{1,1,1}$. If $|v-v_*|\ge 1$ , the estimate is exactly the same as that for $\mathcal{I}_{1,2,2}$. We conclude that
$
\mathcal{I}_{2,2} \lesssim |\mu^{\frac{1}{8}}g|_{L^{2}}|W^{\epsilon}f|_{L^{2}_{\gamma/2}}^{2}.
$ Patching together the estimates of $\mathcal{I}_{2,1}$ and $\mathcal{I}_{2,2}$, we get
\beno
\gamma>-\frac{3}{2}, & \quad
|\mathcal{I}_{2}|
\lesssim |\mu^{\frac{1}{8}}g|_{L^{2}}|h|_{\epsilon,\gamma/2}|W^{\epsilon}f|_{L^{2}_{\gamma/2}};
\\
-3<\gamma \leq -\frac{3}{2}, & \quad |\mathcal{I}_{2}|
\lesssim |\mu^{\frac{1}{8}}g|_{H^{s_{1}}}(|W^{\epsilon}(D)h|_{H^{s_{2}}_{\gamma/2}}+|h|_{\epsilon,\gamma/2})
  |W^{\epsilon}f|_{L^{2}_{\gamma/2}}. \eeno

{\it Step 3: Estimate of $\mathcal{I}_{3}$.} Recalling \eqref{defintion-of-I3},
by the change of variables $(v,v_{*}) \rightarrow (v^{\prime},v_{*}^{\prime})$ and   $(v,v_{*},\sigma) \rightarrow (v_{*},v,-\sigma)$, we have
\beno
\mathcal{I}_{3} =  2\int b^{\epsilon}(\cos\theta)|v-v_{*}|^{\gamma}(\mu^{ \frac{1}{4}} -
(\mu^{\frac{1}{4}})^{\prime})(\mu^{\frac{1}{4}}g)^{\prime} h_{*}f_{*} d\sigma dv_{*}dv.
\eeno
For ease of notation, let $E_{1} = \{(v,v_{*},\sigma): |v-v_{*}| \geq 1/\epsilon\}, E_{2} = \{(v,v_{*},\sigma): |v-v_{*}| \leq 1/\epsilon, \sin\frac{\theta}{2} \geq |v-v_{*}|^{-1}  \}, E_{3} = \{(v,v_{*},\sigma): |v-v_{*}| \leq 1/\epsilon, \sin\frac{\theta}{2} \leq |v-v_{*}|^{-1} \}$. Then $\mathcal{I}_{3}$ can be decomposed into three parts
 $ \mathcal{I}_{3,1}, \mathcal{I}_{3,2}$ and $\mathcal{I}_{3,3}$ which correspond to $E_{1}, E_{2}$ and $E_{3}$ respectively.

 \underline{Estimate of $\mathcal{I}_{3,1}$}
  By the change of variable $v \rightarrow v^{\prime}$ and the fact $|v^{\prime}-v_{*}|\geq |v-v_{*}| /\sqrt{2}$, we have
\beno
|\mathcal{I}_{3,1}| &\lesssim& \int b^{\epsilon}(\cos\theta)|v^{\prime}-v_{*}|^{\gamma}\mathrm{1}_{|v^{\prime}-v_{*}|\geq (\sqrt{2}\epsilon)^{-1}}|(\mu^{\frac{1}{4}}g)^{\prime} h_{*}f_{*}| d\sigma dv_{*}dv^{\prime}
\\&\lesssim& \epsilon^{-2s} \int |v^{\prime}-v_{*}|^{\gamma}\mathrm{1}_{|v^{\prime}-v_{*}|\geq (\sqrt{2}\epsilon)^{-1}}|(\mu^{\frac{1}{4}}g)^{\prime} h_{*}f_{*}| dv_{*}dv^{\prime}.
\eeno
On one hand, by Cauchy-Schwartz inequality, we have
\ben\label{lessthanep2s}
&&\epsilon^{-2s} \int |v^{\prime}-v_{*}|^{\gamma}\mathrm{1}_{|v^{\prime}-v_{*}|\geq (\sqrt{2}\epsilon)^{-1}}|(\mu^{\frac{1}{4}}g)^{\prime}|dv^{\prime}
\\&\leq& |\mu^{\frac{1}{8}}g|_{L^{2}} \epsilon^{-2s} \{\int |v^{\prime}-v_{*}|^{2\gamma}\mathrm{1}_{|v^{\prime}-v_{*}|\geq (\sqrt{2}\epsilon)^{-1}}(\mu^{\frac{1}{4}})^{\prime}dv^{\prime}\}^{\frac{1}{2}}
\lesssim |\mu^{\frac{1}{8}}g|_{L^{2}} \epsilon^{-2s} \langle v_{*} \rangle^{\gamma}, \nonumber
\een
where we use the fact
$
\langle v^{\prime}-v_{*} \rangle^{2\gamma} \lesssim \langle v^{\prime} \rangle^{|2\gamma|}\langle v_{*} \rangle^{2\gamma}.
$
On the other hand, by Cauchy-Schwartz inequality, we have
\ben\label{lessthanvstar2s}
&&\epsilon^{-2s} \int |v^{\prime}-v_{*}|^{\gamma}\mathrm{1}_{|v^{\prime}-v_{*}|\geq (\sqrt{2}\epsilon)^{-1}}|(\mu^{\frac{1}{4}}g)^{\prime}|dv^{\prime}
\\&\lesssim& \int |v^{\prime}-v_{*}|^{\gamma+2s} |(\mu^{\frac{1}{4}}g)^{\prime}|dv^{\prime}
\leq |\mu^{\frac{1}{8}}g|_{L^{2}}  \{\int |v^{\prime}-v_{*}|^{2\gamma+4s}(\mu^{\frac{1}{4}})^{\prime}dv^{\prime}\}^{\frac{1}{2}}
\lesssim |\mu^{\frac{1}{8}}g|_{L^{2}}  \langle v_{*} \rangle^{\gamma+2s}, \nonumber
\een
where use \eqref{mu-cancel-sigularity}.
With estimates \eqref{lessthanep2s} and  \eqref{lessthanvstar2s} in hand, we have
\beno
|\mathcal{I}_{3,1}| \lesssim |\mu^{\frac{1}{8}}g|_{L^{2}}|W^{\epsilon}h|_{L^{2}_{\gamma/2}}|W^{\epsilon}f|_{L^{2}_{\gamma/2}}.
\eeno

 \underline{Estimate of $\mathcal{I}_{3,2}$.}  Thanks to $ \frac{\sqrt{2}}{2}|v-v_{*}| \leq |v^{\prime}-v_{*}| \leq |v-v_{*}| $ and the change of variable $v \rightarrow v^{\prime}$, we get
\ben\label{i32preliminary}
|\mathcal{I}_{3,2}| &\lesssim& \int b^{\epsilon}(\cos\theta)\mathrm{1}_{\sin(\theta/2) \geq (\sqrt{2}|v^{\prime}-v_{*}|)^{-1} }|v^{\prime}-v_{*}|^{\gamma}\mathrm{1}_{|v^{\prime}-v_{*}|\leq 1/\epsilon}|(\mu^{\frac{1}{4}}g)^{\prime} h_{*}f_{*}| d\sigma dv_{*}dv^{\prime}
\\&\lesssim& \int |v^{\prime}-v_{*}|^{\gamma+2s}\mathrm{1}_{|v^{\prime}-v_{*}|\leq 1/\epsilon}|(\mu^{\frac{1}{4}}g)^{\prime} h_{*}f_{*}| dv_{*}dv^{\prime}. \nonumber
\een
On one hand, similar to the argument in \eqref{lessthanvstar2s}, we have
\ben\label{i32lessthanvstar2s}
 \int |v^{\prime}-v_{*}|^{\gamma+2s}\mathrm{1}_{|v^{\prime}-v_{*}|\leq 1/\epsilon}|(\mu^{\frac{1}{4}}g)^{\prime}| dv^{\prime}
\lesssim |\mu^{\frac{1}{8}}g|_{L^{2}}  \langle v_{*} \rangle^{\gamma+2s}.
\een
On the other hand, if $|v_{*}|\geq 2/\epsilon$, then $|v^{\prime}| \geq |v_{*}| - |v^{\prime}-v_{*}| \geq |v_{*}|/2 \geq 1/\epsilon$, which implies $\mu^{\prime} \leq \mu_{*}^{\frac{1}{4}} \lesssim e^{-\frac{1}{2}\epsilon^{2}}$. Then we deduce that
\ben\label{i32lessthanep2s}
&&
\mathrm{1}_{|v_*|\ge \f2{\epsilon}}\int |v^{\prime}-v_{*}|^{\gamma+2s}\mathrm{1}_{|v^{\prime}-v_{*}|\leq 1/\epsilon}|(\mu^{\frac{1}{4}}g)^{\prime}| dv^{\prime}
\\&\lesssim& \mathrm{1}_{|v_*|\ge \f2{\epsilon}}|\mu^{\frac{1}{8}}g|_{L^{2}}  \{\int |v^{\prime}-v_{*}|^{2\gamma+4s}\mathrm{1}_{|v^{\prime}-v_{*}|\leq 1/\epsilon}(\mu^{\frac{1}{4}})^{\prime}dv^{\prime}\}^{\frac{1}{2}} \nonumber
\\&\lesssim& \mathrm{1}_{|v_*|\ge \f2{\epsilon}}|\mu^{\frac{1}{8}}g|_{L^{2}} \mu_{*}^{\frac{1}{64}} (\epsilon^{-1})^{\gamma+2s+\frac{3}{2}} e^{-1/32\epsilon^{2}} \nonumber
\lesssim \mathrm{1}_{|v_*|\ge \f2{\epsilon}}|\mu^{\frac{1}{8}}g|_{L^{2}} \mu_{*}^{\frac{1}{64}}. \nonumber
\een
With estimates \eqref{i32lessthanvstar2s} and  \eqref{i32lessthanep2s} in hand, we have
\beno
|\mathcal{I}_{3,2}| \lesssim |\mu^{\frac{1}{8}}g|_{L^{2}}|W^{\epsilon}h|_{L^{2}_{\gamma/2}}|W^{\epsilon}f|_{L^{2}_{\gamma/2}}.
\eeno

\underline{Estimate of $\mathcal{I}_{3,3}$.}
By Taylor expansion, one has
\beno
\mu^{ \frac{1}{4}} - (\mu^{\frac{1}{4}})^{\prime} = (\nabla \mu^{ \frac{1}{4}})(v^{\prime})\cdot(v-v^{\prime}) + \int_{0}^{1} (1-\kappa) [(\nabla^{2} \mu^{ \frac{1}{4}})(v(\kappa)):(v-v^{\prime})\otimes(v-v^{\prime})] d\kappa,
\eeno
where $v(\kappa) = v^{\prime} + \kappa(v-v^{\prime})$.
Observe that, for any fixed $v_{*}$, there holds
\beno
\int b^{\epsilon}(\cos\theta)|v-v_{*}|^{\gamma} \mathrm{1}_{|v-v_{*}| \leq 1/\epsilon, \sin(\theta/2) \leq |v-v_{*}|^{-1}}(\nabla \mu^{ \frac{1}{4}})(v^{\prime})\cdot(v-v^{\prime})(\mu^{\frac{1}{4}}g)^{\prime}  d\sigma dv = 0.
\eeno
Thus we have
\beno
|\mathcal{I}_{3,3}| &=&  |\int_{E_{3}\times[0,1]} b^{\epsilon}(\cos\theta)|v-v_{*}|^{\gamma}\mathrm{1}_{|v-v_{*}| \leq 1/\epsilon, \sin(\theta/2) \leq |v-v_{*}|^{-1}}
\\&&\times (1-\kappa)[(\nabla^{2} \mu^{ \frac{1}{4}})(v(\kappa)):(v-v^{\prime})\otimes(v-v^{\prime})] (\mu^{\frac{1}{4}}g)^{\prime} h_{*}f_{*} d\kappa d\sigma dv_{*}dv|
\\&\lesssim& \int b^{\epsilon}(\cos\theta)|v^{\prime}-v_{*}|^{\gamma+2} \sin^{2}\frac{\theta}{2} \mathrm{1}_{|v^{\prime}-v_{*}| \leq 1/\epsilon, \sin(\theta/2) \leq |v^{\prime}-v_{*}|^{-1}}|(\mu^{\frac{1}{4}}g)^{\prime} h_{*}f_{*}|  d\sigma dv_{*}dv^{\prime}
\\&\lesssim& \int |v^{\prime}-v_{*}|^{\gamma+2s}\mathrm{1}_{|v^{\prime}-v_{*}| \leq 1/\epsilon}|(\mu^{\frac{1}{4}}g)^{\prime} h_{*}f_{*}|   dv_{*}dv^{\prime}.
\eeno
Copying the argument applied to \eqref{i32preliminary}, we have
$|\mathcal{I}_{3,3}| \lesssim|\mu^{\frac{1}{8}}g|_{L^{2}}|W^{\epsilon}h|_{L^{2}_{\gamma/2}}|W^{\epsilon}f|_{L^{2}_{\gamma/2}}$.

Patching together the above estimates of $\mathcal{I}_{3,1}, \mathcal{I}_{3,2}$ and $\mathcal{I}_{3,3}$, we have
\beno
|\mathcal{I}_{3}| \lesssim |\mu^{\frac{1}{8}}g|_{L^{2}}|W^{\epsilon}h|_{L^{2}_{\gamma/2}}|W^{\epsilon}f|_{L^{2}_{\gamma/2}}.
\eeno

Recalling \eqref{I-into-I1-I2-I3},
the lemma follows from the above estimates of $\mathcal{I}_{1}, \mathcal{I}_{2}$ and $\mathcal{I}_{3}$.
\end{proof}

\subsubsection{Upper bounds for the nonlinear term $ \Gamma^{\epsilon}(g,h)$}
We are now ready to give the upper bound for the inner product $\langle \Gamma^{\epsilon}(g,h), f\rangle_v$.
\begin{thm}\label{upGammagh}
	 For any  $\eta>0$, the following estimates are valid.
\begin{itemize}
	\item If $\gamma>-\frac{3}{2}$, then
 $|\langle \Gamma^\epsilon(g,h), f\rangle_v| \lesssim  |g|_{L^{2}}|h|_{\epsilon,\gamma/2}| f|_{ \epsilon,\gamma/2}.$
 \item  If $\gamma=-\f32$, then
 \beno |\langle \Gamma^\epsilon(g,h), f\rangle_v| \lesssim |\mu^{\frac{1}{8}}g|_{H^{s_{1}}}(|W^{\epsilon}(D)h|_{H^{s_{2}}_{\gamma/2}}+|h|_{\epsilon,\gamma/2})
  |f|_{ \epsilon,\gamma/2} +
|g|_{L^{2}}|h|_{\epsilon,\gamma/2} |f|_{ \epsilon,\gamma/2},\eeno
where $(s_1, s_2) = (0, \eta)$ or $(\eta, 0)$.
  \item If $-3< \gamma < -\frac{3}{2}$, then
\beno
|\langle  \Gamma^\epsilon(g,h), f\rangle_v| \lesssim |\mu^{\frac{1}{8}}g|_{H^{s_{1}}}(|W^{\epsilon}(D)h|_{H^{s_{2}}_{\gamma/2}}+|h|_{\epsilon,\gamma/2})
|f|_{\epsilon,\gamma/2} +
|g|_{L^{2}}|h|_{\epsilon,\gamma/2} |f|_{ \epsilon,\gamma/2},\eeno
where the constants $s_1, s_2 \geq 0$ verify either $s_{1}+s_{2}=-\gamma-\frac{3}{2}, s_{2} >0$ or  $s_1=-\gamma-\frac{3}{2}+\eta, s_2=0$.
 \end{itemize}
 As a direct application,
 we have
\ben\label{upgammamuff1}
 |\langle \Gamma^{\epsilon}(\mu^{\frac{1}{2}},f), f\rangle_v| \lesssim |f|^{2}_{\epsilon,\gamma/2}.
\een
\end{thm}
\begin{proof} Recalling \eqref{Gamma-to-Q-I}, the estimates of $|\langle  \Gamma^\epsilon(g,h), f\rangle_v|$
 follow directly from  Theorem \ref{upQepsilon} and Lemma \ref{upforI}. By taking $s_2=s_3=0,$ we get \eqref{upgammamuff1}.
\end{proof}

Now we are in a position to prove Theorem \ref{main1}.
\begin{proof}[Proof of Theorem \ref{main1}]  On one hand, by Theorem \ref{equivalencenorm}, we derived that
$ \langle \mathcal{L}^{\epsilon}f, f\rangle_v + |f|^{2}_{L^{2}_{\gamma/2}} \gtrsim |f|^{2}_{\epsilon,\gamma/2}$.
On the other hand, recalling \eqref{DefLep}, by \eqref{l2-upper-bound} and \eqref{upgammamuff1}, we have
$
\langle \mathcal{L}^{\epsilon}f, f\rangle_v \lesssim    |f|^{2}_{\epsilon,\gamma/2},
$
 which ends the proof.
\end{proof}

\subsection{Commutator estimates}  In this subsection, we want to prove

\begin{lem}\label{commutatorgamma}
Let $l \geq 2$. The following commutator estimates are valid.
\begin{itemize} \item If $\gamma \ge -2$, then
		 $|\langle \Gamma^{\epsilon}(g,W_{l}h)-W_{l}\Gamma^{\epsilon}(g,h), f\rangle_v| \lesssim |g|_{L^{2}}|W_{l+\gamma/2}h|_{L^{2}}|f|_{\epsilon,\gamma/2}.$	
\item If $-3 < \gamma < -2$, then
        \beno
		 |\langle \Gamma^{\epsilon}(g,W_{l}h)-W_{l}\Gamma^{\epsilon}(g,h), f\rangle_v|  \lesssim |g|_{L^{2}}|W_{l+\gamma/2}h|_{L^{2}}|f|_{\epsilon,\gamma/2}+ |\mu^{\frac{1}{32}}g|_{H^{s_{1}}}|\mu^{\frac{1}{32}}h|_{H^{s_{2}}}|f|_{\epsilon,\gamma/2},
		\eeno
where the constants  $s_{1}, s_{2} \geq 0$ verify  $s_{1} + s_{2} = -\gamma/2-1$.
\end{itemize}
\end{lem}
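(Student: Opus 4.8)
The argument splits into an algebraic reduction and an analytic estimate.

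\textbf{Step 1 (an explicit formula).} Recalling the weak form $\langle Q^{\epsilon}(G,H),\phi\rangle_v=\int B^{\epsilon}G_{*}H(\phi'-\phi)\,d\sigma dv_{*}dv$ (from the change of variables $(v,v_{*})\mapsto(v',v_{*}')$ at fixed $\sigma$) and the decomposition $\langle\Gamma^{\epsilon}(g,h),f\rangle_v=\langle Q^{\epsilon}(\mu^{1/2}g,h),f\rangle_v+\mathcal{I}(g,h,f)$ used before Lemma~\ref{upforI}, I would expand $\langle\Gamma^{\epsilon}(g,W_{l}h),f\rangle_v$ and $\langle\Gamma^{\epsilon}(g,h),W_{l}f\rangle_v$; the ``diagonal'' contributions proportional to $W_{l}(v)h(v)f(v)$ cancel, leaving the clean identity $\langle\Gamma^{\epsilon}(g,W_{l}h)-W_{l}\Gamma^{\epsilon}(g,h),f\rangle_v=\int B^{\epsilon,\gamma}\mu_{*}^{\prime 1/2}g_{*}(W_{l}(v)-W_{l}(v'))h(v)f(v')\,d\sigma dv_{*}dv=:\mathcal{C}$. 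So the whole lemma reduces to estimating $\mathcal{C}$, the new ingredient being pointwise control of the weight difference: from Taylor expansion and $|v-v'|=\sin(\theta/2)|v-v_{*}|$ one has $|W_{l}(v)-W_{l}(v')|\lesssim\min\{\theta|v-v_{*}|,\langle v\rangle+\langle v'\rangle\}(\langle v\rangle+\langle v'\rangle)^{l-1}$, the second-order version $W_{l}(v')-W_{l}(v)=\nabla W_{l}(v)\cdot(v'-v)+O(\theta^{2}|v-v_{*}|^{2}(\langle v\rangle+\langle v'\rangle)^{l-2})$, and the crude bound $\lesssim\langle v\rangle^{l}\langle v_{*}\rangle^{l}$.

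\textbf{Step 2 (the Gaussian part).} Write $\mu_{*}^{\prime 1/2}=\mu_{*}^{1/2}+(\mu_{*}^{\prime 1/2}-\mu_{*}^{1/2})$ and $\mathcal{C}=\mathcal{C}_{1}+\mathcal{C}_{2}$ accordingly. In $\mathcal{C}_{1}$ the factor $\mu_{*}^{1/2}$ gives true Gaussian decay in $v_{*}$, so $\langle v_{*}\rangle^{N}\mu_{*}^{1/2}\lesssim\mu_{*}^{1/4}$ and, since $|v'|\le|v|+|v_{*}|$ and $|v|\le|v'|+|v_{*}|$, one has $\langle v\rangle\sim\langle v'\rangle$ on its support. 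I would insert the first-order expansion; for the leading term $\nabla W_{l}(v)\cdot(v'-v)$ I would further split $f(v')=f(v)+(f(v')-f(v))$, using the reflection $\sigma\mapsto2(\sigma\cdot e)e-\sigma$, $e=\tfrac{v-v_{*}}{|v-v_{*}|}$, to kill the leading angular moment in the $f(v)$ piece (turning $\int b^{\epsilon}(v'-v)\,d\sigma$ into a convergent $\int b^{\epsilon}(\cos\theta-1)\,d\sigma$); the $f(v')-f(v)$ piece and the quadratic remainder are estimated by Cauchy--Schwarz, where the $\theta^{2}|v-v_{*}|^{2}$ factors make the angular integrals converge (Proposition~\ref{symbol}), the residual $|v-v_{*}|^{\gamma+2}$ kernel is locally integrable since $\gamma>-3$, the $h$-side collapses to $|W_{l+\gamma/2}h|^{2}_{L^{2}}$ (all $v_{*}$-weights absorbed by $\mu_{*}^{1/2}$, using $\langle v\rangle\sim\langle v_{*}\rangle$ on $\{|v-v_{*}|\le1\}$ and $\langle v\rangle\sim|v-v_{*}|$ on $\{|v-v_{*}|\ge1\}$), and the $(f(v')-f(v))^{2}$-term is $\lesssim\mathcal{R}^{\epsilon,\gamma}_{*,\mu^{1/2}|g|}(f)\lesssim|g|_{L^{2}}|f|^{2}_{\epsilon,\gamma/2}$ by Lemma~\ref{nosigularityeg} and \eqref{lowerupper1}. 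This gives $|\mathcal{C}_{1}|\lesssim|g|_{L^{2}}|W_{l+\gamma/2}h|_{L^{2}}|f|_{\epsilon,\gamma/2}$ for all $\gamma>-3$.

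\textbf{Step 3 ($\mathcal{C}_{2}$ and the dichotomy).} The term $\mathcal{C}_{2}$ has the structure of the quantity $\mathcal{I}(g,h,f)$ treated in the proof of Lemma~\ref{upforI}, with the extra factor $W_{l}(v)-W_{l}(v')$. I would reuse there the identity $\mu_{*}^{\prime 1/2}-\mu_{*}^{1/2}=(\mu_{*}^{\prime 1/8}+\mu_{*}^{1/8})^{2}(\mu_{*}^{\prime 1/8}-\mu_{*}^{1/8})^{2}+2\mu_{*}^{1/4}(\mu_{*}^{\prime 1/4}-\mu_{*}^{1/4})$, the bound $(\mu_{*}^{\prime 1/8}-\mu_{*}^{1/8})^{2}\lesssim\min\{1,|v-v_{*}|^{2}\theta^{2}\}$, and the $\mathcal{I}_{1},\mathcal{I}_{2},\mathcal{I}_{3}$ / $E_{1},E_{2},E_{3}$ splitting of that proof, now propagating the additional $\min\{\theta|v-v_{*}|,\langle v\rangle\}$-smallness (together with Lemma~\ref{operatorcommutator1} to move $W^{\epsilon}(D)$ past polynomial weights, and Lemma~\ref{aftercancellation}, Theorem~\ref{upQepsilon} for the term produced by the cancellation lemma). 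When $\gamma+2\ge0$, the combined quadratic vanishing (one power of $\theta|v-v_{*}|$ from the weight difference and one from the Maxwellian difference) renders every residual kernel integrable with only $|g|_{L^{2}}$ after absorbing polynomial $v_{*}$-weights against $\mu_{*}^{1/4}$, giving assertion~(1). When $-3<\gamma<-2$, in the regions where $\mu_{*}^{\prime 1/2}$ does not localize $v_{*}$ the residual kernel behaves like $|v-v_{*}|^{\gamma+2}$ with $\gamma+2\in(-1,0)$; applying the Hardy--Littlewood--Sobolev inequality of Lemma~\ref{hardylittlewoodsoblev} and Sobolev embedding, and balancing the exponents, produces the extra term $|\mu^{1/32}g|_{H^{s_{1}}}|\mu^{1/32}h|_{H^{s_{2}}}|f|_{\epsilon,\gamma/2}$ with $s_{1}+s_{2}=-\gamma/2-1$, the weight $\mu^{1/32}$ coming from the cross term $\mu_{*}^{1/4}(\mu_{*}^{\prime 1/4}-\mu_{*}^{1/4})$.

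\textbf{Main obstacle.} Step~1 is bookkeeping; the real work is Step~3 for negative $\gamma$, where one must simultaneously exploit the two independent sources of angular smallness to beat the non-cutoff singularity uniformly in $\epsilon$, keep track of which of $g$, $h$, $f$ carries which polynomial weight so that the final norms come out exactly $|g|_{L^{2}}$ (resp. $|\mu^{1/32}g|_{H^{s_{1}}}$), $|W_{l+\gamma/2}h|_{L^{2}}$ (resp. $|\mu^{1/32}h|_{H^{s_{2}}}$) and $|f|_{\epsilon,\gamma/2}$, and isolate precisely the regions where the Gaussian fails to localize $v_{*}$; this last point is exactly what forces the dichotomy $\gamma+2\ge0$ versus $-3<\gamma<-2$ and the Sobolev budget $s_{1}+s_{2}=-\gamma/2-1$.
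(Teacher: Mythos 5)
Your Step~1 reduction to
\[
\mathcal{C}=\int B^{\epsilon,\gamma}\,\mu_{*}^{\prime 1/2}\,g_{*}\,\bigl(W_{l}(v)-W_{l}(v')\bigr)\,h(v)f(v')\,d\sigma\,dv_{*}\,dv
\]
is correct, and the split $\mu_{*}^{\prime 1/2}=\mu_{*}^{1/2}+(\mu_{*}^{\prime 1/2}-\mu_{*}^{1/2})$ giving $\mathcal{C}=\mathcal{C}_{1}+\mathcal{C}_{2}$ is exactly the paper's decomposition: $\mathcal{C}_{1}$ is the $Q^{\epsilon}$-commutator of Lemma~\ref{commutatorQepsilon} (with $\mu^{1/2}g$ in place of $\mu^{1/4}g$) and $\mathcal{C}_{2}$ is the $\mathcal{I}$-commutator of Lemma~\ref{commutatorforI}. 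So the global strategy matches. The gap is that you have the dichotomy attached to the wrong piece.

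You claim in Step~2 that $|\mathcal{C}_{1}|\lesssim|g|_{L^{2}}|W_{l+\gamma/2}h|_{L^{2}}|f|_{\epsilon,\gamma/2}$ uniformly for $\gamma>-3$, and in Step~3 that the Sobolev correction $|\mu^{1/32}g|_{H^{s_{1}}}|\mu^{1/32}h|_{H^{s_{2}}}|f|_{\epsilon,\gamma/2}$ comes from $\mathcal{C}_{2}$. Both assignments are reversed. For $\mathcal{C}_{1}$ the weight difference $W_{l}-W'_{l}$ is the \emph{only} source of $\theta|v-v_{*}|$-smallness. After Cauchy--Schwarz against the $(f'-f)$-term, the second factor carries $(W_{l}-W'_{l})^{2}g_{*}^{2}h^{2}$ with residual kernel $|v-v_{*}|^{\gamma+2}$ and $g_{*}$ \emph{squared}; and in the cancellation term, $\int b^{\epsilon}(v'-v)d\sigma\sim(v-v_{*})$ leaves $|v-v_{*}|^{\gamma+1}\mu_{*}^{1/2}|g_{*}h f|$ with $g,h,f$ each to degree one. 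Neither can be reduced on $\{|v-v_{*}|\le1\}$ to $|g|_{L^{2}}|h|_{L^{2}}|f|_{L^{2}}$ when $\gamma+2<0$; the Hardy--Littlewood--Sobolev budget (paper's $\mathcal{A}_{1,2,1}$, $\mathcal{A}_{2,1,1}$ in Lemma~\ref{commutatorQepsilon}) is genuinely needed there, and that is exactly where the $|\mu^{1/32}g|_{H^{s_{1}}}|\mu^{1/32}h|_{H^{s_{2}}}$ term originates. Your appeal to ``local integrability of $|v-v_{*}|^{\gamma+2}$'' and to $\mathcal{R}^{\epsilon,\gamma}_{*,\mu^{1/2}|g|}(f)$ is imprecise here: the commutator kernel is $|v-v_{*}|^{\gamma}$, not $\langle v-v_{*}\rangle^{\gamma}$, so what appears is $\mathcal{R}^{\epsilon,\gamma}_{\mu^{1/2}|g|}(f)$, which retains the near-diagonal singularity and is not covered by Lemma~\ref{nosigularityeg}.

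Conversely, $\mathcal{C}_{2}$ does \emph{not} require the dichotomy: the paper's Lemma~\ref{commutatorforI} gives the uniform bound $|g|_{L^{2}}|W_{l+\gamma/2}h|_{L^{2}}|W^{\epsilon}f|_{L^{2}_{\gamma/2}}$ for all $\gamma>-3$ and $l\ge1$. The reason is that the extra $\theta|v-v_{*}|$-vanishing from $\mu_{*}^{\prime 1/4}-\mu_{*}^{1/4}$ lets one split $g_{*}$ symmetrically across the Cauchy--Schwarz factors, keeping it to degree one on each side; the resulting $v_{*}$-integral $\int|v-v_{*}|^{2\gamma+4}\mu_{*}^{1/4}\,dv_{*}$ converges for $\gamma>-7/2$ and is bounded by $\langle v\rangle^{\gamma+2}$ directly, with no Hardy--Littlewood--Sobolev input. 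So if you execute the estimates as outlined you would fail to close Step~2 when $-3<\gamma<-2$, and you would be performing unnecessary work in Step~3. The fix is to move the Sobolev budget from $\mathcal{C}_{2}$ to $\mathcal{C}_{1}$, following the split-of-$g_{*}$ strategy of Lemma~\ref{commutatorforI} for $\mathcal{C}_{2}$ and the Hardy--Littlewood--Sobolev treatment of Lemma~\ref{commutatorQepsilon} (and Lemma~\ref{hardylittlewoodsoblev}) for $\mathcal{C}_{1}$.
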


This lemma is a consequence of Lemma \ref{commutatorQepsilon} and Lemma \ref{commutatorforI} by recalling \eqref{Gamma-to-Q-I}.
We first prove the commutator estimate for $Q^{\epsilon}$.
\begin{lem}\label{commutatorQepsilon}
	Let $l \geq 2$. The following commutator estimates are valid.
\begin{itemize} \item If $\gamma \ge -2$, then
		 $|\langle Q^{\epsilon}(\mu^{\frac{1}{2}}g,W_{l}h)-W_{l}Q^{\epsilon}(\mu^{\frac{1}{2}}g,h), f\rangle_v| \lesssim |\mu^{\frac{1}{32}}g|_{L^{2}}|W_{l+\gamma/2}h|_{L^{2}}|f|_{\epsilon,\gamma/2}.$	
		\item If $-3 < \gamma < -2$, then
        \beno
		 |\langle Q^{\epsilon}(\mu^{\frac{1}{2}}g,W_{l}h)-W_{l}Q^{\epsilon}(\mu^{\frac{1}{2}}g,h), f\rangle_v|  \lesssim (|\mu^{\frac{1}{32}}g|_{L^{2}}|W_{l+\gamma/2}h|_{L^{2}}+ |\mu^{\frac{1}{32}}g|_{H^{s_{1}}}|\mu^{\frac{1}{32}}h|_{H^{s_{2}}})|f|_{\epsilon,\gamma/2},
		\eeno
where the constants  $s_{1}, s_{2} \geq 0$ verify  $s_{1} + s_{2} = -\gamma/2-1$.
\end{itemize}
\end{lem}
\begin{proof}
Recall $B^{\epsilon,\gamma}= |v-v_{*}|^{\gamma} b^{\epsilon}(\cos\theta)$ and note that
\beno &&\langle Q^{\epsilon}(\mu^{\frac{1}{2}}g,W_{l}h)-W_{l}Q^{\epsilon}(\mu^{\frac{1}{2}}g,h), f\rangle_v  = \int B^{\epsilon,\gamma}(W_{l}-W^{\prime}_{l})\mu_{*}^{\frac{1}{2}}g_{*} h f^{\prime} d\sigma dv_{*} dv
\\&=& \int B^{\epsilon,\gamma}(W_{l}-W^{\prime}_{l})\mu_{*}^{\frac{1}{2}}g_{*} h (f^{\prime}-f) d\sigma dv_{*} dv
 +\int B^{\epsilon,\gamma}(W_{l}-W^{\prime}_{l})\mu_{*}^{\frac{1}{2}}g_{*} h f d\sigma dv_{*} dv
:=\mathcal{A}_{1} + \mathcal{A}_{2}. \eeno

{\it Step 1: Estimate of $\mathcal{A}_{1}$.}
By Cauchy-Schwartz inequality, we have
\beno |\mathcal{A}_{1}| \leq \{\int B^{\epsilon,\gamma} \mu_{*}^{\frac{1}{2}}(f^{\prime}-f)^{2} d\sigma dv_{*} dv\}^{\frac{1}{2}}
\{\int B^{\epsilon,\gamma}(W_{l}-W^{\prime}_{l})^{2}\mu_{*}^{\frac{1}{2}}g^{2}_{*} h^{2}  d\sigma dv_{*} dv\}^{\frac{1}{2}}
:=(\mathcal{A}_{1,1})^{\frac{1}{2}}(\mathcal{A}_{1,2})^{\frac{1}{2}}. \eeno
By the estimate of $\mathcal{I}_{2,1}$ in the proof of Lemma \ref{upforI},  we have $  \mathcal{A}_{1,1} \lesssim |f|^{2}_{\epsilon,\gamma/2}$.
It is easy to derive $ \int b^{\epsilon}(W_{l}-W^{\prime}_{l})^{2}d\sigma \lesssim |v-v_{*}|^{2}\langle v \rangle^{2l-2}\langle v_{*} \rangle^{2l-2}, $
which gives $$\mathcal{A}_{1,2} \lesssim \int |v-v_{*}|^{\gamma+2}\langle v \rangle^{2l-2}\langle v_{*} \rangle^{2l-2}\mu_{*}^{\frac{1}{2}}g^{2}_{*} h^{2}   dv_{*} dv.$$
If $\gamma+2 \geq 0$, there holds $ \mathcal{A}_{1,2} \lesssim |\mu^{\frac{1}{16}}g|^{2}_{L^{2}}|h|^{2}_{L^{2}_{l+\gamma/2}}.$
If $\gamma+2 < 0$, we make the decomposition,
\beno \mathcal{A}_{1,2} &\lesssim& \int |v-v_{*}|^{\gamma+2}\mathrm{1}_{|v-v_{*}|\leq 1}\langle v \rangle^{2l-2}\langle v_{*} \rangle^{2l-2}\mu_{*}^{\frac{1}{2}}g^{2}_{*} h^{2}   dv_{*} dv
\\&&+ \int |v-v_{*}|^{\gamma+2}\mathrm{1}_{|v-v_{*}|\geq 1}\langle v \rangle^{2l-2}\langle v_{*} \rangle^{2l-2}\mu_{*}^{\frac{1}{2}}g^{2}_{*} h^{2}   dv_{*} dv
:= \mathcal{A}_{1,2,1}+\mathcal{A}_{1,2,2}.\eeno
When $|v-v_{*}|\leq 1$, there holds $|v_{*}|\geq |v|-1$, thus $|v_{*}|^{2}\gtrsim |v|^{2}/2$ and $\mu_{*} \lesssim \mu^{\frac{1}{2}}$. Therefore we get
$\langle v \rangle^{2l-2}\langle v_{*} \rangle^{2l-2}\mu_{*}^{\frac{1}{2}} \lesssim  \langle v_{*} \rangle^{4l-4}\mu_{*}^{\frac{1}{8}}\mu^{\frac{1}{16}} \lesssim  \mu_{*}^{1/16}\mu^{\frac{1}{16}},$ which yields
\beno \mathcal{A}_{1,2,1} \lesssim \int |v-v_{*}|^{\gamma+2}\mathrm{1}_{|v-v_{*}|\leq 1}\mu_{*}^{1/16}\mu^{\frac{1}{16}}g^{2}_{*} h^{2}dv_{*} dv = \int |v-v_{*}|^{\gamma+2}\mathrm{1}_{|v-v_{*}|\leq 1}G_{*} H dv_{*} dv, \eeno
where $G = \mu^{\frac{1}{16}}g^{2}$ and $H = \mu^{\frac{1}{16}}h^{2}$. We assert that for $s_1,s_2\ge0$ with $s_1+s_2=-(\gamma+2)/2$,
\ben \label{a-elmentary-result}
|\mathcal{A}_{1,2,1}|\lesssim |\mu^{\frac{1}{32}}g|^{2}_{H^{s_{1}}} |\mu^{\frac{1}{32}}h|^{2}_{H^{s_{2}}}.\een
Indeed, if $s_1\in (0, -(\gamma+2)/2)$,
by Hardy-Littlewood-Sobolev inequality and Sobolev embedding theorem, we get the result. If $s_1=0$ or $s_1=-(\gamma+2)/2$, by Hardy's inequality, one has
\beno |\mathcal{A}_{1,2,1}|\lesssim |\sqrt{G}|^{2}_{H^{-(\gamma+2)/2}}|\sqrt{H}|^{2}_{L^2};\quad |\mathcal{A}_{1,2,1}|\lesssim |\sqrt{G}|^{2}_{L^2}|\sqrt{H}|^{2}_{H^{-(\gamma+2)/2}}. \eeno
Patching together these two cases, we get \eqref{a-elmentary-result}.

When $|v-v_{*}|\geq 1$, there holds $|v-v_{*}|^{\gamma+2} \sim \langle v-v_{*} \rangle^{\gamma+2} \lesssim \langle v \rangle^{\gamma+2}\langle v_{*} \rangle^{|\gamma+2|}$, which yields
\beno \mathcal{A}_{1,2,2} \lesssim  \int \langle v \rangle^{2l+\gamma}\langle v_{*} \rangle^{2l-2+|\gamma+2|}\mu_{*}^{\frac{1}{2}}g^{2}_{*} h^{2}   dv_{*} dv
\lesssim |\mu^{\frac{1}{32}}g|^{2}_{L^{2}} |h|^{2}_{L^{2}_{l+\gamma/2}}.\eeno

Patching together the above estimates, we have if $\gamma+2 \geq 0$,   $|\mathcal{A}_{1}| \lesssim |\mu^{\frac{1}{16}}g|_{L^{2}}|h|_{L^{2}_{l+\gamma/2}}|f|_{\epsilon,\gamma/2}$,
and if $\gamma+2 < 0$,  $|\mathcal{A}_{1}| \lesssim (|\mu^{\frac{1}{32}}g|_{H^{s_{1}}} |\mu^{\frac{1}{32}}h|_{H^{s_{2}}}+|\mu^{\frac{1}{32}}g|_{L^{2}} |h|_{L^{2}_{l+\gamma/2}})|f|_{\epsilon,\gamma/2}.$

{\it Step 2: Estimate of $\mathcal{A}_{2}$.}
By Taylor expansion, one has
\beno W^{\prime}_{l} - W_{l} = (\nabla W_{l})(v)\cdot(v^{\prime}-v) +\int_{0}^{1}(1-\kappa)(\nabla^{2}W_{l})(v(\kappa)):(v^{\prime}-v)\otimes(v^{\prime}-v)d\kappa, \eeno
where $v(\kappa) = v + \kappa (v^{\prime}-v)$. Thus we have
\beno \mathcal{A}_{2} &=& -\int B^{\epsilon,\gamma}(\nabla W_{l})(v)\cdot(v^{\prime}-v)\mu_{*}^{\frac{1}{2}}g_{*} h f d\sigma dv_{*} dv
\\&&-\int B^{\epsilon,\gamma}(1-\kappa)(\nabla^{2}W_{l})(v(\kappa)):(v^{\prime}-v)\otimes(v^{\prime}-v)\mu_{*}^{\frac{1}{2}}g_{*} h f d\kappa d\sigma dv_{*} dv
:= \mathcal{A}_{2,1}+\mathcal{A}_{2,2}.\eeno

\underline{Estimate of $\mathcal{A}_{2,1}$.} Thanks to the fact that there exists a constant $C(\epsilon)$  with $|C(\epsilon)|\lesssim 1$ such that
\ben\label{canvvx} \int b^{\epsilon}(\cos\theta) (v^{\prime}-v) d\sigma = -(v-v_{*})\int b^{\epsilon}(\cos\theta)\sin^{2}(\theta/2)d\sigma = -(v-v_{*}) C(\epsilon),\een
we have \beno |\mathcal{A}_{2,1}|  &\lesssim& \int |v-v_{*}|^{\gamma+1}\langle v \rangle^{l-1}\langle v_{*} \rangle^{l-1}\mu_{*}^{\frac{1}{2}}|g_{*} h f| d\sigma dv_{*} dv
\\&\lesssim&\int |v-v_{*}|^{\gamma+1}\mathrm{1}_{|v-v_{*}|\leq 1}\langle v \rangle^{l-1}\langle v_{*} \rangle^{l-1}\mu_{*}^{\frac{1}{2}}|g_{*} h f|   dv_{*} dv
\\&&+ \int |v-v_{*}|^{\gamma+1}\mathrm{1}_{|v-v_{*}|\geq 1}\langle v \rangle^{l-1}\langle v_{*} \rangle^{l-1}\mu_{*}^{\frac{1}{2}}|g_{*} h f|   dv_{*} dv
:=\mathcal{A}_{2,1,1}+\mathcal{A}_{2,1,2}.\eeno
When $|v-v_{*}|\leq 1$, as before, one has
$ \langle v \rangle^{l-1}\langle v_{*} \rangle^{l-1}\mu_{*}^{\frac{1}{2}} \lesssim  \langle v_{*} \rangle^{2l-2}\mu_{*}^{\frac{1}{8}}\mu^{\frac{1}{16}}
\lesssim  \mu_{*}^{\frac{1}{16}}\mu^{\frac{1}{16}}$.
Thus by Cauchy-Schwartz inequality, we have
\beno \mathcal{A}_{2,1,1} &\lesssim& \int |v-v_{*}|^{\gamma+1}\mathrm{1}_{|v-v_{*}|\leq 1}\mu_{*}^{\frac{1}{16}}\mu^{\frac{1}{16}}|g_{*} h f|   dv_{*} dv
\\&\leq& \{\int |v-v_{*}|^{\gamma+2}\mathrm{1}_{|v-v_{*}|\leq 1}\mu_{*}^{\frac{1}{16}}\mu^{\frac{1}{16}}g^{2}_{*} h^{2}    dv_{*} dv\}^{\f12}
\{\int |v-v_{*}|^{\gamma}\mathrm{1}_{|v-v_{*}|\leq 1}\mu_{*}^{\frac{1}{16}}\mu^{\frac{1}{16}}  f^{2}   dv_{*} dv\}^{\f12}
\\&\lesssim& \{\int |v-v_{*}|^{\gamma+2}\mathrm{1}_{|v-v_{*}|\leq 1}\mu_{*}^{\frac{1}{16}}\mu^{\frac{1}{16}}g^{2}_{*} h^{2}dv_{*} dv\}^{\frac{1}{2}}|\mu^{\frac{1}{32}}f|_{L^{2}}. \eeno
By the argument for $\mathcal{A}_{1,2,1}$, we conclude that if $\gamma \geq -2$,
$\mathcal{A}_{2,1,1} \lesssim  |\mu^{\frac{1}{32}}g|_{L^{2}} |\mu^{\frac{1}{32}}h|_{L^{2}}|\mu^{\frac{1}{32}}f|_{L^{2}} $, and if $-3 < \gamma < - 2$,
$ \mathcal{A}_{2,1,1} \lesssim  |\mu^{\frac{1}{32}}g|_{H^{s_{1}}} |\mu^{\frac{1}{32}}h|_{H^{s_{2}}}|\mu^{\frac{1}{32}}f|_{L^{2}}.$
By nearly the same argument as that for $\mathcal{A}_{1,2,2}$, we have
$ \mathcal{A}_{2,1,2} \lesssim |\mu^{\frac{1}{32}}g|_{L^{2}} |h|_{L^{2}_{l+\gamma/2}} |f|_{L^{2}_{\gamma/2}}.$
% Patching together the estimates of $\mathcal{A}_{2,1,1}$ and $\mathcal{A}_{2,1,2}$, we have, when $\gamma > - 5/2$,
% \beno |\mathcal{A}_{2,1}| \lesssim |\mu^{\frac{1}{32}}g|_{L^{2}} |h|_{L^{2}_{l+\gamma/2}} |f|_{L^{2}_{\gamma/2}}, \eeno
% and when $-3 < \gamma \leq - 5/2$, \beno |\mathcal{A}_{2,1}| \lesssim (|\mu^{\frac{1}{32}}g|_{H^{s_{3}}}
% |\mu^{\frac{1}{32}}h|_{H^{s_{4}}}+|\mu^{\frac{1}{32}}g|_{L^{2}} |h|_{L^{2}_{l+\gamma/2}}) |f|_{L^{2}_{\gamma/2}}. \eeno

\underline{Estimate of $\mathcal{A}_{2,2}$.} Since $|(\nabla^{2}W_{l})(v(\kappa))| \lesssim \langle v(\kappa) \rangle^{l-2} \lesssim \langle v \rangle^{l-2}\langle v_{*} \rangle^{l-2}$ and $|v^{\prime}-v|^{2} = \sin^{2}\frac{\theta}{2} |v-v_{*}|^{2}$, we have
\beno |\mathcal{A}_{2,2}| &\lesssim& \int b^{\epsilon}(\cos\theta)\sin^{2}\frac{\theta}{2}|v-v_{*}|^{\gamma+2}\langle v \rangle^{l-2}\langle v_{*} \rangle^{l-2}\mu_{*}^{\frac{1}{2}}|g_{*} h f |d\sigma dv_{*} dv
\\&\lesssim& \int |v-v_{*}|^{\gamma+2}\langle v \rangle^{l-2}\mu_{*}^{\frac{1}{8}}|g_{*} h f | dv_{*} dv.
\eeno
Thanks to $\gamma+2 > -1$, using Cauchy-Schwartz inequality and \eqref{mu-cancel-sigularity},
we have
$\int |v-v_{*}|^{\gamma+2}\mu_{*}^{\frac{1}{8}}|g_{*}| dv_{*} \lesssim \langle v \rangle^{\gamma+2}|\mu^{\frac{1}{16}}g|_{L^{2}}$
and thus $ |\mathcal{A}_{2,2}| \lesssim |\mu^{\frac{1}{16}}g|_{L^{2}} |h|_{L^{2}_{l+\gamma/2}} |f|_{L^{2}_{\gamma/2}}. $
Patching together the estimates of $\mathcal{A}_{2,1,1},\mathcal{A}_{2,1,2}$ and $\mathcal{A}_{2,2}$, we conclude that if $\gamma \geq - 2$,
$ |\mathcal{A}_{2}| \lesssim |\mu^{\frac{1}{32}}g|_{L^{2}} |h|_{L^{2}_{l+\gamma/2}} |f|_{L^{2}_{\gamma/2}}$,
and if $-3 < \gamma < - 2$,   $|\mathcal{A}_{2}| \lesssim (|\mu^{\frac{1}{32}}g|_{H^{s_{1}}} |\mu^{\frac{1}{32}}h|_{H^{s_{2}}}+|\mu^{\frac{1}{32}}g|_{L^{2}} |h|_{L^{2}_{l+\gamma/2}}) |f|_{L^{2}_{\gamma/2}}$.

The lemma   follows by patching together the estimates of $\mathcal{A}_{1}$ and $\mathcal{A}_{2}$.
\end{proof}
The next lemma gives the commutator estimate for $\mathcal{I}(g,h,f)$.
\begin{lem}\label{commutatorforI}
	Let $l \geq 1$, there holds
		\beno
		|\mathcal{I}(g,W_{l}h,f)-\mathcal{I}(g,h,W_{l}f)| \lesssim |g|_{L^{2}}|W_{l+\gamma/2}h|_{L^{2}}|W^{\epsilon}f|_{L^{2}_{\gamma/2}}.
		\eeno	
\end{lem}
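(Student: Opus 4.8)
The starting point is the identity, obtained directly from the definition of $\mathcal{I}$,
\beno
\mathcal{I}(g,W_{l}h,f)-\mathcal{I}(g,h,W_{l}f) = \int B^{\epsilon,\gamma}(\mu_{*}^{\prime 1/2} - \mu_{*}^{1/2})g_{*} h (W_{l}-W_{l}^{\prime}) f^{\prime} d\sigma dv_{*} dv,
\eeno
since the weight $W_l$ acts on the $v$ variable and, after the substitution in $\mathcal{I}$, the $f'$ term carries the weight $W_l' = W_l(v')$ while the $h$ term carries $W_l = W_l(v)$. Thus the whole quantity is controlled by the cancellation of $W_l - W_l'$, exactly as in the commutator estimate for $Q^{\epsilon}$ in Lemma \ref{commutatorQepsilon}. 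The plan is to mimic the decomposition used there: split $f' = (f'-f) + f$, so that
\beno
\mathcal{I}(g,W_{l}h,f)-\mathcal{I}(g,h,W_{l}f) = \mathcal{B}_1 + \mathcal{B}_2,
\eeno
where $\mathcal{B}_1$ contains the factor $(f'-f)$ and $\mathcal{B}_2$ contains the factor $f$ (with the cancellation of $W_l - W_l'$ against the first-order term of Taylor).

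\textbf{Treatment of $\mathcal{B}_1$.} Here I would apply Cauchy--Schwarz, putting $(f'-f)^2$ together with $\mu_*^{1/8}$ (using $|\mu_*'^{1/2}-\mu_*^{1/2}| \lesssim \mu_*^{1/4}$ crudely, or better keeping a factor $(\mu_*^{\prime 1/8}-\mu_*^{1/8})$) into a Bochner-type term that is bounded by $\mathcal{R}^{\epsilon,\gamma}_{*,\mu^{1/8}}(f) \lesssim |W^\epsilon f|^2_{L^2_{\gamma/2}}$ via Lemma \ref{nosigularityeg}, Lemma \ref{lowerboundpart2} and Theorem \ref{main1}; actually since $\gamma \geq -2s > -3/2$ under the hypotheses we may even use the plain $\mathcal{R}$ with a $\mu_*$ weight. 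The remaining factor is
\beno
\Big(\int B^{\epsilon,\gamma}(\mu_{*}^{\prime 1/2} - \mu_{*}^{1/2})^2 \mu_*^{-1/8} g^2_{*} h^2 (W_{l}-W_{l}^{\prime})^{2} d\sigma dv_{*} dv\Big)^{1/2};
\eeno
one uses $|W_l - W_l'| \lesssim |v-v'|\langle v\rangle^{l-1}\langle v_*\rangle^{l-1} \lesssim \theta |v-v_*| \langle v\rangle^{l-1}\langle v_*\rangle^{l-1}$ together with $(\mu_*^{\prime 1/2}-\mu_*^{1/2})^2 \lesssim \min\{1, |v-v_*|^2\theta^2\}$, integrate in $\sigma$ via Proposition \ref{symbol}, absorb $(W^\epsilon)^2(v-v_*) \lesssim (W^\epsilon)^2(v)(W^\epsilon)^2(v_*)$ and the Gaussians (distinguishing $|v-v_*|\le 1$ and $|v-v_*|\ge 1$ as in the proof of Lemma \ref{upforI}), to bound it by $|g|_{L^2}|W_{l+\gamma/2}h|_{L^2}$. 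This gives $|\mathcal{B}_1| \lesssim |g|_{L^2}|W_{l+\gamma/2}h|_{L^2}|W^\epsilon f|_{L^2_{\gamma/2}}$.

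\textbf{Treatment of $\mathcal{B}_2$ and the main obstacle.} For $\mathcal{B}_2$ one Taylor-expands $W_l' - W_l = (\nabla W_l)(v)\cdot(v'-v) + \tfrac12\int_0^1(1-\kappa)(\nabla^2 W_l)(v(\kappa)):(v'-v)^{\otimes 2}d\kappa$. The first-order term, after integrating in $\sigma$, produces a factor $(v-v_*)C(\epsilon)$ with $|C(\epsilon)|\lesssim 1$ by \eqref{canvvx}, so it behaves like $\int |v-v_*|^{\gamma+1}(\mu_*^{\prime 1/2}-\mu_*^{1/2})\langle v\rangle^{l-1}\langle v_*\rangle^{l-1} g_* h f$; the second-order term carries $\theta^2|v-v_*|^2\langle v\rangle^{l-2}\langle v_*\rangle^{l-2}$ and is handled analogously after integrating the angular singularity, which leaves $|v-v_*|^{\gamma+2}$ times a rapidly decaying weight in $v_*$. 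In both cases one uses $|\mu_*^{\prime 1/2}-\mu_*^{1/2}| \lesssim \mu_*^{1/4} + \mu_*^{\prime 1/4}$ and the change of variable $v \to v'$ (when the primed Gaussian appears) to reduce to unprimed quantities; then Cauchy--Schwarz splits off $|W^\epsilon f|_{L^2_{\gamma/2}}$ and, since $\gamma+1 > -1/2 > -3/2$, the convolution $\int |v-v_*|^{\gamma+1}\mu_*^{1/16}|g_*|dv_* \lesssim \langle v\rangle^{\gamma+1}|\mu^{1/32}g|_{L^2}$ (and similarly for $\gamma+2$) closes the estimate by $|g|_{L^2}|W_{l+\gamma/2}h|_{L^2}$. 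The main obstacle is bookkeeping the weights so that the combined power of $\langle v\rangle$ landing on $h$ is exactly $l+\gamma/2$ (not more): this requires carefully balancing, in the Cauchy--Schwarz split, how much of $\langle v\rangle^{l-1}$ or $\langle v\rangle^{l-2}$ goes with $h$ versus with $f$, and exploiting the extra Gaussian decay in $v_*$ from $\mu_*^{1/4}$ to absorb the $\langle v_*\rangle$-powers — exactly the trick used for $\mathcal{A}_{1,2}$ and $\mathcal{A}_{2,1}$ in Lemma \ref{commutatorQepsilon}. Once $\mathcal{B}_1$ and $\mathcal{B}_2$ are bounded, adding them yields the claimed inequality.
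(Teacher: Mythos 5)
Your starting identity and the ambition to reuse the $Q^{\epsilon}$-commutator strategy are natural, but the decomposition $f'=(f'-f)+f$ is the wrong split here, and it cannot deliver the bound as stated. The obstruction is in $\mathcal{B}_1$: the term carrying $(f'-f)^2$ is, after Cauchy--Schwarz, a quantity of $\mathcal{R}$-type (e.g.\ $\mathcal{R}^{\epsilon,\gamma}_{\mu^{1/8}}(f)$). By Lemma~\ref{lowerboundpart2} the only available upper bound for such a quantity is the full $|f|^{2}_{\epsilon,\gamma/2}$, including the Sobolev piece $|W^{\epsilon}(D)f|_{L^{2}_{\gamma/2}}$ and the anisotropic piece $|W^{\epsilon}((-\Delta_{\SS^{2}})^{1/2})f|_{L^{2}_{\gamma/2}}$ — not just the weight piece $|W^{\epsilon}f|_{L^{2}_{\gamma/2}}$ that the lemma asserts. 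So your argument would at best prove $|\mathcal{I}(g,W_{l}h,f)-\mathcal{I}(g,h,W_{l}f)|\lesssim |g|_{L^{2}}|W_{l+\gamma/2}h|_{L^{2}}|f|_{\epsilon,\gamma/2}$, which is strictly weaker than the statement; you cannot recover the weight-only control on $f$ once you have introduced $(f'-f)$.

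The paper avoids this entirely by decomposing the Gaussian difference instead of $f'$: using $\mu_*'^{1/2}-\mu_*^{1/2}=(\mu_*'^{1/4}-\mu_*^{1/4})^{2}+2\mu_*^{1/4}(\mu_*'^{1/4}-\mu_*^{1/4})$, each resulting piece keeps $f'$ whole; then Cauchy--Schwarz isolates integrals of $f'^{2}$ against a Gaussian difference, and a change of variables $(v,v_*)\to(v_*',v')$ (resp.\ $v\to v'$) converts these into $\mathcal{M}^{\epsilon,\gamma}$-type quantities for $f$, which by Lemma~\ref{lowerboundpart1} (and Proposition~\ref{symbol}) are controlled by $|W^{\epsilon}f|^{2}_{L^{2}_{\gamma/2}}$ alone. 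This is precisely what buys the sharp $|W^{\epsilon}f|_{L^{2}_{\gamma/2}}$ on the right — the extra regularizing factor $(\mu_*'^{1/4}-\mu_*^{1/4})$ replaces the role that $(f'-f)$ would otherwise have to play. In addition, your treatment of $\mathcal{B}_2$ invokes the cancellation \eqref{canvvx}, but that identity computes $\int b^{\epsilon}(v'-v)\,d\sigma$ with the rest of the integrand held $\sigma$-independent; here the $\sigma$-dependent factor $(\mu_*'^{1/2}-\mu_*^{1/2})$ sits inside the $\sigma$-integral, so the cancellation simply does not apply. (This particular point is repairable, since that same factor already supplies one power of $\theta$, and combined with the linear $\theta$ from $W_l'-W_l$ the angular integral converges without any cancellation — but as written the step is incorrect.) The deeper issue, however, is $\mathcal{B}_1$: with your decomposition the claimed $|W^{\epsilon}f|_{L^{2}_{\gamma/2}}$ bound is out of reach.
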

\begin{proof}
By the definition of $\mathcal{I}(g,h,f)$ and the fact $(\mu^{\frac{1}{2}})_{*}^{\prime} - \mu_{*}^{\frac{1}{2}} =((\mu^{\frac{1}{4}})_{*}^{\prime} - \mu_{*}^{\frac{1}{4}})^{2}+2\mu_{*}^{\frac{1}{4}}((\mu^{\frac{1}{4}})_{*}^{\prime} - \mu_{*}^{\frac{1}{4}})$, we have
\beno
\mathcal{I}(g,W_{l}h,f)-\mathcal{I}(g,h,W_{l}f) &=& \int B^{\epsilon,\gamma}((\mu^{\frac{1}{2}})_{*}^{\prime} - \mu_{*}^{\frac{1}{2}}) (W_{l}-W^{\prime}_{l})g_{*} h f^{\prime} d\sigma dv_{*} dv
\\&=&  \int B^{\epsilon,\gamma}((\mu^{\frac{1}{4}})_{*}^{\prime} - \mu_{*}^{\frac{1}{4}})^{2}(W_{l}-W^{\prime}_{l})g_{*} h f^{\prime} d\sigma dv_{*} dv
\\&&
 + 2 \int B^{\epsilon,\gamma}\mu_{*}^{\frac{1}{4}}((\mu^{\frac{1}{4}})_{*}^{\prime} - \mu_{*}^{\frac{1}{4}})(W_{l}-W^{\prime}_{l})g_{*} h f^{\prime} d\sigma dv_{*} dv
:= \mathcal{A}_{1} + 2\mathcal{A}_{2}.
\eeno

{\it Step 1: Estimate of $\mathcal{A}_{1}$.}
By Cauchy-Schwartz inequality, we have
\beno |\mathcal{A}_{1}| &\leq& \{\int B^{\epsilon,\gamma} ((\mu^{\frac{1}{4}})_{*}^{\prime} - \mu_{*}^{\frac{1}{4}})^{2} (f^{2})^{\prime} d\sigma dv_{*} dv\}^{\frac{1}{2}}
\\&&\times\{\int B^{\epsilon,\gamma}((\mu^{\frac{1}{4}})_{*}^{\prime} - \mu_{*}^{\frac{1}{4}})^{2}(W_{l}-W^{\prime}_{l})^{2}g^{2}_{*} h^{2}  d\sigma dv_{*} dv\}^{\frac{1}{2}}
:=(\mathcal{A}_{1,1})^{\frac{1}{2}}(\mathcal{A}_{1,2})^{\frac{1}{2}}. \eeno
By the change of variables $(v,v_{*}) \rightarrow (v_{*}^{\prime},v^{\prime})$ and Proposition \ref{lowerboundpart1}, we have
\beno \mathcal{A}_{1,1} = \int B^{\epsilon,\gamma} ((\mu^{\frac{1}{4}})^{\prime} - \mu^{\frac{1}{4}})^{2} f^{2}_{*} d\sigma dv_{*} dv
\lesssim |W^{\epsilon}f|^{2}_{L^{2}_{\gamma/2}}.\eeno
Thanks to  $((\mu^{\frac{1}{4}})_{*}^{\prime} - \mu_{*}^{\frac{1}{4}})^{2} = ((\mu^{\frac{1}{8}})_{*}^{\prime} + \mu_{*}^{\frac{1}{8}})^{2}((\mu^{\frac{1}{8}})_{*}^{\prime} - \mu_{*}^{\frac{1}{8}})^{2} \leq 2 ((\mu^{\frac{1}{4}})_{*}^{\prime} + \mu_{*}^{\frac{1}{4}})((\mu^{\frac{1}{8}})_{*}^{\prime} - \mu_{*}^{\frac{1}{8}})^{2}$, we have
\beno \mathcal{A}_{1,2} &\lesssim& \int B^{\epsilon,\gamma}\mu_{*}^{\frac{1}{4}}((\mu^{\frac{1}{8}})_{*}^{\prime} - \mu_{*}^{\frac{1}{8}})^{2}(W_{l}-W^{\prime}_{l})^{2}g^{2}_{*} h^{2}  d\sigma dv_{*} dv \\&&+ \int B^{\epsilon,\gamma}(\mu^{\frac{1}{4}})_{*}^{\prime}((\mu^{\frac{1}{8}})_{*}^{\prime} - \mu_{*}^{\frac{1}{8}})^{2}(W_{l}-W^{\prime}_{l})^{2}g^{2}_{*} h^{2}  d\sigma dv_{*} dv
:= \mathcal{A}_{1,2,1} + \mathcal{A}_{1,2,2}.\eeno
  Thanks to the facts $|v-v^{\prime}_{*}| \sim |v-v_{*}|$ and
\ben\label{roughaboutwl}
(W_{l}-W^{\prime}_{l})^{2} \lesssim \min\{\sin^{2}\frac{\theta}{2}|v-v_{*}^{\prime}|^{2}\langle v \rangle^{2l-2} \langle v_{*}^{\prime} \rangle^{2l-2}, \sin^{2}\frac{\theta}{2}\langle v \rangle^{2l} \langle v_{*}^{\prime} \rangle^{2l}\},
\\ \label{roughaboutmu}
((\mu^{\frac{1}{8}})_{*}^{\prime} - \mu_{*}^{\frac{1}{8}})^{2} \lesssim \min\{ \sin^{2}\frac{\theta}{2}|v-v_{*}^{\prime}|^{2}, 1\},
\een
we assert
\ben\label{kernelestimate2}
\mathcal{B} := \int B^{\epsilon,\gamma}(\mu^{\frac{1}{4}})_{*}^{\prime}((\mu^{\frac{1}{8}})_{*}^{\prime} - \mu_{*}^{\frac{1}{8}})^{2}(W_{l}-W^{\prime}_{l})^{2}  d\sigma \lesssim \langle v \rangle^{2l+\gamma},
\een
which yields $ \mathcal{A}_{1,2,2} \lesssim |g|^{2}_{L^{2}}|h|^{2}_{L^{2}_{l+\gamma/2}}.$
In fact, by \eqref{roughaboutwl} and \eqref{roughaboutmu}, on one hand, there holds
\beno \mathcal{B}  \lesssim \int b^{\epsilon}(\cos\theta)\sin^{4}\frac{\theta}{2}|v-v^{\prime}_{*}|^{\gamma+4}(\mu^{\frac{1}{4}})_{*}^{\prime} \langle v \rangle^{2l-2} \langle v_{*}^{\prime} \rangle^{2l-2} d\sigma. \eeno
When $|v-v_{*}|\leq 1$, there holds $|v-v_{*}^{\prime}|\leq 1$, $|v-v^{\prime}_{*}|^{\gamma+4}\leq 1$ and $\langle v \rangle \sim \langle v^{\prime}_{*} \rangle$, thus $\langle v \rangle^{2l-2}  \lesssim \langle v \rangle^{2l+\gamma} \langle v^{\prime}_{*} \rangle^{-2-\gamma}$, which yields
\beno \mathcal{B}  \lesssim \int b^{\epsilon}(\cos\theta)\sin^{4}\frac{\theta}{2}(\mu^{\frac{1}{4}})_{*}^{\prime} \langle v \rangle^{2l+\gamma} \langle v_{*}^{\prime} \rangle^{2l-4-\gamma} d\sigma \lesssim \int b^{\epsilon}(\cos\theta)\sin^{4}\frac{\theta}{2} \langle v \rangle^{2l+\gamma} d\sigma \lesssim \langle v \rangle^{2l+\gamma}.  \eeno
By \eqref{roughaboutwl} and \eqref{roughaboutmu}, on the other hand, there holds
$\mathcal{B}  \lesssim \int b^{\epsilon}(\cos\theta)\sin^{2}\frac{\theta}{2}|v-v^{\prime}_{*}|^{\gamma}(\mu^{\frac{1}{4}})_{*}^{\prime} \langle v \rangle^{2l} \langle v_{*}^{\prime} \rangle^{2l} d\sigma. $
When $|v-v_{*}|\geq 1$, there holds   $|v-v^{\prime}_{*}|^{\gamma}\sim \langle v-v^{\prime}_{*} \rangle^{\gamma} \lesssim \langle v \rangle^{\gamma}\langle v^{\prime}_{*} \rangle^{|\gamma|}$, which yields
 \beno \mathcal{B}  \lesssim \int b^{\epsilon}(\cos\theta)\sin^{2}\frac{\theta}{2}(\mu^{\frac{1}{4}})_{*}^{\prime}  \langle v \rangle^{2l+\gamma} \langle v_{*}^{\prime} \rangle^{2l+|\gamma|} d\sigma \lesssim \int b^{\epsilon}(\cos\theta)\sin^{2}\frac{\theta}{2} \langle v \rangle^{2l+\gamma} d\sigma \lesssim \langle v \rangle^{2l+\gamma}. \eeno
Now the estimate \eqref{kernelestimate2} is proved. Note that \eqref{roughaboutwl} and \eqref{roughaboutmu} are still valid if $v_{*}^{\prime}$ is replaced by $v_{*}$ on the right-hand sides.
%$
%(W_{l}-W^{\prime}_{l})^{2} \lesssim \min\{\sin^{2}\frac{\theta}{2}|v-v_{*}|^{2}\langle v \rangle^{2l-2} \langle v_{*} \rangle^{2l-2}, \sin^{2}\frac{\theta}{2}\langle v \rangle^{2l} \langle v_{*} \rangle^{2l}\},
%$
%and  $
%((\mu^{\frac{1}{8}})_{*}^{\prime} - \mu_{*}^{\frac{1}{8}})^{2} \lesssim \min\{ \sin^{2}\frac{\theta}{2}|v-v_{*}|^{2}, 1\},
%$
Then similar to \eqref{kernelestimate2}, we can prove
\beno
%\label{kernelestimate1}
\int B^{\epsilon,\gamma}\mu_{*}^{\frac{1}{4}}((\mu^{\frac{1}{8}})_{*}^{\prime} - \mu_{*}^{\frac{1}{8}})^{2}(W_{l}-W^{\prime}_{l})^{2}  d\sigma \lesssim \langle v \rangle^{2l+\gamma}\mu_{*}^{\frac{1}{8}},
\eeno
which yields $ \mathcal{A}_{1,2,1} \lesssim |\mu^{\frac{1}{16}}g|^{2}_{L^{2}}|h|^{2}_{L^{2}_{l+\gamma/2}}.$
Patching together the estimates of $\mathcal{A}_{1,2,1}$ and $\mathcal{A}_{1,2,2}$, we arrive at
 $\mathcal{A}_{1,2} \lesssim |g|^{2}_{L^{2}}|h|^{2}_{L^{2}_{l+\gamma/2}}. $
From which together with the estimate of $\mathcal{A}_{1,1}$,  we conclude that
 $|\mathcal{A}_{1}| \lesssim |g|_{L^{2}}|h|_{L^{2}_{l+\gamma/2}}|W^{\epsilon}f|_{L^{2}_{\gamma/2}}.$

{\it Step 2: Estimate of $\mathcal{A}_{2}$.} By Cauchy-Schwartz inequality, we have
\beno |\mathcal{A}_{2}| &\leq& \{\int B^{\epsilon,\gamma}\mu_{*}^{\frac{1}{4}}((\mu^{\frac{1}{4}})_{*}^{\prime} - \mu_{*}^{\frac{1}{4}})^{2} |g_{*}| (f^{2})^{\prime} d\sigma dv_{*} dv\}^{\frac{1}{2}}
\\&&\times\{\int B^{\epsilon,\gamma}\mu_{*}^{\frac{1}{4}}(W_{l}-W^{\prime}_{l})^{2} |g_{*}| h^{2}  d\sigma dv_{*} dv\}^{\frac{1}{2}}
:=(\mathcal{A}_{2,1})^{\frac{1}{2}}(\mathcal{A}_{2,2})^{\frac{1}{2}}. \eeno

\underline{Estimate of $\mathcal{A}_{2,1}$.} By the change of variable $v \rightarrow v^{\prime}$, we have
\beno \mathcal{A}_{2,1} \lesssim \int b^{\epsilon}(\cos\theta)|v-v_{*}|^{\gamma}\mu_{*}^{\frac{1}{4}}((\mu^{\frac{1}{4}})_{*}^{\prime} - \mu_{*}^{\frac{1}{4}})^{2}|g_{*}| f^{2} d\sigma dv_{*} dv.\eeno
By \eqref{freq-used-inequality} and Proposition \ref{symbol}, we get
\beno\mathcal{A}_{2,1} &\lesssim& \int (\mathrm{1}_{|v-v_{*}|\leq \sqrt{2}} |v-v_{*}|^{\gamma+2}  + \langle v-v_{*} \rangle^{\gamma} (W^{\epsilon})^{2}(v-v_{*}))\mu_{*}^{\frac{1}{4}} |g_{*}| f^{2} dv_{*} dv\\
&\lesssim& \int ( |v-v_{*}|^{\gamma+2} \mu_{*}^{\frac{1}{8}} \mu^{\frac{1}{16}}  + \langle v \rangle^{\gamma} (W^{\epsilon})^{2}(v) \mu_{*}^{\frac{1}{8}}) |g_{*}| f^{2} dv_{*} dv
\lesssim|\mu^{\frac{1}{16}}g|_{L^2}|W^\epsilon f|_{L^2_{\gamma/2}}^2, \eeno
where we use the fact  $\mu_{*}^{\frac{1}{4}}  \lesssim \mu_{*}^{\frac{1}{8}} \mu^{\frac{1}{16}}$ when $|v-v_{*}|\leq \sqrt{2}$ and the following estimate
\ben \label{C-S-then-cancel}
\int |v-v_{*}|^{\gamma+2} \mu_{*}^{\frac{1}{8}} |g_{*}| dv_{*} \leq \{\int |v-v_{*}|^{2\gamma+4}  \mu_{*}^{\frac{1}{8}}  dv_{*}\}^{\frac{1}{2}}
\{\int \mu_{*}^{\frac{1}{8}} g^{2}_{*} dv_{*} \}^{\frac{1}{2}} \lesssim \langle v \rangle^{\gamma+2} |\mu^{\frac{1}{16}}g|_{L^{2}} \een
given by Cauchy-Schwartz inequality and \eqref{mu-cancel-sigularity}.

 \underline{Estimate of $\mathcal{A}_{2,2}$.}
By Taylor expansion, when $l \geq 1$, it is easy to check that
\beno (W_{l}-W^{\prime}_{l})^{2} \lesssim \sin^{2}\frac{\theta}{2}|v-v_{*}|^{2}(\langle v \rangle^{2l-2} + \langle v_{*} \rangle^{2l-2}) \lesssim \sin^{2}\frac{\theta}{2}|v-v_{*}|^{2}\langle v \rangle^{2l-2} \langle v_{*} \rangle^{2l-2}.\eeno
Thus we have
\beno \mathcal{A}_{2,2} &\lesssim& \int b^{\epsilon}(\cos\theta) \sin^{2}\frac{\theta}{2}|v-v_{*}|^{\gamma+2}\langle v \rangle^{2l-2} \langle v_{*} \rangle^{2l-2}\mu_{*}^{\frac{1}{4}} |g_{*}| h^{ 2}  d\sigma dv_{*} dv
\\&\lesssim& \int |v-v_{*}|^{\gamma+2}\langle v \rangle^{2l-2} \mu_{*}^{\frac{1}{8}} |g_{*}| h^{ 2} dv_{*} dv \lesssim |\mu^{\frac{1}{16}}g|_{L^{2}}|h|^{2}_{L^{2}_{l+\gamma/2}}.
\eeno
where we use \eqref{C-S-then-cancel}.
Putting together the estimates of $\mathcal{A}_{2,1}$ and $\mathcal{A}_{2,2}$, we arrive at
\beno |\mathcal{A}_{2}| \lesssim |\mu^{\frac{1}{16}}g|_{L^{2}}|h|_{L^{2}_{l+\gamma/2}}|W^{\epsilon}f|_{L^{2}_{\gamma/2}}.\eeno

The lemma follows   the estimates of $\mathcal{A}_{1}$ and $\mathcal{A}_{2}$.
\end{proof}

\section{Diversity of longtime behavior of the semi-group}
In this section, we will give the proof to Theorem \ref{main2}.
 We begin with a technical lemma for a commutator estimate.

\begin{lem}\label{CommSemi} Let $-2s \leq \gamma <0$.
Let $\chi_M(v):=\chi(v/M)$ with $(\chi, M)=(\phi, 1/\epsilon)$ or $(\chi, M)=(1-\phi, 1/\epsilon)$ or $(\chi, M)=(\psi, 2^{j})$. Here $\phi$ and $\psi$  are defined in \eqref{function-phi-psi} and $j$ verifies  $2^{j\gamma}\leq \epsilon^{2s}$.  For any $0<\eta<1$, there hold
\ben \label{commutator-Gamma-local} |\langle [\Gamma^\epsilon(g, \cdot), \chi_M]h, f \chi_M\rangle_v |\lesssim \eta^{-1}\epsilon^{2s}(|g|_{L^2}^2|h|_{\epsilon,\gamma/2}^2
+|g|_{\epsilon,\gamma/2}^2|h|_{L^2}^2)+\eta|f \chi_M|_{\epsilon,\gamma/2}^2,
\\
\label{commutator-L-local}
|\langle [\mathcal{L}^\epsilon ,\chi_M]f, f\chi_M\rangle_v |\lesssim \eta^{-1}\epsilon^{2s}|f|_{\epsilon,\gamma/2}^2+\eta|f\chi_M|_{\epsilon,\gamma/2}^2. \een
\end{lem}
\begin{proof} Since $-2s \leq \gamma <0$ and $2^{j\gamma}\leq \epsilon^{2s}$, then $2^j \ge 1/\epsilon$.
For simplicity, we denote $\mathcal{I}(g,h,f):= \langle [\Gamma^\epsilon(g, \cdot), \chi_M]h, f \chi_M\rangle_v = \langle \Gamma^\epsilon(g,h\chi_M)-\chi_M \Gamma^\epsilon(g,h), f\chi_M\rangle_v$.

Direct calculation gives
\beno
\mathcal{I}(g,h,f)=\int B^{\epsilon,\gamma}\big[(g\mu^{\f12})_*+g_*\big((\mu^{\f12})_*'
-(\mu^{\f12})_*\big)\big] h(f\chi_M)^{\prime}\big(-(\chi_M)^{\prime}+\chi_M\big) d\sigma dv_*dv.\eeno
By Cauchy-Schwartz inequality, we get that
\ben \nonumber
|\mathcal{I}(g,h,f)| &\lesssim& \bigg(\int B^{\epsilon,\gamma} g_*^2h^2(\mu_*^{\f12}+(\mu^{\f12})_*') \big((\chi_M)'-\chi_M\big)^2d\sigma dv_*dv\bigg)^{\f12} \bigg(\int B^{\epsilon,\gamma} \big[\mu^\f12_* \big((f\chi_M)'-f\chi_M\big)^2
\\ \nonumber
&&+(f\chi_M)^2\big((\mu^{\f14})_*'-(\mu^{\f14})_*\big)^2\big]d\sigma dv_*dv\bigg)^{\f12}+\big|\int B^{\epsilon,\gamma}(g\mu^{\f12})_*hf\chi_M\big((\chi_M)'-\chi_M\big)d\sigma dv_*dv\big|
\\ \label{into-three-parts}
&\lesssim& \eta|f\chi_M|_{\epsilon,\gamma/2}^2+\eta^{-1} \mathcal{J}(g,h) + |\mathcal{K}(g,h,f)|,\een
where
\beno
\mathcal{J}(g,h):=\int B^{\epsilon,\gamma} g_*^2h^2 (\mu_*^{\f12}+(\mu^{\f12})_*')\big((\chi_M)'-\chi_M\big)^2d\sigma dv_*dv,
\\ \mathcal{K}(g,h,f):=
\int B^{\epsilon,\gamma}(g\mu^{\f12})_*hf\chi_M\big((\chi_M)'-\chi_M\big)d\sigma dv_*dv. \eeno
We now go to estimate $\mathcal{J}(g,h)$ and $\mathcal{K}(g,h,f)$.

{\it \underline{Estimate of $\mathcal{J}(g,h)$}.} We separate the integration domain of $\mathcal{J}(g,h)$ into three regions: $\{|v_*|\le \delta M\}$, $\{|v_*|\ge \delta M, |v|\le \delta|v_*|\}$ and $\{|v_*|\ge \delta M, |v|\ge \delta|v_*|\}$ where $\delta=\frac{1}{100}$ in the whole proof. In the region $\{|v_*|\le \delta M\}$, we notice that
\beno \chi_M(v^{\prime})-\chi_M(v)=\int_0^1 (\na \chi_M)(v(\kappa))\cdot (v'-v)d\kappa,\eeno where $v(\kappa)=v+\kappa(v'-v)$.
By the support of $\na \chi$, one has $|v(\kappa)|\sim M$. Therefore we have $|v|\sim|v(\kappa)|\sim M$.  In the region $\{|v_*|\ge \delta M, |v|\le \delta|v_*|\}$, we deduce that $|v_*|\sim |v-v_*|\sim |v-v_*'|\sim |v_*'|$.  In the region $\{|v_*|\ge \delta M, |v|\ge \delta|v_*|\}$, there holds $|v|\ge \delta^2M$. Putting together all the facts, since $\chi \leq 1$ and $|\na \chi_M| \lesssim M^{-1}$,
we get
\beno  |\chi_M(v^{\prime})-\chi_M(v)|^2&\lesssim& \mathrm{1}_{|v_*|\le \delta M}\mathrm{1}_{|v|\sim M} M^{-2}\theta^2 |v-v_*|^2+(\mathrm{1}_{|v_*|\ge \delta M}\mathrm{1}_{|v_*'|\sim |v_*|}\mathrm{1}_{|v|\le \delta|v_*|}\\&&+\mathrm{1}_{|v_*|\ge \delta M}\mathrm{1}_{|v|\ge \delta^2 M}\mathrm{1}_{|v|\ge \delta|v_*|} )\min\{1,M^{-2}|v-v_*|^2\theta^2\},\eeno
from which together with  Proposition \ref{symbol}, thanks to the factor $\mu_*^{\f12}+(\mu^{\f12})_*'$,
we have for any $a\ge0$,
\ben \label{J-estimate-three-cases}
\mathcal{J}(g,h)&\lesssim& |g\mathrm{1}_{|\cdot|\le \delta M}|_{L^2_{-a}}^2|h \mathrm{1}_{|\cdot|\sim M}|^2_{L^2_{\gamma/2+a}}+e^{-\delta^3M^2}|W^\epsilon g \mathrm{1}_{|\cdot|\ge \delta M}|^2_{L^2_{\gamma/2+a}}|  h|^2_{L^2_{-a}}
\\ \nonumber
&&+M^{-2s}|g \mathrm{1}_{|\cdot|\ge \delta M}|^2_{L^2_{-a}}|W^\epsilon
h \mathrm{1}_{|\cdot|\ge \delta^2 M}|^2_{L^2_{a+\gamma/2}}. \een

{\it \underline{Estimate of $\mathcal{K}(g,h,f)$}.} We decompose  the integration domain of $\mathcal{K}(g,h,f)$ into two regions: $\{|v_*|\le \delta M\}$ and  $\{|v_*|\ge \delta M\}$. Correspondingly, $\mathcal{K}(g,h,f)=\mathcal{K}_{1}(g,h,f)+\mathcal{K}_{2}(g,h,f)$.

We first deal with  $\mathcal{K}_1(g,h,f)$ whose integration domain is  $\{|v_*|\le \delta M\}$. In this case, if $|v|\sim M$ or $|v(\kappa)|\sim M$, then $|v|\sim |v-v_*|\sim M$. By \eqref{canvvx} and Taylor expansion
\ben\label{taylorchi} \chi_M(v^{\prime})-\chi_M(v)=(\na \chi_M)(v)\cdot (v-v')+\int_0^1 (1-\kappa)(\na^2 \chi_M)(v(\kappa)):  (v'-v)\otimes (v'-v)d\kappa, \een
we infer that $|\int B^{\epsilon,\gamma} (\chi_M(v')-\chi_M(v))d\sigma|\lesssim \mathrm{1}_{|v_*|\le \delta M} \mathrm{1}_{|v|\sim |v-v_*|\sim M}  \langle v \rangle^{\gamma}$, which yields that
\ben \label{K1-estimate}
|\mathcal{K}_1(g,h,f)|
\lesssim |g\mu^{\f12}\mathrm{1}_{|\cdot|\le \delta M}|_{L^1}
|h\mathrm{1}_{|\cdot|\sim M}|_{L^2_{\gamma/2}}|f\chi_M|_{L^2_{\gamma/2}} \lesssim \epsilon^{s}|g |_{L^2}
|W^\epsilon h |_{L^2_{\gamma/2}}|f\chi_M|_{L^2_{\gamma/2}}. \een

We turn to estimate $\mathcal{K}_2(g,h,f)$ in which $|v_*|\ge \delta M$.
When $(\chi, M)=(\phi, 1/\epsilon)$, the support of $\chi_M$ is in the ball $B_{\delta^{-1}M}$. In this case, we have
\ben \label{decomposition-theta-large-and-small}
\mathcal{K}_2(g,h,f) &=& \mathcal{K}_{2,1}+\mathcal{K}_{2,2},
\\ \nonumber
\mathcal{K}_{2,1} &:=&
\int B^{\epsilon,\gamma} \mathrm{1}_{|v_*|\ge \delta M}\mathrm{1}_{\sin(\theta/2) \le |v-v_*|^{-1}} (g\mu^{\f12})_*\mathrm{1}_{|v|\le \delta^{-1} M}hf\chi_M\big((\chi_M)'-\chi_M\big)d\sigma dv_*dv,
\\ \nonumber \mathcal{K}_{2,2} &:=&
\int B^{\epsilon,\gamma} \mathrm{1}_{|v_*|\ge \delta M}\mathrm{1}_{\sin(\theta/2) \ge |v-v_*|^{-1}} (g\mu^{\f12})_*\mathrm{1}_{|v|\le \delta^{-1} M}hf\chi_M\big((\chi_M)'-\chi_M\big)d\sigma dv_*dv.  \een
By Taylor expansion \eqref{taylorchi}   and \eqref{canvvx}, one has
\ben \label{estimate-of-k21} |\mathcal{K}_{2,1}|&\lesssim&
\big|\int |v-v_*|^\gamma \mathrm{1}_{|v_*|\ge \delta M}\mathrm{1}_{|v|\le \delta^{-1} M} |(g\mu^{\f12})_*hf\chi_M|(|v-v_*|^{2s}+ |v-v_*|^{2s-1})dv_*dv\big|
\\ \nonumber
&\lesssim& e^{-\delta^3M^2} (|g\mu^{\f14}\mathrm{1}_{|\cdot|\ge \delta M}|_{L^1_{1+\gamma+2s}}+|g\mu^{\f14}\mathrm{1}_{|\cdot|\ge \delta M}|_{L^2_{1+\gamma+2s}})|h\mathrm{1}_{|\cdot|\le \delta^{-1}M}|_{L^2_{\gamma/2+s}}|f\chi_M|_{L^2_{\gamma/2+s}}
\\ \nonumber
&\lesssim& \epsilon^{s}|g |_{L^2}
|W^\epsilon h |_{L^2_{\gamma/2}}|W^\epsilon f\chi_M|_{L^2_{\gamma/2}}.
\een
For $\mathcal{K}_{2,2}$, thanks to the fact that $\gamma+2s\ge0$, it is not difficult to check that
\ben \label{estimate-of-k22}
 |\mathcal{K}_{2,2}| \lesssim e^{-\delta^3M^2} |g\mu^{\f14}\mathrm{1}_{|\cdot|\ge \delta M}|_{L^1 } |h\mathrm{1}_{|\cdot|\le \delta^{-1}M}|_{L^2_{\gamma/2+s}}|f\chi_M|_{L^2_{\gamma/2+s}} \lesssim \epsilon^{s}|g |_{L^2}
|W^\epsilon h |_{L^2_{\gamma/2}}|W^\epsilon f\chi_M|_{L^2_{\gamma/2}}.
 \een

When $(\chi, M)=(1-\phi, 1/\epsilon)$ or $(\chi, M)=(\psi, 2^{j})$, the support of $\chi_M$ is outside of the ball $B_{\delta M}$ and so
\beno
\mathcal{K}_{2}(g,h,f):=
\int B^{\epsilon,\gamma}\mathrm{1}_{|v_*|\ge \delta M} \mathrm{1}_{|v|\ge \delta M} (g\mu^{\f12})_*hf\chi_M\big((\chi_M)'-\chi_M\big)d\sigma dv_*dv.
\eeno
When $|v-v_{*}| \geq 1$, then $|v-v_{*}|^{\gamma} \sim \langle v-v_{*}\rangle^{\gamma} \lesssim \langle v\rangle^{\gamma} \langle v_{*}\rangle^{|\gamma|} $ and so
\ben \label{geq-1-part}
&&|\int B^{\epsilon,\gamma}\mathrm{1}_{|v_*|\ge \delta M} \mathrm{1}_{|v|\ge \delta M} \mathrm{1}_{|v-v_{*}| \geq 1} (g\mu^{\f12})_*hf\chi_M\big((\chi_M)'-\chi_M\big)d\sigma dv_*dv|
\\ \nonumber &\lesssim& \epsilon^{-2s} e^{-\delta^3M^2} \int \langle v\rangle^{\gamma} \mathrm{1}_{|v_*|\ge \delta M} \mathrm{1}_{|v|\ge \delta M} |(g\mu^{\f14})_*hf\chi_M| dv_*dv \lesssim \epsilon^{s}|g |_{L^2}
|W^\epsilon h |_{L^2_{\gamma/2}}|W^\epsilon f\chi_M|_{L^2_{\gamma/2}}.
\een
When $|v-v_{*}| \leq 1$, then $\mu^{\f12}_* \lesssim \mu^{\f18}_* \mu^{\f18}$. We can use the decomposition used in \eqref{decomposition-theta-large-and-small} to get
\ben \label{leq-1-part}
&&|\int B^{\epsilon,\gamma}\mathrm{1}_{|v_*|\ge \delta M} \mathrm{1}_{|v|\ge \delta M} \mathrm{1}_{|v-v_{*}| \leq 1} (g\mu^{\f12})_*hf\chi_M\big((\chi_M)'-\chi_M\big)d\sigma dv_*dv|
\\ \nonumber
&\lesssim& \epsilon^{s}|g |_{L^2}
|W^\epsilon h |_{L^2_{\gamma/2}}|W^\epsilon f\chi_M|_{L^2_{\gamma/2}}.
\een
Patching together \eqref{estimate-of-k21} and \eqref{estimate-of-k22}, patching together
 \eqref{geq-1-part} and \eqref{leq-1-part}, for the three cases,
we conclude that
\ben \label{K2-estimate}
|\mathcal{K}_{2}(g,h,f)| \lesssim \epsilon^{s} |g|_{L^2}
|W^\epsilon h |_{L^2_{\gamma/2}}|W^\epsilon f\chi_M|_{L^2_{\gamma/2}}. \een
Patching together \eqref{K1-estimate} and \eqref{K2-estimate}, we get
\ben \label{K-estimate}
|\mathcal{K}(g,h,f)| \lesssim \epsilon^{s} |g|_{L^2}
|W^\epsilon h |_{L^2_{\gamma/2}}|W^\epsilon f\chi_M|_{L^2_{\gamma/2}}. \een
Plugging \eqref{J-estimate-three-cases} and \eqref{K-estimate} into \eqref{into-three-parts}, we get \eqref{commutator-Gamma-local}. Recalling \eqref{DefLep},
plugging \eqref{J-estimate-three-cases} and \eqref{K-estimate} into \eqref{into-three-parts}, by taking $(g,h)=(\mu^{\f12}, f)$, we get
\ben \label{commu-L1}
|\langle [\mathcal{L}^\epsilon_{1}, \chi_M]f, f\chi_M\rangle_v | =  |\mathcal{I}(\mu^{\f12},f,f)| \lesssim \eta^{-1} \epsilon^{2s}|f|_{\epsilon,\gamma/2}^2+\eta|f\chi_M|_{\epsilon,\gamma/2}^2. \een

Using \eqref{l2-upper-bound}, we get
\ben \nonumber
|\langle [\mathcal{L}^\epsilon_{2}, \chi_M]f, f\chi_M\rangle_v | &=& |\langle \mathcal{L}^\epsilon_{2} \chi_M f, f\chi_M\rangle_v - \langle \mathcal{L}^\epsilon_{2} f, \chi_M  f\chi_M\rangle_v |
\\ \nonumber &=& |\langle \mathcal{L}^\epsilon_{2} \chi_M f, (1-\chi_M)f\chi_M\rangle_v - \langle \mathcal{L}^\epsilon_{2} (1-\chi_M) f, \chi_M  f\chi_M\rangle_v|
\\ \nonumber &\lesssim& |\chi_M f|_{L^{2}_{\gamma/2}} |(1-\chi_M)f\chi_M|_{L^{2}_{\gamma/2}} + |(1-\chi_M) f|_{L^{2}_{\gamma/2}} |\chi_M  f\chi_M|_{L^{2}_{\gamma/2}}
\\ \label{commu-L2}
&\lesssim& \epsilon^{s} |W^{\epsilon} f|_{L^{2}_{\gamma/2}} |W^{\epsilon} f\chi_M|_{L^{2}_{\gamma/2}}
\lesssim \eta^{-1} \epsilon^{2s}|f|_{\epsilon,\gamma/2}^2+ \eta|f\chi_M|_{\epsilon,\gamma/2}^2, \een
Patching together \eqref{commu-L1} and \eqref{commu-L2}, we arrive at \eqref{commutator-L-local}.
\end{proof}

In the rest of this section, we  set $f=e^{-\mathcal{L}^{\epsilon}t}f_0$ with $f_0\in {\mathcal{N}}^{\perp}$. Then $f$ verifies that  $f\in {\mathcal{N}}^{\perp}$ and \begin{equation}\label{linlinBo}
\left\{ \begin{aligned}
&\pa_t f+\mathcal{L}^\epsilon f=0 ;\\
&f|_{t=0} = f_{0}.
\end{aligned} \right.
\end{equation}

Now we are in a position to prove  \eqref{semigroupLe1} and \eqref{semigroupLe4} in Theorem \ref{main2}.
\begin{proof}[Proof of Theorem \ref{main2} {\bf (Part I)}] We first prove \eqref{semigroupLe1}.
Since $f=e^{-\mathcal{L}^{\epsilon}t}f_0 \in {\mathcal{N}}^{\perp}$,
by Proposition \ref{coercvityforLep}, there is a universal constant $\lambda_{1}>0$ such that
 $\f{d}{dt}|f|_{L^2}^2+\lambda_1|f|_{\epsilon,\gamma/2}^2\le 0 $ and  thus
for any $t \geq 0$,
\ben \label{basic-L2-estimate}
|f(t)|_{L^2}^2 + \lambda_{1} \int_{0}^{t} |f(\tau)|^2_{\epsilon,\gamma/2} d\tau \leq |f_{0}|_{L^2}^2.
\een
Recall that $f^l(v)=\phi(\epsilon v)f(v)$ and $f^h=f-f^l$. Recalling \eqref{linlinBo}, we have
\beno
\pa_t f^l+\mathcal{L}^\epsilon f^l=[\mathcal{L}^\epsilon,\phi(\epsilon \cdot)]f, \quad \pa_t f^h+\mathcal{L}^\epsilon f^h=[\mathcal{L}^\epsilon,1-\phi(\epsilon \cdot)]f.
 \eeno
Thanks to Theorem \ref{main1}, Proposition \ref{coercvityforLep}
 and the fact $|f^h|_{\epsilon,\gamma/2} \gtrsim \epsilon^{-s}|f^h|_{L^2_{\gamma/2}}$, we have
\ben \label{low-part-coercivity}
\langle \mathcal{L}^\epsilon f^l, f^l\rangle_v \gtrsim |f^l|^2_{\epsilon,\gamma/2}-C|f^l|^2_{L^2_{\gamma/2}}, \quad
\langle \mathcal{L}^\epsilon f^l, f^l\rangle_v \gtrsim  |(\mathbb{I}-\mathbb{P})f^l|^2_{\epsilon,\gamma/2} \geq |(\mathbb{I}-\mathbb{P})f^l|^2_{L^2_{\gamma/2}},
\\ \label{high-part-coercivity}
\langle \mathcal{L}^\epsilon f^h, f^h\rangle_v \gtrsim |f^h|_{\epsilon,\gamma/2}^2 -C \epsilon^{2s} |f^h|_{\epsilon,\gamma/2}^2. \een
From \eqref{low-part-coercivity} and the identity $\mathbb{P}(f^l+f^h)=\mathbb{P}f=0$, we derive that
\ben \label{low-part-coercivity-final}
\langle \mathcal{L}^\epsilon f^l, f^l\rangle_v \gtrsim |f^l|^2_{\epsilon,\gamma/2}-C|\mathbb{P}f^l|_{L^2_{\gamma/2}}^2 = |f^l|^2_{\epsilon,\gamma/2}-C|\mathbb{P}f^h|_{L^2_{\gamma/2}}^2
\geq |f^l|^2_{\epsilon,\gamma/2}-C \epsilon^{2s} |f^h|_{\epsilon,\gamma/2}^2. \een
  Thanks to \eqref{commutator-L-local} in Lemma \ref{CommSemi}, \eqref{low-part-coercivity-final} and \eqref{high-part-coercivity}, for some universal constant $\lambda_{2}>0$, we get
\beno  &&\f{d}{dt}|f^l|_{L^2}^2+\lambda_{2}|f^l|_{\epsilon,\gamma/2}^2 \lesssim \epsilon^{2s}(|f^h|_{\epsilon,\gamma/2}^2+|f|_{\epsilon,\gamma/2}^2) \lesssim \epsilon^{2s} |f|_{\epsilon,\gamma/2}^2,\\
&&\f{d}{dt}|f^h|_{L^2}^2+\lambda_{2}|f^h|_{\epsilon,\gamma/2}^2 \lesssim \epsilon^{2s}(|f^h|_{\epsilon,\gamma/2}^2+|f|_{\epsilon,\gamma/2}^2) \lesssim \epsilon^{2s} |f|_{\epsilon,\gamma/2}^2.
\eeno
Since $\gamma \geq -2s$, then $\lambda_{2}|f^l|_{\epsilon,\gamma/2}^2 \geq c |f^l|_{L^2}^2$ for some universal constant $c$. Recalling \eqref{basic-L2-estimate}, we get \eqref{semigroupLe1} by using Gr\"{o}nwall's inequality.

Next we want to prove \eqref{semigroupLe4}. Recalling \eqref{linlinBo}, it is easy to check that
\beno \pa_t \mathcal{P}_j f+\mathcal{L}^\epsilon \mathcal{P}_jf=[\mathcal{L}^\epsilon, \psi(2^{-j}\cdot)]f. \eeno   Recall that
 $2^j\ge 1/\epsilon$.
Thanks to Theorem \ref{main1} and Lemma \ref{CommSemi}, for some constant $C_{0}>0$,
we obtain
\beno \f{d}{dt}|\mathcal{P}_j f(t)|_{L^2}^2+C_{0}|\mathcal{P}_jf|_{\epsilon,\gamma/2}^2\gtrsim -\epsilon^{2s}|f|_{\epsilon,\gamma/2}^2. \eeno Observe that
$|W^\epsilon\mathcal{P}_jf|^2_{L^2_{\gamma/2}}\sim \epsilon^{-2s}2^{j\gamma}|\mathcal{P}_jf|_{L^2}^2$ and
$|W^{\epsilon}(D)W_{\gamma/2}\mathcal{P}_jf|_{L^2}^2+|W^\epsilon((-\triangle_{\mathbb{S}^2})^{\frac{1}{2}})W_{\gamma/2}\mathcal{P}_jf|^2\lesssim\\ \epsilon^{-2s}2^{j\gamma}|\mathcal{P}_jf|_{L^2}^2$.
We are led to
\beno \f{d}{dt}|\mathcal{P}_j f(t)|_{L^2}^2 \gtrsim -\epsilon^{2s}|f|^{2}_{\epsilon,\gamma/2}-\epsilon^{-2s}2^{j\gamma}|\mathcal{P}_j f|_{L^2}^2.\eeno
From which together with \eqref{basic-L2-estimate}, we get
 $|\mathcal{P}_jf(t)|_{L^2}^2 \ge  |\mathcal{P}_jf_0|_{L^2}^2-C\epsilon^{-2s}2^{j\gamma}t-C\epsilon^{2s}$, which yields
 \eqref{semigroupLe4} for $t \in [0, C^{-1}\eta 2^{-j\gamma}\epsilon^{2s}]$.
\end{proof}

To complete the proof of Theorem \ref{main2}, we need the following proposition.
\begin{prop}\label{propODE} Let $c_1, \mathcal{C}_2$ and $p$ be three universal and positive constants. Consider the ordinary differential inequality
\begin{equation}\label{ODEdecay}
\left\{ \begin{aligned}
&\f{d}{dt} Y+c_1Y_1+\mathcal{C}_2^{-1}Y_2^{1+\f1{p}}\le 0 , \\
&Y|_{t=0} = 1,
\end{aligned} \right.
\end{equation}
where $Y=Y_1+Y_2$ and $Y,Y_1,Y_2\ge0$.  We have
\begin{enumerate}
\item if $\mathcal{C}_2\ll 1$, then
there exists a critical time $t_*=O(p(-\ln\mathcal{C}_2))$ such that
\ben\label{decayY1} Y(t)\lesssim  e^{-c_1t/8}\mathrm{1}_{t\le t_*}+C(c_1,p) \mathcal{C}_2^p(1+t)^{-p}\mathrm{1}_{t\ge t_*}. \een
\item if $\mathcal{C}_2\sim 1$, then
\ben\label{decayY2} Y(t)\lesssim C(c_1,p) (1+t)^{-p}. \een
\end{enumerate}
\end{prop}
\begin{proof}   It is easy to check that $Y(t)$ is a strictly decreasing function before it vanishes. Let $t_j$ be the time  such that $Y(t_j)=2^{-j}$ for $j\in \N$.

To obtain the desired result, the key point is to  give an estimate for $t_j$. Since $Y=Y_1+Y_2$, one has $Y_1\ge \f12 Y$ or $Y_2\ge\f12 Y$. Then for $t\in[t_j,t_{j+1}]$, $(c_1Y_1+\mathcal{C}_2^{-1}Y_2^{1+\f1{p}})(t)\ge \min\{\f{c_{1}}{2}Y(t_{j+1}), \mathcal{C}_2^{-1}(Y(t_{j+1})/2)^{1+\f1{p}}\}$. By \eqref{ODEdecay}, we obtain that for $t\in[t_j,t_{j+1}]$, $(-Y'(t))^{-1}\le  (c_1\f12Y(t_{j+1}))^{-1}+ (\mathcal{C}_2^{-1}(Y(t_{j+1})/2)^{1+\f1{p}})^{-1}$.
By mean value theorem, there exists a $\tilde{t}\in[t_j,t_{j+1}]$ such that
\beno Y(t_j)-Y(t_{j+1})=Y'(\tilde{t})(t_j-t_{j+1}), \eeno
which yields that
$ t_{j+1}-t_j\le 4c_1^{-1}+\epsilon^{2s}4^{1+\f1{p}}c_2^{-1}2^{\f{j}{p}}.$
From which we obtain
\beno t_N\le 4c_1^{-1}N+  \mathcal{C}_2 4^{1+\f1{p}} 2^{\f{N}{p}} (1-2^{-p})^{-1} = 4c_1^{-1}N + C(p)\mathcal{C}_22^{\f{N}{p}},\eeno
where $C(p):=4^{1+\f1{p}}(1-2^{-p})^{-1}.$

We first consider the case that $\mathcal{C}_2\sim 1$. In this case, we have
$t_N\le C(c_1,p)2^{\f{N}{p}}$. In other words, $Y(t_N)\le C(c_1,p)(1+t_N)^{-p}$. Thanks to the monotonic property of $Y(t)$, we obtain  \eqref{decayY2}.

Next we handle the case $\mathcal{C}_2\ll 1$. Set $H(x)=C(p)\mathcal{C}_22^{\f{x}{p}}-4c_1^{-1}x$. Since $\mathcal{C}_2\ll 1$, we have $H(1)\le0$. Thus for $x\ge1$, there exists a unique $x_*>1$ such that if $x\le x_*$, $H(x)\le 0$ and if $x\ge x_*$, $H(x)\ge0$. Moreover,   there exist two constants $C_1$ and $C_2$ depending only on $c_1$ and $p$ such that $C_1p(-\ln \mathcal{C}_2+C(c_1,p))  \le x_*\le  C_2p(-\ln \mathcal{C}_2+C(c_1,p))$.

  From the above argument, we get that
 if $1\le N\le N_*:= [x_*]$,   $t_N\le 8c_1^{-1}N$ and if $N\ge N_*+1$,
$t_N\le  2\mathcal{C}_2C(p)2^{\f{N}{p}}$.
For $N_*-1\le N\le N_*+1$, we have
\beno t_N \le  4c_1^{-1}(N_*+1)+\mathcal{C}_2C(p)2^{\f{N_*+1}{p}} \le 2\mathcal{C}_2C(c_1,p)2^{\f{N_*+1}{p}}\le 2\mathcal{C}_2C(c_1,p)2^{\f{N+2}{p}},\eeno which yields that for $N\ge N_*-1$, $t_N\le (1+2^{\f{2}{p}})\mathcal{C}_2C(c_1,p)2^{\f{N}{p}}$.
 Thanks to the fact that $Y(t)$ is a strictly decreasing function before it vanishes, we obtain that if $t\le t_{N_*}$, $Y(t)\lesssim 2^{-c_1t/8}$ and if $t\ge t_{N_*-1}$, $Y(t)\lesssim  C(c_1,p)\mathcal{C}_2^p(1+t)^{-p}$.
We conclude that for $t\ge 0$, \beno Y(t)\lesssim  2^{- c_1 t/8}\mathrm{1}_{t\le t_{N_*}}+C(c_1,p)\mathcal{C}_2^p(1+t)^{-p}\mathrm{1}_{t\ge t_{N_*}},\eeno
where $t_{N_*}\le 8c_1^{-1}N_*=O(p(-\ln \mathcal{C}_2+C(c_1,p)))$. On the other hand,  for $t\le \f8{c_1\ln 2}p(-\ln \mathcal{C}_2)$, we have
$2^{c_1t/8}\le  \mathcal{C}_2^{-p}\lesssim C(c_1,p)\mathcal{C}_2^{-p}(1+t)^{p}$. In other words, for $t\le \f8{c_1\ln 2}p(-\ln \mathcal{C}_2)$,
$2^{-c_1  t/8}\ge C(c_1,p)\mathcal{C}_2^{-p}(1+t)^{-p}$. Therefore we deduce that there exists a time $t_*=O(p(-\ln \mathcal{C}_2))$ such that \eqref{decayY1} holds. The proof of the proposition is complete now.
\end{proof}

We have two remarks on Proposition \ref{propODE}.
\begin{rmk}\label{not-exactly-equal} If $Y=Y_1+Y_2$ in Proposition \ref{propODE} is changed to $Y \sim Y_1+Y_2$, the results still hold true at the price of different constants appearing on the right-hand sides.
\end{rmk}
The following remark shows that estimate \eqref{decayY1} is sharp for \eqref{ODEdecay}.
\begin{rmk} %\label{OptimalODEdecay} 
We consider the following special case:
\beno %\label{ODEdecay-specialcase}
\left\{ \begin{aligned}
&\f{d}{dt} Y+Y_1+\epsilon^{-2s}Y_2^{2} = 0,\\
&Y|_{t=0} = 1,
\end{aligned} \right.
\eeno
where we take $c_1 =p=Y(0)=1$ and $\mathcal{C}_2=\epsilon^{2s}$ in \eqref{ODEdecay}. Here $\epsilon>0$ is sufficiently small.
Let us impose $Y_1=\epsilon^{-2s}Y_2^{2}$. Since $Y_{1}+Y_{2}=Y$, we get
  $Y_{2}=\frac{-1+\sqrt{1+4\epsilon^{-2s}Y}}{2\epsilon^{-2s}}$,
which  yields
\beno Y_1+\epsilon^{-2s}Y_2^{2}=2\epsilon^{-2s}Y_2^{2}=\frac{1+2\epsilon^{-2s}Y-\sqrt{1+4\epsilon^{-2s}Y}}{\epsilon^{-2s}}.\eeno
Now let $X=\epsilon^{-2s}Y$. Then we have the following ODE
\beno
\left\{ \begin{aligned}
&\f{d}{dt} X+1+2X- \sqrt{1+4X}= 0, \\
&X|_{t=0} = \epsilon^{-2s}.
\end{aligned} \right.
\eeno
If we set $f(x)=1+2x- \sqrt{1+4x}$, then one has
  $f^{\prime}(x)=2-2(1+4x)^{-\frac{1}{2}}, ~~f^{\prime \prime}(x)=4(1+4x)^{-\frac{3}{2}},~~f^{(3)}(x)=-24(1+4x)^{-5/2},~~f^{(4)}(x)=240(1+4x)^{-7/2}$.
By Taylor expansion, one has
\beno f(x) &=& f(0)+f^{\prime}(0)x+\frac{f^{\prime\prime}(0)}{2}x^{2}+\frac{f^{(3)}(0)}{6}x^{3}+\frac{1}{6}\int_{0}^{x}(x-t)^{3}f^{(4)}(t)dt\\
&=&2x^{2}-4x^{3}+\frac{1}{6}\int_{0}^{x}(x-t)^{3}f^{(4)}(t)dt.\eeno
Since $0\leq f^{(4)}(t) \leq 240$, we have
$ 2x^{2}-4x^{3} \leq f(x) \leq 2x^{2}-4x^{3}+10x^{4}.$
If $x\leq \frac{1}{4}$, then $4x^{3}\leq x^{2}$ and $10x^{4}\leq x^{2},$
which gives
\ben \label{small-value-square}x^{2} \leq 1+2x- \sqrt{1+4x} \leq  3x^{2}, ~~ x \leq \frac{1}{4}.\een
Let $g(x)=f(x)-x/4$, if $x\geq \frac{1}{4}$, then
$ g^{\prime}(x)=1\frac{1}{4}-2(1+4x)^{-\frac{1}{2}} \geq 1\frac{1}{4}-\sqrt{2}> 0,$
which yields
\beno g(x)\geq g(\frac{1}{4})=\frac{3}{2}-\sqrt{2}-\frac{1}{16}>0.\eeno
On the other hand, if $x\geq \frac{1}{4}$, then $1+2x \leq 6x$.
Therefore we have
\ben \label{large-value-linear}x/4 \leq 1+2x- \sqrt{1+4x} \leq 6x,~~ x \geq \frac{1}{4}.\een

Suppose $t_{*}$ is the critical time such that $X(t_{*})=\frac{1}{4}$, then by \eqref{large-value-linear}, we get
\beno \frac{d}{dt} X+X/4 \leq \frac{d}{dt} X+1+2X- \sqrt{1+4X}= 0 \leq \frac{d}{dt} X+6X, ~~t \leq t_{*},\eeno
which yields
$ -6 \leq \frac{d}{dt} \ln X \leq -\frac{1}{4}, ~~t \leq t_{*}$.
Integrating over $[0,t]$ and recalling $X(0)=\epsilon^{-2s}$, we have
\ben\label{exp1-exponetial-decay}\epsilon^{-2s}\exp(-6t)\leq X(t)\leq \epsilon^{-2s}\exp(-t/4),~~t \leq t_{*},\een
By \eqref{small-value-square},  we get
\beno \frac{d}{dt} X+X^{2} \leq \frac{d}{dt} X+1+2X- \sqrt{1+4X}= 0 \leq \frac{d}{dt} X+3X^{2},~~t \geq t_{*}\eeno
which indicates
\beno-3 \leq \frac{d}{dt} (-\frac{1}{X}) \leq -1, ~~t \geq t_{*}.\eeno
Integrating over $[t_{*},t]$, we have
\beno
\frac{1}{4+3(t-t_{*})}\leq X(t)\leq \frac{1}{4+(t-t_{*})},~~t \geq t_{*}.\eeno
By \eqref{exp1-exponetial-decay}, recalling $X(t_{*})=\frac{1}{4}$, we have
\beno \frac{-2s\ln\epsilon-\ln \frac{1}{4}}{6}\leq t_{*}\leq 4(-2s\ln\epsilon-\ln \frac{1}{4}),\eeno
which yields $t_{*}\sim -2s\ln\epsilon$ since $\epsilon$ is small enough.
Recalling $X=\epsilon^{-2s}Y$, we have
\beno  e^{-6t}\mathrm{1}_{t\le t_*}+\epsilon^{2s}\frac{1}{4+3(t-t_{*})}\mathrm{1}_{t> t_*}\leq Y(t)\leq e^{-t/4}\mathrm{1}_{t\le t_*}+\epsilon^{2s}\frac{1}{4+(t-t_{*})}\mathrm{1}_{t> t_*}. \eeno
Comparing which with \eqref{decayY1}, we conclude that estimate \eqref{decayY1} is sharp for \eqref{ODEdecay}.
\end{rmk}

We are in a position to complete the proof of Theorem \ref{main2}.

\begin{proof}[Proof of Theorem \ref{main2} {\bf (Part II)}] By Theorem \ref{main1}, Lemma \ref{commutatorgamma} and \eqref{l2-upper-bound}, for $l\ge 2$, we get
\beno \f{d}{dt}|f|_{L^2_l}^2+\lambda_{3}|f|_{\epsilon,\gamma/2+l}^2\lesssim |f|_{L^2_{l+\gamma/2}}^2,  \eeno
for some universal constant $\lambda_{3}>0$. Observing that
\beno |f|_{L^2_{l+\gamma/2}}^2\lesssim  |f^h|_{L^2_{l+\gamma/2}}^2+\eta |f^l|^2_{L^2_{\gamma/2+l+s}}+C_\eta |f^l|^2_{L^2_{\gamma/2+s}}.\eeno
By taking $\eta$ small enough, when $\epsilon>0$ is small enough,
we infer that
  $\f{d}{dt}|f|_{L^2_l}^2\lesssim |f^l|^2_{L^2_{\gamma/2+s}} \lesssim |f|_{\epsilon,\gamma/2}^2$. Recalling \eqref{basic-L2-estimate},
we have $|f(t)|_{L^2_l}^2\lesssim |f_0|_{L^2_l}^2$ for any $t\ge0$. Recalling
 $\f{d}{dt}|f|_{L^2}^2+\lambda_1|f|_{\epsilon,\gamma/2}^2\le 0 $
and using the interpolation inequality $|f|_{L^2}\le |f|_{L^2_{\gamma/2}}^{\f{p}{p+1}}|f|_{L^2_{-\gamma p/2 }}^{\f1{p+1}}$,
since $\gamma+2s \geq0$, for some universal constants $C_{1}, C_{2}$,
we get
\beno \f{d}{dt}|f|_{L^2}^2+C_{1}|f^l|_{L^2}^2+ C_{2} |f_0|_{L^2_{-\gamma p/2}}^{-2/p}\epsilon^{-2s}|f^h|^{2+\f{2}{p}} \leq 0.\eeno
Let $Y(t)=|f(t)|_{L^2}^2/|f(t_1)|_{L^2}^2$ and then we obtain
\beno \f{d}{dt}Y(t)+C_{1} Y_1(t)+\mathcal{C}_2^{-1}Y_2(t)^{1+1/p}\le     0.\eeno
where $Y_1(t)=|f^l(t)|_{L^2}^2/|f(t_1)|_{L^2}^2$, $Y_2(t)=|f^h(t)|_{L^2}^2/|f(t_1)|_{L^2}^2$ and $\mathcal{C}_2^{-1}=C_{2}|f(t_1)|_{L^2}^{2/p}|f_0|_{L^2_{-\gamma p/2}}^{-2/p}\epsilon^{-2s}$.
From which together with Proposition \ref{propODE} and Remark \ref{not-exactly-equal}, we get \eqref{semigroupLe2} and \eqref{semigroupLe3}, which completes the proof of Theorem \ref{main2}.
\end{proof}

\section{Nonlinear Boltzmann equation in the perturbation framework}
In this section, we will prove Theorem \ref{main3}. In subsection 4.1, we establish
global well-posedness and propagation of regularity for the Boltzmann equation \eqref{linearizedBE}. In subsection 4.2, we derive global dynamics by using Proposition
\ref{propODE}. Subsection 4.3 is devoted to the global asymptotic formula which describes the limit that $\epsilon$ goes to zero.

\subsection{Global well-posedenss and propagation of regularity}
The main task is to provide the {\it a priori} estimates for the equation \eqref{linearizedBE}. We start with the following linear equation
\ben \label{lBE}\partial_{t}f + v\cdot \nabla_{x} f + \mathcal{L}^{\epsilon}f= g. \een
Here $g$ is given and $f$ is unknown.

\subsubsection{Estimate for the linear equation}
Suppose $f$ is a solution to \eqref{lBE}. Recalling \eqref{DefProj}, we set $f_{1} :=\mathbb{P} f$ and $f_{2} := f - \mathbb{P} f$.
The {\it a priori} estimate for \eqref{lBE} can be stated as follows:

\begin{prop}\label{essential-estimate-of-micro-macro} Let $N \geq 1$ and $f$
be a solution to \eqref{lBE}.
Then for $M$ large enough, there holds
\beno %\label{essential-micro-macro-result} 
&&\frac{d}{dt}(M\|f\|^{2}_{H^{N}_{x}L^{2}}+\mathcal{I}_{N}(f))+ \frac{1}{2}(|\nabla_{x}(a,b,c)|^{2}_{H^{N-1}_{x}}+\|f_{2}\|^{2}_{H^{N}_{x}L^2_{\epsilon,\gamma/2}}) \\&\lesssim& \sum_{|\alpha| \leq N}
|(\pa^{\alpha}g, \pa^{\alpha}f)|+ \sum_{|\alpha| \leq N-1}\sum_{j=1}^{13}
\int_{\mathbb{T}^{3}}|\langle  \pa^{\alpha}g, e_j\rangle_{v}|^{2} dx, \nonumber 
\eeno
 where $M\|f\|^{2}_{H^{N}_{x}L^{2}}+\mathcal{I}_{N}(f)\sim \|f\|^{2}_{H^{N}_{x}L^{2}}$,
$\mathcal{I}_{N}(f)$ is a functional defined in \eqref{interactive-INf} and $\{e_{j}\}_{1\leq j \leq 13}$ is defined  explicitly by
\beno e_{1} = \mu^{\frac{1}{2}}, e_{2} = v_{1}\mu^{\frac{1}{2}}, e_{3} = v_{2}\mu^{\frac{1}{2}},e_{4} = v_{3}\mu^{\frac{1}{2}},   e_{5} = v_{1}^{2}\mu^{\frac{1}{2}}, e_{6} = v_{2}^{2}\mu^{\frac{1}{2}},e_{7} = v_{3}^{2}\mu^{\frac{1}{2}},\\ e_{8} = v_{1}v_{2}\mu^{\frac{1}{2}}, e_{9} = v_{2}v_{3}\mu^{\frac{1}{2}},e_{10} = v_{3}v_{1}\mu^{\frac{1}{2}},  e_{11} = |v|^{2}v_{1}\mu^{\frac{1}{2}}, e_{12} = |v|^{2}v_{2}\mu^{\frac{1}{2}},e_{13} = |v|^{2}v_{3}\mu^{\frac{1}{2}}. \eeno
\end{prop}
The proof  of Proposition \ref{essential-estimate-of-micro-macro} will be postponed a little bit. We first recall some basics of macro-micro decomposition. By \eqref{DefProj}, the macro part is defined as
\ben \label{definition-f-1} f_{1}(t,x,v) = \{a(t,x) + b(t,x) \cdot v + c(t,x)|v|^{2}\}\mu^{\frac{1}{2}},\een which solves
\ben \label{macro-micro-LBE-2} \partial_{t}f_{1} + v\cdot \nabla_{x} f_{1}  = -\partial_{t}f_{2} + l + g,\een
 where $ l = - v\cdot \nabla_{x} f_{2} - \mathcal{L}^{\epsilon}f_{2}$.

Let $A = (a_{ij})_{1\leq i \leq 13, 1\leq j \leq 13}$ be the $13 \times 13$ matrix defined by $a_{ij} = \langle e_{i}, e_{j} \rangle_{v} $ and $y$ be the column vector with $13$ components $\partial_{t} a, \{\partial_{t}b_{i}+ \partial_{i} a \}_{1\leq i \leq 3}, \{\partial_{t}c+ \partial_{i} b_{i} \}_{1\leq i \leq 3},  \{\partial_{i}b_{j}+ \partial_{j} b_{i} \}_{1\leq i < j  \leq 3}, \{\partial_{i}c \}_{1\leq i \leq 3}$. Let $e$ be the column vector with $13$ components $\{e_{j}\}_{j=1}^{13}$. Plugging \eqref{definition-f-1} into \eqref{macro-micro-LBE-2}, we get
\ben
\label{macro-micro-express-out} e \cdot y  = -\partial_{t}f_{2} + l + g.
\een

Define column vector $z = (z_i)_{i=1}^{13}:=(\langle -\partial_{t}f_{2}+l+g, e_i\rangle_{v})_{i=1}^{13}$. Taking
 inner product between \eqref{macro-micro-express-out} and the column vector $e$ in the space $L^{2}(\mathbb{R}^{3}_{v})$,  one has $Ay = z$.
For simplicity, we define the following column vectors,
\beno \tilde{f}= (\tilde{f}^{(0)}, \{\tilde{f}^{(1)}_{i}\}_{1\leq i \leq 3}, \{\tilde{f}^{(2)}_{i}\}_{1\leq i \leq 3}, \{\tilde{f}^{(2)}_{ij}\}_{1\leq i < j \leq 3}, \{\tilde{f}^{(3)}_{i}\}_{1\leq i \leq 3})^{T} := A^{-1} (\langle f_{2}, e_{i}\rangle_{v})_{i=1}^{13},
 \\
\tilde{l}= (l^{(0)}, \{l^{(1)}_{i}\}_{1\leq i \leq 3}, \{l^{(2)}_{i}\}_{1\leq i \leq 3}, \{l^{(2)}_{ij}\}_{1\leq i < j \leq 3}, \{l^{(3)}_{i}\}_{1\leq i \leq 3})^{T} := A^{-1} (\langle l, e_{i}\rangle_{v})_{i=1}^{13},
 \\ \tilde{g}=(g^{(0)}, \{g^{(1)}_{i}\}_{1\leq i \leq 3}, \{g^{(2)}_{i}\}_{1\leq i \leq 3}, \{g^{(2)}_{ij}\}_{1\leq i < j \leq 3}, \{g^{(3)}_{i}\}_{1\leq i \leq 3})^{T} := A^{-1} (\langle g, e_{i}\rangle_{v})_{i=1}^{13}. \eeno
Then the equation $Ay = z$ is equivalent to
\ben \label{linear-equation-abc-3} y  = A^{-1} z = -\partial_{t}\tilde{f} + \tilde{l} + \tilde{g}.\een

Following the notations in \cite{duan}, let us define the
 temporal  energy functional  $\mathcal{I}_{N}(f)$ as
\ben \label{interactive-INf} \mathcal{I}_{N}(f) := \sum_{|\alpha|\leq N-1}\sum_{i=1}^{3}( \mathcal{I}^{a}_{\alpha,i}(f)+\mathcal{I}^{b}_{\alpha,i}(f)+\mathcal{I}^{c}_{\alpha,i}(f)+\mathcal{I}^{ab}_{\alpha,i}(f)), \een
where
 \beno \mathcal{I}^{a}_{\alpha,i}(f) := \langle \partial^{\alpha} \tilde{f}^{(1)}_{i}, \partial_{i}\partial^{\alpha} a\rangle_{x}, \mathcal{I}^{c}_{\alpha,i}(f):= \langle \partial^{\alpha} \tilde{f}^{(3)}_{i}, \partial_{i}\partial^{\alpha} c\rangle_{x}, \mathcal{I}^{ab}_{\alpha,i}(f):= \langle \partial_{i}\partial^{\alpha} a, \partial^{\alpha} b_{i}\rangle_{x}
  \\ \mathcal{I}^{b}_{\alpha,i}(f) := -\sum_{j \neq i}\langle \partial^{\alpha} \tilde{f}^{(2)}_{j}, \partial_{i}\partial^{\alpha} b_{i}\rangle_{x} + \sum_{j \neq i}\langle \partial^{\alpha} \tilde{f}^{(2)}_{ji}, \partial_{j}\partial^{\alpha} b_{i}\rangle_{x}  + 2 \langle \partial^{\alpha} \tilde{f}^{(2)}_{i}, \partial_{i}\partial^{\alpha} b_{i}\rangle_{x}.
 \eeno
We are in a position to state a lemma to capture the dissipation of $(a,b,c)$.
\begin{lem}\label{estimate-for-highorder-abc} Let $N \geq 1$. Recall $e = \{e_{j}\}_{j=1}^{13}$.
There exists a constant $C > 0$ such that
\ben \label{solution-property-part2} \frac{d}{dt}\mathcal{I}_{N}(f) + \frac{1}{2}|\nabla_{x}(a,b,c)|^{2}_{H^{N-1}_{x}} \leq C(\|f_{2}\|^{2}_{H^{N}_{x}L^2_{\epsilon,\gamma/2}} + \sum_{|\alpha|\leq N-1}\int_{\mathbb{T}^{3}}|\langle  \pa^{\alpha}g, e\rangle_{v}|^{2} dx).\een
\end{lem}
The proof of lemma \ref{estimate-for-highorder-abc} will be given in the Appendix. Now we are able to prove Proposition \ref{essential-estimate-of-micro-macro}.

\begin{proof}[Proof of Proposition \ref{essential-estimate-of-micro-macro}.] Applying $\partial^{\alpha}$ to equation \eqref{lBE}, taking inner product with $\partial^{\alpha}f$, we have
\beno \frac{1}{2} \frac{d}{dt} \|\partial^{\alpha}f\|^{2}_{L^{2}} + (\mathcal{L}^{\epsilon}\partial^{\alpha}f, \partial^{\alpha}f) =  (\partial^{\alpha}g, \partial^{\alpha}f).\eeno
Thanks to Proposition \ref{coercvityforLep}, for some constant $c_{0}>0$,
we have
\ben \label{solution-property-part-g}\frac{1}{2}\frac{d}{dt}\|f\|^{2}_{H^{N}_{x}L^{2}} + c_0\|f_{2}\|^{2}_{H^{N}_{x}L^2_{\epsilon,\gamma/2}} \lesssim \sum_{|\alpha| \leq N}
|(\pa^{\alpha}g, \pa^{\alpha}f)|.\een
Then Proposition \ref{essential-estimate-of-micro-macro} follows by making a suitable combination of \eqref{solution-property-part-g} and Lemma \ref{estimate-for-highorder-abc}. More precisely, one can multiply \eqref{solution-property-part-g} by a large constant and adding the resultant to \eqref{solution-property-part2}.
\end{proof}

\subsubsection{ A priori  estimate in $H^{N}_{x}L^{2}$.}
In this subsection, we derive the {\it a priori}  estimate in $H^{N}_{x}L^{2}$ for solutions to the Cauchy problem \eqref{linearizedBE}. We apply Proposition \ref{essential-estimate-of-micro-macro} by taking $g = \Gamma^{\epsilon}(f,f)$. For ease of notation, let us define the energy and the dissipation functionals as
\beno \mathcal{E}_{N}(f) := \|f \|^{2}_{H^{N}_{x}L^{2}}, \quad \mathcal{D}_{N}(f) := |(a,b,c)|^{2}_{H^{N}_{x}} + \|f_{2}\|^{2}_{H^{N}_{x}L^2_{\epsilon,\gamma/2}}.\eeno
The {\it a priori} result can be concluded as follows:
\begin{thm}\label{a-priori-estimate-LBE}
Let $-\frac{3}{2}< \gamma<0, N \geq 2$. There exists $\delta_{0}>0$ independent of $\epsilon$ such that if a solution
 $f^{\epsilon}$  to the Cauchy problem \eqref{linearizedBE}  satisfies $\sup_{0 \leq t \leq T}  \mathcal{E}_{2}(f^{\epsilon}(t))\le \delta_{0}$ for some $0< T \leq  \infty$,
then
\beno \sup_{t\in[0,T]}\mathcal{E}_{N}(f^{\epsilon}(t)) +  \int_{0}^{T}\mathcal{D}_{N}(f^{\epsilon}(s))ds \leq C(\mathcal{E}_{N}(f_{0})),\eeno	where $C(\cdot)$ is a continuous increasing function verifying $C(0)=0$. When $N=2$,
$C(x) \lesssim x.$
\end{thm}

\begin{proof}  Thanks to \eqref{conserveq} and \eqref{Nuspace}, we can apply Poincar\'{e} inequality to $(a,b,c)$ to obtain  $|(a,b,c)|_{H^N_x}\sim |\na_x(a,b,c)|_{H^{N-1}_x}$. By Proposition  \ref{essential-estimate-of-micro-macro}, we need to estimate $
|(\pa^{\alpha}\Gamma^{\epsilon}(f,f), \pa^{\alpha}f)|$ and $\int_{\mathbb{T}^{3}}|\langle  \pa^{\alpha}\Gamma^{\epsilon}(f,f), e \rangle_{v}|^{2} dx$ for $|\alpha| \leq N$.

In this sequel, we denote the Fourier transform of $f$ with respect to $x$ variable by $\hat{f}$. Observe
\beno (\Gamma^\epsilon(g, h), f)=\sum_{k,m\in\Z^3} \langle \Gamma^\epsilon (\hat{g}(k), \hat{h}(m-k)), \hat{f}(m)\rangle_v. \eeno
From which together with Theorem \ref{upGammagh}, we get
\beno |(\Gamma^\epsilon(\pa_x^\alpha g, \pa_x^\beta h), f)|\lesssim \sum_{k,m\in\Z^3} |k|^{|\alpha|}|m-k|^{|\beta|}|\hat{g}(k)|_{L^2}|\hat{h}(m-k)|_{L^2_{\epsilon,\gamma/2}}|\hat{f}(m)|_{L^2_{\epsilon,\gamma/2}}.
\eeno
From which we derive that  for $a, b\ge 0$ with $a+b>\f32$,
\ben\label{HNGamma1}
|(  \Gamma^\epsilon(\pa_x^\alpha g, \pa_x^\beta h), f)|\lesssim \|g\|_{H^{|\alpha|+a}_xL^2}\|h\|_{H^{|\beta|+b}_xL^2_{\epsilon,\gamma/2}}\|f\|_{L^2_{\epsilon,\gamma/2}}.\een
As a result, for $|\alpha|\leq N$,
\ben\label{HNGamma}  |(\pa^\alpha \Gamma^\epsilon(g, h), f)|\lesssim \|g\|_{H^2_xL^2}\|h\|_{H^N_xL^2_{\epsilon,\gamma/2}}\|f\|_{L^2_{\epsilon,\gamma/2}}+
\mathrm{1}_{N\ge3}\|g\|_{H^N_xL^2}\|h\|_{H^{N-1}_xL^2_{\epsilon,\gamma/2}}\|f\|_{L^2_{\epsilon,\gamma/2}}. \een
Observing that $\|f\|_{H^N_xL^2_{\epsilon,\gamma/2}}\lesssim |(a,b,c)|_{H^N_x}+\|f_2\|_{H^N_xL^2_{\epsilon,\gamma/2}}$, we obtain that
\ben\label{solution-property-part2-2} \sum_{|\alpha| \leq N}|(\partial^{\alpha}\Gamma^{\epsilon}(f,f), \partial^{\alpha}f)| \lesssim  \mathcal{E}_{2}^{\frac{1}{2}}(f)\mathcal{D}_{N}(f)+\mathrm{1}_{N\ge3}\mathcal{E}_{N}^{\frac{1}{2}}(f)\mathcal{D}_{N-1}^{\frac{1}{2}}(f)\mathcal{D}_{N}^{\frac{1}{2}}(f).\een
Thanks to Theorem \ref{upGammagh}, estimate \eqref{HNGamma1}, similar to \eqref{HNGamma} and \eqref{solution-property-part2-2}, we have for $|\alpha|\leq N$,
\ben \label{solution-property-part1}\int_{\mathbb{T}^{3}}|\langle  \pa^{\alpha}\Gamma^{\epsilon}(f,f), e \rangle_{v}|^{2} dx \lesssim  \mathcal{E}_{2}(f)\mathcal{D}_{N}(f)+\mathrm{1}_{N\ge3}\mathcal{E}_{N}(f)\mathcal{D}_{N-1}(f).\een

Let $\mathcal{E}^M_{N}(f^\epsilon):= M\mathcal{E}_{N}(f^\epsilon)+\mathcal{I}_{N}(f^\epsilon)\sim \mathcal{E}_{N}(f^\epsilon)$. Then by  Proposition \ref{essential-estimate-of-micro-macro}, \eqref{solution-property-part2-2} and \eqref{solution-property-part1}, for some universal constant $c_{0}>0$,
 we arrive at
\ben\label{ENERG1}  \frac{d}{dt}\mathcal{E}^M_{N}(f^\epsilon) + c_{0}\mathcal{D}_{N}(f^\epsilon) \lesssim C(\mathcal{E}_{2}^{\frac{1}{2}}(f^\epsilon)+\mathcal{E}_{2}(f^\epsilon))\mathcal{D}_{N}(f^\epsilon)
+\mathrm{1}_{N\ge3}\mathcal{E}_{N}(f^\epsilon)\mathcal{D}_{N-1}(f^\epsilon).  \een

For $N=2$, if $\delta_{0}$ is sufficiently small,
under the condition $\sup_{0 \leq t \leq T}  \mathcal{E}_{2}(f^{\epsilon}(t))\le \delta_{0}$, we have
\beno % \label{N=2-final-inequality}
\frac{d}{dt}\mathcal{E}^M_{2}(f^\epsilon) + \frac{c_{0}}{2}\mathcal{D}_{2}(f^\epsilon) \le0, \quad \sup_{t\in[0,T]}\mathcal{E}_{2}(f^\epsilon(t))+\int_0^{T} \mathcal{D}_2(f^\epsilon(s))ds\lesssim \mathcal{E}_{2}(f_0). \eeno

For $N\ge3$, using the smallness assumption, \eqref{ENERG1} gives
\beno  \frac{d}{dt}\mathcal{E}^M_{N}(f^\epsilon)+ \frac{c_{0}}{2} \mathcal{D}_{N}(f^\epsilon)\lesssim   \mathcal{E}_{N}(f^\epsilon)\mathcal{D}_{N-1}(f^\epsilon).  \eeno
Then we can get the desired result by using mathematical induction.
 \end{proof}

\subsubsection{Propagation of the weighted Sobolev regularity $H^{N}_{x}L^{2}_{l}$.} We aim to prove:
\begin{prop}\label{a-priori-estimate-LBE-HNL2l}
Let $-\frac{3}{2}< \gamma<0, l \geq 2, N \geq 2$. There exists $\delta_{0}>0$ independent of $\epsilon$ such that if a solution
 $f^{\epsilon}$  to the Cauchy problem \eqref{linearizedBE}  satisfies $\sup_{0 \leq t \leq T}  \mathcal{E}_{2}(f^{\epsilon}(t))\le \delta_{0}$ for some $0< T \leq  \infty$,
then
  \beno \sup_{t\in [0,T]}\|f^{\epsilon}(t)\|^{2}_{H^{N}_{x}L^{2}_{l}} + \int_{0}^{T}\|f^{\epsilon}(s)\|^{2}_{H^{N}_{x}L^{2}_{\epsilon,l+\gamma/2}}ds \leq C(\|f_{0}\|^{2}_{H^{N}_{x}L^{2}_{l}}),
 \eeno
where $C(\cdot)$ is a continuous increasing function verifying $C(0)=0$. When $N=2$,
$C(x) \lesssim x.$
\end{prop}
\begin{proof}
We omit the superscript $\epsilon$ in $f^{\epsilon}$ to write
\ben \label{linearized-Boltzamann} \partial_{t}f + v\cdot \nabla_{x} f + \mathcal{L}^{\epsilon}f= \Gamma^{\epsilon}(f,f).\een
Applying $W_{l}\pa^{\alpha}$ to both sides of \eqref{linearized-Boltzamann}, we have
\beno  \partial_{t}W_{l}\pa^{\alpha}f + v\cdot \nabla_{x} W_{l}\pa^{\alpha}f  + W_{l}\mathcal{L}^{\epsilon}\pa^{\alpha}f =  W_{l}\pa^{\alpha}\Gamma^{\epsilon}(f,f). \eeno
Taking inner product with $W_{l} \pa^{\alpha}f$ and taking sum over $|\alpha| \leq N$, we get
\beno&& \frac{1}{2}\frac{d}{dt}\|f\|^{2}_{H^{N}_{x}L^{2}_{l}}  + \sum_{|\alpha| \leq N} (W_{l}\mathcal{L}^{\epsilon}\pa^{\alpha}f,W_{l} \pa^{\alpha}f) = \sum_{|\alpha| \leq N}( W_{l}\pa^{\alpha}\Gamma^{\epsilon}(f,f),W_{l} \pa^{\alpha}f). \nonumber \eeno
By Theorem \ref{main1}, Lemma \ref{commutatorgamma} and the condition $\gamma/2+l\ge0$, for some constant $c_{0}>0$,
 we have
\beno\sum_{|\alpha| \leq N}  (W_{l}\mathcal{L}^{\epsilon}\pa^{\alpha}f,W_{l} \pa^{\alpha}f) \geq c_{0} \mathcal{D}_N(W_lf)- C \|f\|^{2}_{H^{N}_{x}L^{2}_{l+\gamma/2}}.\eeno
 Observe that
\beno\sum_{|\alpha| \leq N}(W_{l}\pa^{\alpha}\Gamma^{\epsilon}(f,f),W_{l} \pa^{\alpha}f) &=&  \sum_{|\alpha| \leq N}(W_{l}\pa^{\alpha}\Gamma^{\epsilon}(f,f)-\pa^{\alpha}\Gamma^{\epsilon}(f,W_{l}f),W_{l} \pa^{\alpha}f)
 \nonumber \\&&+\sum_{|\alpha| \leq N}(\pa^{\alpha}\Gamma^{\epsilon}(f,W_{l}f),W_{l} \pa^{\alpha}f).\eeno
With the help of the proof of \eqref{HNGamma}, Theorem \ref{upGammagh} and Lemma \ref{commutatorgamma} imply that
\beno  \sum_{|\alpha| \leq N}|(W_{l}\pa^{\alpha}\Gamma^{\epsilon}(f,f),W_{l} \pa^{\alpha}f)|\lesssim \mathcal{E}^{\frac{1}{2}}_{2}(f)\mathcal{D}_N(W_l f)+ \mathrm{1}_{N\ge3}\mathcal{E}^{\frac{1}{2}}_{N}(f)\mathcal{D}_{N-1}^{\frac{1}{2}}(W_l f)\mathcal{D}^{\frac{1}{2}}_N(W_l f)\eeno

Putting together the above results and using the condition $\sup_{0 \leq t \leq T}  \mathcal{E}_{2}(f^{\epsilon}(t))\le \delta_{0}$ with $\delta_{0}$ small enough, we arrive at
\beno  \frac{d}{dt} \mathcal{E}_N(W_l f)  + \frac{c_{0}}{2}\mathcal{D}_N(W_l f) \lesssim  \|f\|^{2}_{H^{N}_{x}L^{2}_{l+\gamma/2}}+ \mathrm{1}_{N\ge3} \mathcal{E}_N(f) \mathcal{D}_{N-1}(W_l f). \eeno
It is not difficult to check that
\beno
\|f\|_{H^{N}_{x}L^{2}_{l+\gamma/2}} &\le& \|f^l\|_{H^{N}_{x}L^{2}_{l+\gamma/2}}+\|f^h\|_{H^{N}_{x}L^{2}_{l+\gamma/2}}
\\&\le&  \eta \|f^l\|_{H^{N}_{x}L^{2}_{l+\gamma/2+s}}+C_\eta \|f^l\|_{H^{N}_{x}L^{2}_{\gamma/2+s}} +\epsilon^{s}\|\epsilon ^{-s}f^h\|_{H^{N}_{x}L^{2}_{l+\gamma/2}}.
\eeno
Taking $\eta$ small enough, when $\epsilon$ is small, we derive
\beno  \frac{d}{dt} \mathcal{E}_N(W_l f)  + \frac{c_{0}}{4}\mathcal{D}_N(W_l f) \lesssim \mathcal{D}_N(f)+ \mathrm{1}_{N\ge3} \mathcal{E}_N(f) \mathcal{D}_{N-1}(W_l f). \eeno
From which, for $N=2$ the desired result is easily obtained thanks to Theorem \ref{a-priori-estimate-LBE}. For $N\ge 3$, we use mathematical induction to get the desired result.
\end{proof}

\subsubsection{Propagation of full regularity} We first give a useful lemma.
\begin{lem}\label{interWsd}  Let $l_2 \geq l_1 \geq 0, m \geq 0, l \in \mathbb{R}$. For any $\eta>0$, there is a constant $C_{\eta}$ such that
\beno |f|_{H^m_l}^2\lesssim (\eta+\epsilon^{2s})|W^\epsilon(D)f|_{H^m_l}^2+C_{\eta}|f|_{L^2_l}^2, \quad |f|_{L^2_{\epsilon,l_1}}\lesssim |f|_{L^2_{\epsilon,l_2}}. \eeno
\end{lem}
\begin{proof} By interpolation inequality, it is easy to check that
\beno |f|_{H^m_l}^2\lesssim |f^\phi|_{H^m_l}^2+|f_\phi|_{H^m_l}^2\lesssim |f^\phi|_{H^m_l}^2+\eta |f_\phi|_{H^{m+s}_l}^{2}+C_\eta|f_\phi|_{L^2_l}^2.\eeno
Then the first result follows Lemma \ref{func}. The second result follows from the definition of $|\cdot|_{L^2_{\epsilon, l}}$.
\end{proof}

We aim to prove:
\begin{prop}\label{a-priori-estimate-LBE-HNlL2q}
Suppose $-\frac{3}{2}< \gamma<0, N \geq 2$. Recalling the weight functions \eqref{AsuWf}, the functionals \eqref{pure-order-m-j}, \eqref{total-order-k}, \eqref{full-order-k}.
There exists $\delta_{0}>0$ independent of $\epsilon$ such that if a solution
 $f^{\epsilon}$  to the Cauchy problem \eqref{linearizedBE}  satisfies $\sup_{0 \leq t \leq T}  \mathcal{E}_{2}(f^{\epsilon}(t))\le \delta_{0}$ for some $0< T \leq  \infty$,
then
\beno \sup_{t\in[0,T]}\mathcal{E}^{N,J}(f^{\epsilon}(t))+\int_{0}^{T}\mathcal{D}^{N,J}(f^{\epsilon}(\tau)) d\tau \leq C(\mathcal{E}^{N,J}(f_0)), \eeno
where $C(\cdot)$ is a continuous increasing function verifying $C(0)=0$.
\end{prop}	
\begin{proof} Since we have the control of $\dot{\mathcal{E}}^{m,0}(f)$ for $0 \leq m \leq N$ by Proposition \ref{a-priori-estimate-LBE-HNL2l}, we will focus on the estimate of $\dot{\mathcal{E}}^{k-j,j}(f)$ with   $1\leq k \leq N, 1\le j\le k$.
We denote
\beno \Gamma^{\epsilon}(g,h;\beta)(v):=
\int B^{\epsilon}(v-v_*,\sigma)(\pa_{\beta}\mu^{\frac{1}{2}})_{*}(g'_*h'-g_*h)d\sigma dv_*.
\eeno
With this notation, one has
\beno % \label{alpha-beta-on-Gamma} 
\pa^{\alpha}_{\beta}\Gamma^{\epsilon}(g,h) = \sum _{\beta_{0}+\beta_{1}+\beta_{2}= \beta,\alpha_{1}+\alpha_{2}=\alpha} C^{\beta_{0},\beta_{1},\beta_{2}}_{\beta} C^{\alpha_{1},\alpha_{2}}_{\alpha} \Gamma^{\epsilon}(\pa^{\alpha_{1}}_{\beta_{1}}g,\pa^{\alpha_{2}}_{\beta_{2}}h;\beta_{0}).\eeno
It is easy to check that for any fixed $\beta$, $\Gamma^{\epsilon}(g,h;\beta)$ shares the same upper bound and commutator estimates as those for $\Gamma^{\epsilon}(g,h)$.
Recalling
$ \mathcal{L}^{\epsilon}g = -\Gamma^{\epsilon}(\mu^{\frac{1}{2}},g) - \Gamma^{\epsilon}(g, \mu^{\frac{1}{2}}). $
Thus
\ben \label{alpha-beta-Lep} &&\pa^{\alpha}_{\beta}\mathcal{L}^{\epsilon}g \\&=& \mathcal{L}^{\epsilon}\pa^{\alpha}_{\beta}g
-\sum_{\beta_{0}+\beta_{1}+\beta_{2}= \beta, \beta_{2} < \beta} C^{\beta_{0},\beta_{1},\beta_{2}}_{\beta}
 [\Gamma^{\epsilon}(\pa_{\beta_{1}}\mu^{\frac{1}{2}}, \pa^{\alpha}_{\beta_{2}}g;\beta_{0}) + \Gamma^{\epsilon}(\pa^{\alpha}_{\beta_{2}}g, \pa_{\beta_{1}}\mu^{\frac{1}{2}};\beta_{0})]. \nonumber\een
 Let $1 \leq k \leq N, 1 \leq  j \leq k$.
 Taking two indexes $\alpha$ and $\beta$ such that $|\alpha|= k-j, |\beta|= j, \beta=(\beta^{1},\beta^{2},\beta^{3})$,  applying $W_{q}\pa^{\alpha}_{\beta}$ to both sides of \eqref{linearized-Boltzamann}, we obtain
\beno \partial_{t}W_{q}\pa^{\alpha}_{\beta}f + v\cdot \nabla_{x} W_{q}\pa^{\alpha}_{\beta}f +
\sum_{j=1}^{3} W_{q}\beta^{j}\pa^{\alpha+e_{j}}_{\beta-e_{j}}f + W_{q}\pa^{\alpha}_{\beta}\mathcal{L}^{\epsilon}f = W_{q}\pa^{\alpha}_{\beta}\Gamma^{\epsilon}(f,f). \eeno
Here $e_{1}=(1,0,0), e_{2}=(0,1,0), e_{3}=(0,0,1).$
Let $W_q=W_{k-j,j}$. Taking inner product with $W_{q}\pa^{\alpha}_{\beta} f$, one has
\beno \frac{1}{2}\frac{d}{dt}\|\pa^{\alpha}_{\beta}f \|^{2}_{L^{2}_{q}}  +
\sum_{j=1}^{3} \beta^{j} (W_{q}\pa^{\alpha+e_{j}}_{\beta-e_{j}}f,W_{q}\pa^{\alpha}_{\beta}f) + (W_{q}\pa^{\alpha}_{\beta}\mathcal{L}^{\epsilon}f,W_{q}\pa^{\alpha}_{\beta}f) = (W_{q}\pa^{\alpha}_{\beta}\Gamma^{\epsilon}(f,f),W_{q}\pa^{\alpha}_{\beta}f).  \eeno
Let us give the estimates term by term.

\underline{(i). The estimate of $(W_{q}\pa^{\alpha+e_{j}}_{\beta-e_{j}}f,W_{q}\pa^{\alpha}_{\beta}f) $.} It is not difficult to check that
\beno |(W_{q}\pa^{\alpha+e_{j}}_{\beta-e_{j}}f,W_{q}\pa^{\alpha}_{\beta}f) |\lesssim \|W_{q}W_{-\gamma/2}\pa^{\alpha+e_{j}}_{\beta-e_{j}}f\|_{L^2}\|W_{q}W_{\gamma/2}\pa^{\alpha}_{\beta}f\|_{L^2}
\lesssim \eta \dot{\mathcal{D}}^{k-j,j}(f)+C_\eta\dot{\mathcal{D}}^{k-j+1,j-1}(f), \eeno
where we use \eqref{AsuWf}.

\underline{(ii). The estimate of $(W_{q}\pa^{\alpha}_{\beta}\mathcal{L}^{\epsilon}f,W_{q}\pa^{\alpha}_{\beta}f) $.} Thanks to \eqref{alpha-beta-Lep}, Theorem \ref{main1}, Theorem \ref{upGammagh} and Lemma \ref{commutatorgamma}, for some universal constant $c_{0}>0$,
 we have
\beno (W_{q}\pa^{\alpha}_{\beta}\mathcal{L}^{\epsilon}f,W_{q}\pa^{\alpha}_{\beta}f) \geq c_{0}\|W_{q}\pa^{\alpha}_{\beta}f\|^{2}_{\epsilon,\gamma/2} - C\|W_{q}\pa^{\alpha}_{\beta}f\|^{2}_{L^2_{\gamma/2}} - C \|f\|^{2}_{H^{k-j}_{x}H^{j-1}_{\epsilon,q+\gamma/2}}.
\eeno
Due to Lemma \ref{interWsd} and our assumption for $W_{m,j}$ in \eqref{AsuWf}, the above inequality can be rewritten as follows
\beno (W_{q}\pa^{\alpha}_{\beta}\mathcal{L}^{\epsilon}f,W_{q}\pa^{\alpha}_{\beta}f) \geq c_{0}\|W_{q}\pa^{\alpha}_{\beta}f\|^{2}_{\epsilon,\gamma/2} - (\eta + \epsilon^{2s}) \dot{\mathcal{D}}^{k-j,j}(f)-C_\eta \dot{\mathcal{D}}^{k-j,0}(f)- C\mathcal{D}^{k-1}(f).\eeno

\underline{(iii). The estimate of $(W_{q}\pa^{\alpha}_{\beta}\Gamma^{\epsilon}(f,f),W_{q}\pa^{\alpha}_{\beta}f) $.}
It is easy to check that
\beno
(W_{q}\pa^{\alpha}_{\beta}\Gamma^{\epsilon}(f,f),W_{q}\pa^{\alpha}_{\beta}f) =(W_{q}\Gamma^{\epsilon}(f,\pa^{\alpha}_{\beta}f),W_{q}\pa^{\alpha}_{\beta}f)\\+\sum_{\beta_{0}+\beta_{1}+\beta_{2}= \beta, \alpha_1+\alpha_2=\alpha,  |\alpha_2|+|\beta_{2}|\le k-1} C^{\beta_{0},\beta_{1},\beta_{2}}_{\beta}
C^{\alpha_{1},\alpha_{2}}_{\alpha}(W_{q}
\Gamma^{\epsilon}(\pa^{\alpha_1}_{\beta_1}f,\pa^{\alpha_2}_{\beta_2}f;\beta_0),W_{q}\pa^{\alpha}_{\beta}f)
\eeno
By Theorem \ref{upGammagh} and Lemma \ref{commutatorgamma}, for $a,b \geq 0, a+b =2$,
 we have
\beno
|(W_{q}\Gamma^{\epsilon}(g,h),W_{q}f)| \lesssim \|g\|_{H^{a}_{x}L^{2}} \|h\|_{H^{b}_{x}L^{2}_{\epsilon, q+\gamma/2}} \|f\|_{L^{2}_{\epsilon,q+\gamma/2}},
\eeno
which gives for any $0<\eta<1$,
\beno
|(W_{q}\Gamma^{\epsilon}(f,\pa^{\alpha}_{\beta}f),W_{q}\pa^{\alpha}_{\beta}f)| \lesssim \mathcal{E}^{\frac{1}{2}}_2(f)\dot{\mathcal{D}}^{k-j,j}(f) \lesssim (\eta + \eta^{-1}\mathcal{E}_2(f)) \dot{\mathcal{D}}^{k-j,j}(f).
\eeno
It remains to consider $A:= (W_{q}\Gamma^{\epsilon}(\pa^{\alpha_1}_{\beta_1}f,\pa^{\alpha_2}_{\beta_2}f;\beta_0),W_{q}\pa^{\alpha}_{\beta}f)$ where $|\alpha_2|+|\beta_{2}|\le k-1$. We will give the estimate for $A$ case by case.

{\it Case  1: $k=1$.} There are only two situations $(|\alpha_1|,|\beta_1|)=(0,0)$ or $(0,1)$. Then we have
\beno |A| &\lesssim& (\|\pa_{\beta}f\|_{L^2}+\|f\|_{L^2})  \|f\|_{H^{2}_{x}L^{2}_{\epsilon, q+\gamma/2}} \|\pa_{\beta}f\|_{L^2_{\epsilon,q+\gamma/2}}
\\&\lesssim& \eta^{-1}(\dot{\mathcal{E}}^{0,1}(f)+1)\mathcal{D}_2(W_{1,0}f)+ (\eta+\eta^{-1}\mathcal{E}_2(f))\dot{\mathcal{D}}^{0,1}(f).\eeno

{\it Case 2: $k=2$.} We divide the estimate into two cases:
  $|\alpha_2|+|\beta_2|=1$ and $|\alpha_2|+|\beta_2|=0$.

In the case of $|\alpha_2|+|\beta_2|=1$, we have $(|\alpha_2|, |\beta_2|)=(1,0)$ or $(|\alpha_2|, |\beta_2|)=(0,1)$. If $(|\alpha_2|, |\beta_2|)=(1,0)$, we get that $j=1$ and $(|\alpha_1|, |\beta_1|)=(0,1)$ or $(0,0)$. Then we have
\beno |A|\lesssim \eta^{-1} \mathcal{E}_2(f)\|W_q  f\|_{H^1_xL^2_{\epsilon,\gamma/2}}^2+ \eta^{-1} \|f\|_{H^1_x\dot{H}^1_v}^2\|W_q  f\|_{H^2_xL^2_{\epsilon,\gamma/2}}^2+\eta\|\pa^\alpha_\beta f\|_{L^2_{\epsilon,q+\gamma/2}}^2. \eeno
If $(|\alpha_2|, |\beta_2|)=(0,1)$, then we have $(|\alpha_1|, |\beta_1|)=(2-j,j-1)$ or $(2-j,j-2)$ if $j\ge2$. These imply that
\beno |A|\lesssim  \eta^{-1}(\mathcal{E}_2(f) + \dot{\mathcal{E}}^{2-j+1,j-1}(f))
\|W_q  f\|_{H^1_x\dot{H}^1_{\epsilon,\gamma/2}}^2 +\eta\|\pa^\alpha_\beta f\|_{L^2_{\epsilon,q+\gamma/2}}^2. \eeno

 In the case of $|\alpha_2|+|\beta_2|=0$, we deduce that
 $(|\alpha_1|, |\beta_1|)=(2-j,j)$ or $(2-j,j-1)$ or $(2-j,j-2)$ if $j\ge2$. Then we arrive at
 \beno  |A|\lesssim \eta^{-1}(\|f\|_{\dot{H}^{2-j}_x\dot{H}^j}^2+\mathcal{E}^1(f))\mathcal{D}_2(W_{q}f) +
\eta\|\pa^\alpha_\beta f\|_{L^2_{\epsilon,q+\gamma/2}}^2.\eeno

{\it Case 3: $k\ge3$.} We consider four subcases.

{\quad \it Case 3.1: $|\alpha_2|+|\beta_2|=k-1$.} Either $(|\alpha_2|,|\beta_2|)=(k-j-1,j)$ or $(k-j,j-1)$, and we have
\beno |A|&\lesssim& \eta^{-1}\mathcal{E}_2(f)(\|W_qf\|_{\dot{H}^{k-j}_x\dot{H}^j_{\epsilon,\gamma/2}}^2
+\|W_qf\|_{\dot{H}^{k-j-1}\dot{H}^j_{\epsilon,\gamma/2}}^2)
\\&&+\eta^{-1}\|f\|^2_{H^2_xH^1}\|W_qf\|^2_{H^{k-j}_xH^{j-1}_{\epsilon,\gamma/2}}+ \eta\|\pa^\alpha_\beta f\|_{L^2_{\epsilon,q+\gamma/2}}^2\\&\lesssim& (\eta^{-1}\mathcal{E}_2(f)+\eta) \dot{\mathcal{D}}^{k-j,j}(f)+ \eta^{-1}\mathcal{D}^{k-1}(f)(\dot{\mathcal{E}}^{2,1}(f)+\mathcal{E}^2(f))
.\eeno

{\quad\it Case 3.2: $|\alpha_2|+|\beta_2|= k-2$ and $|\beta_2|=j$.} We first have $j\le k-2$. It is easy to check that
$(|\alpha_2|,|\beta_2|)=(k-j-2,j)$ or $|\alpha_2|\le k-j-3$ if $k\ge4$. We get that
\beno |A|\lesssim \eta^{-1}(\dot{\mathcal{E}}^{3,0}(f)+\mathcal{E}^2(f))\mathcal{D}^{k-1}(f)+\mathrm{1}_{k\ge 4}\eta^{-1}\mathcal{E}^{k-j}(f)\mathcal{D}^{k-1}(f)+\eta \dot{\mathcal{D}}^{k-j,j}(f).  \eeno

{\quad\it Case 3.3: $|\alpha_2|+|\beta_2|= k-2$ and $|\beta_2|\le j-1$.} Observing $|\alpha_1|+|\beta_1|\le 2$ and $|\beta_0|+|\beta_1|\ge 1$, we have
\beno A|\lesssim \eta^{-1}(\dot{\mathcal{E}}^{3,0}(f)+\dot{\mathcal{E}}^{2,1}(f)+\dot{\mathcal{E}}^{1,2}(f)\mathrm{1}_{j\ge2}+
\mathcal{E}^2(f))\mathcal{D}^{k-1}(f) +\eta \dot{\mathcal{D}}^{k-j,j}(f).  \eeno

{\quad\it Case 3.4: $|\alpha_2|+|\beta_2|\le k-3$.}
It is not difficult to see that
\beno |A|\lesssim \eta^{-1}(\dot{\mathcal{E}}^{k-j,j}(f)+\mathcal{E}^{k-1}(f))\mathcal{D}^{k-1}(f)+\eta \dot{\mathcal{D}}^{k-j,j}(f).\eeno
Now we patch together the above estimates to derive that
\begin{enumerate}
\item if $k=1$,    $|(W_{q}\pa^{\alpha}_{\beta}\Gamma^{\epsilon}(f,f),W_{q}\pa^{\alpha}_{\beta}f)| \lesssim
\eta^{-1}(\dot{\mathcal{E}}^{0,1}(f)+1)\mathcal{D}_2(W_{1,0}f)+ (\eta+\eta^{-1}\mathcal{E}_2(f))\dot{\mathcal{D}}^{0,1}(f)$;
\item if $k=2$, $|(W_{q}\pa^{\alpha}_{\beta}\Gamma^{\epsilon}(f,f),W_{q}\pa^{\alpha}_{\beta}f)|\lesssim
    (\eta+\eta^{-1}\mathcal{E}_2(f)) \dot{\mathcal{D}}^{2-j,j}(f) + \eta^{-1}(\dot{\mathcal{E}}^{2-j,j}(f)+\dot{\mathcal{E}}^{1,1}(f)
 +\mathcal{E}^{1}(f))\mathcal{D}_2(W_{2,0}f) +
 \eta^{-1}(\mathcal{E}_2(f)+\mathcal{E}^{2-j+1,j-1}(f))( \dot{\mathcal{D}}^{1,1}(f)+\mathcal{D}^1(f))
 $;
\item if $k\geq3$, $|(W_{q}\pa^{\alpha}_{\beta}\Gamma^{\epsilon}(f,f),W_{q}\pa^{\alpha}_{\beta}f)|\lesssim
(\eta^{-1}\mathcal{E}_2(f)+\eta) \dot{\mathcal{D}}^{k-j,j}(f)+ \eta^{-1} \mathcal{D}^{k-1}(f)(\dot{\mathcal{E}}^{2,1}(f)+\dot{\mathcal{E}}^{3,0}(f)+\dot{\mathcal{E}}^{1,2}(f)\mathrm{1}_{j\ge2}
+\mathcal{E}^{k-1}(f)+\dot{\mathcal{E}}^{k-j,j}(f)).$
\end{enumerate}

To get the estimate of $\mathcal{E}^1(f)$, we only need to bound $\dot{\mathcal{E}}^{0,1}$. From the above estimates, we have
\beno \f{d}{dt} \dot{\mathcal{E}}^{0,1}(f)+\f12c_{0} \dot{\mathcal{D}}^{0,1}(f)\lesssim C_{\eta} (\dot{\mathcal{E}}^{0,1}(f)+1)\mathcal{D}_2(W_{1,0}f)+(\eta+\eta^{-1}\mathcal{E}_2(f))\dot{\mathcal{D}}^{0,1}(f).\eeno
Taking $\eta$ small enough and since $\sup_{0 \leq t \leq T}  \mathcal{E}_{2}(f(t))\le \delta_{0}$ with $\delta_{0}$ small enough, by Proposition \ref{a-priori-estimate-LBE-HNL2l} and
Gr\"{o}nwall inequality, we conclude that
 \beno \sup_{t\in[0,T]}\mathcal{E}^1(f(t))+\int_0^{T} \mathcal{D}^1(f(\tau))d\tau
 \leq C(\|f_0\|_{H^{2}_{x}L^{2}_{l_{1,0}}}, \mathcal{E}^{1}(f_0)). \eeno

To prove the propagation of $\mathcal{E}^2(f)$, we need to consider the energy $\dot{\mathcal{E}}^{2-j,j}$ with $j=1,2$. It is not difficult to conclude from the above estimates that
\beno
 \f{d}{dt} \dot{\mathcal{E}}^{1,1}(f)+\f12c_{0} \dot{\mathcal{D}}^{1,1}(f)\lesssim   \mathcal{D}^1(f)+\mathcal{D}_2(W_{2,0}f)+\dot{\mathcal{E}}^{1,1}\mathcal{D}_2(W_{2,0}f)+\dot{\mathcal{D}}^{2,0}(f),\eeno
which gives
\beno \sup_{t\in[0,T]}\dot{\mathcal{E}}^{1,1}(f(t))+\int_0^{T} \dot{\mathcal{D}}^{1,1}(f(\tau))d\tau\leq C(\mathcal{E}^{2,1}(f_0)). \eeno

Next we have \beno
 \f{d}{dt} \dot{\mathcal{E}}^{0,2}(f)+\f12c_{0} \dot{\mathcal{D}}^{0,2}(f)\lesssim  ( 1+\dot{\mathcal{E}}^{0,2})\mathcal{D}^1(f)+\dot{\mathcal{D}}^{1,1}(f)+\mathcal{D}^1(f),
 \eeno which yields
\beno \sup_{t\in[0,T]}\dot{\mathcal{E}}^{2,0}(f(t))+\int_0^{T} \dot{\mathcal{D}}^{2,0}(f(\tau))d\tau\leq C(  \mathcal{E}^{2,2}(f_0)). \eeno
In other words,  for $0 \leq J\le2$, we have
$ \sup_{t\in[0,T]}\mathcal{E}^{2,J}(f(t))+\int_0^{T} \mathcal{D}^{2,J}(f(\tau))d\tau\leq C(\mathcal{E}^{2,J}(f_0))$.

 Now we shall use mathematical induction to complete the proof. We assume that the result in the proposition holds for $0\leq J\le N\le n$ with $n\ge2$. For $0\leq J\le N=n+1$, since $J=0$ is handled in Proposition \ref{a-priori-estimate-LBE-HNL2l},
 we begin with the propagation of $\dot{\mathcal{E}}^{n,1}(f)$. From the above inequalities, we have
 \beno \f{d}{dt} \dot{\mathcal{E}}^{n,1}(f)+\f12c_{0} \dot{\mathcal{D}}^{n,1}(f)\lesssim  ( 1+\dot{\mathcal{E}}^{n,1}(f)+\mathcal{E}^{n}(f)+\dot{\mathcal{E}}^{3,0}(f))\mathcal{D}^{n}(f)+\dot{\mathcal{D}}^{n+1,0}(f), \eeno
 which yields that $\sup_{t\in[0,T]} \mathcal{E}^{n+1,1}(f(t))+\int_0^{T}  \mathcal{D}^{n+1,1}(f(\tau))d\tau\leq C(\mathcal{E}^{n+1,1}(f_0))$ thanks to Gr\"{o}nwall inequality. For $j\ge2$, we derive that
 \beno \f{d}{dt} \dot{\mathcal{E}}^{n+1-j,j}(f)+\f12c_{0} \dot{\mathcal{D}}^{n+1-j,j}(f)&\lesssim&  ( 1+\dot{\mathcal{E}}^{n+1-j,j}(f)+\mathcal{E}^{n}(f)+\dot{\mathcal{E}}^{3,0}(f)\\&&+\dot{\mathcal{E}}^{2,1}(f))\mathcal{D}^{n}(f)+\dot{\mathcal{D}}^{n+2-j,j-1}(f). \eeno
Using mathematical induction to index $j$, we get for $2\le j\le J$, \beno \sup_{t\in[0,T]}\dot{\mathcal{E}}^{n+1-j,j}(f(t))+\int_0^{T} \dot{\mathcal{D}}^{n+1-j,j}(f(\tau)))d\tau\leq C(\mathcal{E}^{n+1,J}(f_0)),\eeno
 which completes the inductive argument for $n$. We end the proof of the proposition.
\end{proof}

\begin{proof}[Proof of Theorem \ref{main3} (Part I: Global Well-posedness and propagation of regularity)]
By a standard continuity argument, the global well-posedness in $H^{2}_{x}L^{2}$ follows form   the {\it a priori} estimate in Theorem \ref{a-priori-estimate-LBE} and the local well-posedness result (see \cite{Guo1} for instance). The propagation results \eqref{propagation-h-n-l-2-l} and \eqref{propagation-h-n-h-m-l} follows directly from
Proposition \ref{a-priori-estimate-LBE-HNL2l} and Proposition \ref{a-priori-estimate-LBE-HNlL2q}.
\end{proof}

\subsection{Global dynamics} We now give the proof to the second part of Theorem \ref{main3}.

\begin{proof}[Proof of Theorem \ref{main3}(Part II: Global dynamics)] We first give the proof to  \eqref{localizedenergy}. It is easy to check that $\mathcal{P}_j f^{\epsilon}$ verifies
\beno \pa_t\mathcal{P}_j f^{\epsilon}+v\cdot\na_x \mathcal{P}_j f^{\epsilon}+\mathcal{L^\epsilon}\mathcal{P}_j f^{\epsilon}=[\mathcal{L^\epsilon}, \mathcal{P}_j]f^{\epsilon}+\mathcal{P}_j\Gamma^\epsilon(f^{\epsilon}, f^{\epsilon}).  \eeno
Thanks to Theorem \ref{main1}, Lemma \ref{CommSemi} and \eqref{HNGamma1}, for some $C_{0}>0$,
one has
\beno \f{d}{dt}\|\mathcal{P}_j f^{\epsilon}\|_{L^2}^2 \geq -C_{0}(\|\mathcal{P}_j f^{\epsilon}\|_{L^2_{\epsilon,
\gamma/2}}^2
+ \epsilon^{2s}\|f^{\epsilon}\|_{L^2_{\epsilon,\gamma/2}}^2 +
\epsilon^{2s}\|f^{\epsilon}\|_{H^2_xL^2}^2\|f^{\epsilon}\|_{L^2_{\epsilon,\gamma/2}}^2 +\|f^{\epsilon}\|_{H^2_xL^2}\|\mathcal{P}_j f^{\epsilon}\|_{L^2_{\epsilon,\gamma/2}}^2).\eeno
By Theorem \ref{a-priori-estimate-LBE} for the case $N=2$, we have
\beno \sup_{t\geq 0}\mathcal{E}_{2}(f^{\epsilon}(t)) +  \int_{0}^{\infty}\mathcal{D}_{2}(f^{\epsilon}(s))ds \lesssim \mathcal{E}_{2}(f_{0}) \leq \delta_{0}. \eeno
Recalling that $\|\mathcal{P}_j f^{\epsilon}\|_{L^2_{\epsilon,
\gamma/2}}^2 \lesssim  \epsilon^{-2s}2^{j\gamma}\|\mathcal{P}_j f^{\epsilon}\|_{L^2}^{2} \lesssim  \epsilon^{-2s}2^{j\gamma} \delta_{0}$, we have
\beno  \|\mathcal{P}_j f^{\epsilon}(t)\|_{L^2}^2 \ge \|\mathcal{P}_j f_0\|_{L^2}^2- C\epsilon^{-2s}2^{j\gamma}\delta_0t-C\delta_0\epsilon^{2s}, \eeno
which yields \eqref{localizedenergy}.

We turn to the proof of  \eqref{decay-uniform-formula1} and \eqref{decay-uniform-formula2}. By the interpolation inequality $|f|_{L^2}\lesssim |f|_{L^2_{\gamma/2}}^{\f{p}{p+1}}|f|_{L^2_{-p\gamma/2}}^{\f1{p+1}}$ and the facts $\sup_{t \geq 0} \|f^\epsilon(t)\|_{H^2_xL^2_{l}} \lesssim \|f_0\|_{H^2_xL^2_{l}}$ from
Proposition \ref{a-priori-estimate-LBE-HNL2l} and $\f{d}{dt}\mathcal{E}^M_2(f^\epsilon)+\f{c_{0}}{2}\mathcal{D}_2(f^\epsilon)\le 0$ from Theorem \ref{a-priori-estimate-LBE},
for some universal constants $c_{1},c_{2}>0$,
we obtain that
\beno  \f{d}{dt}\mathcal{E}^M_2(f^\epsilon)+c_1\|f^l\|_{H^2_xL^2}^2+ c_{2}\|f_0\|^{-2/p}_{H^2_xL^2_{-p\gamma/2}}\epsilon^{-2s}\|f^h\|_{H^2_xL^2}^{2(1+\f1{p})}\le 0.\eeno
Let $Y(t)=\mathcal{E}^M_2(f^\epsilon)/\|f_{0}\|_{H^2_xL^2}^2, Y_1(t)=\|f^l(t)\|_{H^2_xL^2}^2/\|f_{0}\|_{H^2_xL^2}^2, Y_2(t)=\|f^h(t)\|_{H^2_xL^2}^2/\|f_{0}\|_{H^2_xL^2}^2$, then
\beno \f{d}{dt}Y(t)+c_1Y_1(t)+\mathcal{C}_2^{-1}Y_2(t)^{1+1/p}\le 0.\eeno
where $\mathcal{C}_2^{-1}=c_2|f_{0}|_{H^2_xL^2}^{2/p}\|f_0\|^{-2/p}_{H^2_xL^2_{-p\gamma/2}}\epsilon^{-2s}$.
By Proposition \ref{propODE} and Remark \ref{not-exactly-equal}, we obtain \eqref{decay-uniform-formula1} and \eqref{decay-uniform-formula2}.
\end{proof}

\subsection{Asymptotic formula for the limit} We want to prove \eqref{error-function-uniform-estimate}.    Let $f^{\epsilon}$ and $f^{0}$ be the solutions to  \eqref{linearizedBE} and \eqref{linearizedNBE} respectively with the same initial data $f_0$. Let $F^{\epsilon}_{R} :=  \epsilon^{2-2s}(f^{\epsilon}-f^{0})$, which solves
\ben \label{error-equation}
\partial_{t}F^{\epsilon}_{R} + v \cdot \nabla_{x} F^{\epsilon}_{R} + \mathcal{L}^{0}F^{\epsilon}_{R}=\epsilon^{2s-2}[(\mathcal{L}^{0}-\mathcal{L}^{\epsilon})
f^{\epsilon}+(\Gamma^{\epsilon}-\Gamma^{0})(f^{\epsilon},f^{0})]
+\Gamma^{\epsilon}(f^{\epsilon},F^{\epsilon}_{R})+\Gamma^{0}(F^{\epsilon}_{R},f^{0}). \een
We first derive the estimate on the operator $\Gamma^{0}-\Gamma^{\epsilon}$.

\begin{lem}\label{estimate-operator-difference} If $\gamma>-3$, there holds
\beno|\langle (\Gamma^{0}-\Gamma^{\epsilon})(g,h), f \rangle_v| \lesssim \epsilon^{2-2s}|g|_{L^{2}}|h|_{H^{2}_{\gamma/2+2}}|f|_{L^{2}_{\gamma/2}}.\eeno
\end{lem}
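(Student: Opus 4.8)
The starting point is the identity $(\Gamma-\Gamma^\epsilon)(g,h)=\mu^{-1/2}(Q-Q^\epsilon)(\mu^{1/2}g,\mu^{1/2}h)$, so the whole estimate reduces to controlling the difference of the two collision operators, whose kernels differ only by the factor $b(\cos\theta)\phi(\sin(\theta/2)/\epsilon)$, supported in $\{\sin(\theta/2)\le \frac{4}{3}\epsilon\}$, i.e. on angles $\theta\lesssim\epsilon$. Thus
\[
\langle (\Gamma-\Gamma^\epsilon)(g,h),f\rangle_v=\int_{\R^6\times\SS^2} |v-v_*|^\gamma b(\cos\theta)\phi\Big(\tfrac{\sin(\theta/2)}{\epsilon}\Big)(\mu_*^{1/2}g_*\mu^{1/2}h)\,\mu^{-1/2}(\text{\it symmetrized differences})\,d\sigma dv dv_*.
\]
First I would use the standard symmetrization (the pre-post collisional change of variables $(v,v_*)\to(v',v_*')$ together with the $\sigma\to-\sigma$ trick, exactly as in the cancellation-lemma manipulations already used repeatedly in Section 2) to rewrite the bilinear form so that the factor $h'-h$ (or a second-order Taylor remainder in $v'-v$) appears, rather than $h'$ alone. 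On the cutoff region $\theta\lesssim\epsilon$ the weight $b(\cos\theta)\phi(\sin(\theta/2)/\epsilon)$ is integrable in $\sigma$: since $\sin\theta\,b(\cos\theta)\le K\theta^{-1-2s}$, one gets $\int_{\SS^2} b(\cos\theta)\phi(\sin(\theta/2)/\epsilon)\,\min\{1,\theta^2 X\}\,d\sigma\lesssim \epsilon^{2-2s}X$ for any $X\ge0$ (split at $\theta\sim\epsilon$ when $X\gtrsim\epsilon^{-2}$, but here the relevant $X=|v-v_*|^2$ is tempered by the Gaussians so the elementary bound $\int\theta^{1-2s}\mathbf 1_{\theta\le C\epsilon}\,d\theta\sim\epsilon^{2-2s}$ suffices). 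This is where the gain $\epsilon^{2-2s}$ comes from.

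The key quantitative step is therefore a Taylor expansion of $h$: write $h(v')-h(v)=\nabla h(v)\cdot(v'-v)+\tfrac12\int_0^1(1-\kappa)\nabla^2 h(v(\kappa)):(v'-v)^{\otimes2}\,d\kappa$ with $v(\kappa)=v+\kappa(v'-v)$ and $|v'-v|=\sin(\theta/2)|v-v_*|\lesssim\epsilon|v-v_*|$. The linear term, after the $\sigma$-integration, produces a factor $v-v_*$ times $\int b\,\phi\,\sin^2(\theta/2)\,d\sigma\lesssim\epsilon^{2-2s}$ exactly as in \eqref{canvvx}; the quadratic remainder carries $|v'-v|^2\lesssim\epsilon^2|v-v_*|^2\theta^{?}$ and again integrates to $\epsilon^{2-2s}$ after using $\int b\,\phi\,\theta^2\,d\sigma\lesssim\epsilon^{2-2s}$ (or $\epsilon^{4-2s}$, which is even better). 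One must similarly Taylor-expand $\mu^{-1/2}(v)\mu^{1/2}(v')=\mu^{-1/2}\mu'^{1/2}$ and the Gaussian weights $\mu_*'^{1/2}$ versus $\mu_*^{1/2}$, each contributing bounded smooth factors times powers of $(v'-v)$; all these manipulations are completely parallel to the treatment of $\mathcal M^{\epsilon,\gamma}$ and $\mathcal I_3$ in the paper. Collecting, the integrand is bounded by $\epsilon^{2-2s}$ times $|v-v_*|^\gamma\langle v-v_*\rangle^2$ (from the two derivatives landing on $|v-v_*|$-powers and the Gaussian shifts) times $|g_*|\,\mu_*^{1/4}\,\mu^{1/8}(|h|+|\nabla h|+|\nabla^2 h|)(\text{arguments near }v)\,|f|$. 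A Cauchy–Schwarz in $(v,v_*)$, together with the trivial bound $\int|v-v_*|^\gamma\langle v-v_*\rangle^2\mu_*^{1/8}\,dv_*\lesssim\langle v\rangle^{\gamma+2}$ valid for $\gamma>-3$, then yields
\[
|\langle(\Gamma-\Gamma^\epsilon)(g,h),f\rangle_v|\lesssim\epsilon^{2-2s}\,|g|_{L^2}\,|h|_{H^2_{\gamma/2+2}}\,|f|_{L^2_{\gamma/2}},
\]
which is the claim. The weight $\langle v\rangle^{\gamma/2+2}$ on $h$ accounts for the $\langle v-v_*\rangle^2$ factor and the $\langle v\rangle^{\gamma/2}$ needed to pair against $f\in L^2_{\gamma/2}$.

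The main obstacle I anticipate is bookkeeping rather than a genuine analytic difficulty: one has to be careful that, after the change of variables bringing in $h'-h$, the remaining Gaussian factors are evaluated at the correct (post-collisional) velocities, and that the Jacobians and the relation $\theta'=\theta/2$ are tracked so that the cutoff $\phi(\sin(\theta/2)/\epsilon)$ really does confine everything to $\theta\lesssim\epsilon$. A minor subtlety is the region $|v-v_*|$ large: there $|v'-v|$ need not be small, so one cannot simply Taylor-expand $h$ to the endpoint; but the Gaussians $\mu^{1/4}_*$ (or $\mu^{1/8}$ after sharing) decay fast enough that $|v-v_*|^{\gamma}\langle v-v_*\rangle^{\text{any}}\mu_*^{1/8}$ is still integrable, so one may instead bound $\min\{1,\theta^2|v-v_*|^2\}$ crudely and absorb the polynomial growth — exactly the device used for $\mathcal A_2$ in Lemma~\ref{crosstermsimilar} and for $\mathcal I_{1,1,2,2}$. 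Once these regions are separated, each piece is an elementary integral and the stated estimate follows.
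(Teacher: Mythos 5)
Your overall plan matches the paper's proof in its essential ingredients: you identify that $b-b^\epsilon$ is supported on $\theta\lesssim\epsilon$ so $\int(b-b^\epsilon)(\cos\theta)\theta^2\,d\sigma\lesssim\epsilon^{2-2s}$; you propose a second-order Taylor expansion of $h$ (and of the Gaussian shifts) in the small increment $v'-v$, with a cancellation of the linear term; and you close with Cauchy--Schwarz and the convolution bound $\int|v-v_*|^\gamma\langle v-v_*\rangle^2\mu_*^{1/8}\,dv_*\lesssim\langle v\rangle^{\gamma+2}$, which explains the weight $H^2_{\gamma/2+2}$ on $h$. The paper organizes exactly these steps through the explicit four-term split into $\mathcal{A}_1,\dots,\mathcal{A}_4$ (using the standard decomposition $\mu_*'^{1/2}f'-\mu_*^{1/2}f=(\mu_*'^{1/2}-\mu_*^{1/2})f'+\mu_*^{1/2}(f'-f)$, then splitting off $h-h'$ and applying the cancellation lemma to $h'f'-hf$), but the mechanisms are the ones you describe.

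There is, however, one step where your write-up is imprecise in a way that would break the argument if carried out literally: you attribute the linear-term cancellation to \eqref{canvvx}, i.e. to $\int(b-b^\epsilon)(\cos\theta)(v'-v)\,d\sigma=-(v-v_*)\int(b-b^\epsilon)(\cos\theta)\sin^2(\theta/2)\,d\sigma=O(\epsilon^{2-2s})(v-v_*)$. That identity is only usable if, after expanding $h$ around $v$, the factor $\nabla h(v)\cdot(v'-v)$ is the \emph{only} $\sigma$-dependent part of the integrand, so that the $\sigma$-integral factors out. In the relevant terms (the analogues of $\mathcal{A}_1$ and $\mathcal{A}_3$) the test function appears as $f'$, which depends on $\sigma$, so the $\sigma$-integral cannot be pulled through; and without a cancellation the linear term only yields $\int(b-b^\epsilon)\theta\,d\sigma\sim\epsilon^{1-2s}$, which is short by one power. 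Re-symmetrizing so that $h'-h$ is paired with $f$ rather than $f'$ unavoidably generates a difference $g_*'-g_*$, requiring regularity on $g$ that is not available. The paper sidesteps this by changing variables $v\to v'$, Taylor-expanding around $v'$, and observing that in $v'$-coordinates $v-v'=-|v'-v_*|\tan(\theta/2)\,\omega$ with $\omega\perp(v'-v_*)$, so the azimuthal average vanishes exactly; this is an honest null average, distinct from \eqref{canvvx}. Your remark about tracking $\theta'=\theta/2$ suggests you would in fact arrive at this change of variables, but as written the cancellation mechanism is misidentified, and this is the one place where the claimed rate $\epsilon^{2-2s}$ would otherwise not be attained.
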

\begin{proof} By direct calculation, we have
\beno\langle (\Gamma^{0}-\Gamma^{\epsilon})(g,h), f \rangle_v  = \mathcal{A}_{1}+\mathcal{A}_{2}+\mathcal{A}_{3}+\mathcal{A}_{4},\eeno
 where
 \beno
 \mathcal{A}_{1} :=\int (b-b^{\epsilon})(\cos\theta)|v-v_{*}|^{\gamma}  ((\mu^{\frac{1}{2}})_{*}^{\prime}-\mu_{*}^{\frac{1}{2}}) g_{*}h^{\prime}f^{\prime} d\sigma dv_{*} dv,
 \\
 \mathcal{A}_{2} := \int (b-b^{\epsilon})(\cos\theta)|v-v_{*}|^{\gamma}  ((\mu^{\frac{1}{2}})_{*}^{\prime}-\mu_{*}^{\frac{1}{2}}) g_{*}( h -h^{\prime})f^{\prime} d\sigma dv_{*} dv,
 \\
 \mathcal{A}_{3} :=\int (b-b^{\epsilon})(\cos\theta)|v-v_{*}|^{\gamma}  \mu_{*}^{\frac{1}{2}} g_{*}( h -h^{\prime})f^{\prime} d\sigma dv_{*} dv,
 \\
 \mathcal{A}_{4} := \int (b-b^{\epsilon})(\cos\theta)|v-v_{*}|^{\gamma}  \mu_{*}^{\frac{1}{2}} g_{*}(h^{\prime}f^{\prime}- h f) d\sigma dv_{*} dv.
 \eeno

\underline{Estimate of $\mathcal{A}_{1}$.} By change of variables, we have
\beno
\mathcal{A}_{1} =  \int (b-b^{\epsilon})(\cos\theta)|v-v_{*}|^{\gamma}(\mu^{\frac{1}{2}} - (\mu^{\frac{1}{2}})^{\prime}) g^{\prime} h_{*}f_{*} d\sigma dv_{*}dv.
\eeno
By Taylor expansion, one has
\beno
\mu^{\frac{1}{2}} - (\mu^{\frac{1}{2}})^{\prime} = (\nabla \mu^{\frac{1}{2}})(v^{\prime})\cdot(v-v^{\prime}) + \int_{0}^{1} (1-\kappa) [(\nabla^{2} \mu^{\frac{1}{2}})(v(\kappa)):(v-v^{\prime})\otimes(v-v^{\prime})] d\kappa,
\eeno
where $v(\kappa) = v^{\prime} + \kappa(v-v^{\prime})$.
Observe that, for any fixed $v_{*}$, there holds
\beno
\int (b-b^{\epsilon})(\cos\theta)|v-v_{*}|^{\gamma} (\nabla \mu^{\frac{1}{2}})(v^{\prime})\cdot(v-v^{\prime}) g^{\prime} d\sigma dv = 0,
\eeno
which gives
\beno
|\mathcal{A}_{1}| &=& |\int (b-b^{\epsilon})(\cos\theta)|v-v_{*}|^{\gamma}  (1-\kappa) [(\nabla^{2} \mu^{\frac{1}{2}})(v(\kappa)):(v-v^{\prime})\otimes(v-v^{\prime})] g^{\prime} h_{*}f_{*} d \kappa d\sigma dv_{*} dv|
\\&\lesssim&\epsilon^{2-2s}\{\int \langle v_{*}\rangle^{\gamma+4} |g^{\prime}|^{2} |h_{*}|^{2}    dv_{*} dv^{\prime} \}^{\frac{1}{2}}
\{\int |v(\kappa)-v_{*}|^{\gamma} \mu^{\frac{1}{8}}(v(\kappa))|f_{*}|^{2} d \kappa dv_{*} dv(\kappa)  \}^{\frac{1}{2}}
\\&\lesssim& \epsilon^{2-2s}|g|_{L^{2}}|h|_{L^{2}_{\gamma/2+2}}|f|_{L^{2}_{\gamma/2}},
\eeno
where we use the change of variable $v \rightarrow v^{\prime}$ and $v \rightarrow v(\kappa)$, and the estimate
$\int_{0}^{\epsilon} \theta^{1-2s} d \theta \lesssim \epsilon^{2-2s}$.

\underline{Estimate of $\mathcal{A}_{2}$.}
By Cauchy-Schwartz inequality, we have
\beno
|\mathcal{A}_{2}| &\leq& \{\int (b-b^{\epsilon})(\cos\theta)|v-v_{*}|^{\gamma+2}  g^{2}_{*}(h -h^{\prime})^{2} ((\mu^{\frac{1}{4}})_{*}^{\prime}+\mu_{*}^{\frac{1}{4}})^{2}d\sigma dv_{*} dv \}^{\frac{1}{2}}
\\&&\times\{\int (b-b^{\epsilon})(\cos\theta)|v-v_{*}|^{\gamma-2}((\mu^{\frac{1}{4}})_{*}^{\prime}-\mu_{*}^{\frac{1}{4}})^{2}|f^{\prime}|^{2} d\sigma dv_{*} dv  \}^{\frac{1}{2}}
:=  \{\mathcal{A}_{2,1}\}^{\frac{1}{2}} \times \{\mathcal{A}_{2,2}\}^{\frac{1}{2}}.
\eeno
By Taylor expansion,
$
h -h^{\prime} = \int_{0}^{1} (\nabla h)(v(\kappa))\cdot(v-v^{\prime}) d\kappa,
$
where $v(\kappa) = v^{\prime} + \kappa(v-v^{\prime})$. By the change of variable $v\rightarrow v(\kappa)$, we get
\beno
\mathcal{A}_{2,1} \lesssim \epsilon^{2-2s} \int \langle v(\kappa)\rangle^{\gamma+4} g^{2}_{*} |(\nabla h)(v(\kappa))|^{2}  dv_{*} dv(\kappa) d\kappa
\lesssim \epsilon^{2-2s}|g|^{2}_{L^{2}}|h|^{2}_{H^{1}_{\gamma/2+2}}.
\eeno
Note that $((\mu^{\frac{1}{4}})_{*}^{\prime}-\mu_{*}^{\frac{1}{4}})^{2}   \lesssim ((\mu^{\frac{1}{4}})_{*}^{\prime}+\mu_{*}^{\frac{1}{4}}) \sin^{2}\frac{\theta}{2}|v-v_{*}|^{2}$, thus we have
\beno
\mathcal{A}_{2,2} \lesssim \epsilon^{2-2s}\int |v-v_{*}|^{\gamma} \mu_{*}^{\frac{1}{4}} |f|^{2} dv_{*} dv \lesssim \epsilon^{2-2s}|f|^{2}_{L^{2}_{\gamma/2}}.
\eeno
Patching together the estimate for $\mathcal{A}_{2,1}$ and $\mathcal{A}_{2,2}$, we have $|\mathcal{A}_{2}| \lesssim \epsilon^{2-2s}|g|_{L^{2}}|h|_{H^{1}_{\gamma/2+2}}|f|_{L^{2}_{\gamma/2}}$.

\underline{Estimate of $\mathcal{A}_{3}$.}
By Taylor expansion, one has
\beno
h -h^{\prime} = (\nabla h)(v^{\prime})\cdot(v-v^{\prime}) + \int_{0}^{1} (1-\kappa) [(\nabla^{2} h)(v(\kappa)):(v-v^{\prime})\otimes(v-v^{\prime})] d\kappa,
\eeno
where $v(\kappa) = v^{\prime} + \kappa(v-v^{\prime})$.
Observe that, for any fixed $v_{*}$, there holds
\beno
\int (b-b^{\epsilon})(\cos\theta)|v-v_{*}|^{\gamma} (\nabla h)(v^{\prime})\cdot(v-v^{\prime}) f^{\prime} d\sigma dv = 0.
\eeno
Thus we have
\beno
|\mathcal{A}_{3}| &=&  |\int (b-b^{\epsilon})(\cos\theta)|v-v_{*}|^{\gamma} \mu_{*}^{\frac{1}{2}} g_{*} (1-\kappa) [(\nabla^{2} h)(v(\kappa)):(v-v^{\prime})\otimes(v-v^{\prime})] f^{\prime} d \kappa d\sigma dv_{*} dv|
\\&\lesssim&\epsilon^{2-2s}\{\int |v(\kappa)-v_{*}|^{\gamma+4} \mu_{*}^{\frac{1}{2}} g^{2}_{*}|(\nabla^{2} h)(v(\kappa))|^{2} d \kappa dv_{*} dv(\kappa)\}^{\frac{1}{2}}
\\&&\times\{\int |v^{\prime}-v_{*}|^{\gamma} \mu_{*}^{\frac{1}{2}} |f^{\prime}|^{2} dv_{*} dv^{\prime}  \}^{\frac{1}{2}}
\lesssim \epsilon^{2-2s}|g|_{L^{2}}|h|_{H^{2}_{\gamma/2+2}}|f|_{L^{2}_{\gamma/2}}.
\eeno

\underline{Estimate of $\mathcal{A}_{4}$.} By cancellation lemma and Lemma \ref{aftercancellation}, we have
\beno |\mathcal{A}_{4}| \lesssim \epsilon^{2-2s} \int |v-v_{*}|^{\gamma} \mu_{*}^{\frac{1}{2}} |g_{*} h f| dv_{*} dv \lesssim \epsilon^{2-2s} |g|_{L^{2}}|h|_{H^2_{\gamma/2}}|f|_{L^{2}_{\gamma/2}}. \eeno
The lemma then follows by patching together the above estimates.
\end{proof}

We are ready to prove  \eqref{error-function-uniform-estimate}.

\begin{proof}[Proof of Theorem \ref{main3}(Part III: Asymptotic formula)] Recalling \eqref{error-equation}, we
set \beno g=\epsilon^{2s-2}[(\mathcal{L}^{0}-\mathcal{L}^{\epsilon})f^{\epsilon}+(\Gamma^{\epsilon}-\Gamma^{0})(f^{\epsilon},f^{0})]
+\Gamma^{\epsilon}(f^{\epsilon},F^{\epsilon}_{R})+\Gamma^{0}(F^{\epsilon}_{R},f^{0}).\eeno
By applying Proposition  \ref{essential-estimate-of-micro-macro} with the previous nonlinear term $g$, using Poincar\'{e} inequality, for some universal constant $c_{0}>0$,
we have
\beno &&\frac{d}{dt}(M\|F^{\epsilon}_{R}\|^{2}_{H^{N}_{x}L^{2}}+\mathcal{I}_{N}(F^{\epsilon}_{R}))+ c_{0}\|F^{\epsilon}_{R}\|^{2}_{H^{N}_{x}L^{2}_{0,\gamma/2}} \\&\lesssim& \sum_{|\alpha| \leq N }
|(\pa^{\alpha}g, \pa^{\alpha}F^{\epsilon}_{R})|+ \sum_{|\alpha| \leq N-1}\sum_{j=1}^{13}
\int_{\mathbb{T}^{3}}|\langle  \pa^{\alpha}g, e_j\rangle_{v}|^{2} dx. \eeno
Thanks to $|\langle  \Gamma^{\epsilon}(g,h), e_j\rangle_{v}| \lesssim |g|_{L^{2}_{\gamma/2}}|h|_{L^{2}_{\gamma/2}}$, for any $|\alpha|\leq N$, we have
\beno &&\int_{\mathbb{T}^{3}} \big(|\langle  \pa^{\alpha}\Gamma^{\epsilon}(f^{\epsilon},F^{\epsilon}_{R}), e_{j}\rangle_{v}|^{2}+ |\langle  \pa^{\alpha}\Gamma^{0}(F^{\epsilon}_{R},f^{0}), e_{j}\rangle_{v}|^{2}\big) dx \\  &\lesssim& \|f^{\epsilon}\|^{2}_{H^{2}_{x}L^{2} }\|F^{\epsilon}_{R}\|^{2}_{H^{N}_{x}L^{2}_{0,\gamma/2}} +
\mathrm{1}_{N \ge 3}\|f^{\epsilon}\|^{2}_{H^{N}_{x}L^{2}}\|F^{\epsilon}_{R}\|^{2}_{H^{N-1}_{x}L^{2}_{0,\gamma/2}}+
\|f^{0} \|^{2}_{H^{N}_{x}L^{2}_{0,\gamma/2}}\|F^{\epsilon}_{R}\|^{2}_{H^{N}_{x}L^{2}}.\eeno

By Lemma \ref{estimate-operator-difference}, we get that
$ \epsilon^{2s-2}\int_{\mathbb{T}^{3}}|\langle  \pa^{\alpha} (\Gamma^{\epsilon}-\Gamma^{0})(f^{\epsilon},f^{0}), e_{j}\rangle_{v}|^{2} dx \lesssim \|f^{\epsilon}\|^{2}_{H^{N}_{x}L^{2}}\|f^{0}\|^{2}_{H^{N}_{x}H^{2}_{\gamma/2+2}},$
and\\
$ \epsilon^{2s-2}\int_{\mathbb{T}^{3}}|\langle  \pa^{\alpha}(\mathcal{L}^{0}-\mathcal{L}^{\epsilon})f^{\epsilon}, e_{j}\rangle_{v}|^{2} dx \lesssim \|f^{\epsilon}\|^{2}_{H^{N}_{x}H^{2}_{\gamma/2+2}}.$
By Theorem \ref{upGammagh} with $\epsilon=0$ and \eqref{HNGamma}, we have
\beno &&|(\pa^{\alpha}\Gamma^{0}(F^{\epsilon}_{R},f^{0}), \pa^{\alpha}F^{\epsilon}_{R})|+|(\pa^{\alpha}\Gamma^{\epsilon}(f^{\epsilon},F^{\epsilon}_{R}), \pa^{\alpha}F^{\epsilon}_{R})|
\\& \lesssim & (\|F^{\epsilon}_{R}\|_{H^{N}_{x}L^{2}}\|f^{0}\|_{H^{N}_{x}L^{2}_{0,\gamma/2}}
+\|f^{\epsilon}\|_{H^{2}_{x}L^{2}}\|F^{\epsilon}_{R}\|_{H^{N}_{x}L^{2}_{0,\gamma/2}}
\\&&+\mathrm{1}_{N\ge3}\|f^{\epsilon}\|_{H^{N}_{x}L^{2}}\|F^{\epsilon}_{R}\|_{H^{N-1}_{x}L^{2}_{0,\gamma/2}})
\|F^{\epsilon}_{R}\|_{H^{N}_{x}L^{2}_{0,\gamma/2}}.\eeno
By Lemma \ref{estimate-operator-difference}, we have
\beno&&
\epsilon^{2s-2}|(\pa^{\alpha}(\Gamma^{\epsilon}-\Gamma^{0})(f^{\epsilon},f^{0}), \pa^{\alpha}F^{\epsilon}_{R})| +\epsilon^{2s-2}|(\pa^{\alpha}(\mathcal{L}^{0}-\mathcal{L}^{\epsilon})f^{\epsilon}, \pa^{\alpha}F^{\epsilon}_{R})|\\&\lesssim& (\|f^{\epsilon}\|_{H^{N}_{x}L^{2}_{}}  \|f^{0}\|_{H^{N}_{x}H^{2}_{\gamma/2+2}}+\|f^{\epsilon}\|_{H^{N}_{x}H^{2}_{\gamma/2+2}})
\|F^{\epsilon}_{R}\|_{H^{N}_{x}L^{2}_{0,\gamma/2}}.\eeno

Patching together the above results, we arrive at
\beno &&\frac{d}{dt}(M\|F^{\epsilon}_{R}\|^{2}_{H^{N}_{x}L^{2}}+\mathcal{I}_{N}(F^{\epsilon}_{R}))+ \frac{c_{0}}{2}\|F^{\epsilon}_{R}\|^{2}_{H^{N}_{x}L^{2}_{0,\gamma/2}} \\&\lesssim&
\mathrm{1}_{N\ge3}\|f^{\epsilon}\|^{2}_{H^{N}_{x}L^{2}}\|F^{\epsilon}_{R}\|^{2}_{H^{N-1}_{x}L^{2}_{0,\gamma/2}}+
\|f^{0} \|^{2}_{H^{N}_{x}L^{2}_{0,\gamma/2}}\|F^{\epsilon}_{R}\|^{2}_{H^{N}_{x}L^{2}}+\|f^{\epsilon}\|_{H^{N}_{x}L^{2}_{}}^2  \|f^{0}\|_{H^{N}_{x}H^{2}_{\gamma/2+2}}^2\\&&+\|f^{\epsilon}\|_{H^{N}_{x}H^{2}_{\gamma/2+2}}^2.
\eeno
Thanks to Proposition \ref{a-priori-estimate-LBE-HNlL2q}, we derive that
\beno \int_{0}^\infty (\|f^{\epsilon}(\tau)\|_{H^{N}_{x}H^{2}_{\gamma/2+2}}^2+\|f^{0}(\tau)\|_{H^{N}_{x}H^{2}_{\gamma/2+2}}^2+\|f^{0} (\tau)\|^{2}_{H^{N}_{x}L^{2}_{0,\gamma/2}}) d \tau \leq C(\mathcal{E}^{N+2,2}(f_0)),  \eeno
which yields when $N=2$,
\beno \sup_{t\ge0} \|F^{\epsilon}_{R}(t)\|^{2}_{H^{2}_{x}L^{2}}+\int_0^\infty \|F^{\epsilon}_{R}(\tau)\|^{2}_{H^2_{x} L^{2}_{0,\gamma/2}}d \tau \leq C(\mathcal{E}^{4,2}(f_0)).\eeno
From which together with mathematical induction, for $N\geq3$, we arrive at
\beno \sup_{t\ge0} \|F^{\epsilon}_{R}(t)\|^{2}_{H^{N}_{x}L^{2}}+\int_0^\infty \|F^{\epsilon}_{R}(\tau)\|^{2}_{H^{N}_{x}L^{2}_{0,\gamma/2}}d \tau \leq C(\mathcal{E}^{N+2,2}(f_0)),\eeno
which ends the proof to \eqref{error-function-uniform-estimate} and thus completes the proof to Theorem \ref{main3}.
\end{proof}

\section{Appendix}
We first give the definition of the symbol class $S^{m}_{1,0}$.
\begin{defi}\label{psuopde} A smooth function $a(v,\xi)$ is said to a symbol of type $S^{m}_{1,0}$ if   $a(v,\xi)$  verifies for any multi-indices $\alpha$ and $\beta$,
\beno |(\pa^\alpha_\xi\pa^\beta_v a)(v,\xi)|\le C_{\alpha,\beta} \langle \xi\rangle^{m-|\alpha|}, \eeno
where $C_{\alpha,\beta}$ is a constant depending only on   $\alpha$ and $\beta$.
\end{defi}

\begin{lem}(\cite{he2})\label{operatorcommutator1}
Let $l, s, r \in \R, M \in S^{r}_{1,0}$ and $\Phi \in S^{l}_{1,0}$. Then there exists a constant $C$ such that
\beno
|[M(D), \Phi]f|_{H^{s}} \leq C|f|_{H^{r+s-1}_{l-1}}.
\eeno
\end{lem}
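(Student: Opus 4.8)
\textbf{Proof proposal for Lemma~\ref{operatorcommutator1}.}

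The statement is the standard pseudodifferential commutator estimate: if $M\in S^r_{1,0}$ and $\Phi\in S^l_{1,0}$, then $[M(D),\Phi]$ maps $H^{r+s-1}_{l-1}$ into $H^s$. Since it is attributed to \cite{he2}, I would only sketch the structure of the argument rather than reprove symbolic calculus from scratch. The plan is to realize $[M(D),\Phi]$ as a pseudodifferential operator whose symbol lies in $S^{r+l-1}_{1,0}$ after absorbing the weight $\langle v\rangle^{l-1}$, and then invoke the $L^2$-boundedness of order-zero operators (Calder\'on--Vaillancourt) together with the mapping properties of $\langle D\rangle^s$ and $\langle v\rangle^m$ on Sobolev spaces.

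First I would write, for Schwartz $f$,
\beno
([M(D),\Phi]f)(v)=\f1{(2\pi)^3}\int_{\R^3}\int_{\R^3} e^{i(v-y)\cdot\xi}\big(M(\xi)-M(\eta)\big)\Phi(y)\,\wh f(\eta)\,\cdots
\eeno
more precisely, expand $M(D)(\Phi f)$ and $\Phi\,M(D)f$ in terms of the Fourier transform of $f$ and subtract, so that the kernel involves the difference $M(\xi)-M(\eta)$ where $\xi,\eta$ are the output and input frequencies. The key algebraic step is the finite Taylor expansion
\beno
M(\xi)-M(\eta)=\sum_{|\alpha|=1}(\xi-\eta)^\alpha\int_0^1(\pa^\alpha M)\big(\eta+\tau(\xi-\eta)\big)\,d\tau,
\eeno
and then to move the factor $(\xi-\eta)^\alpha$ onto $\Phi$: each such factor becomes (up to constants) a derivative $\pa^\alpha\Phi\in S^{l-1}_{1,0}$ acting in the $y$ variable after integration by parts. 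This is exactly the mechanism by which the commutator gains one order: the resulting operator has a symbol built from $\pa^\alpha\Phi$ (order $l-1$) and $\pa^\alpha M$ (order $r-1$), so the composite is of order $r+l-1$. One then checks that the symbol, after extracting $\langle v\rangle^{l-1}$ from $\Phi$ and $\langle D\rangle^{r+s-1}$-type weights, belongs to $S^0_{1,0}$ uniformly; the differential inequalities defining $S^m_{1,0}$ (Definition~\ref{psuopde}) are verified by Leibniz's rule, since each $v$-derivative landing on $\langle v\rangle^{-(l-1)}\Phi$ and each $\xi$-derivative landing on $M$-type factors produces the required decay.

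Concretely the chain of estimates is
\beno
|[M(D),\Phi]f|_{H^s}=|\langle D\rangle^s[M(D),\Phi]f|_{L^2}\lesssim |\langle v\rangle^{l-1}\langle D\rangle^{r+s-1}f|_{L^2}=|f|_{H^{r+s-1}_{l-1}},
\eeno
where the middle inequality is the $L^2$-boundedness of the order-zero operator $\langle D\rangle^s\,[M(D),\Phi]\,\langle v\rangle^{-(l-1)}\langle D\rangle^{-(r+s-1)}$ (Calder\'on--Vaillancourt), together with the commutation of $\langle v\rangle^{l-1}$ and $\langle D\rangle^{r+s-1}$ up to lower-order errors that are themselves controlled by the same norm. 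The main obstacle is purely bookkeeping: one must track how the weight $\langle v\rangle^{l-1}$ interacts with the frequency cutoffs and verify that all remainder terms from the Taylor expansion (the $\tau$-integral of $\pa^\alpha M$ evaluated at the interpolated frequency $\eta+\tau(\xi-\eta)$) still obey the $S^{r-1}_{1,0}$ bounds uniformly in $\tau$; this uses that $|\eta+\tau(\xi-\eta)|$ and $\langle\xi\rangle,\langle\eta\rangle$ are comparable on the region where $|\xi-\eta|\lesssim\min(\langle\xi\rangle,\langle\eta\rangle)$, while the off-diagonal region is handled by non-stationary phase (integration by parts in $\xi$) giving rapid decay. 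Since the result is quoted verbatim from \cite{he2}, in the paper I would simply cite it and, if needed, reproduce only the one-line Taylor-expansion identity above to make the exposition self-contained.
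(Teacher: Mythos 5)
The paper does not actually prove this lemma: it is stated in the appendix with the citation to \cite{he2} and used as a black box, so there is no in-paper argument to compare against. Your sketch is the standard pseudodifferential route and would indeed work if carried out. Two small remarks. First, your phrase ``the composite is of order $r+l-1$'' is loose: $\partial_\xi M\in S^{r-1}_{1,0}$ contributes a $\langle\xi\rangle^{r-1}$ decay while $\partial_v\Phi$ contributes a $\langle v\rangle^{l-1}$ weight, so the commutator symbol is $O(\langle v\rangle^{l-1}\langle\xi\rangle^{r-1})$; these are orders in two different variables and do not add. Your final chain, however, is correct, since $\langle D\rangle^s[M(D),\Phi]\langle v\rangle^{-(l-1)}\langle D\rangle^{-(r+s-1)}$ has symbol $O(1)$. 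Second, and more importantly, note that in the paper's usage $\Phi$ is a function of $v$ alone (e.g.\ $v_j\varphi_k$), yet Definition~\ref{psuopde} defines $S^m_{1,0}$ through $\langle\xi\rangle$-decay; the intended reading for such $\Phi$ is $|\partial^\beta_v\Phi(v)|\lesssim\langle v\rangle^{l-|\beta|}$, with the roles of phase-space variables swapped. You implicitly adopt this reading (you say $\partial^\alpha\Phi\in S^{l-1}_{1,0}$ and that it produces the weight $\langle v\rangle^{l-1}$), which is correct, but it is worth being explicit that this is what the notation means here, because the naive reading of the definition would make the weight $\langle v\rangle^{l-1}$ on the right-hand side inexplicable. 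With that clarification in place, your sketch correctly reproduces the mechanism by which the commutator gains one order in $\xi$ and one order of $v$-decay.
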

As an application of Lemma \ref{operatorcommutator1}, since  $W^{\epsilon}\in S^{s}_{1,0}, 2^{k}\varphi_{k} \in S^{1}_{1,0}$ with $0<s<1$,  we  have
\ben\label{decompostionpacth}
\sum_{k \geq -1}^{\infty}|W^{\epsilon}(D)\varphi_{k}f|^{2}_{L^{2}} &=& \sum_{k \geq -1}^{\infty}2^{-2k}|W^{\epsilon}(D)2^{k}\varphi_{k}f|^{2}_{L^{2}}
\nonumber \\&\lesssim&\sum_{k \geq -1}^{\infty}2^{-2k}(|2^{k}\varphi_{k}W^{\epsilon}(D)f|^{2}_{L^{2}}+|f|^{2}_{H^{s-1}})
\lesssim |W^{\epsilon}(D)f|^{2}_{L^{2}}.
\een

\begin{lem}(\cite{He-Jiang}) \label{func} Let $W_q^\epsilon(v):= \phi(\epsilon v)\langle v\rangle^q+\epsilon^{-q}(1-\phi(\epsilon v))$. Let $l \in \R, m,q\ge0$.  There hold
	\beno |f|_{H^m_l} \sim |f^\phi|_{H^m_l}+|f_\phi|_{H^m_l},\quad
	|W^\epsilon_q(D) W_l f|_{H^m} \sim|W^\epsilon_q(D)f  |_{H^m_l}.
	  \eeno
	  Let $\Phi(v)\in S^l_{1,0}$. Assume that $B^\epsilon(\xi)$ verifies $|B^\epsilon(\xi)|\le W^\epsilon_{q}(\xi)$   and
 	$ |\pa^\alpha B^\epsilon(\xi)|\le W^\epsilon_{(q-|\alpha|)^+}(\xi),$ then
 	\ben\label{func5} |\Phi B^\epsilon(D)f|_{H^m}+| B^\epsilon(D)\Phi f|_{H^m}\lesssim |W^\epsilon_q(D)W_l f|_{H^m}.
 	\een
\end{lem}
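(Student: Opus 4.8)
\textbf{Proof proposal for Lemma \ref{func}.} The plan is to reduce every assertion to two standard tools: the almost-orthogonality of the Littlewood--Paley pieces together with the fact that $W_l=\langle v\rangle^l$ is a smooth weight of polynomial growth, and the pseudo-differential commutator estimate of Lemma \ref{operatorcommutator1}. First I would record the elementary symbolic facts that are used throughout: $W^\epsilon_q\in S^q_{1,0}$ uniformly in $\epsilon$ (its $\xi$-derivatives gain $\langle\xi\rangle^{-1}$ because on the region $|\xi|\lesssim 1/\epsilon$ it behaves like $\langle\xi\rangle^q$ and on $|\xi|\gtrsim 1/\epsilon$ it is the constant $\epsilon^{-q}$, so differentiation kills it), and that $\phi(\epsilon\cdot),\,(1-\phi)(\epsilon\cdot)\in S^0_{1,0}$ uniformly in $\epsilon$. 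These give us that $W^\epsilon_q(D)$, $\phi(\epsilon D)$, $\Phi$ and $W_l$-multiplication are all bounded between the appropriate Sobolev spaces with constants independent of $\epsilon$.

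For the first equivalence $|f|_{H^m_l}\sim|f^\phi|_{H^m_l}+|f_\phi|_{H^m_l}$, I would simply note $f=f_\phi+f^\phi$ gives ``$\gtrsim$'' is trivial by triangle inequality, er, I mean ``$\lesssim$'', while the reverse is immediate from $\phi(\epsilon D),\,1-\phi(\epsilon D)\in S^0_{1,0}$ and the boundedness of zero-order operators on $H^m_l$ (conjugating by $W_l$ turns $\phi(\epsilon D)$ into $W_l\phi(\epsilon D)W_{-l}$, still a zero-order operator by symbolic calculus, with seminorms controlled uniformly in $\epsilon$). For the second equivalence $|W^\epsilon_q(D)(W_lf)|_{H^m}\sim|W^\epsilon_q(D)f|_{H^m_l}$, the point is that $W_l W^\epsilon_q(D)W_{-l}$ differs from $W^\epsilon_q(D)$ by a commutator $[W_l,W^\epsilon_q(D)]W_{-l}$; writing $W_l\in S^l_{1,0}$ and applying Lemma \ref{operatorcommutator1} with the roles of multiplier and Fourier multiplier exchanged (or its obvious transpose) shows this commutator maps $H^m$ to $H^{m}$ after composing with $W^\epsilon_q(D)^{-1}$-type corrections; more cleanly, one bounds $|W^\epsilon_q(D)W_lf-W_lW^\epsilon_q(D)f|_{H^m}\lesssim |f|_{H^{m+q-1}_{l-1}}$ which is a lower-order term absorbable into $|W^\epsilon_q(D)f|_{H^m_l}$ by interpolation. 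Both directions follow by symmetry.

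For \eqref{func5}, I would treat the two terms in parallel. Both $\Phi B^\epsilon(D)$ and $B^\epsilon(D)\Phi$ have, by the symbolic hypotheses on $B^\epsilon$ ($|B^\epsilon(\xi)|\le W^\epsilon_q(\xi)$ and $|(\partial^\alpha B^\epsilon)(\xi)|\le W^\epsilon_{(q-|\alpha|)^+}(\xi)$) and on $\Phi\in S^l_{1,0}$, a composition symbol that is dominated by $\langle v\rangle^l W^\epsilon_q(\xi)$ up to lower-order terms produced by the symbol calculus; then one applies the first two equivalences of the lemma to rewrite $|\langle v\rangle^l W^\epsilon_q(D)f|_{H^m}$ as $|W^\epsilon_q(D)W_lf|_{H^m}$. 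The lower-order remainders are handled by Lemma \ref{operatorcommutator1}, which quantifies exactly the loss incurred when commuting $\Phi$ past $B^\epsilon(D)$, and are absorbed by interpolation between $|W^\epsilon_q(D)W_lf|_{H^m}$ and $|f|_{L^2}$.

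The main obstacle is the $\epsilon$-uniformity of all the pseudo-differential bounds: one must check that the seminorms $C_{\alpha,\beta}$ appearing in the $S^m_{1,0}$ membership of $W^\epsilon_q$, $\phi(\epsilon\cdot)$ and the composed symbols do not blow up as $\epsilon\to0$. This is true because the $\epsilon$-dependence always enters through $\phi(\epsilon\cdot)$ whose derivatives scale like $\epsilon^{|\alpha|}$ and are therefore tamed by the simultaneous gain of $\langle\xi\rangle^{-|\alpha|}$ on the support $|\xi|\lesssim 1/\epsilon$; spelling this out carefully (a short but slightly tedious Leibniz-rule computation on $W^\epsilon_q=\phi(\epsilon\cdot)\langle\cdot\rangle^q+\epsilon^{-q}(1-\phi(\epsilon\cdot))$) is the only place real care is needed. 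Everything else is routine once the uniform symbol bounds and Lemma \ref{operatorcommutator1} are in hand.

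\ef
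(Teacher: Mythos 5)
Note first that the paper does not prove Lemma~\ref{func}: it is imported, with citation, from \cite{He-Jiang}, so there is no in-paper argument to compare against. What follows therefore assesses your proposal on its own terms. Your toolkit is the right one, and the verification that $W^\epsilon_q\in S^q_{1,0}$ and $\phi(\epsilon\cdot),\,1-\phi(\epsilon\cdot)\in S^0_{1,0}$ uniformly in $\epsilon$ is correct, as is the argument for the first equivalence.

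The closing step of the second equivalence is not as automatic as your ``absorbable by interpolation.'' What actually controls the commutator remainder $|f|_{H^{m+q-1}_{l-1}}$ by $|W^\epsilon_q(D)f|_{H^m_l}$ is the observation that $(W^\epsilon_q)^{-1}\in S^0_{1,0}$ uniformly in $\epsilon$ (since $W^\epsilon_q\ge1$ everywhere and $W^\epsilon_q\sim\langle\xi\rangle^q$ wherever its $\xi$-derivatives are nonzero): writing $f=(W^\epsilon_q)^{-1}(D)W^\epsilon_q(D)f$ and using the boundedness of zero-order multipliers on weighted Sobolev spaces gives $|f|_{H^{m+q-1}_{l-1}}\lesssim|W^\epsilon_q(D)f|_{H^{m+q-1}_{l-1}}\le|W^\epsilon_q(D)f|_{H^m_l}$, but this last embedding holds only when $m+q-1\le m$, i.e.\ $q\le1$. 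In the applications here $q=s<1$, so this suffices, but the lemma claims all $q\ge0$, and for $q>1$ your route does not close without iterating in the weight index or a dyadic reduction to the region $W^\epsilon_q\sim\langle\xi\rangle^q$. You should spell this out rather than gesture at interpolation.

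A second, more substantial gap sits in your treatment of \eqref{func5}. You call on symbol calculus for $[B^\epsilon(D),\Phi]$ as if $B^\epsilon\in S^q_{1,0}$, but the hypothesis $|(\partial^\alpha B^\epsilon)(\xi)|\le W^\epsilon_{(q-|\alpha|)^+}(\xi)$ gives the $\langle\xi\rangle^{q-|\alpha|}$ gain only for $|\alpha|<q$ and saturates at a constant once $|\alpha|\ge q$, so $B^\epsilon\not\in S^q_{1,0}$ and Lemma~\ref{operatorcommutator1} does not apply to $[B^\epsilon(D),\Phi]$ as stated. Making \eqref{func5} rigorous requires expanding the composed symbol and bounding each term against the specific $W^\epsilon_{(q-|\alpha|)^+}$ structure, in the spirit of the dyadic bookkeeping in \eqref{decompostionpacth}. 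Your proposal correctly flags the hypotheses on $B^\epsilon$ as where the real care is needed, but the assertion that the remainders are ``lower order'' is not delivered by the quoted commutator lemma alone, and this is precisely the content that must be supplied.
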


\begin{prop}\label{symbol} Let $ A^\epsilon(\xi):= \int b^\epsilon(\f{\xi}{|\xi|}\cdot \sigma)\min\{ |\xi|^2\sin^2(\theta/2),1\} d\sigma$, then
\beno A^\epsilon(\xi)\sim |\xi|^2\mathrm{1}_{|\xi|\le \sqrt{2}}+\mathrm{1}_{|\xi|\ge \sqrt{2}}(W^\epsilon)^{2}(\xi) \lesssim (W^\epsilon)^{2}(\xi).\eeno
\end{prop}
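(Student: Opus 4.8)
\textbf{Plan of proof for Proposition \ref{symbol}.} The plan is to compute the integral $A^\epsilon(\xi)=\int_{\SS^2} b^\epsilon(\tfrac{\xi}{|\xi|}\cdot\sigma)\min\{|\xi|^2\sin^2(\theta/2),1\}\,d\sigma$ by reducing to a one-dimensional integral in the polar angle $\theta$, using that $b^\epsilon$ depends only on $\cos\theta=\tfrac{\xi}{|\xi|}\cdot\sigma$ and that $d\sigma=\sin\theta\,d\theta\,d\phi$, so that (up to the harmless $2\pi$ from the azimuthal integration)
\[
A^\epsilon(\xi)\sim \int_0^{\pi/2} b^\epsilon(\cos\theta)\sin\theta\,\min\{|\xi|^2\sin^2(\tfrac\theta2),1\}\,d\theta.
\]
Recall from (A-2) that $b^\epsilon(\cos\theta)=b(\cos\theta)(1-\phi)(\sin\tfrac\theta2/\epsilon)$, and that $(1-\phi)(\sin\tfrac\theta2/\epsilon)$ vanishes when $\sin\tfrac\theta2\le \tfrac34\epsilon$ and equals $1$ when $\sin\tfrac\theta2\ge\tfrac43\epsilon$; combined with the bound $K^{-1}\theta^{-1-2s}\le \sin\theta\, b(\cos\theta)\le K\theta^{-1-2s}$, this means $\sin\theta\,b^\epsilon(\cos\theta)\sim\theta^{-1-2s}\mathbf{1}_{\theta\gtrsim\epsilon}$ on $(0,\pi/2]$. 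So the whole matter is to estimate, up to constants,
\[
\int_{\epsilon}^{\pi/2}\theta^{-1-2s}\min\{|\xi|^2\theta^2,1\}\,d\theta,
\]
where I have used $\sin\tfrac\theta2\sim\theta$ on the relevant range.

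The key step is then the elementary splitting of this integral according to where the $\min$ switches, i.e. at $\theta\sim |\xi|^{-1}$, distinguishing three regimes. First, if $|\xi|\le 2$ (so $|\xi|^{-1}\ge\tfrac12$), then on all of $(\epsilon,\pi/2]$ we have $|\xi|^2\theta^2\lesssim 1$ only for $\theta\lesssim|\xi|^{-1}$; one shows the contribution is $\sim |\xi|^2\int_\epsilon^{\min(|\xi|^{-1},\pi/2)}\theta^{1-2s}\,d\theta$ plus (if $|\xi|^{-1}<\pi/2$) a tail $\int_{|\xi|^{-1}}^{\pi/2}\theta^{-1-2s}\,d\theta\sim|\xi|^{2s}$, and since $2s<2$ the dominant term is $|\xi|^2$ (the lower cutoff at $\epsilon$ contributes $\lesssim|\xi|^2\epsilon^{2-2s}\lesssim|\xi|^2$ and does not affect the order because $W^\epsilon(\xi)=\langle\xi\rangle^s\sim 1$ here anyway). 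Hence $A^\epsilon(\xi)\sim|\xi|^2$ for $|\xi|\le 2$. Second, if $2\le|\xi|\le 1/\epsilon$, then $\epsilon\le|\xi|^{-1}\le\tfrac12$, and
\[
\int_\epsilon^{|\xi|^{-1}}\theta^{-1-2s}|\xi|^2\theta^2\,d\theta+\int_{|\xi|^{-1}}^{\pi/2}\theta^{-1-2s}\,d\theta\sim |\xi|^2\cdot|\xi|^{-(2-2s)}+|\xi|^{2s}\sim|\xi|^{2s},
\]
and since $\langle\xi\rangle^s\phi(\epsilon\xi)$ is the active branch of $W^\epsilon$ on this range, $(W^\epsilon(\xi))^2\sim|\xi|^{2s}$, giving the claim. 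Third, if $|\xi|\ge 1/\epsilon$, then $|\xi|^{-1}\le\epsilon$, so the $\min$ equals $1$ throughout $(\epsilon,\pi/2]$ and $A^\epsilon(\xi)\sim\int_\epsilon^{\pi/2}\theta^{-1-2s}\,d\theta\sim\epsilon^{-2s}$; since the active branch of $W^\epsilon$ is $\epsilon^{-s}(1-\phi(\epsilon\xi))$, $(W^\epsilon(\xi))^2\sim\epsilon^{-2s}$, again matching. Combining the three regimes with the observation that $\langle\xi\rangle^s\phi(\epsilon\xi)+\epsilon^{-s}(1-\phi(\epsilon\xi))\sim\langle\xi\rangle^s$ for $|\xi|\le1/\epsilon$ and $\sim\epsilon^{-s}$ for $|\xi|\ge1/\epsilon$ yields $A^\epsilon(\xi)\sim|\xi|^2\mathbf{1}_{|\xi|\le2}+\mathbf{1}_{|\xi|\ge2}(W^\epsilon(\xi))^2$.

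I do not anticipate a serious obstacle here; the only mild care needed is bookkeeping near the transition points $|\xi|\sim 2$ and $|\xi|\sim 1/\epsilon$, making sure the lower endpoint $\theta=\epsilon$ never produces a term of larger order than $(W^\epsilon(\xi))^2$ (it does not, precisely because $2s<2$), and checking that replacing $\sin\tfrac\theta2$ by $\theta$ and $\sin\theta\,b^\epsilon$ by $\theta^{-1-2s}\mathbf{1}_{\theta\gtrsim\epsilon}$ only costs universal constants, which follows from (A-1)--(A-2) and $0\le\phi\le1$ with $\mathrm{supp}\,\phi\subset[0,4/3]$, $\phi\equiv1$ on $[0,3/4]$. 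One also records the trivial two-sided nature of all these estimates so that both $\lesssim$ and $\gtrsim$ are obtained simultaneously, which is what the statement $A^\epsilon(\xi)\sim\cdots$ requires.
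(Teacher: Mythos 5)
Your argument is correct and follows essentially the same route as the paper: reduce $A^\epsilon(\xi)$ to a one-dimensional integral whose effective integration range is $\{\theta\gtrsim\epsilon\}$, and split at the crossover scale $\theta\sim|\xi|^{-1}$ according to whether $|\xi|\le 2$, $2\le|\xi|\lesssim\epsilon^{-1}$, or $|\xi|\gtrsim\epsilon^{-1}$. The only cosmetic difference is that the paper first substitutes $t=\sin(\theta/2)$ (so the upper limit $|\xi|/2$ falls below the crossover when $|\xi|\le2$, eliminating the small tail you have to absorb by hand), whereas you work directly in $\theta$; both give the two-sided estimate.
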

\begin{proof} Recalling \eqref{cutoff-kernel-def}, we first get
$A^\epsilon(\xi)=2\pi\int_0^{\pi/2} \sin\theta b(\cos\theta) (1-\phi)(\sin\f{\theta}2/\epsilon)\min\{|\xi|^2\sin^2(\theta/2),1\} d\theta. $
By the change of variable: $t=\sin(\theta/2)$, we have
\beno A^\epsilon(\xi) &\sim& \int_0^{\f{\sqrt{2}}{2}} t^{-1-2s} (1-\phi)(t/\epsilon)\min\{ |\xi|^2t^2,1\}dt
\\&=& |\xi|^{2s} \int_0^{\sqrt{2}|\xi|/2} t^{-1-2s}(1-\phi)(\epsilon^{-1}t|\xi|^{-1})\min\{ t^2,1\}dt.
\eeno
By the definition of $\phi$, we have
\beno
|\xi|^{2s}\int_{\frac{4}{3}\epsilon|\xi|}^{\sqrt{2}|\xi|/2} t^{-1-2s} \min\{ t^2,1\}dt\lesssim A^\epsilon(\xi)\lesssim |\xi|^{2s}\int_{\frac{3}{4}\epsilon|\xi|}^{\sqrt{2}|\xi|/2} t^{-1-2s} \min\{ t^2,1\} dt.
\eeno
Now we focus on the quantity
$ I(\xi):= |\xi|^{2s}\int_{c\epsilon|\xi|}^{\sqrt{2}|\xi|/2} t^{-1-2s} \min\{ t^2,1\} dt$ for a constant $\frac{3}{4} \leq c \leq \frac{4}{3}$.
\begin{enumerate}
	\item For the case of $|\xi|\le \sqrt{2}$, we have
	$I(\xi)=|\xi|^{2s} \int_{c\epsilon|\xi|}^{\sqrt{2}|\xi|/2} t^{1-2s}  dt\sim (1-s)^{-1}|\xi|^2.$
		\item For the case of $\sqrt{2}<|\xi|\le (c\epsilon)^{-1}$, we have \beno
		I(\xi)&=&|\xi|^{2s} \big(\int_{c\epsilon|\xi|}^{1} t^{1-2s}  dt+ \int_{1}^{\sqrt{2}|\xi|/2} t^{-1-2s}  dt\big)\\
		&\sim& (1-s)^{-1}|\xi|^{2s}(1-(c\epsilon |\xi|)^{2-2s})+|\xi|^{2s}(1-(\sqrt{2}|\xi|^{-1})^{2s}). \eeno
		\item For the case of $|\xi|\ge (c\epsilon)^{-1}$,
		we have
	$ I(\xi)=|\xi|^{2s}   \int_{c
			\epsilon |\xi|}^{\sqrt{2}|\xi|/2} t^{-1-2s}  dt \sim \epsilon^{-2s}. $
\end{enumerate}
The desired result follows from the above estimates.
\end{proof}

\begin{prop} \label{fourier-transform-cross-term} Let $h,f$ be real-valued functions. There holds
	\beno
	\int_{\mathbb{S}^2 \times \R^3 } b(\f{u}{|u|}\cdot \sigma) h(u)(f(u^+)-f(\f{|u|}{|u^+|}u^+)) d\sigma du
	=\int_{\mathbb{S}^2 \times \R^3 } b(\f{\xi}{|\xi|}\cdot \sigma)  (\hat{h}(\xi^+)-\hat{h}(\f{|\xi|}{|\xi^+|}\xi^+))\bar{\hat{f}}(\xi) d\sigma d\xi.
	\eeno
\end{prop}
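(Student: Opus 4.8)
\emph{Proof proposal.} The plan is to reduce the identity to two ingredients the paper already has at hand: Bobylev's Fourier identity (used in the proof of Lemma~\ref{lowerboundpart2}) and the fact — of the same nature as Lemma~\ref{comWep} — that a purely angular collision operator, being a function of $-\Delta_{\SS^2}$, commutes with the Fourier transform on $\R^3$. First I would write $f(u^+) - f(\tfrac{|u|}{|u^+|}u^+) = [f(u^+) - f(u)] - [f(\tfrac{|u|}{|u^+|}u^+) - f(u)]$, so that the left-hand side equals $A - B$ with
$$A\eqdef\int b\Big(\tfrac{u}{|u|}\cdot\sigma\Big)h(u)\,\big(f(u^+) - f(u)\big)\,d\sigma\,du,\qquad B\eqdef\int b\Big(\tfrac{u}{|u|}\cdot\sigma\Big)h(u)\,\big(f(\tfrac{|u|}{|u^+|}u^+) - f(u)\big)\,d\sigma\,du.$$
Keeping the differences grouped is what makes $A$ and $B$ separately absolutely convergent in the non-cutoff range (the azimuthal integration produces the extra factor $\theta^{2}$); if $b$ carries the angular cutoff this splitting is of course immediate and one may treat the two terms of each difference individually.

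For $A$: note that $A = \langle Q_{b}(\delta_{0},h),f\rangle_{v}$, where $Q_{b}$ is the Maxwellian collision operator and $\delta_{0}$ the Dirac mass at the origin — indeed specialising $v_{*}=0$ in $\int b\, g_{*}h(f'-f)$ and using the pre/post-collisional change of variables produces exactly $A$. Applying the Fourier (Bobylev) form of the identity with $\widehat{\delta_{0}}\equiv 1$ gives $\widehat{Q_{b}(\delta_{0},h)}(\xi) = \int b(\tfrac{\xi}{|\xi|}\cdot\sigma)\big(\hat h(\xi^{+}) - \hat h(\xi)\big)\,d\sigma$, hence by Plancherel $A = \tfrac{1}{(2\pi)^{3}}\int b(\tfrac{\xi}{|\xi|}\cdot\sigma)\big(\hat h(\xi^{+}) - \hat h(\xi)\big)\overline{\hat f(\xi)}\,d\sigma\,d\xi$ (the version with a Dirac mass is justified by approximating $\delta_0$ with $\delta^{-3}\mu(\cdot/\delta)$; only the weak bilinear form is used). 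For $B$: perform, at each fixed direction $\omega = u/|u|$, the half-angle change of variables $\sigma\mapsto\varsigma = \tfrac{\omega+\sigma}{|\omega+\sigma|}$. Since $\tfrac{|u|}{|u^{+}|}u^{+} = |u|\varsigma$, $b(\tfrac{u}{|u|}\cdot\sigma) = b(2(\omega\cdot\varsigma)^{2}-1)$ and $d\sigma = 4(\omega\cdot\varsigma)\,d\varsigma$, the map $f\mapsto\int b(\tfrac{u}{|u|}\cdot\sigma)(f(\tfrac{|u|}{|u^{+}|}u^{+}) - f(u))\,d\sigma$ becomes $\mathcal K - \mathrm{Id}$, where $(\mathcal K f)(u) = \int_{\SS^{2}}W(\tfrac{u}{|u|},\varsigma)f(|u|\varsigma)\,d\varsigma$ with the \emph{symmetric zonal} kernel $W(\omega,\varsigma) = 4(\omega\cdot\varsigma)\,b(2(\omega\cdot\varsigma)^{2}-1)$. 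Thus $\mathcal K - \mathrm{Id}$ acts only on the angular variable and, by the Funk--Hecke theorem, is a (densely defined) function of $-\Delta_{\SS^{2}}$, acting on each eigenspace of degree $l$ as a scalar (with the $l=0$ scalar equal to $0$, which makes the non-cutoff singularity harmless). Consequently $\mathcal K - \mathrm{Id}$ is self-adjoint on $L^{2}(\R^{3})$ and, by the argument of Lemma~\ref{comWep} (Bochner's relation: the Fourier transform maps $Y_{l}^{m}(u/|u|)\phi(|u|)$ to a constant times $Y_{l}^{m}(\xi/|\xi|)(\mathcal H_{l}\phi)(|\xi|)$, without mixing degrees), it commutes with the Fourier transform. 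Hence, by Plancherel, $B = \langle h,(\mathcal K - \mathrm{Id})f\rangle_{v} = \tfrac{1}{(2\pi)^{3}}\langle\hat h,(\mathcal K - \mathrm{Id})\hat f\rangle = \tfrac{1}{(2\pi)^{3}}\langle(\mathcal K - \mathrm{Id})\hat h,\hat f\rangle$, and reversing the change of variables this is $\tfrac{1}{(2\pi)^{3}}\int b(\tfrac{\xi}{|\xi|}\cdot\sigma)\big(\hat h(\tfrac{|\xi|}{|\xi^{+}|}\xi^{+}) - \hat h(\xi)\big)\overline{\hat f(\xi)}\,d\sigma\,d\xi$. Subtracting, $A - B = \tfrac{1}{(2\pi)^{3}}\int b(\tfrac{\xi}{|\xi|}\cdot\sigma)\big(\hat h(\xi^{+}) - \hat h(\tfrac{|\xi|}{|\xi^{+}|}\xi^{+})\big)\overline{\hat f(\xi)}\,d\sigma\,d\xi$, which is the asserted right-hand side (up to the normalising constant fixed by the convention for $\hat{\;}$).

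The delicate point is the treatment of $B$: one must check that the singular radially symmetric angular operator $\mathcal K - \mathrm{Id}$ genuinely commutes with the three-dimensional Fourier transform, which is exactly where the spherical-harmonic/Bochner structure (as in Lemma~\ref{comWep}) enters, and one must verify that all the manipulations — the splitting into $A$ and $B$, the change of variables $\sigma\mapsto\varsigma$ with Jacobian $4(\omega\cdot\varsigma)$, the use of Fubini and Plancherel — are legitimate in the absence of angular cutoff, i.e.\ that the azimuthal cancellation indeed restores absolute convergence. By contrast, the term $A$ is handled entirely by the classical Bobylev identity and requires no new work.
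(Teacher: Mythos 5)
Your proof is correct and the overall conclusion matches the paper's, but the route is genuinely different, so a comparison is worthwhile.

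The paper does a single, direct Fourier computation: it sets $F(u)=\int_{\SS^2} b(\tfrac{u}{|u|}\cdot\sigma)\,f(\tfrac{|u|}{|u^+|}u^+)\,d\sigma$, expands $f$ through $\hat f$, and then exploits the algebraic identity
\[
\tfrac{|u|}{|u^+|}u^+\cdot\eta \;=\; \tfrac12\Big(\tfrac{\tfrac{u}{|u|}\cdot\sigma+1}{2}\Big)^{-1/2}\big(u\cdot\eta+|u||\eta|\,\tfrac{\eta}{|\eta|}\cdot\sigma\big)
\]
together with the symmetry $\int_{\SS^2} b(\kappa\cdot\sigma)\,d(\tau\cdot\sigma)\,d\sigma=\int_{\SS^2} b(\tau\cdot\sigma)\,d(\kappa\cdot\sigma)\,d\sigma$ to swap the roles of $u$ and $\eta$ in the plane-wave phase and deduce that $\hat F$ has exactly the same form as $F$ with $f$ replaced by $\hat f$. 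The same mechanism handles the $f(u^+)$ term, and the identity follows by subtraction. You instead split the bracket as $(f(u^+)-f(u))-(f(\tfrac{|u|}{|u^+|}u^+)-f(u))$ and treat the two pieces by two separate, already-available devices: the $A$ piece is recognised as $\langle Q_b(\delta_0,h),f\rangle_v$ and disposed of by Bobylev's formula with $\hat\delta_0\equiv 1$, while the $B$ piece is rewritten, via the half-angle change of variables $\sigma\mapsto\varsigma$, as a zonal angular operator which by Funk--Hecke is diagonal on spherical harmonics and hence (by the Bochner decomposition, exactly as in Lemma~\ref{comWep}) commutes with the three-dimensional Fourier transform and is self-adjoint. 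Both arguments are ultimately driven by the same underlying invariance; the paper encodes it as a symmetry of the plane-wave kernel, you encode it as commutativity with $\mathcal F$ on the spherical-harmonic decomposition.

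What each approach buys: the paper's calculation is shorter and completely self-contained, and it treats the whole cross-difference at once, so there is never a question of whether the two pieces separately make sense; the price is that the algebraic identity and the $b$-swap look somewhat magical. Your decomposition is more conceptual — it makes transparent that the $\tfrac{|u|}{|u^+|}u^+$ term is a pure rotation operator (a function of $-\Delta_{\SS^2}$) and the $u^+$ term is the ordinary Bobylev gain term — and it reuses machinery the paper has already set up. The price is exactly what you flag at the end: in the non-cutoff range the split into $A$ and $B$ destroys the $O(\theta^2)$ cancellation of the original bracket, and each of $A$, $B$ converges only after azimuthal integration (equivalently, as an iterated $\int d\theta\int d\phi$ integral, with $P_l(t)-1=O(1-t)$ saving the Funk--Hecke coefficients $\lambda_l$). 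This is a genuine extra verification that your route needs and the paper's direct computation avoids. Also note that the commutation fact you need for $B$ is not literally Lemma~\ref{comWep} (which is about the specific multiplier $W^\epsilon((-\Delta_{\SS^2})^{1/2})$) but its natural extension to an arbitrary diagonal angular multiplier; the proof is identical, but it should be stated as such rather than merely cited.
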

\begin{proof} Let $F(u):= \int_{\mathbb{S}^2}  b(\f{u}{|u|}\cdot \sigma)  f(\f{|u|}{|u^+|}u^+)d\sigma$.
By Plancherel equality, we have
	\beno \int_{\mathbb{S}^2 \times \mathbb{R}^3 } b(\f{u}{|u|}\cdot \sigma) h(u) f(\f{|u|}{|u^+|}u^+) d\sigma du
	=\int_{\mathbb{R}^3} h(u) F(u) du
	=\int_{\mathbb{R}^3} \hat{h}(\xi) \bar{\hat{F}}(\xi)d\xi.\eeno
Next, we compute the Fourier transform $\hat{F}$ of $F$. By definition, we have
\beno
\hat{F}(\xi)=\int_{\mathbb{R}^3} e^{-iu\cdot \xi} F(u)du
=\f1{(2\pi)^{\frac{3}{2}}}\int_{\mathbb{S}^2 \times \mathbb{R}^3 \times \mathbb{R}^3} e^{-iu\cdot \xi}e^{i\f{|u|}{|u^+|}u^+\cdot\eta} b(\f{u}{|u|}\cdot \sigma)  \hat{f}(\eta)  d\sigma d\eta du.
\eeno
Notice that $\f{|u|}{|u^+|}u^+\cdot\eta= \f12 \big((\f{u}{|u|}\cdot \sigma +1)/2\big)^{-\f12}(u
\cdot \eta+|u\|\eta| \f{\eta}{|\eta|}\cdot\sigma)$
and the fact $\int_{\mathbb{S}^2} b_{1}(\kappa\cdot \sigma) b_{2}(\tau\cdot \sigma)d\sigma=\int_{\mathbb{S}^2} b_{1}(\tau\cdot \sigma) b_{2}(\kappa\cdot \sigma)d\sigma$, one has
\beno
\hat{F}(\xi)
&=&\f1{(2\pi)^{\frac{3}{2}}}\int_{\mathbb{S}^2 \times \mathbb{R}^3 \times \mathbb{R}^3} e^{-iu\cdot \xi}e^{i\f{|\eta|}{|\eta^+|}\eta^+\cdot u} b(\f{u}{|u|}\cdot \sigma)  \hat{f}(\eta)  d\sigma d\eta du\\
&=&\f1{(2\pi)^{\frac{3}{2}}} \int_{\mathbb{S}^2 \times \mathbb{R}^3}    b(\f{\eta}{|\eta|}\cdot \sigma)  \hat{f}(\eta)\delta [\xi=\f{|\eta|}{|\eta^+|}\eta^+]  d\sigma d\eta,
\eeno
which yields
\beno \int_{\mathbb{S}^2 \times \mathbb{R}^3 } b(\f{u}{|u|}\cdot \sigma) h(u) f(\f{|u|}{|u^+|}u^+) d\sigma du
=\int_{\mathbb{S}^2 \times \mathbb{R}^3 } b(\f{\xi}{|\xi|}\cdot \sigma)   \hat{h}(\f{|\xi|}{|\xi^+|}\xi^+)\bar{\hat{f}}(\xi) d\sigma d\xi.\eeno
Similar argument can be applied to the remainder term and then we get the desired result.
\end{proof}

\begin{lem}\label{comWep} Let $\mathcal{F}$ be Fourier transform, then $\mathcal{F}W^\epsilon((-\triangle_{\mathbb{S}^2})^{\frac{1}{2}})=W^\epsilon((-\triangle_{\mathbb{S}^2})^{\frac{1}{2}})\mathcal{F}$.
\end{lem}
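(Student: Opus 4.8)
The plan is to exploit the fact that the Fourier transform on $\R^3$ preserves the decomposition of $L^2(\R^3)$ into spherical harmonic sectors, together with the observation that $W^{\epsilon}((-\Delta_{\SS^2})^{1/2})$ acts on each such sector as a radially constant scalar. Concretely, I would first expand a smooth (sufficiently decaying) $f$ in spherical harmonics, $f(r\sigma)=\sum_{l\ge0}\sum_{|m|\le l} f^m_l(r)\,Y^m_l(\sigma)$ with $f^m_l(r)=\int_{\SS^2} Y^m_l(\sigma) f(r\sigma)\,d\sigma$, so that by \eqref{DeltaWe}
\[
\big(W^{\epsilon}((-\Delta_{\SS^2})^{1/2})f\big)(r\sigma)=\sum_{l,m} W^{\epsilon}\big((l(l+1))^{1/2}\big)\, f^m_l(r)\, Y^m_l(\sigma).
\]

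Next I would invoke the Hecke--Bochner identity: for each $(l,m)$ there is a radial (Hankel-type) transform $\mathcal{H}_l$ with
\[
\mathcal{F}\big(f^m_l(|\cdot|)\,Y^m_l(\cdot/|\cdot|)\big)(\rho\omega)=(-i)^l\,(\mathcal{H}_l f^m_l)(\rho)\, Y^m_l(\omega).
\]
That is, $\mathcal{F}$ maps the sector $\{g(|\cdot|)Y^m_l(\cdot/|\cdot|)\}$ into itself, transforming only the radial profile while leaving the angular factor $Y^m_l$ intact; equivalently the $(l,m)$-th spherical harmonic coefficient of $\hat f$ at radius $\rho$ is exactly $(-i)^l(\mathcal{H}_l f^m_l)(\rho)$. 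Since $W^{\epsilon}((l(l+1))^{1/2})$ is a scalar independent of the radial variable, multiplying the $(l,m)$-th sector by it commutes with $\mathcal{F}$, and summing over $(l,m)$ gives
\[
\mathcal{F}\big(W^{\epsilon}((-\Delta_{\SS^2})^{1/2})f\big)(\rho\omega)=\sum_{l,m}W^{\epsilon}\big((l(l+1))^{1/2}\big)(-i)^l(\mathcal{H}_l f^m_l)(\rho)Y^m_l(\omega)=\big(W^{\epsilon}((-\Delta_{\SS^2})^{1/2})\hat f\big)(\rho\omega),
\]
which is the asserted identity.

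The only point needing care is the justification of the term-by-term manipulations. I would handle this by first carrying out the computation for functions whose spherical harmonic expansion is finite — these are dense among the smooth functions in play and the expansion converges in $L^2(\R^3)$ — and then passing to the limit, using that both $\mathcal{F}$ and $W^{\epsilon}((-\Delta_{\SS^2})^{1/2})$ are continuous on the relevant weighted spaces (the latter being a self-adjoint, bounded-below angular Fourier multiplier with symbol of at most polynomial growth in $l$). An alternative, essentially equivalent route avoids Hecke--Bochner: $\mathcal{F}$ intertwines the $SO(3)$-action $f\mapsto f(R\,\cdot)$ with itself, hence commutes with the angular Laplacian $-\Delta_{\SS^2}$ (the Casimir acting in the angular variable at each fixed radius) and therefore with every Borel function of it — but making ``function of $-\Delta_{\SS^2}$'' precise is exactly the sector decomposition above, so this reduces to the same computation. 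I expect this bookkeeping to be the only real obstacle; the algebraic content of the statement is immediate once the Hecke--Bochner splitting is in place.
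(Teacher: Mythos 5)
Your proof is correct and takes essentially the same route as the paper's: both expand in spherical harmonics, use the Hecke--Bochner fact that $\mathcal{F}$ preserves each $(l,m)$ sector (sending $f^m_l(r)Y^m_l(\sigma)$ to a radial profile times the same $Y^m_l$), and conclude by commuting with the scalar multiplier $W^\epsilon((l(l+1))^{1/2})$. The only difference is that you name Hecke--Bochner explicitly and spell out the density/convergence bookkeeping, whereas the paper simply asserts $\mathcal{F}(Y^m_l f^m_l)(\xi)=Y^m_l(\tau)W^m_l(\rho)$ and works term by term.
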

\begin{proof} By definition in \eqref{DeltaWe}, if $\xi=\rho \tau$, we  have \beno  \mathcal{F}\big(W^\epsilon((-\triangle_{\mathbb{S}^2})^{\frac{1}{2}})f\big)(\xi)&=&\sum_{l=0}^\infty\sum_{m=-l}^l W^\epsilon((l(l+1))^{\frac{1}{2}})  \mathcal{F}(Y^m_l  f^m_l)(\xi)\\
&=&\sum_{l=0}^\infty\sum_{m=-l}^l  W^\epsilon((l(l+1))^{\frac{1}{2}})  Y_l^m(\tau)W_l^m(\rho),
\eeno where we use the fact $\mathcal{F}(Y^m_l f^m_l)(\xi)=Y_l^m(\tau)W_l^m(\rho)$.
On the other hand, using the same notation, we have
$(\mathcal{F}f)(\xi)=\sum_{l=0}^\infty\sum_{m=-l}^l Y_l^m(\tau)W_l^m(\rho), $ which yields
\beno  W^\epsilon((-\triangle_{\mathbb{S}^2})^{\frac{1}{2}})(\mathcal{F}f)(\xi)=\sum_{l=0}^\infty\sum_{m=-l}^l  W^\epsilon((l(l+1))^{\frac{1}{2}}) Y_l^m(\tau)W_l^m(\rho)= \mathcal{F}\big(W^\epsilon((-\triangle_{\mathbb{S}^2})^{\frac{1}{2}})f\big)(\xi),\eeno and ends the proof of the lemma. \end{proof}
In the rest of this appendix, we aim to prove Lemma \ref{estimate-for-highorder-abc}. For some of the details, \cite{duan} is a good reference.
Note that  \eqref{linear-equation-abc-3} is equivalent to
\ben \nonumber %\label{equation-a}
\partial_{t} a  = -\partial_{t}\tilde{f}^{(0)} + l^{(0)} + g^{(0)},
\\ \nonumber %\label{equation-ba}
\partial_{t}b_{i}+ \partial_{i} a  = -\partial_{t}\tilde{f}^{(1)}_{i} + l^{(1)}_{i} + g^{(1)}_{i}, ~~~~~~~~ 1\leq i \leq 3,
\\ \label{equation-cb}\partial_{t}c+ \partial_{i} b_{i}  = -\partial_{t}\tilde{f}^{(2)}_{i} + l^{(2)}_{i} + g^{(2)}_{i}, ~~~~~~~~ 1\leq i \leq 3,
\\ \label{equation-b}\partial_{i}b_{j}+ \partial_{j} b_{i}  = -\partial_{t}\tilde{f}^{(2)}_{ij} + l^{(2)}_{ij} + g^{(2)}_{ij},~~~~~~~~1\leq i < j \leq 3,
\\ \nonumber %\label{equation-c}
\partial_{i}c  = -\partial_{t}\tilde{f}^{(3)}_{i} + l^{(3)}_{i} + g^{(3)}_{i}, ~~~~~~~~ 1\leq i \leq 3.\een
Based on equations \eqref{equation-cb} and \eqref{equation-b}, it is easy to derive:
\begin{prop} % \label{equation-b-itself}
For $j = 1,2,3$, the macroscopic component $b_{j}$ satisfies
\ben \label{equation-b-itself-2} -\triangle_{x}b_{j}-\partial^{2}_{j}b_{j} &=& \sum_{i\neq j} \partial_{j}[-\partial_{t}\tilde{f}^{(2)}_{i} + l^{(2)}_{i} + g^{(2)}_{i}] - \sum_{i \neq j} \partial_{i}[-\partial_{t}\tilde{f}^{(2)}_{ij} + l^{(2)}_{ij} + g^{(2)}_{ij}] \\&&- 2 \partial_{j}[-\partial_{t}\tilde{f}^{(2)}_{j} + l^{(2)}_{j} + g^{(2)}_{j}]. \nonumber \een
\end{prop}

The functions $\tilde{f}, \tilde{l}, \tilde{g}$  can be controlled as:
\begin{prop} \label{estimate-on-fln-tilde} There holds
\beno  \sum_{|\alpha|\leq N}|\partial^{\alpha}\tilde{f}|^{2}_{L^{2}_{x}} \lesssim \|f_{2}\|^{2}_{H^{N}_{x}L^2_{\epsilon,\gamma/2}}, \sum_{|\alpha|\leq N-1}|\partial^{\alpha}\tilde{l}|^{2}_{L^{2}_{x}} \lesssim \|f_{2}\|^{2}_{H^{N}_{x}L^2_{\epsilon,\gamma/2}},  \\
  \sum_{|\alpha|\leq N-1}|\partial^{\alpha}\tilde{g}|^{2}_{L^{2}_{x}} \lesssim \sum_{|\alpha| \leq N-1}
\int_{\mathbb{T}^{3}}|\langle  \pa^{\alpha}g, e\rangle_{v}|^{2} dx. \eeno
\end{prop}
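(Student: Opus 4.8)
\textbf{Proof proposal for Proposition \ref{estimate-on-fln-tilde}.}

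The plan is to exploit that $\tilde f,\tilde l,\tilde g$ are obtained from $f_2,l,g$ by applying the fixed linear map $A^{-1}$, where $A=(\langle e_i,e_j\rangle)_{1\le i,j\le 13}$ is a constant invertible Gram matrix. Since $A^{-1}$ has bounded entries, it suffices to bound each component of the raw vectors $(\langle f_2,e_i\rangle)_i$, $(\langle l,e_i\rangle)_i$, $(\langle g,e_i\rangle)_i$ in the appropriate Sobolev norm in $x$, and the stated estimates follow by summing finitely many terms and applying the Cauchy-Schwarz inequality in $v$ against the Gaussian weights hidden in each $e_j$.

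First, for $\tilde f$: each $e_j$ is a polynomial of degree at most $3$ times $\mu^{1/2}$, so $|\langle \partial^\alpha f_2,e_j\rangle_v|\lesssim |\partial^\alpha f_2|_{L^2_{\gamma/2}}\,|\langle\cdot\rangle^{-\gamma/2}e_j|_{L^2}\lesssim |\partial^\alpha f_2|_{L^2_{\gamma/2}}$, because $\langle v\rangle^{-\gamma/2}e_j$ is still Gaussian-decaying and hence in $L^2_v$. Since $|f_2|_{L^2_{\gamma/2}}\le |f_2|_{\epsilon,\gamma/2}$ (the weight part of the $\epsilon$-norm dominates the plain $L^2_{\gamma/2}$ norm, see the definition of $L^2_{\epsilon,l}$), integrating in $x$ and summing over $|\alpha|\le N$ gives $\sum_{|\alpha|\le N}|\partial^\alpha\tilde f|^2_{L^2_x}\lesssim \|f_2\|^2_{H^N_xL^2_{\epsilon,\gamma/2}}$. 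Next, for $\tilde g$: since $z^g_i=\langle g,e_i\rangle$ directly, we have $\sum_{|\alpha|\le N-1}|\partial^\alpha\tilde g|^2_{L^2_x}\lesssim\sum_{|\alpha|\le N-1}\sum_j\int_{\TT^3}|\langle\partial^\alpha g,e_j\rangle|^2\,dx$, which is exactly the right-hand side claimed (indeed controlled by the $|\alpha|\le N$ sum). Finally, for $\tilde l$: recall $l=-v\cdot\nabla_x f_2-\mathcal L^\epsilon f_2$. The transport part contributes $\langle -v\cdot\nabla_x f_2,e_j\rangle_v=-\nabla_x\cdot\langle v f_2,e_j\rangle_v$, and $v e_j$ is again a Gaussian-weighted polynomial, so $|\langle\partial^\alpha(v\cdot\nabla_x f_2),e_j\rangle_v|\lesssim|\nabla_x\partial^\alpha f_2|_{L^2_{\gamma/2}}$, which after summing over $|\alpha|\le N-1$ is bounded by $\|f_2\|^2_{H^N_xL^2_{\epsilon,\gamma/2}}$ (one $x$-derivative is absorbed into the order $N$). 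For the collision part, by the self-adjoint-type structure one writes $\langle \mathcal L^\epsilon f_2,e_j\rangle_v=\langle f_2,\mathcal L^\epsilon e_j\rangle_v$; since $e_j\in\mathcal N$ would kill this, instead one uses the upper bound from Theorem \ref{upGammagh} (or equivalently \eqref{upgammamuff1}) to get $|\langle\mathcal L^\epsilon\partial^\alpha f_2,e_j\rangle_v|\lesssim|\partial^\alpha f_2|_{\epsilon,\gamma/2}\,|e_j|_{\epsilon,\gamma/2}\lesssim|\partial^\alpha f_2|_{\epsilon,\gamma/2}$, as $e_j$ is a fixed Schwartz function with finite $\epsilon$-norm uniformly in $\epsilon$. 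Summing over $|\alpha|\le N-1$ and $j$ yields the bound on $\tilde l$.

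The only mildly delicate point — and the one I would state as the main obstacle — is verifying that the fixed test functions $e_j$ have $\epsilon$-norm $|e_j|_{\epsilon,\gamma/2}$ bounded \emph{uniformly in $\epsilon$}, so that no $\epsilon^{-s}$ factor creeps in when estimating the collision term of $\tilde l$; this is immediate from the definition \eqref{charicter function} of $W^\epsilon$ together with the rapid decay of $e_j$ and its derivatives, since $W^\epsilon(v)\le\langle v\rangle^s$ pointwise and $W^\epsilon((l(l+1))^{1/2})$, $W^\epsilon(D)$ act boundedly on Schwartz data with bounds independent of $\epsilon$. Once this uniformity is in hand, all three estimates are routine and the proposition follows by assembling the component bounds through $A^{-1}$.
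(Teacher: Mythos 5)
Your proposal is correct and follows essentially the same strategy the paper sketches: apply the fixed invertible Gram matrix $A^{-1}$ component-wise and bound each scalar $\langle\partial^{\alpha}f_{2},e_{j}\rangle_{v}$, $\langle\partial^{\alpha}l,e_{j}\rangle_{v}$, $\langle\partial^{\alpha}g,e_{j}\rangle_{v}$ by Cauchy--Schwarz against the fixed Schwartz functions $e_{j}$, using $|\cdot|_{\epsilon,\gamma/2}\ge|W^{\epsilon}W_{\gamma/2}\cdot|_{L^{2}}\ge|\cdot|_{L^{2}_{\gamma/2}}$ to land in the $\epsilon$-norm.

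The only place your route diverges slightly from the paper's hint is the collision part of $\tilde l$: the paper suggests moving $\mathcal{L}^{\epsilon}$ onto $e_{j}$ by self-adjointness and then transferring the weight $W^{\epsilon}W_{\gamma/2}$ onto $\mathcal{L}^{\epsilon}e_{j}$, so that only $|W^{\epsilon}W_{\gamma/2}f_{2}|_{L^{2}}$ appears; you instead invoke the trilinear upper bound of Theorem \ref{upGammagh} with $e_{j}$ in the test slot. Both reductions hinge on the same uniform-in-$\epsilon$ control of the $e_{j}$-side factor, which you correctly identify as the only delicate point. Two small corrections: the parenthetical ``since $e_{j}\in\mathcal{N}$ would kill this'' is misleading, because most of the $e_{j}$ (e.g.\ $e_{8},\dots,e_{13}$, and $e_{5},e_{6},e_{7}$ individually) are not in $\mathcal{N}$, and in any case self-adjointness is not an obstacle but precisely what the paper uses; and the pointwise inequality $W^{\epsilon}(v)\le\langle v\rangle^{s}$ holds only up to a uniform constant (e.g.\ $(4/3)^{s}$) in the transition region $3/(4\epsilon)\le|v|\le4/(3\epsilon)$, so it should read $W^{\epsilon}\lesssim\langle v\rangle^{s}$. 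Neither issue affects the validity of your argument.
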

\begin{proof}
The first one easily follows. The second one is proved by transforming some weight to $\mathcal{L}e_{j}$, and using $|\cdot|_{\epsilon,\gamma/2} \geq |W^{\epsilon}W_{\gamma/2}|_{L^{2}}$. The third is obvious by the fact  $\tilde{g}= A^{-1} \langle g, e\rangle_{v}$.
\end{proof}
The next lemma on the dynamics of (a,b,c) is from macroscopic conservation laws.
\begin{lem}\label{equation-for-ptabc}
The macroscopic components $(a,b,c)$ satisfy the following system of equations:
\beno % \label{equation-for-pta}
\partial_{t}a - \frac{1}{2} \nabla_{x} \cdot \langle \mu^{\frac{1}{2}}|v|^{2}v, f_{2}\rangle_{v}=\frac{1}{2}\langle  (5-|v|^{2})\mu^{\frac{1}{2}}, g\rangle_{v}.
 \\ %\label{equation-for-ptb}
\partial_{t}b + \nabla_{x}(a+5c) + \nabla_{x} \cdot \langle \mu^{\frac{1}{2}}v \otimes v, f_{2}\rangle_{v}=\langle  v \mu^{\frac{1}{2}}, g\rangle_{v}.
 \\ %\label{equation-for-ptc}
 \partial_{t}c + \frac{1}{3} \nabla_{x} \cdot b + \frac{1}{6}\nabla_{x} \cdot \langle \mu^{\frac{1}{2}}|v|^{2}v, f_{2}\rangle_{v}=\frac{1}{6}\langle  (|v|^{2}-3)\mu^{\frac{1}{2}}, g\rangle_{v}.\eeno
\end{lem}
\begin{proof}
Multiply both sides of the  equation \eqref{lBE} by the collision invariants $\mu^{\frac{1}{2}}\{1, v_{i}, |v|^{2}\}$, and take integration over $\mathbb{R}^{3}$
to get equations for inner products $\langle \mu^{\frac{1}{2}}, f\rangle_v,\langle \mu^{\frac{1}{2}}v_{i}, f\rangle_v, \langle \mu^{\frac{1}{2}}|v|^{2}, f\rangle_v$. Then write out each item in the equations in terms of $(a,b,c)$ as many as possible. Finally, take suitable combinations to get the desired equations.
\end{proof}
The previous lemma yields:
\begin{lem}\label{estimate-for-ptabc}
There holds:
  $$\sum_{|\alpha|\leq N-1}|\partial^{\alpha}\partial_{t}(a,b,c)|^{2}_{L^{2}_{x}} \lesssim \sum_{0<|\alpha|\leq N}\|\mu^{\frac{1}{4}}\partial^{\alpha}f_{2}\|^{2}_{L^{2}}+|\nabla_{x}(a,b,c)|^{2}_{H^{N-1}_{x}}
  +\sum_{|\alpha| \leq N-1} \int_{\mathbb{T}^{3}}|\langle  \pa^{\alpha}g, e\rangle_{v}|^{2} dx.$$
\end{lem}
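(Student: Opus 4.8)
The plan is to bound $\partial^\alpha\partial_t(a,b,c)$ in $L^2_x$ directly from the macroscopic conservation laws of Lemma \ref{equation-for-ptabc}, applied with $g=\Gamma^\epsilon(f,f)$ replaced by the generic source $g$ appearing in \eqref{lBE}. First I would apply $\partial^\alpha$ for $|\alpha|\le N-1$ to each of \eqref{equation-for-pta}, \eqref{equation-for-ptb}, \eqref{equation-for-ptc} and take the $L^2_x$ norm. The right-hand side splits into two kinds of terms: spatial derivatives of moments of $f_2$ against fixed Gaussian-weighted polynomials $\mu^{1/2}v\otimes v$, $\mu^{1/2}|v|^2 v$, etc., and moments of the source $g$ against $\mu^{1/2}(5-|v|^2)$, $v_i\mu^{1/2}$, $(3|v|^2-1)\mu^{1/2}$, all of which are linear combinations of the fixed basis functions $e_j$.

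For the $f_2$-contribution, each term is of the form $\partial^{\alpha+\beta}\langle \mu^{1/2}p(v), f_2\rangle_v$ with $|\alpha+\beta|\le N$ and $p$ a polynomial; by Cauchy--Schwarz in $v$ against the rapidly decaying weight $\mu^{1/2}p(v)$ (which I can dominate by $\mu^{1/4}$ times a constant) this is controlled by $|\mu^{1/4}\partial^{\alpha+\beta}f_2|_{L^2}$, integrated over $x$. Since $\nabla_x$ of the macroscopic components of $f_1=\mathbb{P}f$ also appears in \eqref{equation-for-ptb} (the $\nabla_x(a+5c)$ and $\nabla_x\cdot b$ terms), these contribute $|\nabla_x(a,b,c)|_{H^{N-1}_x}$. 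Summing over $|\alpha|\le N-1$ and the finitely many polynomials $p$ gives the first two terms of the asserted bound. For the $g$-contribution, $\partial^\alpha\langle q(v)\mu^{1/2}, g\rangle_v = \langle q(v)\mu^{1/2},\partial^\alpha g\rangle_v$ and $q(v)\mu^{1/2}\in\mathrm{span}\{e_j\}$, so its $L^2_x$ norm squared is bounded by $\sum_{j}\int_{\mathbb{T}^3}|\langle\partial^\alpha g,e_j\rangle|^2\,dx$; summing over $|\alpha|\le N-1$ yields the last term.

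This is essentially a bookkeeping argument with no real obstacle: the only mild subtlety is keeping track of which spatial derivatives land on $f_2$ versus on the macroscopic quantities in \eqref{equation-for-ptb}, so that the $f_2$-moments are differentiated at most $N$ times (hence controllable by the $H^N_x$ norm) while the $(a,b,c)$-derivatives are collected into $|\nabla_x(a,b,c)|^2_{H^{N-1}_x}$. I would also note that one can freely insert the weight $\mu^{1/4}$ into the $f_2$ moments since the Gaussian absorbs any polynomial, which is why the statement can be phrased with $\mu^{1/4}\partial^\alpha f_2$ rather than the full $\epsilon$-dependent norm $\|\cdot\|_{L^2_{\epsilon,\gamma/2}}$; this is the form in which Lemma \ref{estimate-for-ptabc} will be used together with Proposition \ref{estimate-on-fln-tilde} in the proof of Lemma \ref{estimate-for-highorder-abc}. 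The whole proof is three applications of the identities in Lemma \ref{equation-for-ptabc} followed by Cauchy--Schwarz, so it is short.
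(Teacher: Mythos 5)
Your proposal is correct and follows the same route as the paper: the paper's own proof is the single line "The lemma follows easily from Lemma \ref{equation-for-ptabc}," and your argument simply fills in the intended steps (apply $\partial^\alpha$ to the three conservation laws, Cauchy--Schwarz in $v$ against the Gaussian-weighted polynomials for the $f_2$ moments, collect the $\nabla_x(a+5c)$ and $\nabla_x\cdot b$ terms into $|\nabla_x(a,b,c)|_{H^{N-1}_x}$, and note that the source moments are linear combinations of the $\langle\partial^\alpha g, e_j\rangle$). The only nitpick is a slip in your prose: the $\nabla_x\cdot b$ term sits in \eqref{equation-for-ptc}, not \eqref{equation-for-ptb}, but this does not affect the mathematics.
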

\begin{proof}
The lemma follows easily from Lemma \ref{equation-for-ptabc}.
\end{proof}
\begin{proof}[Proof of Lemma \ref{estimate-for-highorder-abc}]  For $|\alpha| \leq N-1$,
applying $\pa^{\alpha}$ to equation \eqref{equation-b-itself-2} for $b_{j}$, then by taking inner product with $\pa^{\alpha} b_{j}$,   one has   \beno |\nabla_{x} \pa^{\alpha}b_{j}|^{2}_{L^{2}_{x}} + |\pa_{j} \pa^{\alpha}b_{j}|^{2}_{L^{2}_{x}} &=& \langle \sum_{i\neq j} \partial_{j}\pa^{\alpha}[-\partial_{t}\tilde{f}^{(2)}_{i} + l^{(2)}_{i} + g^{(2)}_{i}], \pa^{\alpha} b_{j}\rangle_{x}   \\&&- \langle \sum_{i \neq j} \partial_{i}\pa^{\alpha}[-\partial_{t}\tilde{f}^{(2)}_{ij} + l^{(2)}_{ij} + g^{(2)}_{ij}] , \pa^{\alpha} b_{j}\rangle_{x}   \\&&- 2 \langle  \partial_{j}\pa^{\alpha}[-\partial_{t}\tilde{f}^{(2)}_{j} + l^{(2)}_{j} + g^{(2)}_{j}] , \pa^{\alpha} b_{j}\rangle_{x}. \eeno
By integration by parts, the time derivative can transferred to $\pa^{\alpha} b_{j}$, one has
\beno |\nabla_{x} \pa^{\alpha}b_{j}|^{2}_{L^{2}_{x}} + |\pa_{j} \pa^{\alpha}b_{j}|^{2}_{L^{2}_{x}} &=& -\frac{d}{dt} \mathcal{I}^{b}_{\alpha,j}(f) +
\langle \sum_{i\neq j} \partial_{j}\pa^{\alpha}\tilde{f}^{(2)}_{i}, \partial_{t}\pa^{\alpha} b_{j}\rangle_{x}
  - \langle \sum_{i\neq j}\partial_{i}\pa^{\alpha}\tilde{f}^{(2)}_{ij}, \partial_{t}\pa^{\alpha} b_{j}\rangle_{x}\\&&- 2 \langle  \partial_{j}\pa^{\alpha}\tilde{f}^{(2)}_{j}, \partial_{t}\pa^{\alpha} b_{j}\rangle_{x}
+\langle \sum_{i\neq j} \partial_{j}\pa^{\alpha}[ l^{(2)}_{i} + g^{(2)}_{i}] , \pa^{\alpha} b_{j}\rangle_{x}
\\&&- \langle \sum_{i\neq j} \partial_{i}\pa^{\alpha}[ l^{(2)}_{ij} +g^{(2)}_{ij}], \pa^{\alpha} b_{j}\rangle_{x}  - 2 \langle  \partial_{j}\pa^{\alpha}[l^{(2)}_{j} + g^{(2)}_{j}] , \pa^{\alpha} b_{j}\rangle_{x}.\eeno
By Cauchy-Schwartz inequality, one has
\beno &&\langle \sum_{i\neq j} \partial_{j}\pa^{\alpha}\tilde{f}^{(2)}_{i}, \partial_{t}\pa^{\alpha} b_{j}\rangle_{x}
 + \langle \sum_{i\neq j}\partial_{i}\pa^{\alpha}\tilde{f}^{(2)}_{ij}, \partial_{t}\pa^{\alpha} b_{j}\rangle_{x}- 2 \langle  \partial_{j}\pa^{\alpha}\tilde{f}^{(2)}_{j}, \partial_{t}\pa^{\alpha} b_{j}\rangle_{x}
 \\&\leq& \eta \sum_{|\alpha|\leq N-1}|\partial^{\alpha}\partial_{t}(a,b,c)|^{2}_{L^{2}_{x}} + \frac{1}{4\eta}\sum_{|\alpha|\leq N}|\partial^{\alpha}\tilde{f}|^{2}_{L^{2}_{x}}. \eeno
Via integrating by parts, by Cauchy-Schwartz inequality, one has
\beno &&\langle \sum_{i\neq j} \partial_{j}\pa^{\alpha}[ l^{(2)}_{i} + g^{(2)}_{i}] , \pa^{\alpha} b_{j}\rangle_{x} -
\langle \sum_{i\neq j} \partial_{i}\pa^{\alpha}[ l^{(2)}_{ij} +g^{(2)}_{ij}], \pa^{\alpha} b_{j}\rangle_{x}  - 2 \langle  \partial_{j}\pa^{\alpha}[l^{(2)}_{j} + g^{(2)}_{j}] , \pa^{\alpha} b_{j}\rangle_{x}
\\&=&-\langle \sum_{i\neq j} \pa^{\alpha}[ l^{(2)}_{i} + g^{(2)}_{i}] , \partial_{j}\pa^{\alpha} b_{j}\rangle_{x} +
\langle \sum_{i\neq j} \pa^{\alpha}[ l^{(2)}_{ij} +g^{(2)}_{ij}], \partial_{i}\pa^{\alpha} b_{j}\rangle_{x}  + 2 \langle  \pa^{\alpha}[l^{(2)}_{j} + g^{(2)}_{j}] , \partial_{j}\pa^{\alpha} b_{j}\rangle_{x}
 \\&\leq& \eta  |\nabla_{x}(a,b,c)|^{2}_{H^{N-1}_{x}} + \frac{1}{\eta}\sum_{|\alpha|\leq N-1}|\partial^{\alpha}\tilde{l}|^{2}_{L^{2}_{x}}+\frac{1}{\eta}\sum_{|\alpha|\leq N-1}|\partial^{\alpha}\tilde{g}|^{2}_{L^{2}_{x}}. \eeno
Taking sum over $1 \leq j \leq 3$, by Proposition \ref{estimate-on-fln-tilde} and Lemma \ref{estimate-for-ptabc}, we get
\beno  |\nabla_{x} \pa^{\alpha}b|^{2}_{L^{2}_{x}}+\frac{d}{dt}\sum_{j=1}^{3}\mathcal{I}^{b}_{\alpha,j}(f)
\le \eta |\nabla_{x}(a,b,c)|^{2}_{H^{N-1}_{x}} + \frac{C}{\eta}(\|f_{2}\|^{2}_{H^{N}_{x}L^2_{\epsilon,\gamma/2}} + \sum_{|\alpha|\leq N-1}\int_{\mathbb{T}^{3}}|\langle  \pa^{\alpha}g, e\rangle_{v}|^{2} dx).\eeno
Similar techniques can be used to deal with $|\nabla_{x}\pa^{\alpha}c|^{2}_{L^{2}_{x}}$ and $|\nabla_{x}\pa^{\alpha}a|^{2}_{L^{2}_{x}}$, and we have
\beno  && |\nabla_{x} \pa^{\alpha}c|^{2}_{L^{2}_{x}}+\frac{d}{dt}\sum_{j=1}^{3}\mathcal{I}^{c}_{\alpha,j}(f)
+|\nabla_{x}\pa^{\alpha}a|^{2}_{L^{2}_{x}}+\frac{d}{dt}
\sum_{j=1}^{3}(\mathcal{I}^{a}_{\alpha,j}(f)+\mathcal{I}^{ab}_{\alpha,j}(f))
\\&\leq& \eta |\nabla_{x}(a,b,c)|^{2}_{H^{N-1}_{x}} + \frac{C}{\eta}(\|f_{2}\|^{2}_{H^{N}_{x}L^2_{\epsilon,\gamma/2}} + \sum_{|\alpha|\leq N-1}\int_{\mathbb{T}^{3}}|\langle  \pa^{\alpha}g, e\rangle_{v}|^{2} dx).\eeno
Patching together the above estimates and taking sum over $|\alpha|\leq N-1$, we have
\beno  \frac{d}{dt}\mathcal{I}_{N}(f) + |\nabla_{x}(a,b,c)|^{2}_{H^{N-1}_{x}}
\le \eta |\nabla_{x}(a,b,c)|^{2}_{H^{N-1}_{x}} + \frac{C}{\eta}(\|f_{2}\|^{2}_{H^{N}_{x}L^2_{\epsilon,\gamma/2}} + \sum_{|\alpha|\leq N-1}\int_{\mathbb{T}^{3}}|\langle  \pa^{\alpha}g, e\rangle_{v}|^{2} dx).\eeno
Taking $\eta=\frac{1}{2}$, the lemma then follows.
\end{proof}

 {\bf Acknowledgments.} Ling-Bing He is supported by NSF of China under the grant 11771236. Yu-Long Zhou is supported by NSF of China under the grant 12001552.

\end{document}